\renewcommand{\dots}{\ifmmode\mathinner{\ldotp\kern-0.2em\ldotp\kern-0.2em\ldotp}\else.\kern-0.13em.\kern-0.13em.\fi}
\newtheorem{theorem}{Theorem}[section]
\newtheorem{proposition}[theorem]{Proposition}
\newtheorem{lemma}[theorem]{Lemma}
\newtheorem{corollary}[theorem]{Corollary}
\newtheorem{remark}[theorem]{Remark}
\numberwithin{equation}{section}
\numberwithin{figure}{section}
\definecolor{darkgreen}{rgb}{0.1,0.7,0.1}
\definecolor{darkred}{rgb}{0.7,0.1,0.1}
\definecolor{darkblue}{rgb}{0.1,0.1,0.7}
\newcommand{\E}{\mathbb{E}}
\newcommand{\R}{\mathbb{R}}
\renewcommand{\P}{\mathbb{P}}
\newcommand{\bE}{\mathbf{E}}
\newcommand{\bP}{\mathbf{P}}
\newcommand{\bX}{\mathbf{X}}
\newcommand{\bbE}{\mathbb{E}}
\newcommand{\bbN}{\mathbb{N}}
\newcommand{\bbP}{\mathbb{P}}
\newcommand{\bbR}{\mathbb{R}}
\newcommand{\cA}{\mathcal{A}}
\newcommand{\cB}{\mathcal{B}}
\newcommand{\cC}{\mathcal{C}}
\newcommand{\cD}{\mathcal{D}}
\newcommand{\cE}{\mathcal{E}}
\newcommand{\cF}{\mathcal{F}}
\newcommand{\cG}{\mathcal{G}}
\newcommand{\cH}{\mathcal{H}}
\newcommand{\cL}{\mathcal{L}}
\newcommand{\cP}{\mathcal{P}}
\newcommand{\cQ}{\mathcal{Q}}
\newcommand{\cR}{\mathcal{R}}
\newcommand{\cS}{\mathcal{S}}
\newcommand{\cT}{\mathcal{T}}
\newcommand{\ccC}{\mathscr{C}}
\newcommand{\ccK}{\mathscr{K}}
\newcommand{\gd}{\delta}
\newcommand{\gep}{\varepsilon}       
\newcommand{\gO}{\Omega}
\newcommand{\gl}{\lambda}
\newcommand{\ind}{\mathbf{1}}
\DeclareMathOperator{\gap}{\mathrm{gap}}
\newcommand{\lint}{\llbracket}
\newcommand{\rint}{\rrbracket}
\DeclareMathSymbol{\leqslant}{\mathalpha}{AMSa}{"36} 
\DeclareMathSymbol{\geqslant}{\mathalpha}{AMSa}{"3E} 
\DeclareMathSymbol{\eset}{\mathalpha}{AMSb}{"3F}     
\renewcommand{\leq}{\;\leqslant\;}                   
\renewcommand{\geq}{\;\geqslant\;}                   
\newcommand{\dd}{\,\text{\rm d}}             
\newcommand{\mintwo}[2]{\min_{\substack{#1 \\ #2}}} 
\newcommand{\var}{{\rm Var}}
\newcommand{\cc}{\complement}
\renewcommand{\tilde}{\widetilde}
\renewcommand{\geq}{\ge}
\renewcommand{\leq}{\le}
\begin{document}

\title{Spectral gap and cutoff phenomenon for the Gibbs sampler of $\nabla \varphi$ interfaces with convex potential}

\author{Pietro Caputo}
\address{Department of Mathematics and Physics, Roma Tre University, Largo San Murialdo 1, 00146 Roma, Italy.}
\email{caputo@mat.uniroma3.it}
\author{Cyril Labb\'e}
\address{Universit\'e Paris-Dauphine, PSL University, Ceremade, CNRS, 75775 Paris Cedex 16, France.}
\email{labbe@ceremade.dauphine.fr}
\author{Hubert Lacoin}
\address{IMPA, Estrada Dona Castorina 110, Rio de Janeiro, Brasil.}
\email{lacoin@impa.br}

\pagestyle{fancy}
\fancyhead[LO]{}
\fancyhead[CO]{\sc{P.~Caputo, C.~Labb\'e and H.~Lacoin}}
\fancyhead[RO]{}
\fancyhead[LE]{}
\fancyhead[CE]{\sc{Cutoff phenomenon for $\nabla \varphi$ interfaces}}
\fancyhead[RE]{}

\date{\small\today}

\begin{abstract}
We consider the Gibbs sampler, or heat bath dynamics associated to log-concave measures on $\bbR^N$ describing $\nabla\varphi$ interfaces with convex potentials. Under minimal assumptions on the potential, we find that the spectral gap of the process is always given by $\gap_N=1-\cos(\pi/N)$, and that for all $\epsilon\in(0,1)$, its $\epsilon$-mixing time satisfies $T_N(\epsilon)\sim \frac{\log N}{2\gap_N}$ as $N\to\infty$, thus establishing the cutoff phenomenon.   The results  reveal a universal behavior in that they do not depend on the choice of the potential. 

\medskip

\noindent
{\bf MSC 2010 subject classifications}: Primary 60J25; Secondary 37A25, 82C22.\\
 \noindent
{\bf Keywords}: {\it Spectral gap; Mixing time; Cutoff.}
\end{abstract}

\maketitle

\setcounter{tocdepth}{1}
\tableofcontents

\section{Introduction}

\subsection{Model and result}
We consider the $1+1$ dimensional interface model defined as follows.  The state space of the interface is defined by
$$\gO_{N}:=\left\{ (x_0,\dots,x_N)\in \bbR^{N+1} \ : \ x_0=0, x_N= 0  \right\}.$$ 
We fix a potential $V\in\ccC$, where $\ccC$ denotes the set of all functions $V:\bbR\to\bbR$ satisfying the following assumptions:
\begin{enumerate}[(i)]
\item\label{ass:conv} $V$ is convex,
\item\label{ass:poly} $V$ grows at most polynomially:  there exist $C>0$ and $K\ge 1$ such that for all $x\in \bbR$, 
\begin{equation}\label{ass:1}
|V(x)| \leq C (1+|x|)^{K}\;.
 \end{equation}
\item\label{ass:nonaff} $V$ is non-affine: namely we have $V'_+ > V'_-$ where
\begin{equation}\label{VminusVplus}
V'_+:=\lim_{x\to \infty}  V(x)/ x \text{ \ and \ } V'_-:=\lim_{x\to -\infty}  V(x)/x\;
\end{equation}
\end{enumerate}
The $\nabla\varphi$ interface with potential $V$ is the random element of $\Omega_N$ with distribution $\pi_N$, whose density with respect to Lebesgue measure is given by 
 \begin{equation}\label{defpin}
  \frac{\dd\pi_{N}}{ \dd x_1\dots \dd x_{N-1}}= \frac{e^{-H}}{Z_N}\;,
 \end{equation}
 where $Z_N$ is the normalization constant and $H=H_{N,V}$ is the Hamiltonian 
 $$ H(x):=\sum_{k=1}^N V(x_i-x_{i-1}).$$
The Gibbs sampler for the measure $\pi_N$ that we wish to consider is the heat-bath dynamics defined as follows.
 Let $\cQ_k$ be the operator that equilibrates the $k$-th coordinate of $x_k$ conditionally given the remaining coordinates.
More precisely, letting $x^{(k,u)}$ denote the vector $(x_0,\dots,x_{k-1},u,x_{k+1},\dots,x_N)$, set
\begin{equation}\begin{split}\label{projectors}
                  \cQ_kf(x)&:= \frac{\int f(x^{(k,u)}) e^{-H(x^{(k,u)})}\dd u}{\ \int  e^{-H(x^{(k,u)})}\dd u }\\
                  &= \int f(x^{(k,u)}) \rho_{x_{k-1},x_{k+1}}(u) \dd u\;,
                \end{split}
\end{equation}
where
$$\rho_{b,c}(u) := \frac{e^{-V(u-b)-V(c-u)}}{\int e^{-V(s-b)-V(c-s)} \dd s}\;.$$
Define the Markov generator $\cL=\cL_{N,V}$ by
\begin{equation}
 \cL f:=\sum_{k=1}^{N-1}(\cQ_k f-f).
\end{equation}
Let $\bX^{x}=(\bX^{x}(t))_{t\ge 0}$ be the continuous time Markov chain on $\gO_{N}$ with generator $\cL$ and initial condition $x$. Given $x\in \gO_N$ and $\nu$ a probability measure on $\gO_N$, let 
$P^x_t$ and $P^{\nu}_t$ denote the distribution at time $t$ of the Markov chain with initial condition $x$ and $\nu$ respectively. 
One can describe the evolution of the process as follows: each coordinate of $\bX^{x}(t)$ is updated with rate $1$ independently. When an update is performed at time $t$ for coordinate $k$ the value of $X^x_k$ is resampled according to the conditional equilibrium measure, whose density is $\rho_{b,c}$ with $b=X^x_{k-1}(t)$, $c=X^x_{k+1}(t)$.

Since $\cL$ is a finite sum of orthogonal projectors, it is a bounded self-adjoint operator on $L^2=L^2(\Omega_N,\pi_N)$, and therefore, the corresponding process is reversible with respect to $\pi_N$. 
The spectral gap of the Gibbs sampler is defined by 
\begin{equation}\label{defgap}
\gap_N = \inf_{f\in L^2:\, \pi_N(f)=0} \frac{\pi_N(f(-\cL f))}{\pi_N(f^2)},
\end{equation}
we use the notation $\pi_N(f) = \int f \dd\pi_N$. 
We do not know whether the operator $\cL$ has pure point spectrum in general, and therefore the spectral gap does not a priori coincide with (the opposite of) some eigenvalue of $\cL$. Our first result  computes the value of $\gap_N$ and shows that it is indeed an eigenvalue. 

\begin{theorem}\label{Th:gap}
For any potential $V\in\ccC$, for all $N\geq 2$, the spectral gap of $\cL$ is given by
$$ \gap_N = 1 -  \cos\left(\frac{\pi}{N}\right)\;,$$
and the function
\begin{equation}\label{deffn}
 f_N(x) = \sum_{k=1}^{N-1} \sin\left(\frac{k\pi}{N}\right) x_k\;,
\end{equation}
 is an eigenfunction of $\cL$ with eigenvalue $-\gap_N$.
\end{theorem}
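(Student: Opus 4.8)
The proof has two halves: the eigenfunction assertion, which yields $\gap_N\le 1-\cos(\pi/N)$, and the matching lower bound, which is where the work lies.

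For the upper bound, the plan is to note first that for \emph{every} pair $b,c\in\bbR$ the density $\rho_{b,c}$ is symmetric about the midpoint $(b+c)/2$: the involution $u\mapsto b+c-u$ interchanges the two arguments $u-b$ and $c-u$, whose sum is the fixed number $c-b$, and hence leaves $e^{-V(u-b)-V(c-u)}$ unchanged, with no parity of $V$ needed. Therefore $\int u\,\rho_{x_{k-1},x_{k+1}}(u)\dd u=\tfrac12(x_{k-1}+x_{k+1})$, so $\cQ_k$ sends any affine function of the coordinates to the affine function in which $x_k$ is replaced by $\tfrac12(x_{k-1}+x_{k+1})$. Applying this to $f_N$ from \eqref{deffn} gives $(\cQ_k-I)f_N(x)=\sin(k\pi/N)\big(\tfrac12(x_{k-1}+x_{k+1})-x_k\big)$, and then, using $x_0=x_N=0$ together with $\sin 0=\sin\pi=0$ to perform a discrete summation by parts with no boundary terms,
\[
\cL f_N(x)=\tfrac12\sum_{k=1}^{N-1}\Big(\sin\tfrac{(k+1)\pi}{N}-2\sin\tfrac{k\pi}{N}+\sin\tfrac{(k-1)\pi}{N}\Big)x_k=\big(\cos\tfrac{\pi}{N}-1\big)f_N(x),
\]
the last equality being the addition formula for the sine. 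Hence $f_N$ is an eigenfunction with eigenvalue $-\gap_N$. Since $\cL$ is self-adjoint, $\cL\tun=0$, and eigenfunctions for distinct eigenvalues are orthogonal, automatically $\pi_N(f_N)=0$, so $f_N$ is admissible in \eqref{defgap} and $\gap_N\le 1-\cos(\pi/N)$.

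For the lower bound, since each $\cQ_k$ is the orthogonal projection onto the functions not depending on $x_k$, one has $\pi_N(f(-\cL f))=\sum_{k=1}^{N-1}\var_k(f)$ with $\var_k(f):=\pi_N\big[\var(f\mid x_i:i\neq k)\big]$, and the task is to show $\sum_{k=1}^{N-1}\var_k(f)\ge\big(1-\cos(\pi/N)\big)\,\var_{\pi_N}(f)$ for all $f\in L^2$. I would prove this by induction on $N$, the structural input being the Markov property of $\pi_N$: conditionally on the height $x_j$ at a cutting site $j$, the blocks $(x_1,\dots,x_{j-1})$ and $(x_{j+1},\dots,x_{N-1})$ become independent, each a $\nabla\varphi$ chain for the same $V$ over $j$, resp.\ $N-j$, bonds but with tilted endpoints. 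One therefore runs the induction for tilted chains as well; the midpoint-symmetry argument above is insensitive to boundary heights, so the slowest mode and its eigenvalue $-(1-\cos(\pi/m))$ are unchanged by tilting an $m$-bond chain, and $m\le N$ gives $1-\cos(\pi/m)\ge 1-\cos(\pi/N)$, whence the product chain on the two blocks has spectral gap $\ge 1-\cos(\pi/N)$. Decomposing $\var_{\pi_N}(f)=\pi_N[\var(f\mid x_j)]+\var_{\pi_N}\big(\pi_N(f\mid x_j)\big)$ and applying the product-chain estimate to the first term reduces everything to a single-site inequality, $\var_j(f)\ge\big(1-\cos(\pi/N)\big)\,\var_{\pi_N}\big(\pi_N(f\mid x_j)\big)$, i.e.\ a spectral-gap statement for the law of the single height $x_j$ under $\pi_N$.

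This single-site inequality is the heart of the matter and the step I expect to be hardest, because it is where the \emph{exact} constant must be produced: $\pi_N(f\mid x_j)$ and $\var_j(f)=\|(I-\cQ_j)f\|_{L^2}^2$ involve the two complementary conditional expectations $\E[\,\cdot\mid x_j]$ and $\E[\,\cdot\mid x_i:i\neq j]$, which do not commute because $\pi_N$ is not a product measure. My plan would be to linearize once more: by the midpoint symmetry the conditional law of $x_j$ given its neighbours is symmetric and log-concave, hence governed by its Gaussian-like affine part, and I would control $\pi_N(f\mid x_j)$ by projecting $f$ onto $\mathrm{span}\{x_1,\dots,x_{N-1}\}$, on which $-\cL$ is exactly $\tfrac12$ times the Dirichlet Laplacian and the target inequality holds with equality realized by $f_N$. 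Making this precise means showing the non-affine part of $f$ cannot relax more slowly than this linear mode, with no loss in the constant; I would expect to need to choose $j$, or to average over $j$ with weights $\sin(j\pi/N)$, so that the recursion closes exactly, and to invoke the convexity of $V$ (via a Brascamp–Lieb-type correlation estimate) to bound the cross-terms between the affine and non-affine parts. The base cases $N=2,3$, where $\gap_2=1$ and $\gap_3=\tfrac12$, are checked by hand. I anticipate the difficulty is not in any single inequality but in arranging all of them to be simultaneously tight, which is precisely what forces the universal value $1-\cos(\pi/N)$.
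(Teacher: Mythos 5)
Your eigenfunction computation is correct and is exactly the paper's Section \ref{sec:linear} argument: the midpoint-symmetry $\rho_{b,c}(u)=\rho_{b,c}(b+c-u)$ gives $\cQ_k g_j = g_j$ for $j\ne k$ and $\cQ_k g_k = \tfrac12(g_{k-1}+g_{k+1})$, hence $\cL$ acts on linear functions as $\tfrac12\Delta$, and $f_N$ is an eigenfunction with eigenvalue $-\lambda_N$, yielding $\gap_N\le 1-\cos(\pi/N)$.

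The lower bound is where your proposal departs from the paper, and where it has a genuine gap. You correctly identify the Dirichlet form as $\sum_k\var_k(f)$ and set up a martingale/two-block decomposition via the Markov property of $\pi_N$, with an induction on chain length over all tilts. The tensorization step and the use of $1-\cos(\pi/m)\ge 1-\cos(\pi/N)$ for $m\le N$ are sound. But the whole argument hinges on the single-site inequality
\[
\var_j(f)\;\ge\;\big(1-\cos(\pi/N)\big)\,\var_{\pi_N}\!\big(\pi_N(f\mid x_j)\big),
\]
and this is precisely what you do not prove. The left-hand side involves the conditional variance of $f$ given the neighbours of $x_j$ (an $O(1)$ quantity for nice $f$), while the right-hand side involves the marginal variance of $\E[f\mid x_j]$, which for the linear mode is of order $N$. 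These two objects live at different scales, the conditional expectations $\E[\,\cdot\mid x_j]$ and $\E[\,\cdot\mid x_i, i\neq j]$ do not commute, and the constant that comes out of Brascamp--Lieb-type bounds for log-concave marginals would at best be tight up to a multiplicative constant, not exactly $1-\cos(\pi/N)$. You anticipate this yourself (``the step I expect to be hardest''), but anticipating the obstacle is not the same as overcoming it: as written the chain of inequalities does not close, and I see no reason to believe that averaging the cut-point $j$ with weights $\sin(j\pi/N)$ saves the exact constant, because the wasteful step (tensorization with the uniform bound $1-\cos(\pi/m)\ge 1-\cos(\pi/N)$) has already destroyed it before the cross-term even appears.

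The paper takes an entirely different route for the lower bound: Proposition \ref{genresult} identifies $\gap_N$ with the asymptotic exponential decay rate of $\|P_t^\nu-\pi_N\|_{TV}$ over bounded-density (or compactly supported) initial laws $\nu$, and Proposition \ref{Prop:RW} establishes, via a hybrid monotone/sticky coupling together with the linear contraction of Lemma \ref{lem:contraction}, that $\|P_t^\nu-\pi_N\|_{TV}\le C(N^{1/2}B(\nu)t^Ce^{-\lambda_N t}+Ne^{-t})$. Since $B(\nu)<\infty$ for compactly supported $\nu$, the decay rate is at least $\lambda_N$, hence $\gap_N\ge\lambda_N$. This avoids any single-site functional inequality by converting the problem into a coupling estimate in which only linear observables, for which $-\cL$ acts as $\tfrac12\Delta$ with exact eigenvalues, control the contraction. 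The payoff of the paper's route is precisely the exact constant; the cost is that it requires the machinery of Sections \ref{Sec:Tools} and \ref{Sec:Upper}. Your functional-analytic sketch, if completed, would be shorter and perhaps more transparent, but the key inequality remains unproved and, in my view, unlikely to hold with the sharp constant in the form you state it.
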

We remark that the spectral gap of the dynamics is independent of the choice of the potential $V$, as long as $V\in\ccC$, and it coincides with the first Dirichlet eigenvalue of the discrete Laplace operator on the segment $\{1,\ldots,N-1\}$.

Our next results concern the mixing time of the Gibbs sampler.  
Without restriction on the set of possible initial conditions, this mixing time is infinite. Consequently, we restrict ourselves to initial conditions with absolute height at most $N$, and consider the distance to equilibrium at time $t$ from a worst case initial condition: 
\begin{align}\label{defdn}
d_N(t) &:= \sup_{x\in \Omega_N\,:\; |x|_\infty \le N} \| P_N^x(t) - \pi_N\|_{TV}, 
\end{align}
where the total variation distance between two probability measures $\mu,\nu$ on $\Omega_N$ is defined as 
$$
\|\mu-\nu\|_{TV}= \sup_{B\in \cB(\gO_N)} \left(\mu(B)-\nu(B)\right),
$$
the supremum ranging over all Borel subsets of $\gO_N$. 
Note that we do \emph{not} condition the dynamics to keep the height of the interface within $[-N,N]$.

For any $\epsilon\in(0,1)$, the $\epsilon$-mixing time is then defined as
$$ T_N(\epsilon) := \inf\{t\ge 0: d_N(t) < \epsilon\}\;.$$

\begin{theorem}\label{th:main1}
For any $V\in\ccC$, for all $\epsilon \in (0,1)$:
\begin{align}\label{eq:cutoff}
T_N(\epsilon) \sim \frac{\log N}{2 \gap_N}\;.
\end{align}
\end{theorem}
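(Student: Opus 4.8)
Write $t^\star:=\frac{\log N}{2\gap_N}$. The statement is equivalent to the two matching bounds $T_N(\epsilon)\geq(1-o(1))t^\star$ and $T_N(\epsilon)\leq(1+o(1))t^\star$, uniformly in $\epsilon$; the whole difficulty lies in the constant $\tfrac12$. A naive ``$L^1$''/heat-equation control of the relaxation of the mean interface only yields $T_N(\epsilon)\lesssim\frac{\log N}{\gap_N}$, and the gain of a factor $2$ reflects that, at equilibrium, the slowest (first discrete-Fourier) mode already fluctuates on scale $\sqrt N$, so what governs the distance to equilibrium near $t^\star$ is the ratio of the mean displacement of that mode to $\sqrt{\var_{\pi_N}(\text{mode }1)}$.

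\emph{Lower bound.} The plan is to use $f_N$ from Theorem~\ref{Th:gap} as a distinguishing statistic. Since $\cL f_N=-\gap_N f_N$, for every $x$ one has $\E[f_N(\bX^x(t))]=e^{-t\gap_N}f_N(x)$ and $\pi_N(f_N)=0$ (the latter because the stationary gradients $x_i-x_{i-1}$ are exchangeable with zero sum). The Dirichlet-form identity together with $\gap_N\asymp N^{-2}$ (Theorem~\ref{Th:gap}) and the non-affinity of $V$ give $\var_{\pi_N}(f_N)\asymp N^3$. Picking $x$ with $|x|_\infty\leq N$ and $f_N(x)$ of maximal order $\asymp N^2$ (e.g.\ $x_k\equiv N$), one gets $\E[f_N(\bX^x(t))]\asymp N^2 e^{-t\gap_N}$, so Chebyshev's inequality yields
\[
d_N(t)\;\geq\;1-\frac{\var_{P^x_t}(f_N)+\var_{\pi_N}(f_N)}{\big(e^{-t\gap_N}f_N(x)\big)^2}\,,
\]
and it remains to prove $\var_{P^x_t}(f_N)=N^{3+o(1)}$ at $t=(1-\delta)t^\star$ (then the right-hand side is $1-N^{-\delta+o(1)}\to1$). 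For this I would use $\var_{P^x_t}(f_N)=\int_0^t e^{-2(t-s)\gap_N}\E[\Gamma f_N(\bX^x(s))]\dd s$, where $\Gamma f:=\cL(f^2)-2f\cL f$ is the carré-du-champ operator, which here equals $\Gamma f_N(y)=\sum_k\sin^2(k\pi/N)\,\E_{u\sim\rho_{y_{k-1},y_{k+1}}}[(u-y_k)^2]$. Log-concavity of $\rho_{b,c}$ and the polynomial growth of $V$ give $\Gamma f_N(y)\leq C_V\big(N+\sum_i(y_i-y_{i-1})^2\big)$, so the whole thing reduces to a uniform-in-$s$ bound on the ``energy'' $\E[\sum_i(\bX^x_i(s)-\bX^x_{i-1}(s))^2]$, of order $N+N^2 s^{-1/2}\sqrt{\log N}$ say. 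Establishing such a bound \emph{without} being able to diagonalize $\cL$ (only $f_N$ is known to be an eigenfunction) is, I expect, the main technical point of the lower bound; it should be within reach of a Lyapunov/energy estimate, exploiting convexity (and, if needed, a Brascamp--Lieb comparison with the Gaussian interface).

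\emph{Upper bound, reduction.} Convexity of $V$ gives $\rho_{b,c}\preceq\rho_{b',c'}$ (stochastic domination) whenever $b\leq b'$, $c\leq c'$, hence a grand coupling $(\bX^y(t))_y$ preserving the coordinatewise order; moreover one can arrange each one-dimensional update so that the updated coordinates coalesce with probability $\asymp 1-\|\rho_{b,c}-\rho_{b',c'}\|_{TV}$ while keeping monotonicity. Since $\pi_N(|x|_\infty>N)=e^{-\Omega(N)}$, replacing $\pi_N$ by its restriction to $\{|x|_\infty\leq N\}$ costs $e^{-\Omega(N)}$ in total variation and produces a measure supported in $[x^-,x^+]$ with $x^\pm_k\equiv\pm N$; sandwiching any admissible start and any $Y$ drawn from that restriction between $\bX^{x^-}$ and $\bX^{x^+}$ then yields $d_N(t)\leq\P(\bX^{x^+}(t)\neq\bX^{x^-}(t))+e^{-\Omega(N)}$. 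So the task becomes: these two extremal trajectories coalesce by time $(1+\eta)t^\star$, for every fixed $\eta>0$.

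\emph{Upper bound, the crux.} I would split $[0,(1+\eta)t^\star]$ at $t_1:=(1+\tfrac\eta2)t^\star$. On $[0,t_1]$, run the grand coupling and show that, with probability $1-o(1)$, the nonnegative gap $\delta_k(t_1):=\bX^{x^+}_k(t_1)-\bX^{x^-}_k(t_1)$ is ``small and smooth'': its first-mode part has size $\lesssim Ne^{-t_1\gap_N}=N^{1/2-\eta/4}$ — estimated by combining the eigenfunction identity $\sum_k\sin(k\pi/N)\E[\delta_k(t)]=(f_N(x^+)-f_N(x^-))e^{-t\gap_N}$ with a smoothing/comparison property of the mean profile (for convex $V$ the drift of $\E[\delta_k]$ is a random convex combination of $\E[\delta_{k\pm1}]$, since $(b,c)\mapsto\E_{\rho_{b,c}}[u]$ has nonnegative partials summing to one), while all higher modes, and the high-frequency part of $\delta$, have equilibrated long before $t_1$, with concentration coming from a variance bound of the same type as in the lower bound. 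On $[t_1,(1+\eta)t^\star]$, the residual gap is a smooth nonnegative bump of height $N^{1/2+o(1)}$ pinned to $0$ at the endpoints, and the coalescing monotone coupling drives it to $0$ in additional time $o(t^\star)\leq\tfrac\eta2 t^\star$ (erosion from the pinned boundary inward). The decisive difficulty, I expect, is the first phase: one must \emph{simultaneously} push the slow mode's mean displacement below the equilibrium fluctuation scale $\sqrt N$ — which happens precisely at $t^\star$ and is where the factor $2$ originates — and control the equilibration of all the other modes together with the smoothness of the residual gap, all without any spectral decomposition, relying only on monotonicity, the single eigenfunction $f_N$, convexity, and (possibly) censoring-type comparisons. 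The erosion step and the reductions above should be comparatively routine.
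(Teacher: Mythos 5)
Your high-level architecture matches the paper's: Wilson's method via $f_N$ for the lower bound; monotone grand coupling, a sticky coalescing coupling, and a multi-phase analysis of the area between ordered trajectories for the upper bound. You also correctly identify where the factor $\tfrac12$ comes from. But there are two genuine gaps.

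The more serious one is in the upper bound. You reduce to showing that the two deterministic extremal trajectories $\bX^{x^+}$ and $\bX^{x^-}$ coalesce by $(1+\eta)t^\star$, and you treat the ``erosion'' phase as routine. In the paper, the decisive technical difficulty in bringing the area $A_t$ from $N^{3/2-O(\eta)}$ down to $o(1)$ is a chain of diffusive supermartingale estimates (Lemmas~\ref{lem:bracket}--\ref{lem:le3}), whose angle-bracket lower bounds require uniform-in-time control of $\|\nabla\bX(t)\|_\infty$ along one of the two trajectories (Lemma~\ref{lem:le2} needs the density of the resampling to be bounded, which needs bounded gradients). That control is only obtainable if one of the trajectories is already essentially stationary, which is why the paper first proves Theorem~\ref{th:wedge} — mixing from the conditioned ``wedge'' distribution $\nu^\wedge=\pi(\cdot\mid x_i\geq N\,\forall i)$ by time $t_\delta$ — via an entirely separate machine (the Peres--Winkler censoring inequality and skeleton relaxation, Section~\ref{Sec:Wedge}), and then couples $\bX^x$ with $\bX^\wedge$ rather than with $\bX^{x^+}$. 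Coupling $\bX^{x^+}$ with $\bX^{x^-}$, as you propose, forgoes this leverage: neither trajectory is a priori at equilibrium, and your nod to ``possibly censoring-type comparisons'' does not supply the missing ingredient. The censoring/skeleton step is the key new idea and it is absent from your plan.

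A secondary gap is in the lower bound. You correctly reduce the variance bound to a uniform-in-$s$ energy estimate $\E\bigl[\sum_i\eta_i(s)^2\bigr]\lesssim N$, but leave it open with a hope of a Lyapunov or Brascamp--Lieb argument. The paper proves $\E[\eta_k(s)^2]\leq C$ by a concrete coupling trick: in the enlarged space $\widetilde\Omega_N$ it dominates $\eta_k(s)$ by the increments of a process started from a tilted i.i.d.\ invariant measure $\tilde\pi_N$, and reads off the bound from stationarity of that auxiliary process. Also, your candidate initial state $x_k\equiv N$ has boundary gradients of size $N$, which makes the carré-du-champ $\Gamma f_N$ blow up at $t=0$; the paper instead starts from $\varrho_N=\pi_N(\cdot\mid x_{N/2}=N/2,\|x\|_\infty\leq N)$, a random tent-shaped configuration with $O(1)$ gradients, precisely to avoid this.
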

We use the symbol ``$\sim$" for asymptotic equivalence as $N\to\infty$, so that in view of Theorem \ref{Th:gap}, \eqref{eq:cutoff} is equivalent to 
\begin{align}\label{eq:cutoff2}
\lim_{N\to\infty}
\frac{T_N(\epsilon)}{N^2\log N}=\frac1{\pi^2}\;.
\end{align}
Theorem \ref{th:main1} shows that the $\epsilon$-mixing time is, to leading order, insensitive to the threshold parameter $\epsilon$, that is, the Gibbs sampler satisfies the cutoff phenomenon.  Note again the universal behavior, that is the fact that nothing depends on $V$, as long as $V\in\ccC$. 
\begin{remark}
If the restriction on the absolute height $|x|_\infty \le N$ is replaced by $|x|_\infty \le a_N$ with $a_N \gg \sqrt N$ then our proof carries over and yields
$$ T_N(\epsilon) \sim \frac{\log(a_N N^{-1/2})}{\gap_N}\;.$$
For an interpretation of this result, observe that if the initial condition is  $x_i = a_N$ for every $i\in \{1,\ldots,N-1\}$, then $t= \frac{\log(a_N N^{-1/2})}{\gap_N}$ is exactly the time it takes for
$$ \E[f_N(\bX^x(t))] = f_N(x) e^{-\gap_N t}\;,$$
to drop from $f_N(x) = \Theta(N a_N)$ to $\Theta(N^{3/2})$, the latter being the order of fluctuations of $f_N$ at equilibrium.
\end{remark}

\begin{remark}\label{rem:tilt}
The Markov chain can be viewed as taking values in the larger space 
\begin{equation}\label{def:tildeom}
\widetilde \gO_{N}:=\left\{ (x_0,\dots,x_N)\in \bbR^{N+1} \ : \ x_0=0 \right\}. 
\end{equation}
In that case, the value at the endpoint $X^x_N(t)$ remains equal to its  initial value $x_N$ for all $t$. Moreover, we could have fixed the endpoint $x_N = hN$ with $h \ne 0$ and thus have considered the mixing property of the  process within the set
$$\gO_{N,h}:=\left\{ (x_0,\dots,x_N)\in \bbR^{N+1} \ : \ x_0=0, x_N= hN  \right\}.$$
The results of Theorems \ref{Th:gap} and \ref{th:main1} still hold in this more general setting. Indeed, using the transformation $(x_k)\mapsto (x_{k}- kh)$ which maps $\gO_{N,h}$ to $\gO_{N}$, and considering the modified potential $V_h(\cdot):=V(h+\cdot)$, again an element of $\ccC$, we are back to the original setting. In particular, it follows that the spectral gap is {\em independent} of $h$. Concerning the mixing time, one can actually prove that the $N\to\infty$ limit in Theorem \ref{th:main1} holds uniformly over $h$ in compact sets.
\end{remark}

\begin{remark}
Let us comment on our assumptions on the potential  $V\in\ccC$. The convexity hypothesis on $V$ is the most important, and it is required at various points in the proof. In the language of interacting particle systems  it makes the system \textit{attractive}, in the sense that it entails the existence of a coupling that preserves monotonicity (see Lemma \ref{lem:mongc} below).
The assumption \eqref{ass:1} about polynomial growth is merely technical. It helps us obtain certain estimates, and in practice it does not appear to be very restrictive.
 Finally the assumption \eqref{VminusVplus} is the easiest to justify:\ 
 if $V$ is an affine function then the measure $\pi_N$ is not defined since $e^{-H}$ would not be integrable in this case.
 Note that the definition \eqref{defpin} remains unchanged if $V(u)$ is replaced by $V'(u)=V(u)+au+b$, since in that case $H_{N,V'}(x)=H_{N,V}(x)+bN$ is only modified by a constant.
\end{remark}

\subsection{Related works} 
The relaxation to equilibrium of $\nabla\varphi$ interfaces has been the object of many remarkable works in recent years, especially in conjunction with hydrodynamic limits, see e.g.\ \cite{Giacomin,Funaki} and the references therein. In particular, the validity of functional inequalities for the equilibrium measure $\pi_N$ has been explored under various assumptions on the potential $V$. The dynamics considered in these works is usually the conservative diffusion process, namely the Langevin dynamics associated to the Hamiltonian $H$ in the state space $\Omega_N$. For instance, when the potential is a bounded perturbation of a uniformly strictly convex function, then upper and lower bounds of order $N^{-2}$ on the spectral gap of the Langevein diffusion have been obtained  in \cite{Cap}. Moreover, the stronger logarithmic Sobolev inequality has been established by Menz and Otto \cite{OttoMenz}. These results were shown to hold uniformly in the tilt parameter $h$ when the interface endpoint is fixed at $x_N=hN$. The uniformity in $h$ is a consequence of the assumption of uniform strict convexity and it cannot hold for the diffusion process if the potential is only assumed to be convex. In  the non-uniformly convex case, spectral gap bounds with the correct dependence on the tilt $h$ were obtained in \cite{BartheWolff,BartheMilman} for certain potentials such as the solid-on-solid (SOS) potential $V(u)=|u|$. 
While several of the techniques employed in these works carry over to the jump process we consider in this paper,  as far as we know none of the previous works allows one to actually compute the spectral gap as we do here. As discussed in Remark \ref{rem:tilt}, our results hold uniformly for $h$ in a compact set. Comparison with the SOS case studied in \cite{BartheMilman}  shows in particular that the spectral gap of the Gibbs sampler is much less sensitive than the spectral gap of the diffusion process regarding the choice of the tilt parameter $h$. Moreover, as already mentioned our results are largely insensitive to the choice of the potential $V$.   On the other hand, we cannot handle perturbations of a convex potential, since we strongly rely on the FKG inequality and other monotonicity properties which in general do not hold if the convexity assumption is dropped. 

Interface models of the form \eqref{defpin} are also commonly studied in the discrete setting, namely when the heights $x_i$ are restricted to take only integer values, in which case they form natural models for the interface separating two distinct phases in low temperature spin systems; see, e.g.\ \cite{BodineauIoffeVelenik}. For the discrete SOS model, estimates that are tight up to multiplicative constants for the spectral gap and the mixing time of the Gibbs sampler were obtained in \cite{MartinelliSinclair}. We believe that our main results Theorem \ref{Th:gap} and Theorem \ref{th:main1} can be extended to include this case as well, with small modifications in the proof.  Certainly more challenging would be the determination of the spectral gap and mixing time of the  
local dynamics for the discrete SOS interfaces considered e.g.\ in \cite{Posta,CapMarTon}, where only $\pm1$ increments of the height are allowed at each update. 

The problem of determining whether a given Markov chain exhibits the cutoff phenomenon or not keeps attracting a lot of attention. While a general theory is still out of reach, more and more instances of the phenomenon are being understood. Most of the known results concern Markov chains with finite state space, see e.g.\ the monograph \cite{LevPerWil}.  Especially closely related to our analysis here are the results concerning the exclusion process \cite{Lac16,labbe2019cutoff}.  As in our recent work \cite{CLL}, one of the motivations in the present paper is to investigate the phenomenon for Markov chains with continuous state space. Our previous paper \cite{CLL} establishes the cutoff phenomenon for a heat bath dynamics over the simplex, when the target distribution is uniform or some log-concave generalization thereof.  While our assumptions on the potential $V$ here are general enough to handle target distributions $\pi_N$ from a very large family of log-concave measures, we note that they  do not include the measures on the simplex considered in \cite{CLL} since the positivity constraint characterizing the simplex would require dropping the polynomial growth condition.

\subsection{Overview} 

Section \ref{Sec:Tools} introduces several tools and presents some estimates to be used in the sequel. In Section \ref{Sec:Lower}, we establish the lower bound on the mixing time of Theorem \ref{th:main1} by identifying an initial condition for which the process remains far from equilibrium until the putative mixing time: this initial condition is built in such a way that rather explicit computations can be performed on the law of the image through $f_N$ (from \eqref{deffn}) of the process. A first upper bound on the mixing time is obtained in Section \ref{Sec:Upper}: it catches the correct order but not the precise constant. This bound allows us to determine the spectral gap of the generator. In Sections \ref{Sec:UpperTight} and \ref{Sec:Wedge} we refine the upper bound of the previous section by estimating, under some appropriate coupling, the merging time of two processes starting from a (random) `maximal' initial condition and any arbitrary initial condition, and by proving that the process starting from this maximal initial condition reaches equilibrium by the putative mixing time.

\section{Main tools}\label{Sec:Tools}

\subsection{The gradient dynamics}
The process $(\boldsymbol\eta(t))_{t\ge 0}$ defined on $\bbR^N$ by
the increments $$\eta_k(t)=(X_{k}-X_{k-1})(t)$$ is also Markovian. We sometimes use the notation $\boldsymbol\eta(t)=\nabla{\bf X}(t)$. To describe its evolution, we introduce some notation.

Given $a \in \bbR$, we define the resampling potential $W_a$ as
$$ W_a(u) := V\left(a+u\right) + V\left(a-u\right) - 2 V\left(a\right)\;,$$
and set 
\begin{equation}\label{deftheta}
\theta_a(u):=\frac{ e^{-W_a(u)} \dd u}{Z(a)} \quad \text{ with }  \quad Z(a) := \int_{\bbR} e^{-W_a(u)} \dd u\;.
\end{equation}
The function $W_a(u)$ is symmetric (with respect to $u$, but not with respect to $a$ in general), convex and non-negative. It is minimized at $0$ where it admits the value $0$. Our assumption $V\in\ccC$ ensures that $Z(a)$ is finite.
Note also that
\begin{equation}\label{transinv}
\rho_{-a,a}(u)=\theta_a (u)\quad \text{ and } \quad \rho_{b,c}(u)=\theta_{\frac{c-b}{2}}\left(u -\frac{c+b}{2} \right).
\end{equation}

The dynamics of the gradients is then described as follows. For each $k\in \lint 1, N-1\rint$ at rate one 
$(\eta_k,\eta_{k+1})$ jumps to $(\bar \eta_k - U, \bar \eta_k+U)$ where $\bar \eta_k=\frac{\eta_{k+1}+\eta_{k}}{2}$ and $U$ is a r.v. with density $\theta_{\bar  \eta_k}$ . The associated Markov generator is given by
$$ \tilde \cL f(\eta) = \sum_{k=1}^{N-1} \int \big(f(\eta^{(k,u}) - f(\eta)\big) \theta_{\bar \eta_k}(u) \dd u\;,$$
where $f:\bbR^N\mapsto\bbR$ and $\eta^{(k,u)}$, $u\in\bbR$, denotes the vector $\eta$ with the  pair $(\eta_{k-1},\eta_k)$ replaced by $(\bar \eta_k - u, \bar \eta_k+u)$.
Note that the invariant measure $\pi_N$, in terms of the gradient variables $\eta$, is nothing but the product probability measure with density proportional to $\otimes_{k=1}^N e^{-V}$, conditioned on  $\sum_{k=1}^N \eta_k = 0$.

\subsection{The action on linear functions}\label{sec:linear}
The generators $\cL$ and $ \tilde \cL$ take a particularly simple form when applied to linear functions. 
If $g_k$ denotes the coordinate map $g_k:x\mapsto x_k$ then
\begin{equation}\label{Eq:laplace} 
\cL g_k(x) = \frac{x_{k-1} + x_{k+1}}{2}-x_k = \frac12 \Delta x_k\;,
\end{equation}
where $\Delta$ denotes the discrete Laplacian. 
Summation by parts and \eqref{Eq:laplace} 
then show that for every $j\in \lint 1,N-1\rint$ the map $f_N^{(j)}:\Omega_N\mapsto \bbR$ given by 
\begin{equation}\label{defj}
f_N^{(j)} (x) := \sum_{k=1}^{N-1} \sin\left( \frac{j \pi k}{N}\right) x_k\;
\end{equation}
is an eigenfunction of $\cL$  with the eigenvalue $-\lambda_N^{(j)}$ where
\begin{equation}\label{defjj}
\lambda_N^{(j)} := 1 - \cos\left( \frac{j\pi}{N}\right)\;.
\end{equation}
Thus, linear functions form an invariant subspace, and the spectrum of $-\cL$ restricted to that subspace consists of the $N$ eigenvalues 
$$0=:\lambda_N^{(0)}<\lambda_N^{(1)}<\lambda_N^{(2)}<\cdots<\lambda_N^{(N-1)}.$$ 
In the case $j=1$, we simply write $f_N$ for $f_N^{(1)}$ and $\gl_N$ for $ \lambda_N^{(1)}$. 
In particular, it follows that $\gap_N\leq \gl_N$. Theorem \ref{Th:gap} will establish that $\gl_N$ is actually equal to the spectral gap of $\cL$.

\subsection{General spectral gap considerations}
Next, we give a rather general characterization of the spectral gap.  
Consider a reversible Markov process $(X_t)_{t\geq 0}$ on a measurable space $\gO$ with generator $\cL$ and stationary distribution $\pi$. Assume that $\cL$ is self-adjoint in $L^2=L^2(\Omega,\pi)$, and define its spectral gap as
\begin{equation}\label{defgapgen}
\gap = \inf_{f\in L^2:\, \pi(f)=0} \frac{\langle f,-\cL f\rangle}{\langle f,f\rangle},
\end{equation}
where we write $\langle\cdot,\cdot\rangle$ for the scalar product in $L^2$. 
Given a probability measure $\nu$ on $\gO$, we let $P^{\nu}_t$ denote the distribution of $X_t$ starting with initial condition $\nu$.
Finally for a probability measure $\nu\ll \pi$ we let $\|\nu\|_{\infty}$ denote the $L^{\infty}$ norm of density $\dd \nu /\dd\pi$.

\begin{proposition}\label{genresult}
The spectral gap satisfies
 \begin{equation}\label{eq:gapchar}
  \gap= -\sup_{ \|\nu\|_{\infty}<\infty } \limsup_{t\to \infty} \frac{1}{t} \log \| P^{\nu}_t- \pi \|_{TV}.
 \end{equation}
If furthermore $\gO$ is a topological space exhausted by compact sets (and equipped with its Borel $\sigma$-algebra) we can restrict the supremum to $\nu$ with compact support.

\end{proposition}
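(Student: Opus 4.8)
The plan is to prove the two inequalities separately, using on one side the spectral theorem and on the other a test-function argument. For the inequality $\gap \le -\sup_\nu \limsup_t \frac1t\log\|P_t^\nu-\pi\|_{TV}$, I would fix $f\in L^2$ with $\pi(f)=0$, $\|f\|_2=1$, and view $\langle f,-\cL f\rangle$ close to $\gap$; the difficulty is that $f$ need not be bounded, so it does not directly produce an admissible $\nu$. The remedy is to truncate: replace $f$ by $f_M:=(f\wedge M)\vee(-M)$ for large $M$, which is bounded, has $\|f_M\|_2$ and $\langle f_M,-\cL f_M\rangle$ as close as we like to those of $f$ by dominated convergence (here convexity/contraction of truncation under a Dirichlet form of ``jump'' type, or simply the fact that $\cL$ is bounded and $f_M\to f$ in $L^2$, gives $\langle f_M,-\cL f_M\rangle\to\langle f,-\cL f\rangle$). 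Then $g:=f_M-\pi(f_M)$ is bounded with $\pi(g)=0$, and $\nu:=(1+g/\|g\|_\infty)\,\pi$ — or better, $\nu:=(1+\epsilon g)\pi$ for small $\epsilon$ — is a probability measure with $\|\nu\|_\infty<\infty$. One computes $P_t^\nu-\pi$ has density $\epsilon e^{t\cL}g$ with respect to $\pi$, so $\|P_t^\nu-\pi\|_{TV}=\tfrac\epsilon2\|e^{t\cL}g\|_{L^1(\pi)}\le\tfrac\epsilon2\|e^{t\cL}g\|_{L^2(\pi)}$, and by the spectral theorem applied to the self-adjoint operator $\cL$ acting on the mean-zero subspace, $\|e^{t\cL}g\|_2\le e^{-t\,\gap}\|g\|_2$ only gives the wrong direction; instead one uses that $\limsup_t\frac1t\log\|e^{t\cL}g\|_2 = -\inf\{\lambda\in\mathrm{supp}(\mu_g)\}$ where $\mu_g$ is the spectral measure of $-\cL$ associated to $g$, and by taking $g$ with spectral measure concentrated near $\gap$ (possible by definition of the infimum) this $\limsup$ is $\le -\gap+\delta$. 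This yields one direction.

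For the reverse inequality $\gap \ge -\sup_\nu\limsup_t\frac1t\log\|P_t^\nu-\pi\|_{TV}$, take any $\nu$ with $h:=\dd\nu/\dd\pi-1\in L^\infty\subseteq L^2$, $\pi(h)=0$. Then $P_t^\nu-\pi$ has $\pi$-density $e^{t\cL}h$, so
\[
\|P_t^\nu-\pi\|_{TV}=\tfrac12\|e^{t\cL}h\|_{L^1(\pi)}\le\tfrac12\|e^{t\cL}h\|_{L^2(\pi)}\le\tfrac12 e^{-\gap t}\|h\|_2,
\]
the last step being the standard spectral-theorem bound $\|e^{t\cL}h\|_2\le e^{-\gap t}\|h\|_2$ valid since $h\perp\mathbf 1$ and $-\cL\ge\gap$ on that subspace. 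Taking $\frac1t\log$ and $\limsup$ gives $\limsup_t\frac1t\log\|P_t^\nu-\pi\|_{TV}\le-\gap$ for every admissible $\nu$, hence $\sup_\nu(\cdots)\le-\gap$, i.e.\ $-\sup_\nu(\cdots)\ge\gap$. Combining the two directions gives \eqref{eq:gapchar}.

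For the final refinement, when $\gO$ is exhausted by an increasing sequence of compacts $\gO=\bigcup_n K_n$, I would note that given $\nu$ with $\|\nu\|_\infty<\infty$ and $\pi(K_n)>0$ for $n$ large, the conditioned measures $\nu_n:=\nu(\cdot\mid K_n)$ have compact support and $\|\nu_n\|_\infty\le\|\nu\|_\infty/\pi(K_n\mid\nu)<\infty$; moreover $\nu_n$ has $\pi$-density $h_n:=\mathbf 1_{K_n}(1+h)/\nu(K_n)-1$, and since $h_n\to h$ in $L^2(\pi)$ one checks using the bound above (uniformly in $t$ after the spectral estimate) that the exponential rate for $\nu_n$ is at least that for $\nu$ in the limit $n\to\infty$; alternatively, and more cleanly, observe that the one direction already proved shows the supremum over compactly supported $\nu$ is $\le-\gap$, while to get $\ge-\gap$ one revisits the truncation argument in the first paragraph and notes that when $\gO$ is exhausted by compacts one may additionally multiply the bounded test function $g$ by $\mathbf 1_{K_n}$ before forming $\nu$, at the cost of an arbitrarily small $L^2$ perturbation, so the near-optimal $\nu$ can be taken compactly supported. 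The main obstacle is the first paragraph: arranging that the truncated/compactly-supported test function still has spectral measure (for $-\cL$) essentially concentrated near $\gap$, so that the exponential decay rate of $\|e^{t\cL}g\|_2$ is genuinely $\gap$ and not larger — this is where self-adjointness of $\cL$ (guaranteed here since $\cL$ is a finite sum of orthogonal projectors minus the identity) and a careful use of the spectral decomposition are essential.
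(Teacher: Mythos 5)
Your second paragraph (the bound $\|P_t^\nu-\pi\|_{TV}\le \tfrac12 e^{-\gap t}\|h\|_2$ via the spectral theorem) is correct and coincides with the paper's proof of the easy direction, though note you have swapped the labels: this argument shows $\limsup_t\tfrac1t\log\|P_t^\nu-\pi\|_{TV}\le -\gap$ for \emph{every} admissible $\nu$, hence $\sup_\nu\limsup(\dots)\le-\gap$, i.e.\ $\gap\le -\sup_\nu\limsup(\dots)$, which is what you labelled the ``first'' inequality. Likewise your first paragraph, which builds a specific $\nu$ and argues its decay rate is $\gap$, is the one that ought to establish $-\sup_\nu\limsup(\dots)\le\gap$.

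In that first paragraph there is a genuine gap, and it is not the one you flag at the end. After setting $\nu=(1+\epsilon g)\pi$ you correctly obtain $\|P_t^\nu-\pi\|_{TV}=\tfrac{\epsilon}{2}\|e^{t\cL}g\|_{L^1(\pi)}$, but everything that follows concerns the decay of $\|e^{t\cL}g\|_{L^2(\pi)}$. Since $\|\cdot\|_{L^1(\pi)}\le\|\cdot\|_{L^2(\pi)}$, knowing that $\|e^{t\cL}g\|_{L^2}$ decays no faster than $e^{-(\gap+\delta)t}$ gives no information whatsoever about $\|e^{t\cL}g\|_{L^1}$: the $L^1$ norm could a priori decay much faster, and it is the $L^1$ norm, not the $L^2$ norm, that equals (twice) the total variation distance. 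So the step ``this limsup is $\le -\gap+\delta$, this yields one direction'' is unjustified. The spectral-concentration issue you highlight (arranging $g$ to have spectral mass near $\gap$ after truncation) is indeed delicate but manageable by a Markov-type argument on the spectral measure once the Rayleigh quotient is close to $\gap$; the $L^1/L^2$ mismatch is the real hole.

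The paper's proof fills exactly this hole. Rather than bounding the $L^1$ norm from below directly, it uses the duality characterisation of $\|\cdot\|_{TV}$ with a \emph{bounded} test function $h=\rho_M/\|\rho_M\|_\infty$, which yields
$$
\|P_t^\nu-\pi\|_{TV}\ \ge\ \frac{\langle e^{t\cL}(\rho_M-1),\,\rho_M-1\rangle}{\|\rho_M\|_\infty}\ =\ \frac{\|e^{(t/2)\cL}(\rho_M-1)\|_{L^2(\pi)}^2}{\|\rho_M\|_\infty}\,,
$$
turning the $L^1$ lower bound into a genuine $L^2$ quadratic form, which \emph{can} be bounded below by the spectral projection $\mathbf{E}_\delta$: one gets $\ge e^{-(\gap+\delta)t}\langle\rho_M,f\rangle^2/\|\rho_M\|_\infty$, with $\langle\rho_M,f\rangle$ kept bounded away from $0$ by choosing $\rho_M$ proportional to $f_+\wedge M$ for $M$ large. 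Equivalently, in your notation, the key inequality you are missing is $\|e^{2t\cL}g\|_{L^1(\pi)}\ge \|e^{t\cL}g\|_{L^2(\pi)}^2/\|g\|_\infty$, which comes from pairing $e^{2t\cL}g$ with $g/\|g\|_\infty$ and using reversibility. Without this (or an equivalent device) the argument does not close. The compact-support refinement you sketch is fine once this is in place, and it matches the paper's remark that $\rho_M$ can simply be further cut off by $\ind_{K_n}$.
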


\begin{proof}
Suppose $\nu$ is a  probability measure on $\gO$ with $\|\nu\|_{\infty}<\infty$. Let $\rho$ and $\rho_t$ denote respectively the density of 
$\nu$ and $P^{\nu}_t$ with respect to $\pi$. Then $\rho_t=e^{t\cL}\rho$, and the spectral theorem implies 
 \begin{align*}
\| P^{\nu}_t- \pi \|_{TV}&= \frac{1}{2}\| \rho_t- 1 \|_{L^1(\pi)}\\&\le 
\frac{1}{2}\| \rho_t- 1 \|_{L^2(\pi)}\le  \frac1 2 \|\rho-1\|_{L^2(\pi)} e^{-\gap t}.
 \end{align*}
This proves that the spectral gap is at most the right hand side in \eqref{eq:gapchar}. 
The other inequality requires a bit more work. 
 \medskip
 
 Let us first treat the simpler case where $-\gap$ is an eigenvalue of $\cL$. Let $f$ be a normalized eigenfunction such that $\cL f=-\gap f$. Assume without loss of generality that the  
 positive part $f_+$ satisfies $\| f_+ \|^2_{L^2(\pi)}\ge 1/2$ (if not take the negative part).
 Given $M>0$, consider the bounded density 
 $$\rho_{M}= \frac{f_+\wedge M}{\| f_+\wedge M\|_{L^1(\pi)}}.$$
 By monotone convergence 
 and using $\| f_+ \|_{L^1(\pi)}\leq \| f_+ \|_{L^2(\pi)}\leq 1$,
 $$\lim_{M\to \infty} \langle \rho_{M}, f\rangle= \frac{\| f_+ \|^2_{L^2(\pi)}}{\| f_+ \|_{L^1(\pi)}}\ge \frac12.$$
 Let us thus fix $M$ sufficiently large so that 
 $$\langle \rho_{M}, f\rangle= \langle \rho_{M}-1, f\rangle\ge \frac{1}{3}.$$
Recall that $\| \mu_1 - \mu_2 \|_{TV} = \sup\{ \int h \dd (\mu_1-\mu_2): \|h\|_\infty \le 1\}$. If $\nu$ has density $\rho_{M}$,
 then
 \begin{align}
  \| P^{\nu}_t- \pi \|_{TV} &\ge \int \frac{\rho_M}{\|\rho_{M}\|_\infty} \dd (P^{\nu}_t - \pi) = \int \frac{(\rho_{M}-1)}{\|\rho_{M}\|_\infty} \dd  (P^{\nu}_t - \pi)
 \nonumber\\ &= \frac{\langle e^{t\cL } (\rho_{M}-1), (\rho_{M}-1)  \rangle}{\|\rho_{M}\|_\infty} .
\label{eq:sp1} 
\end{align}
If $f$ is an eigenfunction, then 
 $ \rho_{M}-1=  \langle \rho_{M}-1, f\rangle f 
 +g$, 
 where $g$ is orthogonal to $f$ and  $e^{t\cL } g$ is orthogonal to $f$. Therefore,
 $$
\| P^{\nu}_t- \pi \|_{TV} 
 \ge \frac{\langle \rho_{M}-1, f\rangle^2}{\|\rho_{M}\|_\infty}\,\langle e^{t\cL } f, f  \rangle \ge \frac{1}{9{\|\rho_{M}\|_\infty}} e^{-t\gap }.
 $$
 This implies the desired bound in the case where $-\gap$ is an eigenvalue of $\cL$. 
 
If $-\gap$ is not an eigenvalue we argue as follows. Let ${\mathbf E}_{\delta}$ denote the spectral projector of $-\cL$ associated to the interval $[\gap,\gap+\delta]$, and let $H_\delta$ denote the corresponding closed subspace of $L^2(\pi)$. Suppose that $f$ is normalized and $f\in H_\delta$.  
Let  $\rho_{M}$ be defined as above and notice that \eqref{eq:sp1} continues to hold.  Since
$ \rho_{M}= {\mathbf E}_{\delta} \rho_{M} + g$, 
 where $g\in H_\delta^\perp$ and $e^{t\cL } g\in H_\delta^\perp$, one has
\begin{align}
\langle e^{t\cL } (\rho_{M}-1), (\rho_{M}-1)  \rangle  \ge 
 e^{-(\gap+\delta) t} \| {\mathbf E}_{\delta}\rho_M\|^2_{L^2(\pi)}.
  \end{align}
Since $f$ is normalized and $f\in H_\delta$, one has 
$\| {\mathbf E}_{\delta}\rho_M\|^2_{L^2(\pi)}\geq  \langle \rho_M,f \rangle^2$. In conclusion, we have shown that if $\nu$ has density $\rho_M$ then 
\begin{align*}
  \| P^{\nu}_t- \pi \|_{TV} &\ge \frac{1}{9{\|\rho_{M}\|_\infty}} e^{-t(\gap + \delta) }.
\end{align*}
By the arbitrariness of $\delta$ this implies the desired inequality.  
 If $\gO$ is exhausted by compact sets then we can modify the definition of $\rho_{M}$ to make it compactly supported.
\end{proof}

\subsection{Monotone grand coupling}\label{Sec:GC}

We will consider two partial orders on interface configurations: 
\begin{equation}\label{Eq:order}
\begin{split}
 x \le y  \quad &\Leftrightarrow \quad \forall k \in \lint 0, N \rint,\quad x_k\le y_k.\\ 
  x \preccurlyeq y  \quad &\Leftrightarrow \quad \forall k \in \lint 1, N \rint,\quad (x_k-x_{k-1})\le (y_k-y_{k-1}).
  \end{split}
\end{equation}
Note that $\le$ is a natural partial order in both spaces $\Omega_N$ and  $\widetilde \Omega_N$  while $\preccurlyeq$ is only relevant for  the enlarged space $\widetilde  \Omega_N$(recall the definition in \eqref{def:tildeom}). 

\medskip

We present a global coupling of the trajectories $\bX^x$ (and therefore $\boldsymbol\eta$) starting from all possible initial conditions $x$ which preserves both types of monotonicity. 

\begin{lemma}\label{lem:mongc}
There exists a coupling of $\{\bX^x, x\in\widetilde \Omega_N\}$  such that 
\begin{itemize}
\item If $x  \le y$, then $\bX^x(t)\le \bX^y(t)$ for all $t\ge 0$;
\item If $x \preccurlyeq y$, then $\bX^x(t)\preccurlyeq \bX^y(t)$ for all $t\ge 0$.
\end{itemize}
\end{lemma}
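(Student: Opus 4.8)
The plan is to construct the coupling explicitly via the standard ``update by common randomness'' recipe for heat-bath dynamics, and then to verify that each elementary update preserves both partial orders. Concretely, I would attach to each site $k\in\lint 1,N-1\rint$ an independent rate-one Poisson clock, shared by all initial conditions, and at the same time draw a single uniform random variable $U^{(k)}_t\in(0,1)$ at each ringing time $t$ of the $k$-th clock. When the $k$-th clock rings, every trajectory $\bX^x$ resamples its $k$-th coordinate by setting $X^x_k(t)=F_{X^x_{k-1}(t^-),\,X^x_{k+1}(t^-)}^{-1}(U^{(k)}_t)$, where $F_{b,c}$ denotes the cumulative distribution function of the density $\rho_{b,c}$ from \eqref{projectors} and $F^{-1}_{b,c}$ its generalized inverse. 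Since all other coordinates are untouched at such a time, it suffices to check the two monotonicity statements for a single update of coordinate $k$, assuming the two configurations were already ordered before the update.

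For the order $\le$: if $x\le y$ before the update, then in particular $x_{k-1}\le y_{k-1}$ and $x_{k+1}\le y_{k+1}$, and I need $F^{-1}_{x_{k-1},x_{k+1}}(u)\le F^{-1}_{y_{k-1},y_{k+1}}(u)$ for every $u\in(0,1)$; equivalently, that the family $\rho_{b,c}$ is stochastically increasing in $(b,c)$. By \eqref{transinv} we may factor the dependence, but the cleanest route is to show directly that $\rho_{b,c}$ dominates $\rho_{b',c'}$ stochastically whenever $b\ge b'$ and $c\ge c'$. This is a monotone-likelihood-ratio computation: the ratio $\rho_{b,c}(u)/\rho_{b',c'}(u)$ is proportional to $\exp\bigl(-[V(u-b)-V(u-b')]-[V(c-u)-V(c'-u)]\bigr)$, and convexity of $V$ (assumption \eqref{ass:conv}) makes $u\mapsto -[V(u-b)-V(u-b')]$ nondecreasing when $b\ge b'$ (the increment of a convex function over a sliding window is monotone) and similarly $u\mapsto -[V(c-u)-V(c'-u)]$ nondecreasing when $c\ge c'$; hence the likelihood ratio is nondecreasing in $u$, which implies stochastic domination, hence the pointwise inequality of quantile functions. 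Applying this with $(b,c)=(y_{k-1},y_{k+1})$, $(b',c')=(x_{k-1},x_{k+1})$ and the common $u=U^{(k)}_t$ gives $X^x_k(t)\le X^y_k(t)$, and all other coordinates keep their order, so $\bX^x(t)\le\bX^y(t)$.

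For the order $\preccurlyeq$: here the relevant statement is best phrased via the gradient dynamics of Section~2.1. An update at site $k$ replaces the pair of increments $(\eta_{k},\eta_{k+1})$ by $(\bar\eta_k-U,\bar\eta_k+U)$ with $\bar\eta_k=(\eta_k+\eta_{k+1})/2$ and $U$ drawn from $\theta_{\bar\eta_k}$; in terms of the common uniform we set $U=G_{\bar\eta_k}^{-1}(U^{(k)}_t)$ where $G_a$ is the cdf of $\theta_a$. Suppose $\eta\preccurlyeq\eta'$, i.e. $\eta_j\le\eta'_j$ for all $j$; then in particular $\bar\eta_k\le\bar\eta'_k$, and the new increments at positions $k,k+1$ become $\bar\eta_k\mp U$ and $\bar\eta'_k\mp U'$ with $U=G^{-1}_{\bar\eta_k}(u)$, $U'=G^{-1}_{\bar\eta'_k}(u)$. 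Monotonicity of $\bar\eta_k-U$ and $\bar\eta_k+U$ in $\bar\eta_k$ would be automatic if $U$ did not depend on $\bar\eta_k$, but it does; so the needed ingredient is that $a\mapsto a-G^{-1}_a(u)$ and $a\mapsto a+G^{-1}_a(u)$ are both nondecreasing for each fixed $u$, equivalently that $|{\tfrac{d}{da}}G^{-1}_a(u)|\le 1$, or that the map $(\eta_k,\eta_{k+1})\mapsto(\bar\eta_k-U,\bar\eta_k+U)$ is coordinatewise monotone. I would derive this again from convexity of $V$: writing $W_a(u)=V(a+u)+V(a-u)-2V(a)$, the density $\theta_a$ is a symmetric log-concave density whose ``spread'' is controlled monotonically by $a$ in the sense that both $a+U$ and $a-U$ are stochastically monotone in $a$ under the quantile coupling — this is exactly the coordinatewise attractiveness of the heat-bath update on the pair of neighbouring increments, and it is where assumption \eqref{ass:conv} is used most essentially.

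The main obstacle I anticipate is precisely the $\preccurlyeq$ part: one must show that the two-variable map sending $(\eta_k,\eta_{k+1})$ to the resampled pair, with the randomness coupled through a single quantile, is monotone in each input when the other is held fixed. The $\le$ part reduces cleanly to a single monotone-likelihood-ratio inequality, but for $\preccurlyeq$ the coupling variable $U$ itself moves with $\bar\eta_k$, so the naive comparison fails and one genuinely needs the Lipschitz-type bound $|\partial_a G^{-1}_a(u)|\le 1$, proved by differentiating the identity $G_a(G^{-1}_a(u))=u$ and estimating $\partial_a G_a$ against the density $\theta_a$ using the convexity (hence log-concavity and the sign structure of $\partial_a W_a$). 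Once both elementary monotonicity facts are in hand, the lemma follows by running all trajectories off the same clocks and uniforms and iterating over successive update times, since on any finite time interval there are almost surely finitely many updates and each one preserves the two orders.
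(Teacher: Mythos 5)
Your construction — common Poisson clocks, a single uniform per ring, quantile-coupled updates — is exactly the paper's graphical construction, and your reduction to the monotonicity of a single elementary update is the same. For the order $\le$, your monotone-likelihood-ratio argument is essentially identical to the paper's: you observe that $\log\rho_{b,c}-\log\rho_{b',c'}$ is nondecreasing in $u$ when $b\ge b'$, $c\ge c'$ by convexity of $V$, which is precisely the content of \eqref{comparr} in the paper's proof, and both yield the quantile-coupling inequality \eqref{firstorder}. That part is complete and correct.

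For the order $\preccurlyeq$, you have correctly identified the genuine subtlety — the coupling variable $U=G^{-1}_{\bar\eta_k}(u)$ moves with $\bar\eta_k$, so one needs the Lipschitz-type bound $|\partial_a G^{-1}_a(u)|\le 1$, equivalently that both $a\mapsto\Theta_a(t-a)$ and $a\mapsto\Theta_a(a-t)$ are monotone (where $\Theta_a$ is the cdf of $\theta_a$). This is indeed equivalent to the paper's \eqref{secondorder}. However, you do not prove this estimate; you only sketch ``differentiate the identity $G_a(G^{-1}_a(u))=u$ and estimate $\partial_aG_a$ against $\theta_a$ using convexity,'' and the invocation of log-concavity is not quite the right ingredient. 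A direct proof does exist: writing out $\theta_a(v)-\partial_a\Theta_a(v)$ one finds it equals $\int_{-\infty}^v\theta_a(w)\,g(w)\,\dd w$ where $g(w)=2V'(a-w)-2V'(a)-\E_{\theta_a}[\partial_aW_a]$, the function $g$ has mean zero under $\theta_a$, and $g'(w)=-2V''(a-w)\le 0$ by convexity; the single-crossing property of $g$ then gives nonnegativity of the integral for every $v$. This is a real computation, and leaving it at a gesture is a gap. The paper avoids it entirely by a cleaner route: after translating to $b=b'=0$, the first inequality in \eqref{secondorder} is literally a case of \eqref{firstorder}, and the second is the same inequality for the reflected potential $\tilde V(\cdot)=V(-\cdot)\in\ccC$; no differentiation of the quantile function is needed. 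In short, your reduction is correct and your strategy would work, but the key Lipschitz estimate is left unproved, whereas the paper closes the $\preccurlyeq$ case by reusing the already-established $\le$ monotonicity together with a reflection symmetry.
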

\begin{proof}
The coupling is a version of  the usual graphical construction (see e.g.~\cite{Liggettbook}). To each $k\in\lint 1, N-1\rint$ we associate a Poisson clock process $(\cT^{(k)}_i)_{i\ge 1}$ whose increments are i.i.d.~rate one exponentials, and a sequence $(U^{(k)}_i)_{i\ge 1}$ of i.i.d.\ uniform r.v.
 Then, for every $x\in\widetilde\gO_N$, $(\bX^x(t))_{t\ge 0}$ is a c\`ad-l\`ag process that only evolves at the update times $(\cT^{(k)}_i)_{k\in \lint 1, N-1\rint, i\ge 1}$. More precisely at time $t=\cT^{(k)}_i$, if $U^{(k)}_i=u$ then the $k$-th coordinate is updated as follows
 $$ X_k^x(t)=  F^{-1}_{X_{k-1}(t_-),X_{k+1}(t_-)}(u)  \text{ and } X_j^x(t):= X^x_j(t_{-}) \text{ for } j\ne k\;,$$ 
where for $b,c$ we define $F_{b,c}: \bbR \to [0,1]$ as
$$F_{b,c}(t)=\int_{-\infty}^t \rho_{b,c}(u) \dd u\;.$$
By construction,  the law of $\bX^x$ under this coupling is the desired one.
To check that this coupling preserves the partial order ``$\le$" it is sufficient to check that for every $t\in\bbR$
\begin{equation}\label{firstorder}
 b\le b' \text{ and } c\le c' \quad \Rightarrow \quad
F_{b,c}(t)\ge F_{b',c'}(t)\;.
\end{equation}
For the partial order ``$\preccurlyeq$" it suffices to show that if $c-b\le c'-b'$ then
\begin{equation}\label{secondorder} 
 F_{b,c}(t+b)\ge F_{b',c'}(t+b') \quad \text{ and }  \quad F_{b,c}(c-t)\le F_{b',c'}(c'-t).
\end{equation}
We start with \eqref{firstorder}. As $\rho_{b,c}$ and $\rho_{b',c'}$ are positive, continuous and integrate to the same value, there must exist $u_0$ such that 
$\rho_{b,c}(u_0)=\rho_{b',c'}(u_0)$. Let us show that $u_0$ must satisfy 
\begin{equation}\label{comparr}
 \begin{cases}
  \forall u\le u_0, \quad \rho_{b',c'}(u)\le\rho_{b,c}(u),\\
    \forall u\ge u_0, \quad \rho_{b',c'}(u)\ge\rho_{b,c}(u).
 \end{cases}
\end{equation}
We note that the desired inequality \eqref{firstorder} is a simple consequence of  \eqref{comparr}. 
To prove \eqref{comparr}, we set $W_{b,c}=\log \rho_{b,c}$, and show that 
$W_{b',c'}-W_{b,c}$ is nondecreasing. Indeed, everywhere except on a countable set, $W_{b',c'}-W_{b,c}$ is differentiable and we have by convexity
$$(W_{b',c'}-W_{b,c})'(u)= V'(u-b)-V'(u-b')-V'(c-u)+V'(c'-u)\ge 0.$$
This proves \eqref{comparr}. 
Now \eqref{secondorder} only needs to be proved for $b=b'=0$ by translation invariance. With this in mind, the first inequality in \eqref{secondorder} is a consequence of \eqref{firstorder}. Regarding the second inequality in \eqref{secondorder}, we observe that it is equivalent to
$$ \tilde{F}_{c',0}(-t+c') \ge \tilde{F}_{c,0}(-t+c)\;,$$
where $\tilde{F}$ is the distribution function associated to the potential $\tilde{V}(\cdot) = V(-\cdot)$. The later inequality is then exactly of the same form as the first inequality in \eqref{secondorder}: since $\tilde{V}$ satisfies the same assumptions as $V$ we are done. \end{proof}

\subsection{The sticky coupling}\label{Subsec:sticky}
In this section we construct  a coupling of two trajectories $(\bX^{x}(t))_{t\ge 0}$ and $(\bX^{y}(t))_{t\ge 0}$ which is aimed at minimizing the merging time.
This coupling is also monotone, that is if $x\leq y$ then $\bX^{x}(t)\leq\bX^{y}(t)$ at all times. 

In contrast with that of the previous section, this construction cannot naturally be extending to a grand-coupling.
It
can (and will) also be used for two processes $\bX^{(1)}$ and $\bX^{(2)}$ with initial conditions $\bX^{(1)}(0)$ and $\bX^{(2)}(0)$ sampled according to some  prescribed distributions on $\gO_N$.

As  in the previous construction, to each $k\in\lint 1, N-1\rint$ we associate a Poisson clock process $(\cT^{(k)}_i)_{i\ge 1}$ whose increments are i.i.d.\ rate one exponentials.
Let us now describe how the updates are performed. If 
$\cT^{(k)}_i=t$ for some $i$ we resample the values of $X^x_k$ and $X^y_k$.
We use the short hand notation
\begin{equation}
 \rho_x:= \rho_{X^x_{k-1}(t_-),X^x_{k+1}(t_-)}\,,\quad  \rho_y:= \rho_{X^y_{k-1}(t_-),X^y_{k+1}(t_-)}\;,
\end{equation}
and set 
\begin{equation}
 p(t,k):=\int_{\bbR}\rho_x(u)\wedge \rho_y(u) \dd u\;.
\end{equation}
Finally we define three probability measures $\nu_1$, $\nu_2$ and $\nu_3$ with densities proportional to $(\rho_x-\rho_y)_+$,  $(\rho_x\wedge \rho_y)$ and $(\rho_y-\rho_x)_+$ (in the case were $\rho_x=\rho_y$ we can set $\nu_1$ and $\nu_3$ to be the Dirac mass at $0$, or any other arbitrary distribution).
The update then goes as follows
\begin{itemize}
 \item With probability $p=p(t,k)$, we set $X^x_k(t)=X^y_k(t)$, and we draw their common value according to $\nu_2$.
 \item With probability $q=1-p$, we draw $X^x_k(t)$ and $X^y_k(t)$ independently with respective distributions $\nu_1$ and $\nu_3$.
\end{itemize}

To see that this coupling preserves ``$\le$" notice that if the configurations are ordered before the update (or more specifically if  $X^x_{k\pm 1}(t_{-} )\le X^y_{k\pm 1}(t_{-})$) then there exists $u_0$ such that 
$\nu_1$ is supported on $(-\infty,u_0]$ and $\nu_3$ on $[u_0,\infty)$, the latter fact being a direct consequence of \eqref{comparr}.

\begin{remark}
More formally, we can define the coupling using, on top of the clock process, $4$ sequences of independent uniform random variables on $[0,1]$ for each coordinate, from which the updates are defined in a deterministic fashion: we couple if and only if the first uniform is smaller than $p$ and we use the three other uniforms to sample independent random variables with distribution $\nu_1$, $\nu_2$ and $\nu_3$ respectively. 
\end{remark}

\subsection{FKG inequalities}

Recall the partial order $\le$ introduced in \eqref{Eq:order}. We say that $f : \Omega_N \to \R$ is increasing if
$$ x \le y \;\; \Rightarrow \;\; f(x) \le f(y)\;.$$
For two probability measures $\mu,\nu$ on $\Omega_N$, we write $\mu\leq \nu$ and say that $\mu$ is stochastically dominated by $\nu$ if for all increasing $f: \Omega_N \to \R$ one has $\mu(f)\le\nu(f)$. 
We also say that a set $A\subset \Omega_N$ is increasing if the map $\ind_A$ is increasing. Finally, for any two configurations $x,y \in \Omega_N$ we introduce the configurations $\min(x,y), \max(x,y)$ defined as 
$$ \min(x,y)_i := \min(x_i,y_i)\;,\quad \max(x,y)_i := \max(x_i,y_i)\;.$$

\begin{proposition}[FKG inequalities]
\label{prop:fkg}
If $f,g$ are increasing then
$$ \pi_N(fg) \ge \pi_N(f) \pi_N(g)\;.$$
Furthermore if $A,B \subset \Omega_N$ are increasing and satisfy
 $$\left\{ x\in A \ \text{ and } \  y\in B\right\}\;\; \Rightarrow \;\;\min(x,y)\in B,$$
then
\begin{equation}\label{piapib}
\pi_N(\cdot \,|\, A) \ge \pi_N(\cdot \,|\, B)\,.
\end{equation}
\end{proposition}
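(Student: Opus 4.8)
The plan is to prove the two FKG statements separately, both reducing to well-known facts about log-concave densities.

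\textbf{The first inequality (Holley/FKG for $\pi_N$).} The measure $\pi_N$, written in the increment variables $\eta_k = x_k - x_{k-1}$, is a product of one-dimensional densities $\prod_{k=1}^N e^{-V(\eta_k)}$ conditioned on $\sum_k \eta_k = 0$. However it is cleaner to work directly in the height variables: the density of $\pi_N$ on $\Omega_N \cong \bbR^{N-1}$ is proportional to $\exp(-\sum_k V(x_k - x_{k-1}))$, and since each $V$ is convex, $\log \frac{\dd\pi_N}{\dd x}$ is concave. Moreover it satisfies the condition that for all $x,y$,
\begin{equation*}
\log\pi_N(\max(x,y)) + \log\pi_N(\min(x,y)) \ge \log\pi_N(x) + \log\pi_N(y),
\end{equation*}
i.e. $\pi_N$ is \emph{log-supermodular} (an MTP$_2$/FKG lattice condition). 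This is a pointwise check: for each bond $k$, convexity of $V$ gives $V((\max(x,y))_k - (\max(x,y))_{k-1}) + V((\min(x,y))_k-(\min(x,y))_{k-1}) \le V(x_k-x_{k-1}) + V(y_k-y_{k-1})$, by the elementary fact that $a \mapsto V(a)$ convex implies $V(a\vee a') + V(b \vee b') + V(a \wedge a') + \dots$ — more precisely one splits into cases according to which of $x_{k-1},y_{k-1}$ and $x_k,y_k$ is larger and uses that $(\max(x,y))_k - (\max(x,y))_{k-1}$ and $(\min(x,y))_k - (\min(x,y))_{k-1}$ are, as an unordered pair, a ``less spread out'' pair than $(x_k-x_{k-1}, y_k-y_{k-1})$, i.e. they interlace; convexity of $V$ then gives the inequality of sums. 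Once log-supermodularity is established, the FKG inequality $\pi_N(fg) \ge \pi_N(f)\pi_N(g)$ for increasing $f,g$ follows from the standard FKG theorem on the lattice $\bbR^{N-1}$ (Holley's inequality in the continuum; see e.g.\ the references in \cite{Giacomin}), or alternatively by a one-coordinate-at-a-time induction using the monotone coupling already built in Lemma \ref{lem:mongc}: indeed stochastic monotonicity of the single-site conditionals $\rho_{b,c}$ in $(b,c)$, which is exactly \eqref{firstorder}, is the infinitesimal form of FKG and can be bootstrapped.

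\textbf{The second inequality.} Here we want $\pi_N(\cdot \mid A) \ge \pi_N(\cdot \mid B)$ whenever $A, B$ are increasing and $x \in A, y \in B \Rightarrow \min(x,y) \in B$. By definition of stochastic domination it suffices to show $\pi_N(f \mid A) \ge \pi_N(f\mid B)$ for every bounded increasing $f$; by approximating we may even take $f = \ind_C$ for $C$ increasing, so we must show $\pi_N(C \mid A) \ge \pi_N(C \mid B)$, i.e.
\begin{equation*}
\pi_N(C \cap A)\,\pi_N(B) \ge \pi_N(C \cap B)\,\pi_N(A).
\end{equation*}
The standard route: for increasing sets it is enough (by replacing $C$ with $C\cap A$, noting $C \cap A$ is increasing and $(C\cap A) \subset A$) to prove the special case $C \subset A$, but the clean general statement is Holley's inequality for the two measures $\mu_1 := \pi_N(\cdot \mid A)$ and $\mu_2 := \pi_N(\cdot\mid B)$. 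Holley's criterion requires checking $\mu_1(\max(x,y))\mu_2(\min(x,y)) \ge \mu_1(x)\mu_2(y)$ for all $x,y$. Writing $\mu_1, \mu_2$ in terms of $\pi_N$ and the indicators of $A,B$: if $x \in A$ and $y \in B$, then $\max(x,y) \in A$ (as $A$ is increasing and $\max(x,y) \ge x$), and $\min(x,y) \in B$ by the hypothesis linking $A$ and $B$; combined with log-supermodularity of $\pi_N$ proved above, this gives Holley's criterion (and if $x \notin A$ or $y \notin B$ the right side vanishes). Holley's theorem then yields $\mu_1 \ge \mu_2$, which is \eqref{piapib}.

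\textbf{Main obstacle.} The crux is the pointwise log-supermodularity check for $\pi_N$: that $\sum_k V(x_k - x_{k-1})$ is supermodular as a function on the lattice $(\bbR^{N-1}, \le)$. Because the Hamiltonian couples only nearest-neighbour heights, this reduces to the two-variable statement that $(s,t) \mapsto V(t-s)$ is supermodular on $\bbR^2$ — equivalently $-\partial_s\partial_t V(t-s) = V''(t-s) \ge 0$, which is precisely convexity of $V$ (rigorously, without assuming $V \in C^2$, one uses that $V$ convex implies $V(b) - V(a) \le V(b') - V(a')$ whenever $a \le a'$, $b \le b'$, $b - a \le b' - a'$, applied bond by bond). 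Everything else is an invocation of the classical Holley/FKG machinery. I would also remark that the conditioning is harmless: $\pi_N(\cdot \mid A)$ still has a log-concave, log-supermodular density (indicator of an increasing, hence sublattice-compatible, set times a log-supermodular density), so Holley applies verbatim.
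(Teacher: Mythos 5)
Your proof is correct and follows essentially the same route as the paper: both reduce the two statements to a Preston/Holley-type lattice criterion hinging on the submodularity $H(\max(x,y))+H(\min(x,y))\le H(x)+H(y)$, which you verify bond by bond using convexity of $V$ (the paper instead cites Preston's theorems and Giacomin's appendix for this check). For the conditional version, the indicator bookkeeping $\ind_A(\max(x,y))\ind_B(\min(x,y))\ge\ind_A(x)\ind_B(y)$ via $A$ increasing and the hypothesis on $(A,B)$ is exactly the observation the paper packages as $H_A(\max(x,y))+H_B(\min(x,y))\le H_A(x)+H_B(y)$.
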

\begin{proof}
By~\cite[Thm 3]{Preston}, the first part of the statement is granted if we have for all $x,y\in \Omega_N$
\begin{equation}\label{Hmaxmix} H(\max(x,y)) + H(\min(x,y)) \le H(x) + H(y)\;.\end{equation}
The convexity of $V$ is sufficient to ensure this inequality, see for instance~\cite[Appendix B1]{Giacomin}.
We turn to the second part of the statement. Set $\mu_A := \pi_N(\cdot \,|\, A)$ and define $\mu_B$ similarly. The densities of these measures are proportional to $e^{-H_A}, e^{-H_B}$ where
$$ H_A(x) := \begin{cases} H(x)&\mbox{ if } x\in A\;,\\
+\infty&\mbox{ if } x\notin A\;.\end{cases}$$
By~\cite[Prop 1]{Preston}, it suffices to check that for all $x,y \in \Omega_N$
$$ H_A(\max(x,y)) + H_B(\min(x,y)) \le H_A(x) + H_B(y)\;.$$
This is granted by \eqref{Hmaxmix} and the assumption on $A,B$.
\end{proof}
A useful example to keep in mind is as follows. Let $\ccK\subset \{1,\dots,N-1\}$ be a set of labels and define, for some $a\in\bbR$, the sets 
\begin{align}\label{ex:abex}
A_i = \left\{x:\,x_i\geq a\right\}\,,\qquad A = \bigcap_{i\in\ccK}A_i\,,\qquad B = \bigcup_{i\in\ccK}A_i\,. 
\end{align}
Then $A,B\subset \Omega_N$ satisfy the requirement in Proposition \ref{prop:fkg} and the inequality \eqref{piapib} is crucially used in the proof of Proposition \ref{prop:muW} below. 

\subsection{Absolute continuity}\label{sec:ac}
It will be useful to compare the conditional probability measure $\pi_N$ to an unconditional measure under which the increments $\eta_i$ are independent and  have the same mean. We need a preliminary lemma. 

\begin{lemma}\label{lem:mean}
Let $V\in\ccC$ and set $I:= ( V'_-, V'_+)$, where $V'_\pm$ are defined in \eqref{VminusVplus}.
The function $\psi: I\mapsto \bbR$ defined by 
\begin{equation}
 \psi(\gl) = \frac{\int u e^{-V(u)+\gl u} \dd u }{\int e^{-V(u)+\gl u} \dd u}
\end{equation}
is 
bijective from $I$ to $\bbR$. 
\end{lemma}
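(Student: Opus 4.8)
The plan is to recognize $\psi$ as the derivative of a log-Laplace transform and exploit strict convexity. Concretely, for $\gl\in I$ set
\[
\Lambda(\gl):=\log\int_{\bbR} e^{-V(u)+\gl u}\dd u\,,
\]
which is finite precisely because $V\in\ccC$: assumption \eqref{ass:poly} ensures $e^{-V(u)+\gl u}$ is bounded, while assumption \eqref{ass:nonaff}, combined with convexity of $V$, gives $V(u)-\gl u\to+\infty$ as $|u|\to\infty$ whenever $\gl\in(V'_-,V'_+)$, so the integral converges; if $\gl\notin[V'_-,V'_+]$ it diverges. By differentiation under the integral sign (justified by the same domination, using that all moments of $e^{-V(u)+\gl u}\dd u$ are finite and locally uniformly bounded in $\gl$ on $I$), $\Lambda$ is smooth on $I$ with $\Lambda'(\gl)=\psi(\gl)$ and $\Lambda''(\gl)=\var_{\theta_\gl}(u)>0$, where $\theta_\gl$ is the probability density proportional to $e^{-V(u)+\gl u}$; the variance is strictly positive because this measure is not a Dirac mass (it has a density with respect to Lebesgue measure). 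Hence $\psi=\Lambda'$ is continuous and strictly increasing on $I$, so it is injective and is a homeomorphism onto its image, an open subinterval of $\bbR$.

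It remains to identify the image as all of $\bbR$, i.e.\ to show $\psi(\gl)\to+\infty$ as $\gl\uparrow V'_+$ and $\psi(\gl)\to-\infty$ as $\gl\downarrow V'_-$ (with the obvious modifications if $V'_+=+\infty$ or $V'_-=-\infty$). I would argue by contradiction and convexity: if $\psi$ stayed bounded above, say $\psi(\gl)\le M$ for all $\gl\in I$, then by monotone convergence $\Lambda(\gl)\le \Lambda(\gl_0)+M(\gl-\gl_0)$ would force $\Lambda(V'_+)<\infty$ if $V'_+<\infty$; but one checks directly that $\int e^{-V(u)+V'_+ u}\dd u=\infty$, since convexity of $V$ gives $V(u)\le V'_+\, u + c$ for $u$ large (as $V(u)/u\to V'_+$ and the slopes increase to $V'_+$), making the integrand bounded below by a positive constant on a half-line. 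This contradiction yields $\limsup_{\gl\uparrow V'_+}\psi(\gl)=+\infty$, and since $\psi$ is increasing the $\limsup$ is a genuine limit. The case $V'_+=+\infty$ is similar, comparing with $\int e^{-V(u)+\gl u}\dd u$ for arbitrarily large $\gl$. The lower end is symmetric. Combining, $\psi(I)$ is an open interval unbounded above and below, hence $\psi(I)=\bbR$, and $\psi$ is a bijection $I\to\bbR$.

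The main obstacle I anticipate is the boundary analysis at $V'_\pm$ when these are finite: one must pin down the behavior of $\int e^{-V(u)+\gl u}\dd u$ as $\gl$ approaches the critical slope, and translate the divergence of this integral into divergence of $\psi$. The clean way is the convexity inequality $\psi=\Lambda'$ together with the general fact that a convex function ($\Lambda$) finite on the open interval $I$ but equal to $+\infty$ at the endpoint must have derivative tending to $+\infty$ there; so the real content is just verifying $\Lambda(V'_+)=+\infty$, which follows from the elementary estimate above using that the right-slopes of $V$ increase to $V'_+$. Everything else — smoothness, strict monotonicity, injectivity — is routine once differentiation under the integral is justified, which in turn is immediate from \eqref{ass:1} and the exponential decay of $e^{-V(u)+\gl u}$ on $I$.
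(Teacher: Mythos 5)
Your proof is correct and follows essentially the same route as the paper: strict monotonicity comes from the variance identity $\psi'(\gl)=\Lambda''(\gl)>0$, and surjectivity comes from showing the log-partition function $\Lambda$ becomes infinite at the endpoints of $I$, using the convexity bound $V(u)\le V'_+ u + V(0)$ for $u>0$ (and its mirror for the lower end). The paper states the endpoint argument directly ($\Lambda$ diverges, hence its derivative diverges), whereas you run it as a contradiction; these are the same convexity fact. One small inaccuracy: you attribute the boundedness of $e^{-V(u)+\gl u}$ and finiteness of $\Lambda$ on $I$ to the polynomial-growth assumption \eqref{ass:poly}, but in fact this finiteness follows from convexity together with $\gl\in(V'_-,V'_+)$ (which forces $V(u)-\gl u\to+\infty$ linearly as $|u|\to\infty$); the polynomial bound \eqref{ass:poly} gives only a lower bound on $V$ and is not the relevant hypothesis here. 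This does not affect the argument, since you also invoke the correct convexity-based reason in the same breath.
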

\begin{proof}
The function $\psi$ is increasing since  for any $\gl\in I$:
\begin{align}\label{eq:leg2}
\psi'(\gl)  
 = \frac{\int (u-\psi(\gl))^2 e^{-V(u)+\gl u} \dd u }{\int e^{-V(u)+\gl u} \dd u}>0.
\end{align}
To prove that $\psi$ is surjective we show that 
$\psi(\gl)\uparrow\infty$ when $\gl\uparrow V'_+$ (a similar argument proves that $\psi(\gl)\downarrow-\infty$ when $\gl\downarrow V'_-$). When $V'_+<\infty$ this follows from the fact that $\psi(\gl)$ is the derivative of $\log \int e^{-V(u)+\gl u} \dd u$ which itself tends to infinity when $\gl\to V'_+$  (by convexity we have that
$V(u)\le  V'_+ u +  V(0)$). When $V'_+=\infty$ it is a standard task to check that $\log \int e^{-V(u)+\gl u} \dd u$ grows superlinearly at infinity.
\end{proof}

As a consequence
there exists $\gl \in I$ such that for $\tilde{V}(x) := V(x) - \gl x$ we have $\int x e^{-\tilde{V}(x)}\dd x = 0$. Note that $\tilde{V}\in\ccC$. 
Let $\nu_N$ be the probability measure under which the r.v.~$\eta_k$, $k\in\lint 1,N\rint$ are i.i.d.~with density proportional to  $e^{-\tilde{V}}$. 
Under $\nu_N$, the expectation of the r.v.~$x_N$ vanishes. The next lemma shows that the law of a fixed proportion  (bounded away from $1$)  of the $\eta_k$'s under $\pi_N$ is  absolutely continuous with respect to \ the law of the same r.v.~under $\nu_N$, uniformly in $N$. The point here is that $\nu_N$ remains a product law and is therefore more tractable.

\begin{lemma}\label{compare}
Fix $a \in (0,1)$ and write $N_a := \lfloor a N \rfloor$ for all $N\ge 1$. There exists a constant $C_a > 0$ such that for all $N\ge 1$ and all positive bounded measurable functions $f:\bbR^{N_a} \to \R_+$ we have
$$ \pi_N[f(\eta_1,\ldots,\eta_{N_a})] \le C_a \,\nu_N[f(\eta_1,\ldots,\eta_{N_a})]\;.$$
\end{lemma}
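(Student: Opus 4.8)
The plan is to compare $\pi_N$ with $\nu_N$ directly via their densities with respect to the product Lebesgue measure on the increments. Recall that under $\nu_N$ the increments $\eta_1,\dots,\eta_N$ are i.i.d.\ with density proportional to $e^{-\tilde V}$, while $\pi_N$ is exactly $\nu_N$ conditioned on the event $\{\eta_1+\dots+\eta_N=0\}$ (since replacing $V$ by $\tilde V$ only changes $H$ by an affine-in-$x_N$ term, which is constant on $\Omega_N$). Writing $S_j=\eta_1+\dots+\eta_j$, the key identity is that for a function $f$ depending only on $\eta_1,\dots,\eta_{N_a}$,
$$ \pi_N[f(\eta_1,\dots,\eta_{N_a})] = \frac{\nu_N\big[f(\eta_1,\dots,\eta_{N_a})\, g_{N-N_a}(-S_{N_a})\big]}{g_N(0)}, $$
where $g_m$ denotes the density of $S_m=\eta_1+\dots+\eta_m$ (a sum of $m$ i.i.d.\ copies) and $g_N(0)$ is the density of $S_N$ at $0$. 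Hence the lemma reduces to the pointwise bound $g_{N-N_a}(s) \le C_a\, g_N(0)$, uniformly in $s\in\bbR$ and $N$. Since the $\eta_k$ are i.i.d.\ centered with density $\propto e^{-\tilde V}$ and $\tilde V\in\ccC$, the common law has finite exponential moments in a neighborhood of $0$ (by the polynomial growth and non-affineness assumptions, as used in Lemma \ref{lem:mean}), in particular finite variance $\sigma^2>0$.

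The bound on $g_m$ splits into two parts. First, an upper bound $\sup_s g_{m}(s)\le C/\sqrt m$ for all $m\ge 1$: this follows from a standard local CLT / characteristic function argument. The characteristic function $\widehat\varphi$ of $\eta_1$ satisfies $|\widehat\varphi(\xi)|<1$ for $\xi\ne 0$ and $|\widehat\varphi(\xi)|\le 1-c\xi^2$ near the origin (finite variance), and since $e^{-\tilde V}$ is in $L^p$ for all $p$, $\widehat\varphi\in L^q$ for some $q<\infty$, so for $m$ large $\widehat\varphi^m\in L^1$ and $\|g_m\|_\infty \le \tfrac1{2\pi}\|\widehat\varphi^m\|_{L^1}\le C/\sqrt m$ by splitting the integral near and away from $0$; small $m$ are handled individually since $g_1=e^{-\tilde V}/\!\int e^{-\tilde V}$ is already bounded and $g_m$ for $2\le m\le m_0$ is a convolution of bounded densities. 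Applying this at $m=N-N_a\ge (1-a)N/2$ gives $g_{N-N_a}(s)\le C (1-a)^{-1/2} N^{-1/2}$ for all $s$.

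Second, a matching lower bound $g_N(0)\ge c/\sqrt N$. By the local central limit theorem for i.i.d.\ lattice-free random variables with finite variance (which applies here since the law of $\eta_1$ is absolutely continuous, hence non-lattice, with a bounded density after one convolution), $\sqrt N\, g_N(0)\to \tfrac1{\sigma\sqrt{2\pi}}$ as $N\to\infty$; combined with $g_N(0)>0$ for every fixed $N$ (positivity of the density), this yields $g_N(0)\ge c N^{-1/2}$ uniformly in $N$. Dividing the two bounds gives $g_{N-N_a}(s)/g_N(0)\le C_a$ with $C_a = C'(1-a)^{-1/2}$, which is exactly the claimed inequality with a constant depending only on $a$.

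I expect the main technical obstacle to be the uniform (in $N$) two-sided Gaussian-scale control of the densities $g_m(0)$ and $\sup_s g_m(s)$, i.e.\ making the local CLT estimates quantitative and uniform rather than merely asymptotic. This is where the assumptions on $V$ enter: convexity and $V\in\ccC$ guarantee that $\tilde V$ also lies in $\ccC$, hence $e^{-\tilde V}$ has all polynomial moments and light-enough tails to make $\widehat\varphi$ decay and lie in some $L^q$, giving both the $O(m^{-1/2})$ upper bound and the non-degeneracy $\sigma^2\in(0,\infty)$ needed for the lower bound. One must also be a little careful that $f$ depends only on the first $N_a$ increments so that the conditioning identity above is clean; this is where the restriction $a<1$ (so that $N-N_a$ is still of order $N$) is used.
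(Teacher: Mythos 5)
Your proposal is correct and takes essentially the same approach as the paper: express $\pi_N$ as $\nu_N$ conditioned on $S_N=0$, derive the density-ratio identity $\pi_N[f]=\nu_N[f\,q_{N-N_a}(-S_{N_a})]/q_N(0)$, and bound the ratio uniformly by local-CLT-scale estimates on the densities of partial sums. The only cosmetic difference is that you give a separate characteristic-function argument for the uniform upper bound $\sup_s q_m(s)\le C/\sqrt m$, whereas the paper reads both the upper bound and the matching lower bound on $q_N(0)$ directly off the uniform local limit theorem of Petrov.
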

\begin{remark}
 Note that from exchangeability the above statement is also valid for the functional of an arbitrary subset of the increments of cardinality smaller than $ aN$.
\end{remark}

\begin{corollary}\label{cor:bazics}
There exist two constants $c,C>0$ such that for all $u>0$
 \begin{equation}\label{deviates}
  \pi_N\left( \|x\|_{\infty}\ge u \sqrt{N} \right) \le C N e^{-  c \left[ u^2\wedge(\sqrt{N}u)\right]},
 \end{equation}
and 
\begin{equation}\label{largegrad}
 \pi_N\left(  \max_{i\in \lint 1, N \rint} |\eta_i| \ge u \right) \le N e^{- cu}.
\end{equation}

\end{corollary}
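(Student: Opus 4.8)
The plan is to derive Corollary~\ref{cor:bazics} from Lemma~\ref{compare} together with classical large-deviation estimates for sums of i.i.d.\ random variables with exponential-type tails under the product measure $\nu_N$, and then transfer back to $\pi_N$. First I would record the basic tail bound for a single increment: since $\tilde V\in\ccC$, the density proportional to $e^{-\tilde V}$ has exponential tails (indeed $\tilde V(u)\ge \tilde V'_+\, u - O(1)$ for large positive $u$ and similarly on the left, with $\tilde V'_\pm$ the one-sided asymptotic slopes, which are strictly positive and strictly negative respectively after the centering), so there are constants such that $\nu_N(|\eta_1|\ge u)\le c_1 e^{-c_2 u}$ for all $u>0$. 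In particular $\eta_1$ has a finite exponential moment $\nu_N(e^{\lambda|\eta_1|})<\infty$ for $\lambda$ in a neighbourhood of $0$, uniformly in $N$ since the law does not depend on $N$.

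For \eqref{largegrad}, by a union bound over $i\in\lint 1,N\rint$ and exchangeability it suffices to bound $\pi_N(|\eta_i|\ge u)$ for a single $i$; applying Lemma~\ref{compare} with $a=1/2$ (so that a single coordinate is among the first $N_a$) gives $\pi_N(|\eta_i|\ge u)\le C_{1/2}\,\nu_N(|\eta_1|\ge u)\le C_{1/2}c_1 e^{-c_2 u}$, and absorbing constants yields $\pi_N(\max_i|\eta_i|\ge u)\le N e^{-cu}$ after adjusting $c$ (and noting the bound is trivial when the right-hand side exceeds $1$). For \eqref{deviates}, I would write $x_k=\sum_{i=1}^k \eta_i$, so $\|x\|_\infty\le \max_{1\le k\le N}|\sum_{i=1}^k\eta_i|$, and again union-bound over $k$. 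For each fixed $k\le N$, the partial sum $S_k=\sum_{i=1}^k\eta_i$ involves at most $N$ increments; to apply Lemma~\ref{compare} I split into the cases $k\le N/2$ and $k>N/2$, in the latter case using $S_k=x_N-\sum_{i=k+1}^N\eta_i=-\sum_{i=k+1}^N\eta_i$ (since $x_N=0$ under $\pi_N$) so that in either case $S_k$ is, up to sign, a function of at most $N/2$ of the increments. Thus $\pi_N(|S_k|\ge u\sqrt N)\le C_{1/2}\,\nu_N(|\sum_{i\in J}\eta_i|\ge u\sqrt N)$ for some index set $J$ with $|J|\le N/2$. Under the product measure $\nu_N$ this is a sum of at most $N$ centered i.i.d.\ variables with exponential tails, so a Bernstein-type inequality gives $\nu_N(|\sum_{i\in J}\eta_i|\ge u\sqrt N)\le 2\exp(-c[u^2\wedge(\sqrt N u)])$: in the regime $u\sqrt N\le N$, i.e.\ $u\le\sqrt N$, the sub-Gaussian part of Bernstein (variance of order $N$) dominates and gives $e^{-cu^2}$, while for $u\ge\sqrt N$ the linear tail from a single large increment takes over and gives $e^{-c\sqrt N u}$. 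Multiplying by the union-bound factor $N$ yields \eqref{deviates}.

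The main obstacle I anticipate is bookkeeping rather than conceptual: one must make sure that the constants in the exponential tail bound for $\eta_1$ are genuinely uniform in $N$ (they are, since under $\nu_N$ the increments are i.i.d.\ with a fixed density not depending on $N$, once the centering $\lambda$ from Lemma~\ref{lem:mean} is fixed) and that the split at $N/2$ is compatible with the ``fixed proportion bounded away from $1$'' hypothesis of Lemma~\ref{compare} --- which it is, with $a=1/2$. The only slightly delicate point is the Bernstein inequality for sums of i.i.d.\ variables whose tails are only exponential (not bounded): the standard statement gives exactly the $u^2\wedge(\sqrt N u)$ crossover, with the quadratic regime governed by the variance $\Theta(N)$ and the linear regime by the scale of the exponential tail of a single summand; one just needs the version for variables with finite exponential moments, which is classical. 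No new ideas beyond Lemma~\ref{compare} and this standard concentration estimate are needed.
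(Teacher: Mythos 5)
Your proposal is correct and follows essentially the same route as the paper: both treat \eqref{largegrad} by a union bound after transferring $\ind_{|\eta_i|\ge u}$ from $\pi_N$ to $\nu_N$ via Lemma~\ref{compare}, and both treat \eqref{deviates} by union-bounding over $k$, splitting at $k\le N/2$ versus $k>N/2$ (using $x_N=0$ and the remark after Lemma~\ref{compare} so that only at most $N/2$ increments are involved), and then invoking a concentration estimate for sums of centered i.i.d.\ sub-exponential variables under the product law $\nu_N$. The only cosmetic difference is that the paper phrases the concentration step as the Chernoff/Cram\'er computation (noting the rate function is convex, vanishes quadratically at $0$, hence dominates $c(x^2\wedge x)$), whereas you invoke a Bernstein-type inequality for sub-exponential summands; these are the same estimate and produce the same $u^2\wedge(\sqrt N\,u)$ crossover. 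Your explicit check that $\tilde V'_\pm$ straddle $0$ after the centering from Lemma~\ref{lem:mean} is a useful detail that the paper leaves implicit.
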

\begin{proof}[Proof of Corollary \ref{cor:bazics}]
For \eqref{largegrad} we apply the lemma to $\ind_{|\eta_i|\ge u}$ and use the union bound. For \eqref{deviates} we only need to prove that 
\begin{equation}\label{toprove}
   \pi_N\left( x_i \ge u \sqrt{N} \right) \le (C/2) e^{-  c \left[ u^2\wedge(\sqrt{N}u)\right]}.
\end{equation}
for $i\le N/2$ (the corresponding lower bound and the case $i\ge N/2$ can be dealt with by symmetry) and use union bound. Lemma \ref{compare} applied to $a=N/2$ allows to prove the bound for $\nu_N$ under which $x_i$ is a sum of IID exponentially integrable random variables. Reproducing the classic upper bound computation in the proof of Cramer's Theorem (see e.g. \cite[Chapter 1]{DZ09}) we have
$$\nu_N\left( x_i \ge u \sqrt{i} \right)\le e^{- i\varphi( u i^{-1/2})}$$
where $\varphi(x):= \max_{t\ge 0} \left( tx-\log \frac{\int e^{-\tilde V(u)+tu} \dd u}{\int e^{-\tilde V(u)} \dd u}\right).$
 Our assumptions on $\tilde V$ imply that $\varphi$ has quadratic behavior at zero. Since in addition $\varphi$ is convex, we have necessarily $\varphi(x)\ge c  x^2\wedge x$ for all $x>0$ (for some positive $c>0$) yielding \eqref{toprove}.
\end{proof}

\begin{proof}[Proof of Lemma \ref{compare}]
Let $\sigma^2$ be the variance of $\eta$ under the measure with density proportional to $e^{-\tilde{V}}$. Let $q_k$ be the density of the random variable $\eta_1+ \ldots +\eta_k$ under $\nu_N$. The Local Limit Theorem~\cite[Th. VII.2.7]{Petrov} gives
$$ \lim_{k\to\infty} \sup_{y\in \bbR} \big|\varepsilon(k,y)\big| = 0\;,$$
where we define
$$
\varepsilon(k,y)= \sigma\sqrt{k} \,q_k(y \sigma\sqrt{k}) - g(y),
$$
and $g$ is the density of the standard Gaussian distribution.
Since $g$ is maximized at $0$, for $k$ sufficiently large we may estimate  
$$
\sup_{z\in\bbR}\sqrt{k}\,q_{k}(z)\leq 2g(0)
$$ 
One can check that, for any $f_0$ which is a bounded measurable function of $x_1,\dots,x_{N-1}$, we have
$$ \pi_N[f_0] = \lim_{\delta\downarrow 0} \frac{\nu_N[f_0\, \ind_{x_N \in [-\delta,\delta]}]}{\nu_N(x_N \in [-\delta,\delta])}\;.$$
Taking $f$ as in the statement of the lemma, we thus get for all $N$ sufficiently large,
\begin{align*}
\pi_N[f(\eta_1,\ldots,\eta_{N_a})] &= \frac{\nu_N\big[f(\eta_1,\ldots,\eta_{N_a}) q_{N-N_a}(- x_{N_a})\big]}{q_N(0)}
\\
&\leq \frac{2g(0)}{\sqrt{N-N_a}q_N(0)} \nu_N\big[f(\eta_1,\ldots,\eta_{N_a})\big]\;.
\\
&\le \frac{4}{\sqrt{1-a}}\,\nu_N\big[f(\eta_1,\ldots,\eta_{N_a})\big]\;.
\end{align*}
The result of the lemma follows by adjusting the value of $C_a$ in order to cover also the small values of $N$.
\end{proof}

\subsection{Technical estimates for the resampling probability}\label{sec:tecnos}
The goal of this subsection is to collect some useful estimates on the resampling distribution of our dynamics. All the constants are allowed to depend on the potential $V\in\ccC$ and on nothing else.
Let us mention before starting that, as a consequence of Assumptions
 (\ref{ass:conv}) and (\ref{ass:poly}) on $V$, we have
\begin{equation}\label{eq:ct22}
 |V'(u)| \leq C(1+|u|)^{K-1}\;,
\end{equation}
for all $u$ where $V$ is differentiable, and by continuity, also for the derivatives on the left and on the right when they differ. All issues concerning differentiability appearing in the proofs below can be resolved in this fashion, so we will not mention them.

\medskip

\noindent Our first estimate guaranties that our distribution is sufficiently spread-out. Recall \eqref{deftheta}.
\begin{lemma}\label{lem:lerho}
There exists a constant $C>0$ such that
\begin{equation} \label{lowZ}
 Z(a)\ge \frac{1}{C(1 \vee |a|^K)} \;.
\end{equation}
As a consequence, we have
\begin{equation}\label{eq:ct3}
\|\rho_{b,c}\|_{\infty} \le C(1 \vee |c-b|^K)\;.
 \end{equation}
\end{lemma}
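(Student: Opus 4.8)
The plan is to prove the lower bound \eqref{lowZ} on $Z(a)$ directly from the convexity and polynomial growth of $V$, and then deduce \eqref{eq:ct3} using the translation-invariance relation \eqref{transinv} together with the trivial bound $\theta_a(u) \le 1/Z(a)$.

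\textbf{Step 1: lower bound on $Z(a)$.} Recall $W_a(u) = V(a+u) + V(a-u) - 2V(a)$ is nonnegative, convex, symmetric in $u$, and vanishes at $u=0$. Hence $W_a$ is nondecreasing on $[0,\infty)$, and to lower bound $Z(a) = \int_\bbR e^{-W_a(u)}\dd u \ge 2\int_0^1 e^{-W_a(u)}\dd u$ it suffices to control $W_a(1)$ (and thus $W_a(u)$ for $u\in[0,1]$) from above. Writing $W_a(1) = [V(a+1) - V(a)] - [V(a) - V(a-1)]$, convexity of $V$ gives $V(a+1) - V(a) \le V'_+(a+1)$ and $V(a) - V(a-1) \ge V'_-(a-1)$ for the right/left derivatives, so a cruder route is simply to bound each of $V(a+1), V(a), V(a-1)$ via \eqref{ass:1}: $W_a(1) \le 4C(1+|a|+1)^K \le C'(1\vee |a|^K)$. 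Therefore $e^{-W_a(u)} \ge e^{-C'(1\vee|a|^K)}$ on $[0,1]$, which gives $Z(a) \ge 2e^{-C'(1\vee|a|^K)}$. This is an exponential lower bound, not the claimed polynomial one, so a sharper argument is needed: instead of using the full interval $[0,1]$, use the interval $[0, \delta]$ with $\delta = \delta(a)$ chosen so that $W_a(\delta) \le 1$. Since $W_a$ is convex with $W_a(0)=0$ and $W_a$ has bounded second-order behaviour, one gets $W_a(u) \le C(1\vee|a|^{K})\, u^2$ or at worst $W_a(u)\le C(1\vee|a|^{K-1})|u|$ type control near $0$ using \eqref{eq:ct22}; then choosing $\delta \asymp (1\vee|a|^K)^{-1}$ (or $(1\vee|a|^{K-1})^{-1}$, which is even larger) makes $W_a \le 1$ on $[0,\delta]$, whence $Z(a) \ge 2\delta e^{-1} \ge \frac{1}{C(1\vee|a|^K)}$. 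The cleanest version: by convexity $W_a(u) \le \frac{u}{\delta} W_a(\delta) + (1-\tfrac u\delta) W_a(0) \le \frac u\delta W_a(\delta)$ for $0\le u\le \delta$; but one still needs an absolute $\delta$ on which $W_a$ is bounded. The right move is to use the derivative bound: $W_a'(u) = V'(a+u) - V'(a-u) \le 2C(1+|a|+|u|)^{K-1}$ by \eqref{eq:ct22}, so $W_a(u) = \int_0^u W_a'(s)\dd s \le 2C u (1+|a|+|u|)^{K-1}$, and for $u \le 1$ this is $\le C'' u (1\vee|a|^{K-1}) \le C'' u (1\vee|a|^K)$. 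Taking $\delta := \min(1, [C''(1\vee|a|^K)]^{-1})$ yields $W_a \le 1$ on $[0,\delta]$ and hence $Z(a) \ge 2\delta/e \ge \frac{1}{C(1\vee|a|^K)}$ after adjusting constants.

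\textbf{Step 2: sup bound on $\rho_{b,c}$.} From \eqref{transinv}, $\rho_{b,c}(u) = \theta_{(c-b)/2}(u - (c+b)/2)$, so $\|\rho_{b,c}\|_\infty = \|\theta_{(c-b)/2}\|_\infty$. Since $W_a \ge 0$ everywhere, $\theta_a(u) = e^{-W_a(u)}/Z(a) \le 1/Z(a)$, and hence $\|\theta_a\|_\infty \le 1/Z(a) \le C(1\vee|a|^K)$ by Step 1. Plugging $a = (c-b)/2$ gives $\|\rho_{b,c}\|_\infty \le C(1 \vee |(c-b)/2|^K) \le C(1\vee|c-b|^K)$, adjusting the constant to absorb the factor $2^{-K}$.

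\textbf{Main obstacle.} The only genuine subtlety is getting the \emph{polynomial} (rather than exponential) lower bound on $Z(a)$ in Step 1 — i.e., realizing that one should integrate over a shrinking window of width $\asymp (1\vee|a|^K)^{-1}$ on which the convex symmetric function $W_a$ stays $O(1)$, using the derivative estimate \eqref{eq:ct22} to bound the growth of $W_a$ away from its minimum at $0$. Everything else (symmetry/monotonicity of $W_a$, the trivial bound $\theta_a \le 1/Z(a)$, and the translation identity) is routine.
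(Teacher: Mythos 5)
Your Step 1 is correct and rests on the same key idea as the paper's: use the derivative bound \eqref{eq:ct22} to show that $W_a$ stays $O(1)$ on a window of width $\asymp (1\vee|a|^K)^{-1}$ around the origin, and integrate $e^{-W_a}$ over that window. The only (cosmetic) difference is in the bookkeeping — the paper introduces $z_a$ implicitly as the half-height point of $e^{-W_a}$, notes $Z(a)\ge z_a$, and splits into cases $z_a>1$ and $z_a\le 1$, whereas you choose $\delta$ explicitly from the derivative estimate; both yield the same polynomial lower bound. Step 2 is identical to the paper's (using $\|\theta_a\|_\infty = Z(a)^{-1}$ and \eqref{transinv}).
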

\noindent
Our second lemma ensures that the distribution $\rho_{b,c}$ displays an exponential decay outside of the interval $[b,c]$.
 \begin{lemma}\label{lem:letail}
There exists positive constants $\alpha$ and $C$ such that for all $s\ge 0$ and all $b, c\in \bbR$
we have

 \begin{equation}\label{eq:tail1}
\int_{ (b \vee c) + s}
^\infty\rho_{b,c}(u)du\leq Ce^{-\alpha s} \,.
 \end{equation}
Symmetrically we have
 \begin{equation}\label{eq:tail2}
\int_{-\infty}
^{ (b \wedge c) - s}\rho_{b,c}(u)\dd u\leq C e^{-\alpha s} \,.
 \end{equation}
 In particular, 
 the variance of the random variable with density $\rho_{b,c}$ satisfies, for some possibly different choice of $C$,  for every $b,c\in\bbR$:
\begin{equation}\label{lavar}
 \var(\rho_{b,c}):= \int_{\bbR} \left(u-\frac{b+c}{2}\right)^2 \dd u \le C (|b-c|+1)^2
\end{equation}
\end{lemma}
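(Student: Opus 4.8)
The plan is to prove Lemma~\ref{lem:letail} in three steps: first the tail bounds \eqref{eq:tail1}--\eqref{eq:tail2}, then deduce the variance bound \eqref{lavar}.

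\textbf{Step 1: Tail bounds.} By the translation/reflection symmetries in \eqref{transinv}, it suffices to treat \eqref{eq:tail1}, and after translating we may reduce to the case $b\le c$ with, say, $b=0$; write $d=c-b\ge0$. Recall $\rho_{b,c}(u)\propto e^{-V(u-b)-V(c-u)}$. The key point is that the function $u\mapsto V(u-b)+V(c-u)$ is convex and, by non-affineness \eqref{VminusVplus}, its right derivative at $u=c+s$ for $s>0$ equals $V'_+(s+d\text{-ish terms})\ge V'_+(s)-V'_-(\cdot)$; more precisely, once $u$ exceeds $c$, increasing $u$ by one unit costs at least $V(u+1-b)-V(u-b) - (V(c-u)-V(c-u-1))$, and since $V(c-u)$ is nonincreasing in $u$ on this range while $V(u-b)$ eventually increases at rate bounded below by any slope $<V'_+$, there is a fixed $\alpha>0$ (depending only on $V$) and $s_0$ such that for $u\ge (b\vee c)+s_0$ the increment of the exponent per unit length is at least $\alpha$. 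Hence the tail integral decays at least like $e^{-\alpha s}$; absorbing the behavior on $[(b\vee c),(b\vee c)+s_0]$ and the normalization into the constant $C$ gives \eqref{eq:tail1}. One must be a little careful that $C,\alpha$ do not depend on $d$: this is where one uses that the exponent, restricted to $u\ge c$, dominates (up to an additive constant uniform in $d$) the function $s\mapsto \tfrac12(V'_++V'_-)_{\text{err}} $—concretely, comparing $V(u-b)+V(c-u)$ with $V(u-b)+V'_- (c-u)+\text{const}$ via convexity of the second term removes the $d$-dependence. The analogous argument with $\tilde V(\cdot)=V(-\cdot)$, as in the proof of Lemma~\ref{lem:mongc}, yields \eqref{eq:tail2}.

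\textbf{Step 2: Variance bound.} Let $U$ have density $\rho_{b,c}$ and set $m=\tfrac{b+c}{2}$. Write $U-m = (U-m)\ind_{U\in[b\wedge c,\,b\vee c]} + (U-m)\ind_{U>b\vee c} + (U-m)\ind_{U<b\wedge c}$. On the first event $|U-m|\le \tfrac{|b-c|}{2}$, contributing at most $\tfrac{(b-c)^2}{4}$ to $\E[(U-m)^2]$. For the second event, $|U-m|\le |U-(b\vee c)| + |b\vee c - m| \le |U-(b\vee c)| + \tfrac{|b-c|}{2}$, so its contribution is bounded by $2\E[(U-(b\vee c))^2\ind_{U>b\vee c}] + \tfrac{(b-c)^2}{2}$, and the first term is $\le 2\int_0^\infty 2s\,\P(U-(b\vee c)>s)\dd s \le 4C\int_0^\infty s e^{-\alpha s}\dd s = 4C/\alpha^2$ by \eqref{eq:tail1} and integration by parts. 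The third event is handled symmetrically using \eqref{eq:tail2}. Summing, $\var(\rho_{b,c}) = \E[(U-m)^2] \le \tfrac34(b-c)^2 + 8C/\alpha^2 \le C'(|b-c|+1)^2$ for a suitable $C'$.

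\textbf{Main obstacle.} The delicate part is Step~1: extracting a decay rate $\alpha$ and prefactor $C$ that are genuinely uniform in both $b$ and $c$ (equivalently, uniform in the gap $d=|c-b|$). Naively the exponent $V(u-b)+V(c-u)$ near $u=b\vee c$ can be very flat when $d$ is large, but the saving grace is that \emph{beyond} $b\vee c$ the "outer" potential $V(u-b)$ grows with slope approaching $V'_+>V'_-$ while the "inner" contribution $V(c-u)$ has slope bounded by $V'_-$ in absolute value (in the right direction), so the net slope is bounded below by a positive constant independent of $d$ once $s$ is past a $d$-independent threshold. Making this comparison rigorous—ideally by the clean device of replacing $V(c-u)$ for $u>c$ by its supporting affine function $V'_-(c-u)+V(0)$, which only decreases the exponent—is the crux; everything else is routine. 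The polynomial growth assumption \eqref{ass:1} is not needed for Step~1 but guarantees all integrals are finite and is used implicitly via $V\in\ccC$.
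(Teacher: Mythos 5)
Your Step~1 takes the same route as the paper: reduce by translation/reflection, and show that the logarithmic derivative of the (unnormalized) density beyond $b\vee c$ is eventually bounded above by $-\alpha$ uniformly in $b,c$, which in the paper's notation is exactly $\partial_u\log\theta_a(|a|+u)\le V'(-u)-V'(u)\le-\alpha$ for $u\ge s_0$, using monotonicity of $V'$ and $V'_+>V'_-$. Your Step~2 is a correct (if slightly miscounted in the coefficient of $(b-c)^2$, which is harmless) deduction of \eqref{lavar} from the tails; the paper states \eqref{lavar} without proof, so this fills a gap the reader is left to check.

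There is, however, a genuine hole in Step~1, precisely at ``absorbing the behavior on $[(b\vee c),(b\vee c)+s_0]$ and the normalization into the constant $C$.'' The derivative bound gives you $\rho_{b,c}(u)\le\rho_{b,c}((b\vee c)+s_0)\,e^{-\alpha(u-(b\vee c)-s_0)}$ for $u\ge(b\vee c)+s_0$, so your tail estimate inherits the prefactor $\rho_{b,c}((b\vee c)+s_0)$, and you have not argued that this is bounded uniformly in $b,c$. In fact \eqref{eq:ct3} only gives $\|\rho_{b,c}\|_\infty\le C(1\vee|c-b|^K)$, which is not uniform, so the uniformity is not automatic. The fix is cheap but must be said: $\rho_{b,c}$ is decreasing on $[(b+c)/2,\infty)\supset[b\vee c,\infty)$ and integrates to $1$, hence $s_0\,\rho_{b,c}((b\vee c)+s_0)\le\int_{b\vee c}^{(b\vee c)+s_0}\rho_{b,c}\le 1$, giving $\rho_{b,c}((b\vee c)+s_0)\le 1/s_0$. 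The paper accomplishes the same thing by writing the tail as $\int_{|a|+s}^\infty\theta_a\le\big(\int_0^{|a|+s_0}\theta_a\big)^{-1}\int_{|a|+s}^\infty\theta_a$ and then bounding $\int_0^{|a|+s_0}\theta_a\ge(|a|+s_0)\theta_a(|a|+s_0)$ by the same monotonicity; the normalization $Z(a)$ then cancels in the ratio.

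Your proposed ``clean device''---replacing $V(c-u)$ for $u>c$ by $V'_-(c-u)+V(0)$, claimed to be a supporting affine function---goes in the wrong direction. For $x<0$, convexity gives $V(x)-V(0)=-\int_x^0 V'\le V'_-x$, so $V'_-x+V(0)$ lies \emph{above} $V$ there, not below. The replacement therefore \emph{increases} $V(c-u)$ and hence \emph{decreases} the unnormalized density, which yields a lower bound on the tail rather than the upper bound you need. Happily you never actually rely on this device: the derivative argument (with the normalization fix above) already suffices, just as in the paper.

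Finally, the side remark that ``$V(c-u)$ is nonincreasing in $u$'' for $u>c$ is false in general (e.g.\ SOS $V(x)=|x|$ has $V'_-=-1<0$, so $V$ decreases on $\bbR_-$ and $V(c-u)$ is increasing in $u$), but this error is in the favorable direction and does not affect the derivative estimate $V'(u-b)-V'(c-u)\ge V'(s)-V'(-s)$, which is what actually matters.
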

Finally the third lemma allows us to control the total variation distance between the distributions associated with $\rho_{b,c}$ and $\rho_{b',c'}$.

 \begin{lemma}\label{lem:lequ}
There exists a constant $C$ such that for any $b,b',c,c'$
\begin{equation}\label{eq:q1}
q=\frac{1}{2}\int_\bbR|\rho_{b,c}(u)-\rho_{b',c'}(u)|\dd u\leq C\Delta (1\vee |c-b|^K),
 \end{equation}
 where $\Delta:= (|b'-b|+ |c'-c|)/2$.
\end{lemma}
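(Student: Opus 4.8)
The plan is to bound the $L^1$ distance between $\rho_{b,c}$ and $\rho_{b',c'}$ by interpolating: I would connect $(b,c)$ to $(b',c')$ by a path and estimate the derivative of $\rho_{b,c}$ with respect to the parameters. Concretely, writing $\rho_{b,c}(u)=e^{-V(u-b)-V(c-u)}/Z_{b,c}$ with $Z_{b,c}=\int e^{-V(s-b)-V(c-s)}\dd s$, I would compute $\partial_b \log\rho_{b,c}(u) = V'(u-b) - \E_{\rho_{b,c}}[V'(\cdot-b)]$ and similarly $\partial_c\log\rho_{b,c}(u) = -V'(c-u) + \E_{\rho_{b,c}}[V'(c-\cdot)]$, so that along a straight-line path from $(b,c)$ to $(b',c')$ one gets
\begin{equation*}
\tfrac12\int|\rho_{b,c}-\rho_{b',c'}|\dd u \le \int_0^1 \tfrac12\int \big|\partial_t \rho_{b(t),c(t)}(u)\big|\dd u\, \dd t,
\end{equation*}
and $\tfrac12\int|\partial_t\rho|\dd u$ is a total-variation-type quantity controlled by $|\dot b(t)|\,\E_\rho|V'(\cdot-b)-\E V'| + |\dot c(t)|\,\E_\rho|V'(c-\cdot)-\E V'| \le \Delta\cdot 2\sup_t \E_{\rho_{b(t),c(t)}}[|V'(\cdot - b(t))| + |V'(c(t)-\cdot)|]$, using $|\dot b|,|\dot c|\le \Delta$ along the path.

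The next step is to bound $\E_{\rho_{b,c}}[|V'(u-b)|]$ (and the symmetric term) using the results already in the excerpt. By \eqref{eq:ct22}, $|V'(u-b)|\le C(1+|u-b|)^{K-1}$, so it suffices to bound $\E_{\rho_{b,c}}[(1+|u-b|)^{K-1}]$. Splitting the integral over $u\in[b\wedge c - s, b\vee c + s]$ for $s\ge 0$: on this set $|u-b|\le |c-b|+s$, while the tail contributions are handled by the exponential decay estimates \eqref{eq:tail1}–\eqref{eq:tail2} of Lemma \ref{lem:letail}, which give $\int_{(b\vee c)+s}^\infty (1+|u-b|)^{K-1}\rho_{b,c}(u)\dd u \le C\int_0^\infty (1+|c-b|+r)^{K-1} e^{-\alpha r}\dd r \le C'(1+|c-b|)^{K-1}$ (after expanding and integrating the polynomial-times-exponential). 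This yields $\E_{\rho_{b,c}}[|V'(u-b)|] \le C(1\vee|c-b|)^{K-1} \le C(1\vee|c-b|^{K})$, and the same for the symmetric term by applying the argument to $\tilde V(\cdot)=V(-\cdot)$ as in Lemma \ref{lem:mongc}. Plugging back gives the claimed bound $q\le C\Delta(1\vee|c-b|^K)$.

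One small care point: along the straight-line path from $(b,c)$ to $(b',c')$, the quantity $|c(t)-b(t)|$ may differ from $|c-b|$, but it stays within $|c-b| + 2\Delta \le 3(1\vee|c-b|)(1\vee \Delta)$ of it, so up to adjusting constants (and noting the bound is only interesting when, say, $\Delta \le 1$, since for $\Delta\ge 1$ one can take $C$ large enough that $C\Delta(1\vee|c-b|^K)\ge 1 \ge q$ trivially) the supremum over $t$ of $(1\vee|c(t)-b(t)|)^{K}$ is bounded by $C(1\vee|c-b|^K)$.

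The main obstacle I anticipate is purely bookkeeping: getting the polynomial moment bound for $\E_{\rho_{b,c}}[|V'|]$ with the correct power of $(1\vee|c-b|)$, which requires combining \eqref{eq:ct22} with the tail bounds of Lemma \ref{lem:letail} carefully and tracking the dependence on $|c-b|$ through the path. There is no conceptual difficulty — the convexity is already packaged into Lemmas \ref{lem:lerho} and \ref{lem:letail} — but one must be attentive that the power obtained is $(1\vee|c-b|^K)$ and not something larger.
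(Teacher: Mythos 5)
Your proof is correct, but it takes a genuinely different route from the paper's. The paper first reduces, by the triangle inequality and translation invariance, to the case where only one endpoint moves and $b=b'=0$; then, after arranging $Z(c/2)\ge Z(c'/2)$ by symmetry, it exploits the algebraic inequality $(1-\tfrac{Z(c)}{Z(c')}e^{-\Gamma})_+\le(1-e^{-\Gamma})_+\le \Gamma_+$ with $\Gamma_{c,c'}(u)=V(c'-u)-V(c-u)$, and concludes from the moment bound $\int\rho_{0,c}(u)|u|^{K-1}\dd u\le C(1\vee|c|)^K$ obtained from \eqref{eq:ct22} and Lemma~\ref{lem:letail}. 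Your argument instead interpolates along a straight-line path in $(b,c)$, computes the score $\partial_b\log\rho_{b,c}(u)=V'(u-b)-\E_{\rho_{b,c}}V'(\cdot-b)$ (and its $c$-analogue), and bounds $\frac12\int|\partial_t\rho|\dd u$ by a centered first moment of $|V'|$; both proofs then converge on the same moment estimate via \eqref{eq:ct22} and the exponential tails of Lemma~\ref{lem:letail}. Your approach avoids the triangle-inequality reduction and the sign bookkeeping with $Z$, at the cost of the path-dependence of $|c(t)-b(t)|$, which you correctly dismiss by noting that one may assume $\Delta\le 1$ (else the bound is trivial since $q\le 1$). The paper's approach is more elementary and stays entirely at the level of one-parameter comparisons; yours is the more systematic ``total variation $\le$ integrated $L^1$-norm of the time-derivative'' estimate and handles both parameters simultaneously. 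One minor slip to fix: along the straight-line path $|\dot b|=|b'-b|\le 2\Delta$, not $\le\Delta$ as written, but the factor $\tfrac12$ from the definition of $q$ absorbs this, so the final constant is unaffected.
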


\begin{proof}[Proof of Lemma \ref{lem:lerho}]
From \eqref{transinv}, it suffices to prove \eqref{eq:ct3} with $a$ and $\theta_a$ instead of $(c-b)$ and $\rho_{b,c}$. Since $W_a(u)\geq W_a(0)=0$, we have $\|\theta_{a}\|_{\infty}=Z(a)^{-1}$ and therefore we only need to prove \eqref{lowZ}. Let $z_a$ be defined as the unique positive solution of
$e^{-W_a(z_a)}= \frac12$. Existence and uniqueness of $z_a$ follow from convexity of $W_a$ and the fact that $W_a$ is minimized at $W_a(0)=0$. 
We have
\begin{equation}\label{eq:ct1}
Z(a)\geq \int_{|u|\leq z_a} e^{-W_a(z_a)}du\geq z_a.
\end{equation}
If $z_a > 1$, then \eqref{lowZ} immediately follows. We now assume that $z_a\leq 1$. Writing 
$$
\log 2= W_a(z_a)=\int_0^{z_a}\left(V'(a+u)-V'(a-u)\right)\dd u \;,
$$
we deduce from \eqref{eq:ct22} that
\begin{equation}\label{eq:ct2}
\log 2 \le 2 z_a\max_{|u-a|\leq 1} |V'(u)| \le 2 C Z(a) (|a|+2)^K \;,
\end{equation}
thus concluding the proof.
\end{proof}

\begin{proof}[Proof of Lemma \ref{lem:letail}]
Using translation invariance \eqref{transinv} we 
only need to prove an upper bound for the tail distribution associated with $\theta_a$, that is, for
$\int_{ |a| + s} \theta_a(u)\dd u.$
Also, at the cost of changing the value of $C$, we can assume that $s\ge s_0$ for some sufficiently large $s_0 \ge 1$ independent of $a$.
Recalling that $\theta_a$ integrates to $1$ and is decreasing on $\bbR_+$, we have
\begin{equation}
 \int_{ |a| + s}^{\infty} \theta_a(u)\dd u\le \frac{ \int^{\infty}_{ |a| + s} \theta_a(u)\dd u}{ \int_{0}^{ |a| + s_0} \theta_a(u)\dd u}\le \frac{1}{|a|+s_0}\int^{\infty}_{ s} \frac{\theta_a(|a|+u)}{\theta_a(|a|+s_0)}\dd u \;.
\end{equation}
We can then conclude if we show that for all $u\ge s_0$
\begin{equation}
 \frac{\theta_a(|a|+u)}{\theta_a(|a|+s_0)}\le C e^{-\alpha u}.
\end{equation}
From our assumptions (i) and (iii) on the potential $V$, we have
$$\lim_{u\to+\infty} V'(u)-V'(-u) \in (0,\infty]\;.$$
Therefore, there exist $\alpha > 0$ and $s_0 > 1$ such that for all $u \ge s_0$, we have $V'(u)-V'(-u) \ge \alpha$. We then compute for all $u\ge s_0$
\begin{align*}
 \partial_u \left[\log \theta_a(|a|+u)\right] &= V'(a-|a|-u)-V'(|a|+a +u) \\ &\le V'(-u)-V'(u)\le -\alpha\;,
\end{align*}
which readily yields
\begin{equation}
\frac{\theta_a(|a|+u)}{\theta_a(|a|+s_0)}\le e^{-\alpha(u-s_0)}\;.
\end{equation}

\end{proof}

\begin{proof}[Proof of Lemma \ref{lem:lequ}]
Note that we may assume $|b'-b|+ |c'-c|\le 1$, otherwise the result is trivial. 
In particular,  $|c'-b'|\le |c-b|+1$.
Using the triangle inequality 
$$|\rho_{b,c}(u)-\rho_{b',c'}(u)|\le | \rho_{b,c}(u)-\rho_{b,c'}(u)|
+| \rho_{b,c'}(u)-\rho_{b',c'}(u)|$$
it is sufficient to treat the case where either $b=b'$ or $c=c'$. By translation invariance we reduce to the case $b=b'=0$ (the case $c=c'$ can be treated symmetrically). 
Interchanging the variables if necessary, we may further assume that  $Z(c/2)\ge Z(c'/2)$. Setting
 \begin{equation*}
 \Gamma_{c,c'}(u)= V(c'-u)-V(c-u)\;.
 \end{equation*}
we observe that 
\begin{align}
\label{eq:q2}
q&= \int_\bbR \rho_{0,c}(u)\left(1-\frac{Z(c)}{Z(c')}e^{-\Gamma_{c,c'}(u)}\right)_+\dd u  \\ &\le 
 \int_\bbR\rho_{0,c}(u)\left(1-e^{-\Gamma_{c,c'}(u)}\right)_+\dd u
  \le \int_\bbR\rho_{0,c}(u)(\Gamma_{c,c'}(u))_+\dd u.
\end{align} 
 Using \eqref{eq:ct22} and $2\Delta=|c-c'|$ we have
 \begin{equation}\label{eq:q21}
 |\Gamma_{c,c'}(u)|\leq C\Delta(|u| + |c| + 1)^{K-1}.
 \end{equation}
 We can  conclude using
 \begin{equation}
   \int_\bbR\rho_{0,c}(u)|u|^{K-1} \dd u \le C (1\vee |c|)^K,
 \end{equation}
 which follows  from Lemma \ref{lem:letail}.
\end{proof}

\section{Lower bound on the mixing time}\label{Sec:Lower}

\begin{proposition}\label{prop:lalb}
 There exists a constant $c>0$ such that, for every $N$ and $t\ge 0$,
 \begin{equation}\label{Eq:lowbo}
   d_N(t) \ge 1- \frac{1}{1+  c N e^{-2\gl_N t}},
 \end{equation}
 where $\gl_N=1-\cos(\pi/N)$.
As a consequence, there exists another constant $C$ such that, for all $\gep\in (0,1)$,
\begin{equation}\label{Eq:lowbo1}
 T_N(\gep)\ge  \frac{1}{2\gl_N} \left(\log N + \log (1-\gep)- C\right).
\end{equation}

\end{proposition}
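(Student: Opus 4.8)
The plan is to bound $d_N(t)$ from below by projecting both measures onto the real-valued statistic $f_N$ from \eqref{deffn}: since applying $f_N$ cannot increase total variation, $d_N(t)\ge \|(f_N)_*P^{x^\star}_t-(f_N)_*\pi_N\|_{TV}$ for any admissible deterministic initial condition $x^\star$. I take $x^\star\in\Omega_N$ with $x^\star_k:=N$ for $1\le k\le N-1$; this satisfies $|x^\star|_\infty\le N$ and, since $\sin(k\pi/N)>0$ on $\lint 1,N-1\rint$, essentially maximises $f_N$ on $\{|x|_\infty\le N\}$, with $f_N(x^\star)=N\cot(\pi/(2N))\ge N^2/2$. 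Because $\cL f_N=-\gl_N f_N$ (Section~\ref{sec:linear}), the first moment is explicit: $\E[f_N(\bX^{x^\star}(t))]=f_N(x^\star)e^{-\gl_N t}=:m_t$, of order $N^2e^{-\gl_N t}$, whereas $\E_{\pi_N}[f_N]=0$. So the two laws of $f_N$ are separated on the scale $m_t$, with a macroscopic gap precisely while $m_t^2$ dominates the common fluctuation scale, i.e.\ while $Ne^{-2\gl_N t}$ is large.

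To make this quantitative I need matching upper bounds on both variances. For the stationary one, write $\var_{\pi_N}(f_N)=\sum_{k,l}\sin(k\pi/N)\sin(l\pi/N)\,\mathrm{Cov}_{\pi_N}(x_k,x_l)$ and use Lemma~\ref{compare} to compare, pair by pair, with the product measure $\nu_N$ (under which $x_k$ is a sum of i.i.d.\ centered variables of $O(1)$ variance); this gives $|\mathrm{Cov}_{\pi_N}(x_k,x_l)|\le C\min(k,l)\le CN$ and hence $\var_{\pi_N}(f_N)\le CN^3$. For the dynamical variance $v_t:=\var_{P^{x^\star}_t}(f_N)$ I claim $v_t\le CN^3$ uniformly in $t$. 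The key is that $v_t$ solves a closed ODE: since $f_N$ is affine in each coordinate, $\cQ_k$ resamples $x_k$ from $\rho_{x_{k-1},x_{k+1}}$, whose mean is $(x_{k-1}+x_{k+1})/2$, and $\cL f_N=-\gl_N f_N$, a direct computation yields
\begin{equation*}
\tfrac{\dd}{\dd t}v_t=-2\gl_N v_t+\tfrac14\textstyle\sum_{k}\sin^2(k\pi/N)\,\E_{P^{x^\star}_t}[(\eta_{k+1}-\eta_k)^2]+\sum_{k}\sin^2(k\pi/N)\,\E_{P^{x^\star}_t}[\var(\rho_{x_{k-1},x_{k+1}})].
\end{equation*}
Since $v_0=0$, using \eqref{lavar} this gives $v_t\le \gl_N^{-1}\sup_{s\le t}\{C\sum_k\sin^2(k\pi/N)(1+\E_{P^{x^\star}_s}[\eta_k^2])\}$, and since $\gl_N\ge c/N^2$, everything reduces to the uniform bound $\sum_k\sin^2(k\pi/N)(1+\E_{P^{x^\star}_s}[\eta_k^2])\le CN$. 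Split $\E[\eta_k^2]=\var(\eta_k)+(\E[\eta_k])^2$. For the mean part, $\E[\eta_k(s)]=\E[x_k(s)]-\E[x_{k-1}(s)]$ with $\E[\bX^{x^\star}(s)]=e^{s\Delta/2}x^\star$ solving the discrete heat equation with Dirichlet data; the gradient of the initial data $\eta^\star$ is supported on the two endpoints, and after time $s$ it has spread diffusively over a $\sqrt{s\wedge N^2}$-neighbourhood of the endpoints, where $\sin^2(k\pi/N)$ is of order at most $(k/N)^2$ — a direct estimate then gives $\sum_k\sin^2(k\pi/N)(\E[\eta_k(s)])^2\le CN$. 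The fluctuation part $\var_{P^{x^\star}_s}(\eta_k)$ is controlled the same way, using \eqref{lavar}, the same spreading picture (made rigorous by the $\le$- and $\preccurlyeq$-monotone couplings of Lemma~\ref{lem:mongc}), and for $s\gtrsim N^2$ the equilibrium gradient tail \eqref{largegrad}.

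With $m_t\ge N^2e^{-\gl_N t}/2$, $v_t\le CN^3$ and $\var_{\pi_N}(f_N)\le CN^3$, I conclude by the one-sided Chebyshev (Cantelli) inequality applied to $A=\{f_N\ge m_t/2\}$: on the equilibrium side $\pi_N(A)\le \var_{\pi_N}(f_N)/(\var_{\pi_N}(f_N)+m_t^2/4)\le 1/(1+cNe^{-2\gl_N t})$, and on the dynamical side $P^{x^\star}_t(f_N<m_t/2)\le v_t/(v_t+m_t^2/4)\le 1/(1+c'Ne^{-2\gl_N t})$, so $d_N(t)\ge P^{x^\star}_t(A)-\pi_N(A)\ge 1-1/(1+c'Ne^{-2\gl_N t})-1/(1+cNe^{-2\gl_N t})$; after adjusting the constant this gives \eqref{Eq:lowbo} in the range where $Ne^{-2\gl_N t}$ is bounded below, which is the only range used in \eqref{Eq:lowbo1}. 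Finally, \eqref{Eq:lowbo1} follows by solving $1-1/(1+cNe^{-2\gl_N t})\ge\gep$ for $t$: this holds as soon as $Ne^{-2\gl_N t}\ge \gep/(c(1-\gep))$, i.e.\ $t\le \tfrac1{2\gl_N}(\log N+\log(1-\gep)+\log c-\log\gep)$, so since $-\log\gep\ge0$ we get the claim with $C:=-\log c$ (take $c\le1$). The main obstacle is the uniform-in-$s$ estimate $\sum_k\sin^2(k\pi/N)\,\E_{P^{x^\star}_s}[\eta_k^2]\le CN$ — i.e.\ showing that the large endpoint gradients of the height-$N$ plateau do not inflate the bulk gradient second moments faster than the $\sin^2(k\pi/N)$ weight can absorb; this is exactly where the convexity of $V$ (through the monotone couplings) and the spread-out/exponential-tail estimates of Section~\ref{sec:tecnos} are essential. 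The remaining steps are routine bookkeeping.
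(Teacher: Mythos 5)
Your overall strategy is sound and reuses the paper's central objects (the eigenfunction $f_N$, the eigenvalue relation $\cL f_N=-\gl_N f_N$, and a second-moment separation argument), but your execution is genuinely different from the paper's and it has a real gap.

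The paper does not start from the deterministic plateau $x^\star_k=N$; it starts from the \emph{random} initial distribution $\varrho_N=\pi_N(\,\cdot\,|\,x_{N/2}=N/2,\;|x|_\infty\le N)$. This is not a cosmetic choice. Under $\varrho_N$ (and hence under $\hat\pi_N=\pi_N(\,\cdot\,|\,x_{N/2}=N/2)$) the increments $\eta_k$ are, coordinatewise, distributed like the increments of a tilted equilibrium measure with mean $\pm1$, and the crucial estimate in the paper is the \emph{uniform} bound $\E[\eta_k(s)^2]\le C$ for all $k$ and $s$ (display \eqref{touborne}). The proof of that bound uses the $\preccurlyeq$-monotone coupling against a process $\bX'$ started from the stationary tilted product measure $\tilde\pi_N(\,\cdot\,|\,x_{N/2}\ge N/2)$, whose gradients have mean $2$; the $\preccurlyeq$-domination $\eta_k(s)\le\eta'_k(s)$ then transfers a stationary second-moment bound to the dynamical one. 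This works precisely because the starting increments under $\hat\pi_N$ are $O(1)$ and can be dominated by a \emph{fixed} tilt.

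With your choice $x^\star$, the initial gradient is $\eta_1(0)=N$, $\eta_N(0)=-N$ and $\eta_k(0)=0$ otherwise. Any $\preccurlyeq$-dominating process would need $\eta'_1\ge N$ and $\eta'_k\ge 0$ for all $k\in\lint 2,N-1\rint$, which rules out any fixed-tilt stationary product measure (and for potentials with $V'_+<\infty$, such as SOS, tilting toward mean $N$ is impossible or makes moments blow up). So the paper's coupling argument is simply unavailable for your initial condition. Your replacement — the weighted bound $\sum_k\sin^2(k\pi/N)\,\E_{P^{x^\star}_s}[\eta_k^2]\le CN$ uniformly in $s$ — is heuristically consistent with a diffusive spreading picture (and in fact borderline: at $s\sim N^2$ the weighted sum is of order $N$), but the sketch you give is not a proof. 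The mean part $\sum_k\sin^2(k\pi/N)(\E[\eta_k(s)])^2$ can in principle be handled explicitly via the Fourier expansion of the discrete heat semigroup, but the variance part $\sum_k\sin^2(k\pi/N)\,\var(\eta_k(s))$ is not ``controlled the same way'': there is no deterministic heat equation for it, and the monotone couplings do not supply a dominating process with controllable gradient second moments, for the reason above. Invoking ``the spreading picture made rigorous by $\le$- and $\preccurlyeq$-monotone couplings'' is exactly the step that needs a new idea, not routine bookkeeping. The paper's random initial condition is engineered precisely so that this difficulty never arises.

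Two minor points. First, your stationary-variance bound is stated via an element-by-element covariance estimate $|\mathrm{Cov}_{\pi_N}(x_k,x_l)|\le C\min(k,l)$ ``by Lemma~\ref{compare}''; that lemma compares expectations of \emph{nonnegative} functionals to the product measure $\nu_N$ and does not directly control signed covariances. The cleaner route (used implicitly in the paper) is to get $\var_{\pi_N}(x_k)\le CN$ uniformly from Lemma~\ref{compare} and then apply Cauchy–Schwarz to the quadratic form. Second, your ODE for $v_t$ is correct, and the paper derives the same drift/variance decomposition via the angle-bracket of the martingale $M_t=e^{\gl_N t}f_N(\bX(t))$, so that part is fine; similarly, replacing \cite[Proposition 7.12]{LevPerWil} by two Cantelli inequalities gives a slightly weaker but sufficient form of \eqref{Eq:lowbo}.
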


To prove \eqref{Eq:lowbo} we select a test function $f$ and use the fact that if at time $t$ the value $f({\bf X}(t))$ is far from the equilibrium value $\pi_{N}(f)$ with large probability then $d_N(t)$ must be large. This is implemented by choosing a suitable initial condition and by estimating the first two moments of $f({\bf X}(t))$. This is a variant of Wilson's method  \cite{Wil04}.
As for the exclusion process \cite{Wil04} and for the Beta-sampler on the simplex \cite{CLL}, we take  $f=f_N$, the eigenfunction appearing in Theorem \ref{Th:gap}. 
For the remainder of this section we assume for notational simplicity that $N$ is even and we write $\bX$ for the process started from the random initial condition $\bX(0)$ drawn according to the measure 
\begin{equation}
\varrho_N:=\pi_N\left( \cdot  \ | \ x_{N/2} = N/2, |x|_\infty \le N\right).
\end{equation}

\begin{proposition}\label{prop:meanandvariance}
There exists a constant $C$ such that for every $t\ge 0$
$$ \E[f_N(\bX(t))] \ge C^{-1} N^2 e^{-\lambda_N t}\;,\quad \var[f_N(\bX(t))] \le C N^3.$$
\end{proposition}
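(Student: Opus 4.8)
The plan is to analyze the image of the process through the eigenfunction $f_N$, exploiting the exact relation $\E[f_N(\bX(t))] = \E[f_N(\bX(0))]\, e^{-\lambda_N t}$ coming from Section \ref{sec:linear}, and to get the variance bound by a Doob-type decomposition of $f_N(\bX(t))$ into a martingale part whose bracket is controlled by the resampling variances from Lemma \ref{lem:letail}, plus a drift part.

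First I would handle the mean. Since $f_N$ is an eigenfunction of $\cL$ with eigenvalue $-\lambda_N$, the scalar $m(t) := \E[f_N(\bX(t))]$ solves $m'(t) = -\lambda_N m(t)$, so $m(t) = m(0) e^{-\lambda_N t}$. It then suffices to show $m(0) = \E_{\varrho_N}[f_N] \geq C^{-1} N^2$. Under $\varrho_N$ we condition $\pi_N$ on $\{x_{N/2} = N/2\}$ and on $|x|_\infty \le N$. Heuristically the interface is pinned high in the middle, so the configuration is, in expectation, close to the tent function $x_k \approx \min(k, N-k)$ near the middle; since $\sin(k\pi/N) \ge 0$ on $\lint 1, N-1\rint$, each term $\sin(k\pi/N)\, x_k$ contributes, and the sum is of order $N^2$. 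To make this rigorous I would first drop the height restriction using the FKG machinery: conditioning $\pi_N$ on $\{x_{N/2}=N/2\}$ makes all the $x_k$ stochastically larger than under $\pi_N$ itself in a controlled way — more precisely, I would use monotonicity under the shift $x\mapsto x+ c e_{N/2}$-type comparisons, or directly compare $\pi_N(\cdot\mid x_{N/2}=N/2)$ with the law of $\pi_N$-increments conditioned appropriately — to bound $\E[x_k \mid x_{N/2}=N/2] \ge c\min(k,N-k)$ for $k$ in a bulk of proportion, say, $[N/4, 3N/4]$. Summing the corresponding $\sin(k\pi/N) \asymp 1$ weights over that range gives $\Theta(N^2)$. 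Then I would argue the extra conditioning on $|x|_\infty \le N$ changes this by at most a constant factor, using Corollary \ref{cor:bazics}: since $\pi_N(\|x\|_\infty > N) = \pi_N(\|x\|_\infty > \sqrt{N}\cdot\sqrt N)$ is superexponentially small while $\pi_N(x_{N/2}=N/2)$-conditioned events are only polynomially costly, the conditioning on $|x|_\infty\le N$ is asymptotically free. One must also check $\E[x_k\mid x_{N/2}=N/2,|x|_\infty\le N]$ stays positive and of the right order despite the truncation, which again follows because the bad event is negligible.

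For the variance, I would write $M(t) := f_N(\bX(t)) - f_N(\bX(0)) - \int_0^t \cL f_N(\bX(s))\dd s = f_N(\bX(t)) - f_N(\bX(0)) + \lambda_N \int_0^t f_N(\bX(s))\dd s$, which is a mean-zero martingale. Its predictable quadratic variation is $\langle M\rangle_t = \int_0^t \Gamma(\bX(s))\dd s$ where $\Gamma(x) = \sum_{k=1}^{N-1} \sin^2(k\pi/N)\,\var_k(x)$ and $\var_k(x)$ is the variance of the resampled $k$-th coordinate given the neighbors $x_{k-1},x_{k+1}$; by \eqref{lavar} this is $\le C(|x_{k-1}-x_{k+1}|+1)^2 \le C(|\eta_k|+|\eta_{k+1}|+1)^2$. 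Hence $\Gamma(x) \le C\sum_{k=1}^{N-1}(\eta_k^2+\eta_{k+1}^2+1) \le C(N + \sum_k \eta_k^2)$. Taking expectations under the annealed process and using that $\pi_N$ stochastically controls the occupation measure is delicate because $\bX(s)$ is not stationary; however I would bound $\E[\sum_k \eta_k(s)^2]$ uniformly in $s$ by comparing, via the monotone grand coupling of Lemma \ref{lem:mongc}, the gradients started from $\varrho_N$ (supported on $|x|_\infty\le N$, so $|\eta_k(0)|\le 2N$) with stationarity; alternatively, since under $\varrho_N$ all increments are absolutely continuous w.r.t. a product law with exponential tails (Lemma \ref{compare}) and the resampling only improves this, $\E[\eta_k(s)^2]$ stays $O(1)$ on average, giving $\E[\Gamma(\bX(s))] = O(N)$ uniformly, hence $\E[\langle M\rangle_t] \le CNt$. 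Combined with $\var(f_N(\bX(0))) = O(N^3)$ (directly from \eqref{largegrad}, since $f_N$ has $N-1$ terms each of size $O(x_k) = O(N)$, but better: $\var_{\varrho_N}(f_N) = O(N^3)$ by summing covariances controlled via FKG and Lemma \ref{compare}) and the $L^2$ Duhamel/Gronwall bound $\var(f_N(\bX(t))) \le C\,\E[\langle M\rangle_t] + \ldots$ obtained by expanding $f_N(\bX(t)) = f_N(\bX(0))e^{-\lambda_N t} + (\text{martingale-driven term})$, one gets $\var(f_N(\bX(t))) \le C N^3$ for all $t$.

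The main obstacle I expect is controlling $\E[\sum_k \eta_k(s)^2]$ uniformly in time along the non-stationary trajectory started from $\varrho_N$: one cannot simply invoke stationarity, and the naive bound $|\eta_k(0)|\le 2N$ would only give $\Gamma = O(N^3)$ and hence a useless variance bound $O(N^4 t)$. The resolution is to show that the squared gradients relax quickly — either by a direct coupling argument showing $\bX(s)$ started from $\varrho_N$ is, for $s\ge 1$ say, stochastically sandwiched between two stationary-like configurations, or by a Lyapunov/contraction estimate on $\E[\eta_k(s)^2]$ exploiting that each resampling replaces $\eta_k$ by a variable with variance $O((|\eta_k|+|\eta_{k+1}|+1)^2)$ but *centered* so that $\E[(\text{new }\eta_k)^2 \mid \mathcal F_{s-}]$ is a contraction on average — this is where convexity of $V$ and the tail bounds of Lemma \ref{lem:letail} do the real work. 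I would spend most of the effort making this uniform-in-time gradient moment bound precise.
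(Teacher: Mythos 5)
Your overall architecture is the same as the paper's: for the mean you use that $e^{\lambda_N t}f_N(\bX(t))$ is a martingale (your Duhamel reformulation is equivalent) so everything reduces to $\E_{\varrho_N}[f_N]\gtrsim N^2$; for the variance you reduce to a bound on the bracket of the eigenfunction martingale, whose time-integrated increments are controlled by $\sum_k(1+\eta_k(s)^2)$, and you correctly observe that the whole proof hinges on showing $\sup_{s\ge 0}\E[\eta_k(s)^2]=O(1)$ along the non-stationary trajectory started from $\varrho_N$.

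The problem is that you flag this uniform-in-time gradient moment bound as the ``main obstacle'' and then only gesture at two possible resolutions without carrying either out, and neither sketch as written is obviously workable. Your first idea, ``stochastically sandwich $\bX(s)$ between two stationary-like configurations,'' is close to what the paper does, but the substance is missing: the paper introduces a \emph{tilted} product measure $\tilde\pi_N$ on the enlarged state space $\widetilde\Omega_N$ (density $\propto\prod e^{-V(\eta_i)+\lambda\eta_i}$ with $\lambda$ chosen by Lemma~\ref{lem:mean} so that the increment mean is $2$), observes that $\tilde\pi_N$ is invariant for $\cL$ on $\widetilde\Omega_N$, and couples the process started from $\hat\pi_N$ with one started from $\tilde\pi_N(\cdot\,|\,x_{N/2}\ge N/2)$ via the monotone grand coupling in the $\preccurlyeq$ (gradient) partial order — exploiting that the first-half increments under $\hat\pi_N$ have the same law as under $\tilde\pi_N(\cdot\,|\,x_{N/2}=N/2)$. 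This dominates $\eta_k(s)_+$ by a gradient coordinate of a stationary process with finite second moment, and a symmetric argument handles the negative part. None of that specific construction (choice of tilt, enlarged state space, which conditioning makes the laws comparable, domination in $\preccurlyeq$ rather than $\le$) appears in your plan. Your second idea, a Lyapunov/contraction bound, would require new estimates on the one-step conditional second moment of $\eta_k$ that the paper never proves and that do not follow from Lemma~\ref{lem:letail} alone. And the remark that ``resampling only improves'' the absolute continuity of Lemma~\ref{compare} is not justified — $\varrho_N$-absolute-continuity with uniform constant is an equilibrium statement, and the dynamics started from $\varrho_N$ is not stationary.

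Two further, smaller issues. First, your Doob decomposition $M(t)=f_N(\bX(t))-f_N(\bX(0))+\lambda_N\int_0^t f_N(\bX(s))\dd s$ has $\E[\langle M\rangle_t]\lesssim Nt$, which diverges; you do invoke a Duhamel/Gronwall reformulation to introduce the damping $e^{-2\lambda_N(t-s)}$, but this step should be made explicit (or just use the paper's cleaner $M_t=e^{\lambda_N t}f_N(\bX(t))$, whose bracket has the $e^{2\lambda_N s}$ factor built in). Second, for the mean, your FKG-based lower bound $\E[x_k\,|\,x_{N/2}=N/2]\ge c\min(k,N-k)$ is plausible but not the paper's route; the paper uses exchangeability of increments to identify $\hat\pi_N$ restricted to each half of the segment with $\pi_{N/2}$ for a shifted potential, then applies Corollary~\ref{cor:bazics}, which gives the sharper two-sided concentration actually needed for both the mean \emph{and} the initial variance $\var_{\varrho_N}(f_N)=O(N^3)$. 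Your sketch of the initial variance (``summing covariances controlled via FKG and Lemma~\ref{compare}'') would likewise need to be made precise; the paper's way is a simple Cauchy--Schwarz after centering each $x_i$ at the tent.
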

\begin{proof}[Proof of Proposition \ref{prop:lalb} using Proposition \ref{prop:meanandvariance}]
By definition, $$d_N(t)\ge \| P^{\varrho_N}_t- \pi_N \|_{TV}.$$ 
From \cite[Proposition 7.12]{LevPerWil} one has
\begin{equation}\label{eq:lpw}
\| P^{\varrho_N}_t- \pi_N \|_{TV} \ge 1- \left(1+ \frac{|\E[f_N(\bX(t))]-\pi_N(f_N)|^2}{2\var(f_N(\bX(t))+2\var_{\pi_N}(f_N) }\right)^{-1},
\end{equation}
where $\var_{\pi_N}(f_N)$ denotes the  the variance of $f_N$ with respect to \ $\pi_N$. Using $\pi_N(f_N)=0$ and Fatou's lemma for weak convergence  to control $\var_{\pi_N}(f_N)$ through the variance $\var(f_N(\bX(t))$ at $t=\infty$, Proposition \ref{prop:meanandvariance} implies the estimate 
\begin{align}
\| P^{\varrho_N}_t- \pi_N \|_{TV}\ge 1- \left(1+  \frac{N e^{-2\gl_N t}}{4C^3}\right)^{-1},
\end{align}
which proves \eqref{Eq:lowbo} with $c=1/4C^3$ if $C$ is the constant in Proposition \ref{prop:meanandvariance}.
The lower bound \eqref{Eq:lowbo1} is a simple consequence of \eqref{Eq:lowbo}.

\end{proof}

\begin{proof}[Proof of Proposition \ref{prop:meanandvariance}]
As $f_N$ is an eigenfunction associated with the eigenvalue $-\gl_N$,  see Section \ref{sec:linear}, the process
$$M_t:=e^{\gl_N t}f_N(\bX(t))$$ is a martingale. In particular,
\begin{equation}\label{eq:lowbo11}
\E[f_N(\bX(t))] =e^{-\lambda_N t}\E[f_N(\bX(0))]=e^{-\lambda_N t}\varrho_N(f_N). 
\end{equation}
Under $\hat \pi_N=\pi_{N}(  \cdot \ |  x_{N/2}=N/2)$, the increments $\eta_i$, $i\in\lint 1,N/2\rint$ are exchangeable and have all mean  $1$. 
The same can be said for   $i\in\lint N/2+1,N\rint$ with mean $-1$. The distribution $\hat \pi_{N}$ restricted to the variables in the first half of the segment is the distribution of $\{\eta_i+1\}$ where the $\eta_i$ are distributed according to the measure  $\pi_{N/2}$ for the shifted potential $ V^+(u)=V(u+1)$, see Remark \ref{rem:tilt}. Similarly, for the second half of the segment with 
$V^-(u)=V(u-1)$. Then, an application of Corollary \ref{cor:bazics} 
shows that for any $a_0 > 0$ there exists $c>0$ such that
 for every $a\ge a_0$ and for all $N$ sufficiently large,
\begin{equation}\label{eq:concentration}
\hat \pi_N(  \max |x_i- \hat\pi_N(x_i)|\ge a N \ ) \le e^{-c a N}\;,
\end{equation}
where $\hat\pi_N(x_i)=i$ if $i\leq N/2$ and $\hat\pi_N(x_i)=N-i$ if $i\geq N/2$.
Moreover, using the Cauchy-Schwarz inequality and \eqref{deviates} one has $$\hat\pi_N(f_N\,\ind_{\|x\|_\infty > N})= o(N^2).$$
It follows that $\varrho_N(f_N)=\hat\pi_N(f_N\ |\ \|x\|_\infty \le N)$ satisfies 
\begin{equation}
 \varrho_{N}( f_N )
 = \frac{2 N^2}{\pi^2}(1+o(1)).
\end{equation}
Combined with \eqref{eq:lowbo11} this proves the desired lower bound on $\E[f_N(\bX(t))]$. 

To control the variance, we write  
\begin{equation}
 \var[f_N(\bX(t))]= e^{-2\gl_N t}  \var[M_t]= e^{-2\gl_N t} \left(\var[M_0]+\bbE \left[\langle M \rangle_t\right]\right),
\end{equation}
where $\langle M \rangle_t$ is the increasing predictable process, or angle bracket, associated to the martingale $M_t$ defined above.
The control of $\var[M_0]=\var_{\varrho_N}(f_N)$ can be obtained by reducing to the measure $\hat\pi_N$ and using Lemma \ref{compare}, considering the cases $i\in \lint 1, N/2\rint$ and $i\in  \lint N/2+1, N\rint$ separately as above. More precisely,
for some constant $C$, for every $i\in\lint 1, N\rint$:
\begin{equation}
\hat \pi_N\left( (x_i- \hat\pi_N(x_i))^2  \right) \le C N.
\end{equation}
Using Cauchy-Schwarz, 
\begin{equation}
\hat \pi_N \left(\Big( f_N(x)-\sum_{i=1}^{N} \hat\pi_N(x_i)\sin(i \pi/N)\Big)^2 \right)\le C N^3.
\end{equation}
Recalling \eqref{eq:concentration}, $\varrho_N$ is obtained by conditioning $\hat\pi_N$ to an event of probability larger than $1/2$, and therefore, using the variational representation for the variance of a random variable $X$, $\var(X)=\inf_{m\in\bbR}\bbE[(X-m)^2]$, one finds
\begin{equation}
 \var_{\varrho_N}(f_N)\le  \varrho_N \left(\Big( f_N(x)-\sum_{i=1}^{N}  \hat\pi_N(x_i)\sin(i \pi/N)\Big)^2\right)\le 2 CN^3.
\end{equation}
The martingale bracket can be given an explicit expression.
The contribution to the bracket of the potential update at site $k$ at time $s$ is bounded by 
\begin{equation}\label{eq:contr}
e^{2\gl_N s}\sin^2(k \pi/N)\,\,\bE\left[\left(X_k(s)-X_k(s^-)\right)^2\right],
\end{equation}
where $\bE[\cdot]$ is the expectation with respect to \ the resampling random variable $X_k(s)$ with distribution $\rho_{X_{k-1}(s^-),X_{k+1}(s^-)}$. 
Notice that
\begin{equation}
X_k(s)-X_k(s^-)=
\frac12(\eta_{k+1}(s^-)-\eta_k(s^-)) -U,
\end{equation}
where $U$ has distribution $\theta_{\bar \eta_k(s^-)}$, see \eqref{transinv}. Using 
 Lemma \ref{lem:letail} to estimate the variance of $U$, we see that \eqref{eq:contr} is bounded above by
\begin{align}\label{eq:contro}
Ce^{2\gl_N s}\left[
1+ \eta_{k}(s^-)^2+\eta_{k+1}(s^-)^2\right],
\end{align}
for some constant $C>0$.
Hence,
\begin{equation}\label{lecrochet}
 \langle M \rangle_t
 \le C \int^t_0 e^{2\gl_N s}\sum_{k=1}^{N-1} \left( 1+\eta_{k}(s)^2+\eta_{k+1}(s)^2\right)\dd s.
\end{equation}
To conclude we prove that there exists $C>0$ such that
\begin{equation}\label{touborne}
\forall N\ge 1,  \forall k\in\lint 1,N\rint, \ \forall s\ge 0, \quad  \E[\eta_k(s)^2] \le C.
\end{equation}
Indeed, \eqref{touborne}  combined with \eqref{lecrochet} yields
\begin{equation}
  e^{-2\gl_N t} \bbE \left[\langle M \rangle_t\right]\le   C N \gl^{-1}_N \le C' N^3.
\end{equation}
By symmetry,  it is sufficient to show \eqref{touborne} for $k\leq N/2$. Moreover, using \eqref{eq:concentration} as above, we may consider the dynamics with initial distribution $\hat\pi_N$ instead of $\varrho_N$.
With slight abuse of notation 
we still use the notation $\bX$ for this process.
We are going to prove a bound for  $\E[\max(\eta_k(s),0)^2]$, the analogous bound for the negative part being proved by a symmetric argument.
Using Lemma \ref{lem:mean}, we fix $\gl$ such that 
\begin{equation}
 (\smallint u e^{-V(u)+\lambda u} \dd u) / ( \smallint  e^{-V(u)+\lambda u} \dd u)=2.
\end{equation}
We consider the measure $\tilde \pi_N$ under which the $\eta_i$ are IID with a distribution whose density with respect to Lebesgue is proportional to $e^{-V(u)+\lambda u}$, and note that  $\tilde \pi_N$ is an invariant measure for the generator $\cL$ in the enlarged state space $\widetilde\Omega_N$.

\medskip

In the enlarged state space, we couple $\bX$ with the process $\bX'$ with initial condition distributed according to $\tilde \pi_N( \cdot | \ x_{N/2}\ge N/2)$. Observe that the law of the increments $(\eta_k)_{k\leq N/2}$ under $\hat\pi_N$ coincides with the law of   $(\eta_k)_{k\leq N/2}$ under $\tilde\pi_N(\cdot \ |\ x_{N/2}= N/2)$. Therefore, by Lemma \ref{lem:mongc}, $\bX$ and $\bX'$ can be coupled in such a way that $\eta_k(s)\le \eta'_k(s)$ for all $s\ge 0$ and $k\leq N/2$.
Hence
\begin{equation}
 \bbE \left[\max(\eta_k(s),0)^2\right]\le  \bbE \left[(\eta'_k(s))^2\right].
\end{equation}
Simple estimates for i.i.d.\ random variables show that $\tilde \pi_N(x_{N/2}\ge  N/2) \ge 1/2$, and therefore, using the invariance of $\tilde \pi_N$:
\begin{equation}
 \bbE \left[\max(\eta_k(s),0)^2\right]\le  2\,\tilde \pi_N(\eta_k^2) =\frac{2\int u^2 e^{-V(u)+\lambda u} \dd u}{\int e^{-V(u)+\lambda u} \dd u}.
\end{equation}

\end{proof}

\section{A first upper bound and the spectral gap}\label{Sec:Upper}

In this section, we establish an upper bound on the total-variation distance to equilibrium that holds for \emph{all} $N\ge 2$. From this bound we will derive the value of the spectral gap of the generator. This upper bound is sharp enough to catch the order of the mixing time when $N\to\infty$ but not the right prefactor: this will be sharpened in the next section.
The main result of this section is formulated as follows. 
For a probability distribution $\nu$ on $\gO_N$ we let $B(\nu)$ denote the following quantity
\begin{equation}\label{eq:bnu}
B(\nu):=\mintwo{x\sim \nu}{x'\sim \pi_N} \sqrt{ \sum_{k=1}^{N-1} \bE\left[  |x_k- x'_k|\right]^2},
\end{equation} 
where $\bE$ denotes the expectation with respect to a coupling of $(x,x')$ with marginals $\nu$ and $\pi_N$, and the minimum is taken over all such couplings.  
\begin{proposition}\label{Prop:RW}
There exists a constant $C>0$ such that for any distribution $\nu$ on $\gO_N$, all $t\ge C \log N$ and all $N\ge 2$
\begin{equation}
\| P_t^{\nu} - \pi_N \|_{TV} \le C  \left( N^{1/2} B(\nu) t^{C} e^{-\gl_N t}+ Ne^{-t}\right)\;,
\end{equation}
where $\gl_N=1-\cos(\pi/N)$. 
\end{proposition}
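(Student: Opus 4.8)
The plan is to bound the distance to equilibrium by a coalescence probability under a well-chosen coupling, and to extract the rate $\gl_N$ from the fact that the first moments of the coordinates evolve \emph{exactly} according to the discrete heat equation with Dirichlet boundary conditions. \textbf{Step 1: reduction to ordered pairs.} I would first use convexity of the total variation distance and stationarity of $\pi_N$ to write $\| P_t^{\nu} - \pi_N \|_{TV} \le \bE_{(x,x')\sim\gamma}[\| P^x_t - P^{x'}_t \|_{TV}]$, where $\gamma$ achieves the infimum in \eqref{eq:bnu}. For fixed $(x,x')$, setting $y=\min(x,x')$ and $z=\max(x,x')\in\Omega_N$ gives $y\le x\le z$ and $y\le x'\le z$, so by the triangle inequality it suffices to bound $\| P^{y}_t - P^{z}_t \|_{TV}$, $\| P^{y}_t - P^{x}_t \|_{TV}$ and $\| P^{x'}_t - P^{z}_t \|_{TV}$, three comparisons of \emph{ordered} initial conditions whose coordinatewise gap is dominated by $(|x_k-x'_k|)_k$. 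If the ordered case produces a bound that is \emph{linear} in the gap vector, of the form $\| P^{y}_t - P^{z}_t \|_{TV}\le C Ne^{-t}+C\sum_k h_k(t)(z_k-y_k)$ for $y\le z$ with $\|\vec h(t)\|_2\le C N^{1/2} t^{C} e^{-\gl_N t}$, then taking $\bE_\gamma$ and applying Cauchy--Schwarz in the form $\sum_k h_k(t)\bE_\gamma|x_k-x'_k|\le \|\vec h(t)\|_2\,B(\nu)$ gives the proposition. So from now on fix ordered $y\le z$ and write $D_k(t):=X^z_k(t)-X^y_k(t)\ge0$.

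\textbf{Step 2: sticky coupling and the heat equation for the mean.} I would run $\bX^{y},\bX^{z}$ under the sticky coupling of Section~\ref{Subsec:sticky}: since $y\le z$ it is monotone, hence $D_k(t)\ge0$, and once the two configurations agree they agree forever, so $\| P^{y}_t - P^{z}_t \|_{TV}\le \sum_{k}\P(D_k(t)>0)$. The crucial observation is that $\vec m(t):=(\bE[D_k(t)])_k$ solves the discrete heat equation \emph{exactly}: the resampling law $\rho_{b,c}$ has mean precisely $(b+c)/2$ (since $\theta_a$ is symmetric), so at an update of coordinate $k$ the conditional expectation of $D_k$ becomes $\tfrac12(D_{k-1}+D_{k+1})$ independently of the local slopes; thus $\tfrac{\dd}{\dd t}\vec m(t)=\cL\vec m(t)$ with $\cL$ acting as in \eqref{Eq:laplace} and $D_0=D_N=0$. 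Expanding the constant vector in the sine eigenbasis of Section~\ref{sec:linear} and using that $\gl_N$ is the least eigenvalue yields $\sum_k\bE[D_k(t)]\le C(\log N)e^{-\gl_N t}\sum_k(z_k-y_k)$ for all $t\ge0$.

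\textbf{Step 3: from the mean to the coalescence probability.} On the event that $k$ has been updated before $t$, let $s$ be its last update time; since $X^y_k,X^z_k$ are constant between updates of $k$, $D_k(t)>0$ iff that update failed to couple, which given the state at $s^-$ has probability $q=\tfrac12\int|\rho^{y}-\rho^{z}|\le C(D_{k-1}(s^-)+D_{k+1}(s^-))(1\vee|X^y_{k+1}(s^-)-X^y_{k-1}(s^-)|^{K})$ by Lemma~\ref{lem:lequ}. Conditioning on the Poisson clock of $k$, which is independent of the state at $s^-$, gives $\P(D_k(t)>0)\le e^{-t}+C\int_0^t e^{-(t-s)}\bE[(D_{k-1}(s)+D_{k+1}(s))(1\vee|\eta^y_k(s)|\vee|\eta^y_{k+1}(s)|)^{K}]\dd s$. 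I would then split the slope factor at a threshold $M(N)$: the bulk part contributes $CM^{K}\bE[D_{k-1}(s)+D_{k+1}(s)]$, which after summing over $k$, inserting Step 2 and integrating against $e^{-(t-s)}$ (using $\gl_N<1$) produces a term $C\sum_k h_k(t)(z_k-y_k)$ with $\|\vec h(t)\|_2\le\mathrm{poly}(N,t)e^{-\gl_N t}$; the tail part is disposed of by Cauchy--Schwarz using bounds on the increments $\eta^y_j(s)$ of the out-of-equilibrium process, obtained by dominating $\bX^y$ by a process started from a tilted product measure and invoking its invariance as in the proof of \eqref{touborne}, together with Corollary~\ref{cor:bazics} and Lemma~\ref{lem:letail}. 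Summing over $k$ and over the three comparisons of Step~1 and taking $\bE_\gamma$ then finishes the proof.

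\textbf{Main obstacle.} The conceptual heart is the exact identity $\tfrac{\dd}{\dd t}\bE[\vec D(t)]=\cL\bE[\vec D(t)]$, valid precisely because we compare ordered configurations (so that the coupled increment of $D_k$ matches a \emph{first} moment of the resampling laws) and responsible for the sharp rate $\gl_N$; this is why the reduction to ordered pairs is essential rather than cosmetic. The technical heart is the control of the polynomial prefactors --- in particular the sizes of the increments $\eta_j(s)$ of the non-equilibrium dynamics at times $s\approx t$, and the precision of the Dirichlet heat-kernel estimate on $\{1,\dots,N-1\}$ --- needed to reach exactly $N^{1/2}t^{C}$; for the identification of the spectral gap via Proposition~\ref{genresult} only the exponential factor $e^{-\gl_N t}$ is relevant, so the bookkeeping of these prefactors can be carried out with considerable slack.
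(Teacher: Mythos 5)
Your proposal shares the essential ingredients of the paper's proof: the sticky coupling, the exact heat-equation evolution of the first moments (Section~\ref{sec:linear}) which is responsible for the sharp rate $\gl_N$, and Lemma~\ref{lem:lequ} to bound the probability that a single update fails to merge. The overall architecture, however, is different. The paper couples $\bX^\nu$ directly with the \emph{stationary} process $\bX^\pi$, using a two-phase coupling: the monotone grand coupling up to a random time $\cT$ (the last integer before $t$ after which every coordinate has been refreshed), which shrinks the envelope gap $A_\ell$ via Lemma~\ref{lem:contraction}, followed by the sticky coupling on $(\cT,t)$ to achieve exact merging. In Lemma~\ref{Lemma:EstimateBruteGap} the failure probability is summed over all updates in $(\cT,t)$, and the polynomial slope factor $(1\vee|c-b|)^K$ of Lemma~\ref{lem:lequ} is systematically evaluated on the \emph{equilibrium} trajectory $\bX^\pi$, so that the event $\cC_\ell$ controlling $\|\nabla\bX^\pi\|_\infty$ can be estimated by stationarity and Corollary~\ref{cor:bazics}. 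You instead reduce, via envelopes, to two ordered \emph{deterministic} initial conditions $y\le z$, run the sticky coupling from time $0$, and integrate the failure probability against the distribution of the last update of each coordinate.

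There is a genuine gap in your Step~3. After the envelope reduction neither $\bX^y$ nor $\bX^z$ is at equilibrium, yet you need tail/moment control on the slopes $\eta^y_j(s)$ (equivalently on $|X^y_{k+1}-X^y_{k-1}|$) uniformly over $s\in[0,t]$ to dispose of the polynomial factor in Lemma~\ref{lem:lequ}. You invoke the tilted-measure domination from the proof of~\eqref{touborne}, but that argument is tailored to the specific initial law $\hat\pi_N$, whose increments are $\preccurlyeq$-dominated by a stationary tilted product measure. For an arbitrary deterministic $y=\min(x,x')$ the slopes $\nabla y$ can be arbitrarily large (nothing bounds $\|x\|_\infty$, and indeed $B(\nu)$ is allowed to be as large as one likes, since the proposition must apply to compactly supported $\nu$ in order to identify the spectral gap via Proposition~\ref{genresult}), and there is no stationary measure $\preccurlyeq$-dominating $\delta_y$. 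So the slope control -- which is the technical heart of both arguments -- is not actually obtained. This is precisely the difficulty the paper sidesteps by keeping one of the two coupled trajectories equal in law to $\pi_N$ at all times; after reducing to pointwise ordered configurations, that structure is lost. A secondary, smaller issue: your threshold-and-Cauchy--Schwarz decomposition would require second moments $\E[D_k(s)^2]$, whereas Step~2 supplies only first moments; one can fall back on the deterministic bound $D_k\le\sum_j(z_j-y_j)$, but this again needs the slope tails (with a threshold $M$ roughly of order $t$) to be available, so it runs into the same obstacle.
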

Before giving the proof of Proposition \ref{Prop:RW} we describe some of its consequences for the spectral gap and the mixing time.

\subsection{Proof of Theorem \ref{Th:gap}}
The upper bound in Proposition \ref{Prop:RW}
is valid for all $N\ge 2$ and for all initial distributions $\nu$, without restrictions on the maximal height. In particular, it
allows us to identify the spectral gap of the generator and prove Theorem \ref{Th:gap}.
We already saw that $f_N$ is an eigenfunction of $-\cL$ associated with $\gl_N$. 
It remains to check that the latter is indeed the spectral gap of $\cL$. Using Proposition \ref{genresult},
it is sufficient to check that for any compactly supported distribution $\nu$ 
$$\limsup_{t\to \infty} \frac{1}{t}\log \| P^{\nu}_t- \pi_N \|_{TV}
\le -\gl_N.$$
This follows from Proposition \ref{Prop:RW} since $B(\nu)<\infty$ if $\nu$ has compact support.

\subsection{A first upper bound on the mixing time}
From the considerations in Section \ref{sec:linear} we obtain the following useful contraction bounds. 

\begin{lemma}\label{lem:contraction}
For any $x,y\in\gO_N$, for all $t\ge 0$: 
\begin{equation}\label{Eq:BdHeat}
\left( \sum_{k=1}^{N-1} \bE[X^x_k(t) - X^y_k(t)] \right)^2 \le  N
e^{-2\gl_N t}\sum_{k=1}^{N-1} (x_k - y_k)^2\;,
\end{equation}
where $\bE$ denotes the expectation with respect to an arbitrary coupling of $\bX^x(t)$ and $\bX^y(t)$.   
Moreover, for any distribution $\nu$ on $\gO_N$ and  $t\ge 0$, the quantity defined in \eqref{eq:bnu} satisfies
\begin{equation}\label{Eq:BdHeat21}
B(P_t^\nu)\le B(\nu) e^{-\gl_N t}.
\end{equation}
\end{lemma}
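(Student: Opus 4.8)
The plan is to exploit the fact, established in Section~\ref{sec:linear}, that linear functions form an invariant subspace for $\cL$ on which $-\cL$ acts as the discrete Dirichlet Laplacian on $\{1,\dots,N-1\}$, diagonalized by the sine basis $f_N^{(j)}$ with eigenvalues $\gl_N^{(j)}=1-\cos(j\pi/N)$. The key observation is that for any coupling $(\bX^x(t),\bX^y(t))$, the vector $v(t):=(\bE[X^x_k(t)-X^y_k(t)])_{k=1}^{N-1}\in\bbR^{N-1}$ evolves \emph{deterministically} under the heat semigroup generated by the Dirichlet Laplacian. Indeed, from \eqref{Eq:laplace} we have $\frac{\dd}{\dd t}\bE[X^x_k(t)-X^y_k(t)] = \bE[\cL g_k(\bX^x(t))-\cL g_k(\bX^y(t))] = \tfrac12\Delta v_k(t)$ (using that $g_k$ is linear so no coupling-dependent noise survives in expectation, with the convention $v_0(t)=v_N(t)=0$), hence $v(t)=e^{\frac{t}{2}\Delta}v(0)$ where $v(0)=(x_k-y_k)_{k=1}^{N-1}$ and $\Delta$ is the Dirichlet Laplacian. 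This is essentially the computation behind the martingale property of $f_N(\bX(t))$ already used in Proposition~\ref{prop:meanandvariance}, applied coordinatewise.

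For the first bound \eqref{Eq:BdHeat}: writing $v(0)=\sum_{j=1}^{N-1} c_j \phi^{(j)}$ in the orthonormal sine basis $\phi^{(j)}_k = \sqrt{2/N}\,\sin(j\pi k/N)$, we get $v(t)=\sum_j c_j e^{-\gl_N^{(j)} t}\phi^{(j)}$, so $\|v(t)\|_2^2 = \sum_j c_j^2 e^{-2\gl_N^{(j)}t} \le e^{-2\gl_N t}\sum_j c_j^2 = e^{-2\gl_N t}\|v(0)\|_2^2$ since $\gl_N=\gl_N^{(1)}$ is the smallest eigenvalue. Then $\big(\sum_{k=1}^{N-1} v_k(t)\big)^2 \le (N-1)\|v(t)\|_2^2 \le N e^{-2\gl_N t}\sum_k(x_k-y_k)^2$ by Cauchy--Schwarz, which is \eqref{Eq:BdHeat}.

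For \eqref{Eq:BdHeat21}: fix $\epsilon>0$ and pick a coupling of $(x,x')\sim(\nu,\pi_N)$ that nearly achieves the infimum defining $B(\nu)$, i.e.\ with $\sum_k \bE[|x_k-x'_k|]^2 \le B(\nu)^2+\epsilon$. Run the monotone grand coupling (or any coupling) of $\bX^x$ and $\bX^{x'}$ from this joint law; since $\pi_N$ is stationary, the time-$t$ marginals are $(P_t^\nu,\pi_N)$, so this induces an admissible coupling for $B(P_t^\nu)$. Now I want $\sum_k \bE[|X^x_k(t)-X^{x'}_k(t)|]^2 \le e^{-2\gl_N t}\sum_k \bE[|x_k-x'_k|]^2$. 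Conditioning on $(x,x')$ and applying \eqref{Eq:BdHeat}-type reasoning is the natural route, but \eqref{Eq:BdHeat} as stated controls only $\big(\sum_k \bE[\cdot]\big)^2$, not $\sum_k \bE[\cdot]^2$; the correct statement is the coordinatewise one: $\bE[X^x_k(t)-X^{x'}_k(t)\mid x,x'] = (e^{\frac t2\Delta}(x-x'))_k$, so the vector $w$ with $w_k = \bE[X^x_k(t)-X^{x'}_k(t)]$ equals $e^{\frac t2\Delta}\bE[x-x']$ (taking expectation over $(x,x')$ too), and for $|\cdot|$ we instead use that by Jensen $\bE[|X^x_k(t)-X^{x'}_k(t)|] \ge |\bE[\cdots]|$, which goes the wrong way. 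The fix is to note $B$ is defined via an infimum over couplings, so it suffices to produce \emph{one} good coupling: use the monotone coupling of Lemma~\ref{lem:mongc} decomposed through $\min,\max$, or more simply observe that $\bE[|X^x_k(t)-X^{x'}_k(t)|]$, when the monotone coupling preserves order on each sign-component, equals $|\bE[X^x_k(t)-X^{x'}_k(t)]|$; then $\sum_k \bE[|X^x_k(t)-X^{x'}_k(t)|]^2 = \|w(t)\|_2^2$ where $w(t)=e^{\frac t2\Delta}w(0)$, $w_k(0)=\bE[|x_k-x'_k|]$ provided the initial coupling is also chosen monotone per component, and then $\|w(t)\|_2^2\le e^{-2\gl_N t}\|w(0)\|_2^2 \le e^{-2\gl_N t}(B(\nu)^2+\epsilon)$. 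Letting $\epsilon\downarrow 0$ gives $B(P_t^\nu)^2 \le e^{-2\gl_N t}B(\nu)^2$.

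The main obstacle is precisely this last bookkeeping point: $B(\nu)$ involves the $\ell^2$-norm of the vector of $L^1$-distances $\bE[|x_k-x'_k|]$, and the semigroup contraction is cleanest for the vector of signed expectations. Reconciling the two requires choosing the coupling so that, coordinatewise, $X^x_k(t)-X^{x'}_k(t)$ does not change sign — which is exactly what the monotone grand coupling of Lemma~\ref{lem:mongc} delivers when the initial coupling is itself realized via $\min(x,x')$ and $\max(x,x')$: then $\bE[|X^x_k(t)-X^{x'}_k(t)|] = \bE[X^{\max}_k(t)] - \bE[X^{\min}_k(t)] = (e^{\frac t2\Delta}(\bE[|x_\cdot - x'_\cdot|]))_k$, and the spectral bound applies directly. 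Everything else is the elementary spectral estimate for $e^{\frac t2\Delta}$ together with Cauchy--Schwarz.
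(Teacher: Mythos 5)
Your proof of \eqref{Eq:BdHeat} is the paper's: the vector $v(t)$ with $v_k(t)=\bE[X^x_k(t)-X^y_k(t)]$ solves $\partial_t v=\tfrac12\Delta v$ by \eqref{Eq:laplace} regardless of the coupling, contracts by $e^{-\gl_N t}$ in $\ell^2$ after expansion in the Dirichlet sine basis, and Cauchy--Schwarz finishes.

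For \eqref{Eq:BdHeat21} the overall plan — pick a (near-)optimal initial coupling of $(\nu,\pi_N)$, pass to the envelopes $Y_k=\max(x_k,x'_k)$, $W_k=\min(x_k,x'_k)$, and run the monotone grand coupling of Lemma~\ref{lem:mongc} for the four processes $\bX^x,\bX^{x'},\bX^Y,\bX^W$ — is also the paper's. However, the claimed ``coordinatewise sign preservation'' and the resulting identity
\begin{equation*}
\bE\big[|X^x_k(t)-X^{x'}_k(t)|\big] = \bE\big[X^Y_k(t)\big] - \bE\big[X^W_k(t)\big]
\end{equation*}
are not correct: Lemma~\ref{lem:mongc} preserves the partial order $\le$ on $\gO_N$, not the sign of each individual coordinate difference, so when $x$ and $x'$ are incomparable the sign of $X^x_k(t)-X^{x'}_k(t)$ can flip under the dynamics. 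What the sandwich $\bX^W(t)\le \bX^x(t),\bX^{x'}(t)\le \bX^Y(t)$ actually gives is the one-sided bound
\begin{equation*}
\bE\big[|X^x_k(t)-X^{x'}_k(t)|\big] \le \bE\big[X^Y_k(t)-X^W_k(t)\big],
\end{equation*}
and this is what the paper uses. Since $B(P_t^\nu)$ is an infimum over couplings, the inequality is all you need: squaring, summing over $k$, and applying the $\ell^2$ heat-semigroup contraction to $\bE[Y-W]=\bE[|x-x'|]$ gives $B(P_t^\nu)^2\le e^{-2\gl_N t}\big(B(\nu)^2+\epsilon\big)$, and $\epsilon\downarrow 0$ concludes. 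So replace the ``$=$'' in your final display by ``$\le$'' and drop the sign-preservation heuristic; with that correction your argument coincides with the paper's proof.
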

\begin{proof}
From \eqref{Eq:laplace} we have
\begin{equation}\label{Eq:Heat} \partial_t a(t,k) = \frac12 \Delta a(t,k)\;,\end{equation}
where $a(t,k) := \bE[X^x_k(t) - X^y_k(t)]$. An orthonormal basis for $\Delta$ on the segment $\{1,\dots,N-1\}$
with Dirichlet boundary condition at $0$ and $N$ is given by the eigenfunctions $\varphi^{(j)}$,  $j=1,\dots,N-1$:
\begin{equation}\label{Eq:eigenfcts}
\varphi_k^{(j)} = \sqrt{\frac2N}\sin\left(\frac{ jk \pi}{N}\right)\,, \quad (\Delta \varphi^{(j)})_k= -2\gl_N^{(j)} \varphi_{k}^{(j)},
\end{equation}
where $\gl_N^{(j)}$ is given in \eqref{defjj}. 
Expanding $a(t,\cdot)$ along this basis one obtains 
$$ \sum_{k=0}^N a(t,k)^2 \le e^{-2\gl_N t} \sum_{k=0}^N a(0,k)^2\;,$$
and the bound \eqref{Eq:BdHeat} follows from the Cauchy-Schwarz inequality. To prove \eqref{Eq:BdHeat21} we argue as follows. By definition of $B(\nu)$ we may choose a coupling $\bP_0$ of $(\nu,\pi)$ such that 
\begin{equation}\label{initcoupling}
\sum_{k=1}^{N-1}\bE_0 \big[ | X^{\nu}_k(0)- X^{\pi}_k(0)|\big]^2 =B(\nu)^2.
\end{equation}
Under this coupling we let $Y$ and $W$ denote the upper and lower enveloppe of $\{ \bX^{\nu}(0),\bX^{\pi}(0)\}$, setting
$Y_k=X_k^{\nu}(0)\vee X_k^\pi(0)$ and $W_k=X_k^{\nu}(0)\wedge X_k^\pi(0)$. We have by definition
$$ \sum_{k=1}^{N-1} \bE_0[Y_k - W_k]^2 = B(\nu)^2\;.$$
Now we couple four Markov chains $[\bX^{\nu}(t), \bX^\pi(t), \bX^{Y}(t),\bX^W(t)]_{t\ge 0}$ using the coupling $\bP_0$ to set the initial condition ($\bX^{Y}(0)=Y$ and $\bX^W(0)=W$ respectively) and using the monotone grand coupling from Section \ref{Sec:GC} for the dynamics. We let $\bP$ denote the joint law.
As the initial conditions are ordered we obtain from Lemma \ref{lem:mongc} that  under $\bP$  for any $t\ge 0$ we have
$$\bX^W(t)\le\bX^\nu(t)\le \bX^Y(t) \quad \text{ and } \quad \bX^W(t)\le\bX^\pi(t)\le \bX^Y(t) .$$ Therefore the argument used to prove \eqref{Eq:BdHeat} implies that
 \begin{align}\label{Eq:BdHeatao}
\sum_{k=1}^{N-1} \bE[|X^\nu_k(t) - X^\pi_k(t)|]^2 &\le
\sum_{k=1}^{N-1} \bE[X^Y_k(t) - X^W_k(t)]^2\nonumber \\ &
\leq e^{-2\gl_N t}\sum_{k=1}^{N-1} \bE[Y_k - W_k]^2\nonumber\\& =e^{-2\gl_N t} B(\nu)^2.
\end{align}
By stationarity of $\pi$, under $\bP$ the distribution of $\bX^\nu(t)$ and $\bX^\pi_k(t)$ are respectively $P_t^\nu$ and $\pi$, and \eqref{Eq:BdHeat21} follows.
\end{proof}

Next, we show  that  Proposition \ref{Prop:RW} provides an upper bound on the mixing time which is of order $N^2\log N$. 
 This bound is off by a factor $4$ with respect to Theorem \ref{th:main1}. In the next section we will refine the proof in order to catch the right prefactor.

\begin{corollary}\label{th:corol}
For any $\delta>0$, for all $\gep \in (0,1)$ and all $N\ge N_0(\gep,\delta)$ sufficiently  large 
$$ T_N(\gep) \le  \frac{2+\delta}{\gl_N}  \log N\;.$$
\end{corollary}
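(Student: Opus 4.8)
The plan is to combine Proposition \ref{Prop:RW} with the contraction estimate from Lemma \ref{lem:contraction}, after controlling $B(\nu)$ for the relevant class of initial distributions. Recall that $d_N(t)$ is the worst-case total variation distance over deterministic initial conditions $x$ with $|x|_\infty\le N$. So it suffices to bound $B(\delta_x)$ uniformly over such $x$, where $\delta_x$ is the Dirac mass at $x$.

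First I would estimate $B(\delta_x)$. By definition \eqref{eq:bnu}, using the trivial coupling that keeps $x$ fixed and samples $x'\sim\pi_N$, we have
\begin{equation*}
B(\delta_x)^2 \le \sum_{k=1}^{N-1}\bigl(|x_k| + \pi_N(|x_k'|)\bigr)^2.
\end{equation*}
Since $|x_k|\le N$ for all $k$ by assumption, and since Corollary \ref{cor:bazics} (specifically \eqref{deviates}) gives $\pi_N(|x_k'|)\le \pi_N(\|x\|_\infty)= O(\sqrt N)$ uniformly in $k$ and $N$, each summand is $O(N^2)$, hence $B(\delta_x)^2 = O(N^3)$, i.e. $B(\delta_x)\le C N^{3/2}$ uniformly over all admissible $x$ and all $N$.

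Next I would plug this into Proposition \ref{Prop:RW}: for $t\ge C\log N$,
\begin{equation*}
\| P_t^x - \pi_N \|_{TV} \le C\bigl( N^{1/2}\cdot C N^{3/2}\, t^{C} e^{-\gl_N t} + N e^{-t}\bigr) = C\bigl( C N^{2}\, t^{C} e^{-\gl_N t} + N e^{-t}\bigr).
\end{equation*}
Now fix $\delta>0$ and $\gep\in(0,1)$ and set $t = t_N := \frac{2+\delta}{\gl_N}\log N$. Since $\gl_N = 1-\cos(\pi/N)\sim \frac{\pi^2}{2N^2}$, we have $t_N\sim \frac{2(2+\delta)}{\pi^2}N^2\log N$, which is certainly $\ge C\log N$ for $N$ large, so the proposition applies. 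For the first term, $N^2 e^{-\gl_N t_N} = N^2\cdot N^{-(2+\delta)} = N^{-\delta}$, and the polynomial factor $t_N^{C}$ is only polynomial in $N$ (times $\log$ factors), so the whole first term is at most $N^{-\delta/2}$ say, for $N$ large; the second term $Ne^{-t_N}$ is doubly exponentially small. Hence $\| P_{t_N}^x - \pi_N\|_{TV}\to 0$ uniformly over admissible $x$, so $d_N(t_N)<\gep$ for $N\ge N_0(\gep,\delta)$, which is exactly $T_N(\gep)\le \frac{2+\delta}{\gl_N}\log N$.

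I do not anticipate a serious obstacle here — the corollary is a direct bookkeeping consequence of Proposition \ref{Prop:RW} once $B(\delta_x)= O(N^{3/2})$ is established, and that bound in turn is immediate from the height restriction $|x|_\infty\le N$ together with the equilibrium tail estimate \eqref{deviates}. The only point requiring a little care is checking that the polynomial prefactor $t^C$ in Proposition \ref{Prop:RW} is harmless: since $t_N$ is polynomial in $N$, $t_N^C$ contributes only a fixed power of $N$ (and a power of $\log N$), which is absorbed by the gain $N^{-\delta}$ coming from choosing the exponent $2+\delta$ rather than $2$. This is precisely why the statement needs the extra $\delta>0$ and why the same argument cannot yet reach the sharp constant $\frac{1}{2\gl_N}\log N$ of Theorem \ref{th:main1} — closing that gap is deferred to the next section.
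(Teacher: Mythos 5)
You have identified the right ingredients (the uniform bound $B(\delta_x)\le CN^{3/2}$ and Proposition \ref{Prop:RW}), but the last step of your bookkeeping is wrong, and the error is exactly the one the paper's two-step argument is designed to fix. After substituting $t_N = \frac{2+\delta}{\gl_N}\log N$ into Proposition \ref{Prop:RW}, the first term becomes
\begin{equation*}
C\,N^{2}\, t_N^{C}\, e^{-\gl_N t_N}
= C\,N^{2}\, t_N^{C}\, N^{-(2+\delta)}
= C\, t_N^{C}\, N^{-\delta}.
\end{equation*}
But $t_N \sim \tfrac{2(2+\delta)}{\pi^2}N^{2}\log N$, so $t_N^C \asymp N^{2C}(\log N)^C$, and the product is $\Theta\bigl(N^{2C-\delta}(\log N)^C\bigr)$. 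Since $C$ is a fixed constant determined by the potential (the proof of Lemma \ref{Lemma:EstimateBruteGap} produces an exponent of the order $2K+1$, so $C\ge 3$), this blows up for small $\delta$. Your claim that $t_N^C$ is ``absorbed by the gain $N^{-\delta}$'' has the inequality backwards: a fixed positive power $N^{2C}$ cannot be offset by $N^{-\delta}$ with $\delta>0$ arbitrary. What your direct argument actually yields is $T_N(\gep)\le \frac{2+2C+\delta}{\gl_N}\log N$, i.e.\ the constant $C'$ the paper mentions as the crude bound, not $2+\delta$.

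The paper's proof circumvents this by a two-step decomposition. Write $s = (s-t)+t$ with $s=\frac{2+\delta}{\gl_N}\log N$ and $t=(\log N)^2$. During the long initial interval $[0,s-t]$ one does not invoke Proposition \ref{Prop:RW} at all, but instead uses the pure contraction estimate \eqref{Eq:BdHeat21} to get $B(P^x_{s-t})\le CN^{3/2}e^{-\gl_N(s-t)}$. Then Proposition \ref{Prop:RW} is applied only over the short window $[s-t,s]$, i.e.\ with $\nu=P^x_{s-t}$ and time $t=(\log N)^2$, so that $t^C$ is merely polylogarithmic. The resulting bound is $C\bigl(N^2 t^C e^{-\gl_N s}+Ne^{-t}\bigr) = C\bigl(N^{-\delta}(\log N)^{2C}+Ne^{-(\log N)^2}\bigr)\to 0$. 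You need this splitting: the point is to separate the role of the contraction (which runs for the full time $s-t$ and contributes the full exponential factor $e^{-\gl_N(s-t)}$) from the role of the sticky-coupling merging estimate (which is only efficient over a short window, where its polynomial prefactor is cheap).
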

\begin{remark}\label{rem:uniformity}
 An important observation here  which is used in Section \ref{sec:censor} is that not only the above estimate is  also valid when the boundary condition $x_N=0$ is replaced by $x_N=hN$ (cf. Remark \ref{rem:tilt}), but it is uniform when $h$ takes value in a compact interval (say $[-C,C]$ for some constant $C>0$). Checking this uniformity is a tedious but rather straightforward procedure. We have chosen to omit it in the proof, but the reader can check that it boils down to making sure that all technical estimates in Section \ref{sec:tecnos} are indeed uniform in this sense.  A second observation (which can, this time, immediately be checked from the proof) is that if the bound on $\|x\|_{\infty}$ is chosen to be $N^{\alpha}$, with $\alpha>1/2$ then the corresponding $\gep$-mixing time is smaller than $\frac{1+\alpha+\delta}{\gl_N}  \log N\;.$
Let us also remark that Corollary \ref{th:corol} is sufficient to establish the so-called pre-cutoff phenomenon, namely the fact that
$$
 \limsup_{N\to\infty}\frac{T_N(\gep)}{T_N(1-\gep)}
$$
is uniformly bounded for $\gep\in(0,1/2)$. 
\end{remark}

\begin{proof}
Consider an initial condition $x\in \gO_N$ such that $\|x\|_\infty \le N$. We have $B(\delta_x)\le C N^{3/2}$ so that a direct application of Proposition \ref{Prop:RW} would yield $T_N(\gep) \le \frac{C'}{\lambda_N} \log N$ for some constant $C'$ depending on $C$ and $N$ large enough. However one can sharpen this upper bound as follows.\\
By \eqref{Eq:BdHeat21} we have for $s\ge t$, $B(P^x_{s-t})\le CN^{3/2} e^{-\gl_N(s-t)}$.
Now using Proposition \ref{Prop:RW} for $\nu=P^x_{s-t}$ we obtain for some new constant $C$:
\begin{equation}
 \| P_s^{x} - \pi_N \|_{TV}= \| P_t^{P^x_{s-t}} - \pi_N \|_{TV} \le C  \left( N^{2} t^{C} e^{-\gl_N s}+ Ne^{-t}\right). 
\end{equation}
Then choosing $s=  \frac{2+\delta}{\gl_N}  \log N$ and $t=(\log N)^2$ we can conclude.
\end{proof}

\subsection{Proof of Proposition \ref{Prop:RW}}
The rest of this subsection is devoted to the proof of Proposition \ref{Prop:RW}. 
We are going to perform the proof for $N\ge 3$ (we require $\gl_N<1$ in \eqref{Eq:EstimateTell}). For $N=2$ since the system equilibrates after one update, we have 
 \begin{equation}
  \| P_t^{\nu} - \pi_2 \|_{TV}\le e^{-t}.
 \end{equation}
Moreover, since the total variation distance $\| P_t^{\nu} - \pi_N \|_{TV} $ is monotone as a function of $t$, we may assume without loss of generality that $t$ is an integer.

Fix $t \in \bbN$ and a distribution $\nu$ on $\gO_N$. For notational simplicity we often write $\pi$ instead of $\pi_N$. We are going to construct a (non-Markovian) coupling  $(\bX^{\nu}(s), \bX^\pi(s))_{s\in [0,t]}$, for the two processes starting with respective distributions $\nu$ and $\pi$. We let $\bbP_t$ denote the law of this coupling.
First we couple the initial conditions $\bX^{\nu}(0), \bX^\pi(0)$ in such a way that \eqref{initcoupling} holds. 
The second ingredient for our coupling is a set of independent, rate $1$, Poisson clocks $(\tau_{k})^{N-1}_{k=1}$ (which are independent of the initial conditions) indexed by coordinates from $1$ to $N-1$ (each $\tau_k$ is considered as a subset of $\bbR_+$ ). These clocks determine the update times for the coordinates of our processes. We then define the random time $\cT$ as the largest integer $\ell$ before $t$ such that all the Poisson clocks $\tau_k$ have rung at least once on $(\ell,t)$. More formally, we set (here $\sup\emptyset=0$)
\begin{equation}
 \cT:= \sup \left\{ \ell\in \lint 0, t\rint \ : \forall k, \ 
 \tau_{k}\cap (\ell,t) \ne \emptyset \right\}.
\end{equation}
Note that we have
\begin{equation}
 \bbP_t(t-\cT=\ell)=\begin{cases}
 		(1-e^{-1})^N &\text{ if }\ell=1\;,\\
		(1-e^{-\ell})^N - (1-e^{-\ell+1})^N &\text{ if }\ell\in \lint 2, t-1\rint\;,\\
		1 - (1-e^{-t+1})^N &\text{ if } \ell=t\;.
               \end{cases}
\end{equation}
Observe that there exists $C>0$ such that for all $N\ge 3$ and all $\ell \ge 0$
\begin{equation}\label{Eq:SimpleTell}
\bbP_t(t-\cT\ge \ell) \le C N e^{-\ell}\;.
\end{equation}
Thus, using the fact that $\bbE[f(Z)]=f(0)+\sum_{k=1}^\infty[f(k)-f(k-1)]\bbP(Z\geq k)$ for non negative integer valued random variables $Z$ and any function $f$, provided that the sum in the r.h.s.\ converges, one has   
\begin{equation}\label{Eq:EstimateTell}
\bbE \left[e^{\gl_N(t-\cT)}\right]\leq 1+ 2CN\sum_{\ell=1}^t  \gl_N e^{\ell (\gl_N-1)}\le C',  
\end{equation}
for some constant $C'>0$. 

Now we perform our coupling as follows
\begin{itemize}
 \item For $s\le \cT$, we use the monotone coupling of Subsection \ref{Sec:GC}\ : At each update time we draw a uniform variable  $U$ and the updated values of $X^{\nu}_k$,  $X^{\pi}_k$ are constructed composing $U$ with the inverse of the conditional distribution function. 
 \item For $s> \cT$, we use the sticky coupling of Subsection \ref{Subsec:sticky}\ :  At each update time we couple $X^{\nu}_k$ \text{ and } $X^{\pi}_k$ with maximal probability.
\end{itemize}

To prove Proposition \ref{Prop:RW}, we introduce the r.v. 
\begin{equation}
A_s:= \sum_{k=1}^{N-1} |X^{x}_k(s)-X^{\pi}_k(s)|\;,\quad s\in [0,t]\;.
\end{equation}

\begin{lemma}\label{Lemma:EstimateBruteGap}
There exist $c',C'>0$ such that for all $N\ge 2$, all $t\ge  \log N$ and all $\ell \in \lint 1,t-1\rint$ we have
\begin{equation}
 \bbP_t\left(  \bX^{\nu}(t)\ne \bX^\pi(t) \ | \cT=\ell \right) \le C'\Big( e^{-c' t^2} + t^{2K+1} \E_t[A_\ell \,|\, \cT = \ell] \Big)\;.
\end{equation}
\end{lemma}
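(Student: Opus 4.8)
The plan is to bound the probability that the two processes $\bX^{\nu}$ and $\bX^{\pi}$ are not yet merged at time $t$, conditionally on $\cT = \ell$, by tracking what happens on the interval $(\ell, t]$ under the sticky coupling. Since on $(\ell,t]$ each coordinate is updated at least once (by definition of $\cT$), there is ample opportunity to couple; the issue is to control the chance that a coupling attempt at coordinate $k$ fails, which by the sticky coupling construction happens with probability $q = q(s,k)$ where $q$ is the total-variation distance between $\rho_{X^x_{k-1},X^x_{k+1}}$ and $\rho_{X^\pi_{k-1},X^\pi_{k+1}}$. By Lemma \ref{lem:lequ}, $q \le C\,\Delta\,(1\vee|c-b|^K)$ with $\Delta$ of the order of $|X^x_{k\pm1} - X^\pi_{k\pm1}|$, and $|c-b|$ of the order of the local gradients. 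So the failure probability is small precisely when the discrepancy $A_s$ is small and the interface is not too steep.

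The key steps, in order: (1) On the event $\{\cT = \ell\}$ we work on $(\ell,t]$; first record that under the monotone coupling on $[0,\ell]$ the two processes stay ordered, so $A_s$ is a monotone-type quantity, and in particular (via Lemma \ref{lem:contraction}-type reasoning, or directly from \eqref{Eq:laplace} applied to $\bE[X^x_k(s)-X^\pi_k(s)]$) we can relate $\E_t[A_\ell]$ to expectations of $A_s$ for $s>\ell$; the sticky coupling is also monotone, so the processes remain ordered on $(\ell,t]$ too and $A_s = \sum_k (X^{\pi}_k(s) - X^{x}_k(s))$ up to sign, hence $\E_t[A_s]$ is controlled by $\E_t[A_\ell]$ via the heat-kernel contraction for the difference of means. (2) Introduce the good event $G$ that all heights and all gradients of both processes stay bounded by a polynomial in $t$ (say $\|\bX\|_\infty \lesssim t^2$ and $\max_i|\eta_i| \lesssim t\log t$ or similar) throughout $[\ell,t]$; using Corollary \ref{cor:bazics} for the equilibrium process $\bX^\pi$, the monotone coupling to dominate $\bX^x$ by enveloping processes started from extreme conditions, and crude union bounds over the $O(Nt)$ updates, show $\bbP_t(G^c \mid \cT = \ell) \le C' e^{-c' t^2}$ — this accounts for the first term in the stated bound. (3) On $G$, each coupling attempt at a given update time fails with probability at most $C\,t^{K}\cdot(\text{current local discrepancy})$; summing the (at least one, but we only need a bound) update events across the $N-1$ coordinates and integrating over $(\ell,t]$, a union bound gives
\begin{equation*}
\bbP_t\big( \bX^\nu(t) \ne \bX^\pi(t),\, G \,\big|\, \cT = \ell\big) \le C\, t^{K} \int_\ell^t \sum_{k=1}^{N-1} \E_t\big[ |X^x_{k-1}(s) - X^\pi_{k-1}(s)| + |X^x_{k+1}(s) - X^\pi_{k+1}(s)| \,;\, G \big] \dd s,
\end{equation*}
which is $\le C\,t^{K}\cdot t \cdot \sup_{s\in[\ell,t]} \E_t[A_s]$, and by step (1) this is $\le C' t^{K+1}\cdot t^{K}\cdot\E_t[A_\ell \mid \cT=\ell]$ once we pay another factor $t^K$ for the bound $\|x\|^K$ appearing inside $q$ that we absorbed into $G$ (giving the $t^{2K+1}$ exponent). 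Collecting (2) and (3) yields the claim.

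The main obstacle I expect is step (1)/(3) bookkeeping: one must be careful that failing to couple at one update does not ruin monotonicity (it does not, because the sticky coupling is monotone by construction, as noted after \eqref{secondorder} and \eqref{comparr}), and that after a failed attempt the discrepancy $A_s$ can only grow in a controlled way — in fact one should argue that on $G$ the total discrepancy stays polynomially bounded and, more importantly, that its \emph{expectation} contracts like the discrete heat flow so that $\sup_{s\le t}\E_t[A_s\,;\,G] \lesssim t^{K}\E_t[A_\ell\mid\cT=\ell]$, where one factor absorbs the possible transient growth before contraction sets in. The delicate point is combining the conditioning on $\{\cT=\ell\}$ (which correlates the clocks with the dynamics on $(\ell,t]$) with the coupling estimates; the cleanest route is to note that conditioning on $\cT=\ell$ only adds the information that each clock rings in $(\ell,t)$, which can be handled by enlarging the failure-probability constants, since an extra forced ring only helps coupling. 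The polynomial powers of $t$ are not optimized here — they are harmless because in the application $t \asymp \log N$ and these factors get beaten by the exponential gain $e^{-\gl_N t}$ coming from the contraction of $\E_t[A_\ell]$.
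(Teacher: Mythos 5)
Your high-level plan is in the right family as the paper's proof (failure probability at each update bounded via Lemma \ref{lem:lequ}, a good event controlling gradients, a union bound over updates), but two specific choices create genuine gaps.

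First, the good event. You propose to bound heights and gradients of \emph{both} processes by polynomials in $t$ and claim probability $1-Ce^{-c't^2}$. But Lemma \ref{Lemma:EstimateBruteGap} must hold for an arbitrary initial distribution $\nu$ --- it is fed into Proposition \ref{Prop:RW}, which is stated for all $\nu$ and is in turn used (via compactly supported $\nu$) to prove Theorem \ref{Th:gap}. For such $\nu$, $\bX^\nu$ can start arbitrarily high, and no envelope/domination argument gives a uniform polynomial bound on $\|\bX^\nu\|_\infty$ or $\|\nabla\bX^\nu\|_\infty$ with the required probability. The paper's event $\cC_\ell$ only constrains $\|\nabla\bX^\pi_s\|_\infty\le t^2$, which suffices because in Lemma \ref{lem:lequ} the factor $(1\vee|c-b|^K)$ can be taken with $(b,c)$ from either configuration, so one only needs control of the \emph{stationary} process, for which stationarity plus Corollary \ref{cor:bazics} gives the $e^{-c't^2}$ bound. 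Restrict your $G$ to $\bX^\pi$ only.

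Second, the way discrepancies at times $s>\ell$ are handled. You propose to bound $\sup_{s\in[\ell,t]}\E_t[A_s]$ by (a polynomial in $t$ times) $\E_t[A_\ell\mid\cT=\ell]$, invoking heat-flow contraction or the super-martingale property. This is delicate: after a failed coupling attempt the sticky coupling draws the two new values independently from $\nu_1,\nu_3$, and the local discrepancy can increase; moreover the conditioning on $\{\cT=\ell\}$ correlates the clock process on $(\ell,t)$ with the trajectories, so blanket super-martingale bounds do not apply directly and need care. The paper avoids all of this with a cleaner pathwise observation: let $\tau$ be the first unsuccessful update after $\ell$; on $\{\tau\ge t_k^{(i)}\}$ every update prior to $t_k^{(i)}$ was successful, so the coordinatewise discrepancies have only \emph{decreased} since time $\ell$, i.e.\ $\Delta_k(t_k^{(i)}-)\le\Delta_k(\ell)$. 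One therefore union-bounds over the event $\{\tau=t_k^{(i)}\}\cap\cC^{i,k}_\ell$ and uses Lemma \ref{lem:lequ} directly with the time-$\ell$ discrepancy, never needing to track $A_s$ for $s>\ell$. The conditioning issue is handled by working with the larger $\sigma$-field $\tilde\cF$ generated by the update times and the trajectory up to $\cT$, under which the update variables are still conditionally independent, and then separately showing $\bbE_t[\max_k n_k\mid\cT=\ell]\le Ct$. That one extra factor of $t$, together with the $t^{2K}$ coming from $\|\nabla\bX^\pi\|_\infty\le t^2$ raised to the power $K$, is exactly how the paper produces $t^{2K+1}$; your bookkeeping arrives at the same exponent by a different and looser accounting, which I would not trust without the pathwise $\Delta_k(t_k^{(i)}-)\le\Delta_k(\ell)$ argument.
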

\begin{proof}
For every $k\in \lint 1,N-1\rint$, let us denote by $(t_k^{(i)})_{i=1}^{n_k}$ the ordered set of update times occurring at site $k$ on the time-interval $(\cT,t)$. Let $\tilde\cF$ be the sigma-field generated by all the $(t_k^{(i)})_{i=1}^{n_k}$, $k\in \lint 1,N-1\rint$, and by the processes $\bX^{\nu},\bX^\pi$ up to time $\cT$. Denote by $\tilde{\P}_t$ the associated conditional probability. We are going to show that, for some constant $C>0$, on the event $\{\cT = \ell\}$ we have
\begin{equation}\label{eq:timecondit}
 \tilde\P_t(\bX^{\nu}(t)\ne \bX^\pi(t)) \le C\left(\max_{k\in \lint 1,N-1\rint} n_k\right) \left(e^{-c't^2}+t^{2K} A_{\ell}\right)\;.
\end{equation}
and that
\begin{equation}\label{Eq:Maxcondit}
\bbE_t \Big[\max_{k\in \lint 1,N-1\rint} n_k \ |\ \cT=\ell\Big] \le  Ct\;.
\end{equation}

Let us first show how we conclude from \eqref{eq:timecondit} and \eqref{Eq:Maxcondit}. Since $\{\cT=\ell\}$ is $\tilde\cF$-measurable we have 
\begin{equation}
 \bbP_t\left(\bX^{\nu}(t)\ne \bX^\pi(t) \ | \ \cT=\ell \right) \le C \,\bbE_t \left[  \max_{k\in \lint 1,N-1\rint} n_k\left(e^{-c't^2}+  t^{2K}A_{\ell}\right) \ | \ \cT=\ell \right]
\end{equation}
Observe that $A_{\ell}$ and $\max_{k\in \lint 1,N-1\rint} n_k$ are independent under $\bbP_t( \cdot \ | \ \cT=\ell )$. Therefore we get
$$ \bbP_t\left(\bX^{\nu}(t)\ne \bX^\pi(t) \ | \ \cT=\ell \right) \le C'\big(e^{-c'' t^2} + t^{2K+1} \E_t[A_\ell \,|\, \cT = \ell]\big)\;,$$
as required.

Now let us prove \eqref{eq:timecondit}. We introduce the event
\begin{equation}\begin{split}
\cC_\ell&:= \{ \forall s\in [\ell,t]: \|\nabla \bX^{\pi}_{s} \|_{\infty}\le t^2  \}.
\end{split}
\end{equation}
We note that $\cC_\ell = \cap_{i,k} \cC^{i,k}_\ell$ where
\begin{equation}\begin{split}
\cC^{i,k}_\ell&:= \{ \forall s\in [\ell,t_k^{(i)}): \|\nabla \bX^{\pi}_{s} \|_{\infty}\le t^2  \}.
\end{split}
\end{equation}
We say that the update at time $t_k^{(i)}$ is successful if $X^{\nu}_k(t_k^{(i)})=X^{\pi}_k(t_k^{(i)})$.
We let $\tau$ be the time of the first unsuccessful update among the update times $(t_k^{(i)})_{i=1}^{n_k}$, $k\in\lint 1,N-1\rint$. If all the updates are successful, we set $\tau := t$. We have
$$ \{\bX^{\nu}(t) \ne \bX^\pi(t)\}\cap \{\cT=\ell\} \subset \{\tau < t\}\cap \{\cT=\ell\}\;.$$
Indeed, on the event $\{\tau = t\}\cap\{\cT=\ell\}$, there is at least one update per coordinate on $(\ell,t)$ and all the updates are successful so that the two processes merge by time $t$.
Then we write
\begin{align*}
\tilde\P_t(\tau < t) &= \tilde\P_t(\cup_{i,k} \{\tau = t_k^{(i)}\})\\
&\le \tilde\P_t(\cup_{i,k} (\cC^{i,k}_\ell)^\cc) + \tilde\P_t(\cup_{i,k} \{\tau = t_k^{(i)}\}\cap \cC^{i,k}_\ell)\\
&\le \tilde\P_t(\cC^\cc_\ell) + \sum_{i,k}\tilde\P_t(\{\tau = t_k^{(i)}\}\cap \cC^{i,k}_\ell)\;.
\end{align*}
Using Lemma \ref{lem:lequ}, we have
\begin{align*}
&\tilde\P_t(\{\tau = t_k^{(i)}\}\cap \cC_\ell^{i,k})\\
&= \tilde\E_t\big[\tilde\P_t\big(X_k^{\nu}(t_k^{(i)}) \ne X_k^\pi(t_k^{(i)}) \,|\, \cF_{t_k^{(i)}-}\big)\ind_{\{\tau \ge t_k^{(i)}\}\cap \cC^{i,k}_\ell}\big]\\
&\le \tilde\E_t\big[C \max(1, \|\nabla \bX^{\pi}(t_k^{(i)}-) \|_{\infty})^K    \Delta_k(t_k^{(i)}-) \ind_{\{\tau \ge t_k^{(i)}\}\cap \cC^{i,k}_\ell}\big]\;,
\end{align*}
where
$$ 2\Delta_k(s) := |X^{\pi}_{k-1}(s)-X^{\nu}_{k-1}(s)|+|X^{\pi}_{k+1}(s)-X^{\nu}_{k+1}(s)| \;.$$
On the event $\{\tau \ge t_k^{(i)}\}$, all the updates are successful up to time $t_k^{(i)}$ so that
$$ \Delta_k(t_k^{(i)}-) \le \Delta_k(\ell)\;.$$
Consequently, we have
\begin{align*}
\tilde\P_t(\{\tau = t_k^{(i)}\}\cap \cC^{i,k}_\ell) &\le C t^{2K}  \Delta_k(\ell) \;.
\end{align*}
Putting everything together, we find that on the event $\{\cT=\ell\}$ (which is $\tilde\cF$-measurable):
\begin{align*}
\tilde\P_t\big(\bX^{\nu}(t)\ne \bX^\pi(t)\big) &\le  \tilde\P_t(\{\tau < t\})\\
&\le \tilde\P_t(\cC^\cc_\ell) + (\max_{k\in \lint 1,N-1\rint} n_k) C' t^{2K} A_{\ell}\;.
\end{align*}
To bound the first term, we use stationarity and Corollary \ref{cor:bazics} to obtain
\begin{align*}
\tilde\P_t(\cC^\cc_\ell) &\le \left(\max_{k\in \lint 1,N-1\rint} n_k\right)\, \pi_N(\max_{i\in \lint 1,N\rint} |\eta_i| > t^2/2)\\
&\le \left(\max_{k\in \lint 1,N-1\rint} n_k\right) N e^{-c't^2}\;.
\end{align*}
Since $t\ge \log N$, this yields \eqref{eq:timecondit}.\\

Let us now estimate the conditional expectation of $\max_{k\in \lint 1,N-1\rint} n_k$. Let us first describe the conditional law of the $n_k$'s. Let $G$ be the random number of Poisson clocks that have not rung on $(\ell+1,t)$. On the event $\{\cT=\ell\}$ this number is positive. Given $\{\cT=\ell\}$ the $n_k$'s can be obtained as $G$ i.i.d.~Poisson r.v.~of parameter $1$ conditioned to be positive and $N-1-G$ i.i.d.~r.v.~which are the independent sum of a Poisson r.v.~of parameter $1$ and a Poisson r.v.~of parameter $t-\ell-1$ conditioned to be positive.\\
It is simple to check that the law of a Poisson r.v.~of parameter $q$ conditioned to be positive is stochastically increasing with $q$. As a consequence of these observations, we deduce that $\max_{k\in \lint 1,N-1\rint} n_k$, conditionally given $\{\cT=\ell\}$, is stochastically smaller than $\max_{k\in \lint 1,N-1\rint} Z_k$ where $Z_k$ are i.i.d.~r.v.~obtained as the independent sum of a Poisson r.v.~of parameter $1$ and a Poisson r.v.~of parameter $t-1$ conditioned to be positive. Recalling that a Poisson random variable $W$ with parameter $\gl$ satisfies $\bbP(W\geq k)\leq e^{-k(\log(k/\gl)-1)}$, and that $t\geq \log N$, it is not difficult to check that
\begin{equation}\label{eq:poissonmax}
 \E[\max_{k\in \lint 1,N-1\rint} Z_k] \le Ct\;,
 \end{equation}
for some new constant $C>0$. This implies \eqref{Eq:Maxcondit}.
\end{proof}
We now proceed to the proof of our proposition.
\begin{proof}[Proof of Proposition \ref{Prop:RW}]
We start with an upper bound on the expectation of $A_\ell$ given $\{\cT=\ell\}$ for any $\ell \in \lint 0,t-1\rint$. Since up to time $\cT$ we use the monotone grand coupling, arguing as in the proof of Lemma \ref{lem:contraction}, setting $Y_k=(X^\nu_k\vee X^\pi_k)(0)$ and $W_k=(X^\nu_k\wedge X^\pi_k)(0)$
one obtains 
\begin{align*}
\bbE_t \left[ A_\ell \ |\ \cT=\ell \right]^2 &\le \Big(\sum_{k=1}^{N-1} \E_t[X^Y_k(\ell) -X^W_k(\ell)\ |\ \cT=\ell]\Big)^2\\
&\le  N \sum_{k=1}^{N-1} \E_t[X^Y_k(\ell) -X^W_k(\ell)\ |\ \cT=\ell]^2\\
&\le  N e^{-2\gl_N \ell} \sum_{k=1}^{N-1} \E_t[Y_k -W_k]^2.
\end{align*}
Therefore, by \eqref{initcoupling}
\begin{equation}\label{Eq:Bell}
\bbE_t \left[ A_\ell \ |\ \cT=\ell \right] \le \sqrt{N} B(\nu) e^{-\gl_N \ell}\;.
\end{equation}
By definition of the total-variation distance we have
\begin{align*} \| P_t^{\nu} - \pi_N \|_{TV} &\le
 \P_t(\bX^{\nu}(t)\ne \bX^\pi(t)) 
 \\&= \sum_{\ell=0}^{t-1} \P_t(\bX^{\nu}(t)\ne \bX^\pi(t) \ |\ \cT=\ell) \P_t(\cT=\ell)\;.
 \end{align*}
We treat separately the case $\ell = 0$ (recall that $\cT=0$ on the event where not all Poisson clocks have rung on $(0,t)$). Using \eqref{Eq:SimpleTell} we have
$$ \P_t(\bX^{\nu}(t)\ne \bX^\pi(t) \ |\ \cT=0) \P_t(\cT=0) \le \P_t(\cT=0) \le CNe^{-t}\;.$$
On the other hand, combining, \eqref{Eq:Bell} and Lemma \ref{Lemma:EstimateBruteGap} we find
\begin{multline*}
\sum_{\ell=1}^{t-1} \P_t(\bX^{\nu}(t)\ne \bX^\pi(t) \ |\ \cT=\ell) \P_t(\cT=\ell) \\ \le Ce^{-c't^2} + C'' N^{1/2} B(\nu) t^{2K+1} e^{-\gl_N t} \bbE \left[ e^{\gl_N (t- \cT)} \right]\;.
\end{multline*}
and we can conclude using \eqref{Eq:EstimateTell}.
\end{proof}

\section{Upper bound on the mixing time}\label{Sec:UpperTight}

\subsection{Proof strategy}
The overall strategy is similar to that in \cite{CLL}. First, we show that the `maximal' evolution gets close to equilibrium by time $\log N / (2\gl_N)$. More precisely, let $\nu^\wedge$ denote the equilibrium measure $\pi$ conditioned to having $x_i\geq N$ for all $i=1,\dots,N-1$. Let $\bX^\wedge$ denote the evolution with initial condition $\nu^\wedge$ and call $P_{t}^\wedge$ its law at time $t$. We have the following result. 
\begin{theorem}\label{th:wedge}
For any $\delta>0$,
$$\lim_{N\to \infty} 
\| P_{t_{\delta}}^\wedge - \pi \|_{TV} = 0\;,$$
where $ t_{\delta}:= (1+\delta) \frac{\log N}{2 \lambda_N}.$
\end{theorem}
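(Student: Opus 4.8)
The plan is to compare the maximal evolution $\bX^\wedge$ with a stationary evolution $\bX^\pi$ using a monotone coupling, reduce the total variation distance to the behaviour of linear statistics, and then exploit the fact that the only slowly-relaxing mode is the first Fourier mode $f_N$. More precisely, I would first observe that by the FKG inequality (Proposition \ref{prop:fkg}, in the form \eqref{piapib} applied to the sets $A_i=\{x_i\ge N\}$), the measure $\nu^\wedge$ stochastically dominates $\pi$, so under the monotone grand coupling of Lemma \ref{lem:mongc} we may assume $\bX^\wedge(0)\ge \bX^\pi(0)$ and hence $\bX^\wedge(t)\ge \bX^\pi(t)$ for all $t$. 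Then $\| P_t^\wedge-\pi\|_{TV}$ can be controlled once we know that $\bX^\wedge(t)$ and $\bX^\pi(t)$ are close in an $L^1$-type sense, which by the ordering means controlling $\sum_k \E[X^\wedge_k(t)-X^\pi_k(t)]$.

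The key computational step uses the action on linear functions from Section \ref{sec:linear}: the quantity $a(t,k):=\E[X^\wedge_k(t)-X^\pi_k(t)]$ solves the discrete heat equation \eqref{Eq:Heat}, so expanding along the Dirichlet eigenbasis $\varphi^{(j)}$ gives $a(t,k)=\sum_{j=1}^{N-1} c_j e^{-\lambda_N^{(j)}t}\varphi_k^{(j)}$ with $c_j=\langle a(0,\cdot),\varphi^{(j)}\rangle$. At time $t_\delta=(1+\delta)\frac{\log N}{2\lambda_N}$ all modes $j\ge 1$ are damped by at least $e^{-\lambda_N t_\delta}=N^{-(1+\delta)/2}$, and one needs to check that the initial coefficients $c_j$ are at most polynomially large; this follows from the concentration estimate \eqref{deviates} of Corollary \ref{cor:bazics}, which guarantees $\E[X^\wedge_k(0)]\le N+C\sqrt N\log N$ and more generally that $\sum_k a(0,k)^2=O(N^3\,\mathrm{polylog}\,N)$ — here one uses Lemma \ref{compare} to transfer tail bounds for $\nu^\wedge$ (equilibrium conditioned on a product-type event) onto the tractable product measure $\nu_N$. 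Hence $\sum_k \E[X^\wedge_k(t_\delta)-X^\pi_k(t_\delta)] = o(N^{3/2})$, in fact $o(\sqrt N\cdot N)$ with room to spare.

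The final step converts this $L^1$ control into a total-variation bound. Since $\bX^\wedge(t_\delta)\ge \bX^\pi(t_\delta)$ and $\bX^\pi(t_\delta)\sim\pi$, the discrepancy $\sum_k(X^\wedge_k-X^\pi_k)$ is a nonnegative random variable with expectation $o(N^{3/2})$; combined with the fact that at equilibrium $f_N$ has fluctuations of order $N^{3/2}$ (from \eqref{lavar} and Lemma \ref{compare}), one concludes that the law of $\bX^\wedge(t_\delta)$, viewed through the coordinates, is indistinguishable from $\pi$ in the relevant macroscopic observables. To upgrade this to genuine total variation I would invoke Proposition \ref{Prop:RW}: writing $\nu=P^\wedge_{s}$ for a small additional time $s$ and using \eqref{Eq:BdHeat21}, one has $B(P^\wedge_s)\le B(\nu^\wedge)e^{-\lambda_N s}$, and $B(\nu^\wedge)=O(N^{3/2}\,\mathrm{polylog}\,N)$ by the same concentration estimates; plugging into Proposition \ref{Prop:RW} at time slightly past $t_\delta$ makes the right-hand side $o(1)$.

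\textbf{Main obstacle.} The delicate point is bounding $B(\nu^\wedge)$ (equivalently the spread of $\nu^\wedge$ around the flat profile $x_i\equiv N$) uniformly enough: $\nu^\wedge$ is $\pi$ conditioned on a large-deviation-type event $\{x_i\ge N\ \forall i\}$, which a priori could distort the increment law substantially. The resolution is that conditioning on $\{\min_i x_i\ge N\}$ only shifts and mildly tilts the increments, and Lemma \ref{compare} together with the FKG comparison \eqref{piapib} lets one dominate $\nu^\wedge$ by an explicit product measure whose fluctuations are genuinely $O(\sqrt N)$ per coordinate; making this domination quantitative, with polylogarithmic (not worse) corrections, is the crux of the argument and is presumably where the detailed work of Sections \ref{Sec:UpperTight}--\ref{Sec:Wedge} is spent.
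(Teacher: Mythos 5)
Your first-moment analysis is sound: with the monotone coupling to a stationary copy, $a(t,k)=\E[X^\wedge_k(t)-X^\pi_k(t)]$ solves the discrete heat equation, $\sum_k a(0,k)^2=O(N^3)$ up to logs, and hence $\sum_k a(t_\delta,k)=O(N^{3/2-\delta/2})=o(N^{3/2})$. The gap is in the final step, where you try to upgrade this $L^1$ estimate to a total-variation bound. Proposition~\ref{Prop:RW} gives $\|P^\nu_t-\pi\|_{TV}\lesssim N^{1/2}B(\nu)\,t^C e^{-\lambda_N t}+Ne^{-t}$, and here $B(\nu^\wedge)\asymp N^{3/2}$. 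Plugging in $t=t_\delta$ (so $e^{-\lambda_N t_\delta}=N^{-(1+\delta)/2}$) yields a bound of order $N^{1/2}\cdot N^{3/2}\cdot N^{-(1+\delta)/2}=N^{3/2-\delta/2}$, which \emph{diverges}. The contraction identity \eqref{Eq:BdHeat21} does not rescue this: it is exactly what already produced the exponent, and iterating it only gets the right-hand side to $O(1)$ around $t\approx\frac{2}{\lambda_N}\log N=4t_\delta$. This factor $4$ is precisely the gap between Corollary~\ref{th:corol} and the sharp constant in Theorem~\ref{th:main1}, so Proposition~\ref{Prop:RW} is structurally incapable of closing the argument at time $t_\delta$.

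You also cannot borrow the finer area-coupling machinery of Section~\ref{Sec:UpperTight} (which \emph{does} drive the area from $N^{3/2-\eta}$ to $0$ in an additional $O(N^2)$ time) because those diffusivity estimates---Lemma~\ref{lem:le1} in particular, via the events $\cB,\cD$---already invoke Theorem~\ref{th:wedge} as an input; using them here would be circular. The paper's actual route is genuinely different and designed to dodge the $L^1$-to-TV bottleneck: it applies the Peres--Winkler censoring inequality (Proposition~\ref{pro:censor}) to the increasing density $\dd P^\wedge_t/\dd\pi$, reduces the problem to the $K$-skeleton marginal, and then exploits a strong FKG argument (Proposition~\ref{prop:muW}) which, \emph{for measures in $\cS_N$ only}, converts a first-moment bound on $W=\sum_i y_{i}$ directly into a TV bound on the skeleton marginal---no $\sqrt N$ or $B(\nu)$ loss. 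The censored dynamics on the $K$ independent blocks of length $N/K$ then mixes the remaining degrees of freedom in time $s_{\delta}=o(t_\delta)$ by the crude Corollary~\ref{th:corol}. In short: the missing idea in your proposal is the censoring/skeleton decomposition together with the FKG-based ``first moment implies TV'' lemma for monotone densities, which is what removes the polynomial prefactors that make Proposition~\ref{Prop:RW} too weak at the critical time.
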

\noindent Next, using Theorem \ref{th:wedge} as an input, we compare the evolution $\bX^x$ for an arbitrary initial state $x$ with $\|x\|_\infty\leq N$ to $\bX^\wedge$ and show that they come close in total variation by time $t_{\delta}$.
\begin{theorem}\label{th:xwedge}
For any $\delta>0$,
\begin{equation}\label{eq:thth}
 \lim_{N\to \infty} \sup_{x\in \gO_N : \,\|x\|_{\infty}\le N} \| P_{t_{\delta}}^x - P_{t_{\delta}}^\wedge \|_{TV}=0\;.
\end{equation}
\end{theorem}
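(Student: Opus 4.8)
The plan is to exploit monotonicity to sandwich an arbitrary initial configuration $x$ with $\|x\|_\infty\le N$ between two extremal evolutions, and then to show that both extremal evolutions have merged with $\bX^\wedge$ (in total variation) by time $t_\delta$. Concretely, fix $x$ with $\|x\|_\infty\le N$. Since $x_k\ge -N$ for all $k$, one also considers the ``down'' initial condition $\nu^\vee$, defined as $\pi$ conditioned to $x_i\le -N$ for all $i=1,\dots,N-1$, with evolution $\bX^\vee$ and law $P_t^\vee$. By symmetry (apply the results already obtained to the potential $\tilde V(u):=V(-u)$, cf.\ the argument in Remark \ref{rem:tilt} and in the proof of Lemma \ref{lem:mongc}), Theorem \ref{th:wedge} also gives $\|P_{t_\delta}^\vee-\pi\|_{TV}\to 0$. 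The triangle inequality reduces the claim to showing
\begin{equation}\label{eq:xwedgered}
\lim_{N\to\infty}\sup_{x:\,\|x\|_\infty\le N}\bP\big(\bX^x(t_\delta)\ne\bX^\wedge(t_\delta)\big)=0
\end{equation}
under an appropriate coupling; indeed $\|P_{t_\delta}^x-P_{t_\delta}^\wedge\|_{TV}$ is bounded by the probability that the two coupled chains have not merged.

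The coupling I would use is a two-stage construction, exactly in the spirit of the proof of Proposition \ref{Prop:RW}. For a first block of time — say up to a time $s_\delta=\tfrac{\delta}{4}\tfrac{\log N}{2\gl_N}$, so that $t_\delta-s_\delta$ is still of the form $(1+\delta')\tfrac{\log N}{2\gl_N}$ — run the monotone grand coupling of Lemma \ref{lem:mongc} simultaneously for the five chains started from (random) initial data sampled as follows: $\bX^\wedge(0)\sim\nu^\wedge$, $\bX^\vee(0)\sim\nu^\vee$, $\bX^x(0)=x$ deterministic, and two ``enveloping'' chains started from $\max(x,\bX^\wedge(0),\bX^\vee(0))$ and $\min(x,\bX^\wedge(0),\bX^\vee(0))$ (one checks $\bX^\vee(0)\le x\le\bX^\wedge(0)$ with probability tending to $1$, using Corollary \ref{cor:bazics} to control the deviations of $\nu^\wedge,\nu^\vee$ from height $\pm N$, so in fact the enveloping chains coincide with $\bX^\wedge,\bX^\vee$ up to a negligible event). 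By Lemma \ref{lem:mongc}, $\bX^\vee(s)\le\bX^x(s)\le\bX^\wedge(s)$ for all $s\ge 0$. The role of this first block is twofold: it makes the processes ``thermalize'' enough that the $\eta$-increments of $\bX^\wedge$ and $\bX^\vee$ are $O(1)$ (Corollary \ref{cor:bazics}, stationarity-type bounds as in the proof of Proposition \ref{prop:meanandvariance}), and, via the contraction estimate \eqref{Eq:BdHeat}, it brings $\sum_k\bE[X^\wedge_k(s_\delta)-X^\vee_k(s_\delta)]^2$ down by a factor $e^{-2\gl_N s_\delta}$, i.e. to order $N^{3}e^{-2\gl_N s_\delta}$, which is still polynomially large but controllably so.

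For the second block $[s_\delta,t_\delta]$, I would switch to the sticky coupling of Subsection \ref{Subsec:sticky} between $\bX^\wedge$ and $\bX^\vee$ (which preserves $\le$, so the sandwiched chain $\bX^x$ stays trapped between them and, once the envelopes merge, is forced to equal their common value). Now I reuse the machinery of Lemma \ref{Lemma:EstimateBruteGap} and the proof of Proposition \ref{Prop:RW} verbatim, with $\nu$ replaced by the law of $\bX^\wedge(s_\delta)$ and $\pi$ by the law of $\bX^\vee(s_\delta)$: define $\cT$ via the last integer time before $t_\delta$ at which every Poisson clock has fired on $(\cT,t_\delta)$, run the monotone coupling up to $\cT$ and the sticky one afterwards. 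The merging probability is then bounded, as in Lemma \ref{Lemma:EstimateBruteGap} and the final display of the proof of Proposition \ref{Prop:RW}, by
$$
C e^{-c' t_\delta^2}+C' N^{1/2}\, B_{s_\delta}\, t_\delta^{2K+1} e^{-\gl_N (t_\delta-s_\delta)}\,\bE\big[e^{\gl_N(t_\delta-\cT)}\big]+C N e^{-(t_\delta-s_\delta)},
$$
where $B_{s_\delta}^2=\sum_k\bE[X^\wedge_k(s_\delta)-X^\vee_k(s_\delta)]^2\le C N^3 e^{-2\gl_N s_\delta}$. Plugging $t_\delta-s_\delta=(1+\delta/2)\tfrac{\log N}{2\gl_N}$ and $s_\delta=\tfrac{\delta}{4}\tfrac{\log N}{2\gl_N}$, the dominant term is $N^{1/2}\cdot N^{3/2}e^{-\gl_N s_\delta}\cdot(\log N)^{O(1)}\cdot e^{-\gl_N(t_\delta-s_\delta)}=N^{2}e^{-\gl_N t_\delta}(\log N)^{O(1)}=N^{1}e^{-\gl_N\delta\frac{\log N}{4\gl_N}}(\log N)^{O(1)}\to 0$, while the other two terms are trivially negligible since $t_\delta\asymp N^2\log N$.

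The main obstacle is the second block: one must verify that Lemma \ref{Lemma:EstimateBruteGap} and the surrounding estimates apply when the ``$\pi$''-chain is replaced by the \emph{non-stationary} chain $\bX^\vee$ started at time $s_\delta$. The place where stationarity was used in Lemma \ref{Lemma:EstimateBruteGap} is to bound $\tilde\P_t(\cC_\ell^\cc)$, i.e.\ to control $\P(\|\nabla\bX^\vee_s\|_\infty>t^2)$ uniformly in $s\in[s_\delta,t_\delta]$. This is exactly the kind of estimate proved in \eqref{touborne} of the proof of Proposition \ref{prop:meanandvariance}: using the monotone coupling of Lemma \ref{lem:mongc} one dominates the increments of $\bX^\vee$ (and of $\bX^\wedge$) by those of a chain started from a product-measure initial condition with suitably tilted marginals, which is stationary, so a bound of the form $\bE[\eta_k(s)^2]\le C$ holds uniformly in time; an exponential-tail version via Corollary \ref{cor:bazics} then yields the $Ne^{-c't^2}$ bound on $\tilde\P_t(\cC^\cc_\ell)$ just as before. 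Once this uniform-in-time gradient control is in hand, every other step of the proof of Proposition \ref{Prop:RW} — the conditional-independence splitting, the $\max_k n_k$ bound \eqref{eq:poissonmax}, the estimate \eqref{Eq:EstimateTell} for $\bE[e^{\gl_N(t-\cT)}]$ — goes through word for word, and \eqref{eq:xwedgered} follows.
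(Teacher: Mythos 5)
Your approach is fundamentally off: it cannot produce the constant $\tfrac12$ in front of $\tfrac{\log N}{\gl_N}$, and there is an arithmetic slip in the final line that masks this. You claim that the dominant term $N^2 e^{-\gl_N t_\delta}(\log N)^{O(1)}$ tends to zero, but with $t_\delta=(1+\delta)\tfrac{\log N}{2\gl_N}$ one has $e^{-\gl_N t_\delta}=N^{-(1+\delta)/2}$, hence
$$
N^2 e^{-\gl_N t_\delta}=N^{2-\frac{1+\delta}{2}}=N^{\frac{3-\delta}{2}}\longrightarrow\infty
$$
for any $\delta<3$. Your rewriting as ``$N^1 e^{-\gl_N\delta\frac{\log N}{4\gl_N}}$'' is just wrong; the exponent does not come out negative.

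This is not an accident that a different choice of $s_\delta$ could fix. The bootstrap buys you nothing: applying Lemma~\ref{lem:contraction} for time $s_\delta$ to shrink $B_{s_\delta}$ and then Proposition~\ref{Prop:RW}-type estimates for time $t_\delta-s_\delta$ produces exactly the same final product $N^{1/2}\cdot B_0\cdot(\log N)^{O(1)}\cdot e^{-\gl_N t_\delta}$, with $B_0\asymp N^{3/2}$, because the two exponential factors $e^{-\gl_N s_\delta}\cdot e^{-\gl_N(t_\delta-s_\delta)}$ just recombine. This is precisely why Corollary~\ref{th:corol}, which \emph{is} your argument, only yields the bound $(2+\delta)\tfrac{\log N}{\gl_N}$, a factor $4$ away from the target. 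The loss is the Cauchy--Schwarz factor $\sqrt N$ in Lemma~\ref{lem:contraction}, applied on top of a worst-case $B(\nu)\asymp N^{3/2}$, and running the coupled processes longer before switching to the sticky phase does not remove it.

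What is missing from your proposal is the entire second stage of the paper's proof (Proposition~\ref{propstep2}): once the heat-flow contraction has brought the sticky-coupled area $A_t$ down to $N^{3/2-4\eta}$ (still polynomially large) at time $t_{\delta/2}$, the paper does \emph{not} wait for further exponential decay. Instead it exploits the fact that $A_t$ is a nonnegative supermartingale whose quadratic variation has a \emph{lower} bound (Lemmas~\ref{lem:le2} and~\ref{lem:le3}), feeds this into the diffusivity estimate of \cite[Proposition 21]{CLL} (Lemma~\ref{lem:bracket}), and concludes that $A_t$ hits the next threshold $N^{-\eta}$ within an additional time of order $N^2\ll\tfrac{\log N}{\gl_N}$. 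Only then does the coupon-collector argument in the spirit of Lemma~\ref{Lemma:EstimateBruteGap} finish the merging. This diffusive step is also the reason Theorem~\ref{th:wedge} is a genuine prerequisite: the gradient and height controls of Lemma~\ref{lem:le1} rely on $P^\wedge_t$ being close to $\pi_N$ by time $t_{\delta/2}$, which is a much sharper input than the stationary-domination bounds you invoke. Without the supermartingale-diffusivity step, no version of the Proposition~\ref{Prop:RW} machinery can close the gap from factor $2$ to factor $\tfrac12$.
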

The upper bound stated in Theorem \ref{th:main1} follows from the two results above and the triangle inequality. Although Theorem \ref{th:main1} can be deduced from Theorem \ref{th:xwedge} alone, the intermediate result provided by Theorem \ref{th:wedge} is a crucial ingredient in our proof of Theorem \ref{th:xwedge}.

\medskip

Let us briefly explain the importance of Theorem \ref{th:wedge} as an intermediary step. Our proof of Theorem \ref{th:xwedge} is based on a coupling argument that uses monotonicity. For this reason it is important to start with initial conditions that are ordered (for the order on $\gO_N$). This is the case here since the random initial configuration $\nu^\wedge$ is by definition always above $x$ if $\|x\|_{\infty}\le N$ (while using directly $\pi_N$ as an initial condition instead of $\nu^\wedge$ would not work).

\medskip

On the other hand, our proof of Theorem \ref{th:xwedge} also requires to apply the equilibrium estimates of Lemma \ref{compare} to $\bX^\wedge$. It is the double requirement of having a trajectory which is already close to equilibrium and above $\bX^{x}(t)$
which makes Theorem \ref{th:wedge} a necessity.

\medskip

Observe that for all $t$ the density
 $\dd P_{t}^\wedge/ \dd \pi_N$ is an increasing function. This allows for the use of various tools in order to control $\| P_{t_{\delta}}^\wedge - \pi \|_{TV}$, such as the FKG inequality as well as the censoring inequality. Our proof of Theorem \ref{th:wedge} (which is postponed to Section \ref{Sec:Wedge}) is entirely based on these tools and cannot be adapted to an arbitrary initial condition.

\medskip

\subsubsection*{Proof strategy for Theorem \ref{th:xwedge}}
The remainder of this section is devoted to the proof of Theorem \ref{th:xwedge}.
From now on, the processes $\bX^{\wedge}(t)$ and 
$\bX^{x}(t)$ are coupled through the sticky coupling of Subsection \ref{Subsec:sticky} (we denote by $\bbP$ the associated distribution).
To prove Theorem \ref{th:xwedge} we want to estimate the time at which the trajectories  $\bX^{\wedge}(t)$ and 
$\bX^{x}(t)$ merge using the auxiliary function 
\begin{equation}\label{defat}
A_t=\sum_{k=1}^{N-1} (X^{\wedge}_k(t)-X^{x}_k(t))\;,
\end{equation}
which corresponds to the area between the two configurations at time $t$. By monotonicity $A_t\geq 0$ and  the merging time of the two trajectories is the hitting time of $0$ by the random process $A_t$.

\medskip

The control of the evolution of $A_t$ proceeds in several steps.
First we use the heat equation for a time $t_{\delta/2}$ to bring the area $A_t$ between the ordered configurations $X^\wedge_t$ and $X^x_t$ below a first  threshold equal to $N^{3/2-\eta}$ where $\eta>0$ is a parameter that will be taken to be small depending on $\delta$. This step relies on Lemma \ref{lem:contraction}.

\medskip

In a second step, we  show that within an additional time $T=O(N^2)$, with large probability, $A_t$ falls below a second threshold   $N^{-\eta}$.
This is a delicate step, which requires the application of diffusive estimates for super-martingales during a finite sequence of intermediate stages each running for a time $O(N^2)$. It relies tremendously on the specificity of the sticky coupling, and also on the fact that one of the trajectories we are trying to couple is already at equilibrium (cf. Theorem \ref{th:wedge}).

\medskip

The final step brings the area from $N^{-\eta}$ to zero, by using
Proposition \ref{Prop:RW}, the proof of which indicates that after the second threshold has been attained
merging occurs with large probability as soon as every coordinate has been updated once, which by the standard coupon collector argument, takes a time of order $\log N$.

\subsection{Proof of Theorem \ref{th:xwedge}}

We introduce the successive stopping times 
$$ \cT_i:= \inf \{ t \ge t_{\delta/2}: \ A_t\le N^{3/2-i\eta}  \}\;, \quad i\ge 0\;,$$
where $\eta>0$ is a parameter that we are going to choose small enough depending on $\delta$.

\textit{Step 1}: We want to show that by time $t_{\delta}/2$, $A_t$ is much smaller than $N^{3/2-4\eta}$ (here the factor $4$ is present only for technical reason, and can be considered irrelevant since $\eta$ is arbitrary).
\begin{lemma}\label{lem:A}
Setting
$\cA=\cA_N:= \{ \cT_4 = t_{\delta/2}\}\;,$ and fixing $\eta\le \delta/20$ we have
$$ \lim_{N\to\infty}\P(\cA_N) = 1\;.$$
\end{lemma}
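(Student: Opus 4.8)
The plan is to use the contraction estimate of Lemma \ref{lem:contraction} together with Markov's inequality. Recall that $\cT_4 = t_{\delta/2}$ happens precisely when $A_{t_{\delta/2}}\le N^{3/2-4\eta}$, since $A_t$ starts (at time $t_{\delta/2}$, which is the lower endpoint in the definition of the $\cT_i$) and the stopping times are measured from there; so by monotonicity it suffices to show that $\P(A_{t_{\delta/2}} > N^{3/2-4\eta})\to 0$. Since the sticky coupling is monotone and the initial configurations satisfy $x\preccurlyeq$-type ordering — more precisely $\bX^{x}(0)=x\le \bX^{\wedge}(0)$ $\bbP$-a.s.\ because $\|x\|_\infty\le N$ and $\nu^\wedge$ is supported on configurations with all coordinates $\ge N$ — we have $A_t = \sum_k (X^\wedge_k(t)-X^x_k(t))$ with every summand nonnegative, and thus $A_t = \sum_k |X^\wedge_k(t)-X^x_k(t)|$.

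First I would apply Lemma \ref{lem:contraction}: by \eqref{Eq:BdHeat}, under any coupling of $\bX^\wedge$ and $\bX^x$,
\begin{equation*}
\E[A_{t_{\delta/2}}]^2 = \Big(\sum_{k=1}^{N-1}\E[X^\wedge_k(t_{\delta/2})-X^x_k(t_{\delta/2})]\Big)^2 \le N e^{-2\gl_N t_{\delta/2}}\sum_{k=1}^{N-1}(X^\wedge_k(0)-x_k)^2\;.
\end{equation*}
Since $\|x\|_\infty\le N$ and, with probability tending to $1$, $\|\bX^\wedge(0)\|_\infty\le 2N$ (this follows from Corollary \ref{cor:bazics}, or rather its analogue for the measure $\nu^\wedge$, which is $\pi$ conditioned to an FKG-type increasing event, so one can bound the deviation of $x_i$ above $N$ by the corresponding $\pi_N$-deviation), the sum $\sum_k (X^\wedge_k(0)-x_k)^2$ is at most $C N^3$ on a high-probability event. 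Hence on that event $\E[A_{t_{\delta/2}}\mid \bX^\wedge(0),\bX^x(0)] \le C N^2 e^{-\gl_N t_{\delta/2}}$. Plugging in $t_{\delta/2} = (1+\delta/2)\frac{\log N}{2\gl_N}$ gives $e^{-\gl_N t_{\delta/2}} = N^{-(1+\delta/2)/2} = N^{-1/2-\delta/4}$, so $\E[A_{t_{\delta/2}}\mid\cdot] \le C N^{3/2-\delta/4}$. By Markov's inequality,
\begin{equation*}
\P\big(A_{t_{\delta/2}} > N^{3/2-4\eta}\big) \le \P\big(\|\bX^\wedge(0)\|_\infty > 2N\big) + C N^{3/2-\delta/4-(3/2-4\eta)} = o(1) + C N^{4\eta-\delta/4}\;,
\end{equation*}
which tends to $0$ provided $4\eta < \delta/4$, i.e. $\eta < \delta/16$; the hypothesis $\eta\le\delta/20$ is amply sufficient.

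The main obstacle, and the only point requiring genuine care, is controlling $\|\bX^\wedge(0)\|_\infty$, equivalently the tail of $x_i - N$ under $\nu^\wedge$. One cannot directly invoke Corollary \ref{cor:bazics} since $\nu^\wedge$ is not $\pi_N$; however $\nu^\wedge = \pi_N(\cdot \mid x_i\ge N\ \forall i)$ is $\pi_N$ conditioned on an increasing event, and by the FKG inequality (Proposition \ref{prop:fkg}) conditioning on an increasing event can only increase the upward deviations, so the cleanest route is to note that $\nu^\wedge$ is stochastically dominated by the shifted measure $x\mapsto x + (N,\dots,N)$ pushed forward from $\pi_N$ conditioned appropriately — or, more robustly, to invoke the stochastic domination $\nu^\wedge \le \pi_N^{+N}$ where $\pi_N^{+N}$ denotes $\pi_N$ translated by the constant interface at height $N$ — and then apply \eqref{deviates}. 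Either way this yields $\P(\|\bX^\wedge(0)\|_\infty > 2N)\le \P(\|x\|_\infty > N \text{ under } \pi_N) \to 0$ by \eqref{deviates}, and the argument is complete.
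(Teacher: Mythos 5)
Your overall architecture is the same as the paper's: bound $\E[A_{t_{\delta/2}}]$ by a contraction estimate, then apply Markov's inequality, checking that $\eta\le\delta/20$ makes the exponent negative. The decisive difference is \emph{where} you take the expectation, and this is where a genuine gap appears.

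The paper applies the contraction estimate at the level of means: writing $b(t,k)=\E[X^\wedge_k(t)-X^x_k(t)]$, the function $b(t,\cdot)$ solves the discrete heat equation, so
$$\E[A_t]=\sum_k b(t,k)\le \sqrt{N}\,\Bigl(\sum_k b(0,k)^2\Bigr)^{1/2} e^{-\gl_N t}\,,$$
and $b(0,k)=\E[X^\wedge_k(0)]-x_k$ is bounded by $4N$ using only $|x_k|\le N$ and $\E[X^\wedge_k(0)]\le 3N$ (the latter is exactly what is proved in Proposition \ref{prop:Wt}). This avoids any tail control on the random initial condition. You instead apply \eqref{Eq:BdHeat} pathwise, conditionally on the initial configuration, and therefore must control $\|\bX^\wedge(0)\|_\infty$ with high probability. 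That is a strictly stronger requirement, and your justification of it is incorrect: the stochastic domination $\nu^\wedge \le \pi_N^{+N}$ is \emph{false}. Indeed $\nu^\wedge(x_k\ge N)=1$ by construction, while $\pi_N^{+N}(x_k\ge N)=\pi_N(x_k\ge 0)$, which is bounded away from $1$; since $\{x_k\ge N\}$ is increasing, domination would force $1\le \pi_N(x_k\ge 0)$, a contradiction. Your first alternative (``$\pi_N$ conditioned appropriately'') gestures at the right idea but is not spelled out.

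The claim you need \emph{is} true, but the correct route is the one used in the paper's proof of Proposition \ref{prop:Wt} and in display \eqref{Eq:monobc1}: raise the boundary condition from $(0,0)$ to $(\tfrac32 N,\tfrac32 N)$ (or $(2N,2N)$), use the FKG-based monotonicity in boundary conditions for the measures $\pi(\cdot\mid\min_i x_i\ge n)$, and thereby dominate $\nu^\wedge$ by a $\tfrac32 N$-shift of $\pi_N(\cdot\mid\min_i x_i\ge -N/2)$. Then $\nu^\wedge(x_k>2N)\le \pi_N(x_k>N/2\mid \min_i x_i\ge -N/2)$, and the conditioning event has $\pi_N$-probability close to $1$ by Corollary \ref{cor:bazics}, so the bound follows from \eqref{deviates}. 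With that repair your argument closes; but as written, the stochastic-domination step is wrong, and the paper's expectation-first formulation sidesteps the issue entirely by only needing the mean bound $\E[X^\wedge_k(0)]\le 3N$.
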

\begin{proof}
As in Lemma \ref{lem:contraction}
 \begin{align*}
  \bbE[A_t] 
  \le 
  \sqrt{N} \sqrt{\sum_{k=1}^{N-1}\left( \bbE\left[  X^{\wedge}_k(0)- x_k\right]\right)^2 }e^{-\gl_N t}\le 4 N^2 e^{-\gl_N t}.
 \end{align*}
In the last inequality we used the fact that $|x_k|\le N$ (by definition) and the fact that $\bbE\left[  X^{\wedge}_k(0)\right]\le 3N$ (cf. the proof of Proposition \ref{prop:Wt} for this estimate).
Using this estimate for $t=t_{\delta/2}$ we obtain 
$$\bbE[A_t]\le 4N^{\left(3-\delta/2\right)/2}.$$
Since by monotonicity of the coupling, $A_t$ is positive, we can conlude using Markov's inequality.
\end{proof}

\noindent\textit{Step 2}: The aim of the second step is to prove the following estimate
\begin{proposition}\label{propstep2}
Introduce $I:= \min\{i\ge 1: 3/2-i\eta \le -\eta\}$. We have
$$ \lim_{N\to\infty} \P(\cT_I \le t_{\delta/2} + N^2/2) = 1\;.$$
\end{proposition}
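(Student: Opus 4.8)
The plan is to iterate a one-stage diffusive decay estimate: starting from $\cT_{i}$ (where $A_t\le N^{3/2-i\eta}$), I want to show that after an additional time $O(N^2)$, with probability tending to $1$, the area $A_t$ has dropped below $N^{3/2-(i+1)\eta}$, i.e. $\cT_{i+1}$ has occurred; since $I$ is a fixed finite number (of order $3/(2\eta)$), summing over $i\in\{4,\dots,I\}$ the total extra time is still $O(N^2)$ and the total failure probability is $o(1)$. By Step~1 (Lemma \ref{lem:A}) we may start this iteration from $i=4$ on the event $\cA_N$ which has probability $\to 1$. So the core is a single inductive step.

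For the inductive step I would work on the event where $A_{\cT_i}\le N^{3/2-i\eta}$ and run the dynamics for a time window of length $\Theta(N^2)$ (a fixed multiple of $\gl_N^{-1}$). The key is that $A_t$ is a supermartingale under the sticky coupling — this should follow because $\cL$ acting on the linear functional $\sum_k(X^\wedge_k-X^x_k)$ produces the discrete Laplacian applied to a nonnegative (by monotonicity of the coupling) sequence vanishing at the endpoints, hence has nonpositive drift; moreover once the two configurations have merged at a site the sticky coupling keeps them merged there, so $A_t$ genuinely decreases. I would then bound the quadratic variation of $A_t$: each update at site $k$ changes $A_t$ by an amount controlled by the local gradients of the two configurations, and using the tail/variance estimates of Lemma \ref{lem:letail} together with the equilibrium gradient bounds of Corollary \ref{cor:bazics} applied to $\bX^\wedge$ (which is essentially at equilibrium by Theorem \ref{th:wedge}, so its gradients are $O(\log N)$ with high probability) — and similarly controlling $\bX^x$'s gradients via monotonicity, sandwiching it between $\bX^\wedge$ and a low initial condition — one gets that the bracket accumulated over a window of length $\Theta(N^2)$ is at most $O(N^3)\times$(polylog). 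An $L^2$ / optional-stopping argument (à la the computation in the proof of Proposition \ref{prop:meanandvariance}, or Doob's inequality for the supermartingale $A_t$ plus its bracket) then shows that if $A_{\cT_i}^2$ is much larger than the accumulated bracket we expect $A$ to have decreased by a factor like $N^{-\eta}$ within time $O(N^2)$, except on an event of probability $o(1)$; conversely if $A$ ever gets small the step is already done. Concretely I would stop the process at $\cT_{i+1}\wedge(\cT_i+cN^2)$ and show $\P(\cT_{i+1}>\cT_i+cN^2)$ is small by comparing $\E[A_{\cT_{i+1}\wedge(\cT_i+cN^2)}]$ (which is $\le\E[A_{\cT_i}]\le N^{3/2-i\eta}$ by the supermartingale property) with $N^{3/2-(i+1)\eta}$ times the probability of not having hit the threshold.

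The main obstacle I anticipate is getting a quantitative rate of decay rather than just "$A_t$ is a nonincreasing-in-expectation supermartingale": a pure supermartingale argument only gives $\E[A_t]\le A_{\cT_i}$, which is not enough to cross from one threshold to the next in a controlled time. One needs to exploit that the Dirichlet-Laplacian drift of $A_t$ is strictly negative and of the right order — essentially $\partial_t\E[A_t]\le -\gl_N\E[A_t]$ would give exponential decay and hence crossing in time $O(\gl_N^{-1}\log N)=O(N^2\log N)$, but the statement wants $O(N^2)$, so one likely needs the sharper fact that the decay of the area is actually governed by a faster mode once one conditions away the bottleneck, OR one argues more crudely that in time $O(N^2)$ a supermartingale with quadratic variation $O(N^3)$ started at $O(N^{3/2-i\eta})$ with $i\ge 4$ is, by a diffusive scaling heuristic made rigorous via Doob, below $N^{3/2-(i+1)\eta}$ with high probability — here the slack built into Step~1 ($i=4$, i.e. starting well below $N^{3/2}$) and the fact that $\eta$ is small are exactly what make the quadratic-variation term negligible compared to the square of the target threshold. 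Making this diffusive comparison rigorous, and correctly handling the adaptedness of the stopping times $\cT_i$ together with the union bound over the finitely many stages, is the part that will require the most care; the control of the bracket via Lemmas \ref{lem:letail} and \ref{compare}/Corollary \ref{cor:bazics} applied to the equilibrium-like trajectory $\bX^\wedge$ is technical but routine given the tools already assembled in Section \ref{Sec:Tools}.
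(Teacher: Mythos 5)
Your plan correctly identifies that a pure supermartingale Markov inequality gives nothing (as you say, $\E[A_t]\le A_{\cT_i}$ does not force a threshold crossing) and that a diffusive argument is needed. But the diffusive argument you sketch uses the wrong inequality direction, and this is a genuine gap, not a technicality. You propose to bound the quadratic variation of $A_t$ \emph{from above} (via gradient bounds on $\bX^\wedge$ and $\bX^x$) by $O(N^3\,\mathrm{polylog})$ over a window of length $\Theta(N^2)$, and then conclude via a diffusive heuristic. An upper bound on the bracket, however, cannot force a nonnegative supermartingale to decrease: a supermartingale with small (or zero) quadratic variation could simply stay put, so nothing prevents $A_t$ from hovering near $N^{3/2-i\eta}$ for all time and never reaching the next threshold. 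To force a crossing in time $O(N^2)$ you need a \emph{lower} bound on $\partial_t\langle M\rangle_t$ in terms of the current area (this is the paper's Lemma \ref{lem:le3}, which says $\partial_t\langle M\rangle_t\gtrsim (\log N)^{-C}\min(M_t/\sqrt N,\,M_t^2/N)$). Establishing this lower bound is the heart of the step and is what actually exploits the structure of the sticky coupling: it shows that whenever the two configurations disagree at many sites, each update has a uniformly positive resampling discrepancy, so the bracket accumulates at a definite rate.

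There is also a second, distinct ingredient you are implicitly conflating with your gradient-based bracket bound: the paper's Lemma \ref{lem:bracket} gives an \emph{upper} bound on the accumulated bracket $\langle M\rangle_{\cT_i}-\langle M\rangle_{\cT_{i-1}}\le 4N^{3-2(i-2)\eta}$, but this comes from a gambler's-ruin type diffusivity estimate for supermartingales with bounded jumps (applied after stopping the process via $\cR_i$ when it exceeds $N^{3/2-(i-2)\eta}$), not from pointwise bounds on the bracket rate. The paper's argument then works by contradiction: if $\cT_i-\cT_{i-1}$ were too long, the lower bound on $\partial_t\langle M\rangle_t$ would produce more accumulated bracket than the diffusivity bound allows. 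Both halves are needed, and each requires its own preparatory work (the auxiliary events $\cB,\cC,\cD$ and the truncated process $M_t$ are introduced precisely to make the jumps bounded and to keep the configurations in a regime where the estimates of Section \ref{sec:tecnos} apply). Your sketch captures the ``supermartingale plus diffusive fluctuations'' picture at a high level, but it is missing the key lower bound on the bracket rate, and without it the argument cannot be closed.
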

To highlight better the main ideas of the proof, we postpone the proof of some of the technical lemmas (namely Lemma \ref{lem:le1}, Lemma \ref{lem:bracket} and Lemma \ref{lem:le3}) to the next subsection and focus on the main steps of the reasoning.
By \eqref{Eq:Heat}, we observe that $A_t$ is a super-martingale. 
More precisely, considering the natural filtration  $(\cF_s)_{s\ge 0}$ associated with the process $(\bX^{\wedge},\bX^{x})$ the conditional version of  \eqref{Eq:Heat} summed along the coordinates yield
\begin{equation}
 \bbE[ A_t \ | \ \cF_s]= A_s  -\int^t_s \bbE[ X^{\wedge}_1(u)-X^{x}_1(u)+ X^{\wedge}_{N-1}(u)-X^{x}_{N-1}(u)]\dd u \le A_s,
\end{equation}
where again we have used the fact that our coupling preserves the ordering.

To prove Proposition \ref{propstep2}, we would like to use diffusive estimates in the form of \cite[Proposition 21]{CLL} but this requires 
a modification of  $(A_t)$ in such a way that it becomes a super-martingale with bounded jumps.
We thus define 
\begin{align*}
\cR_i&:= \inf \{ s\ge \cT_{i-1} \ : \ A_{s}\ge N^{3/2-(i-2)\eta}\}\;,\quad i\ge 1\;,\\
\cQ&:= \inf \{ s\ge t_{\delta/2}:\; \|\nabla\bX_{s}^{\wedge}\|_{\infty}\ge (\log N)^{2}\}\;.
\end{align*}
and 
$\cR:= \inf_{i \in \lint 1, I\rint} \cR_i\wedge \cQ$.
We consider the super-martingale 
$$ M_t := \begin{cases} A_t &\mbox{ if } t<\cR\\
A_\cR \wedge N^{\frac32 - (i-2)\eta} &\mbox{ if } t\ge \cR \mbox{ and } \cR =\cR_i < \cR_{i+1}\;,\\
A_\cR &\mbox{ if }t\ge\cR\mbox{ and } \cR= \cQ < \inf_{i \in \lint 1, I\rint} \cR_i\;.
\end{cases}$$

The construction of $M_t$ is designed so that with large probability it coincides with $A_t$. To show this we 
introduce a collection of events:
\begin{align*}
 & \cB= \cB_N := \big\{ \forall t\in [t_{\delta/2},N^3]:\quad \|\nabla\bX_{t}^{\wedge}\|_{\infty} < (\log N)^{2} \big\},\\
   & \cC= \cC_N := \big\{ \forall i\in \lint 4, I\rint, \forall s\ge \cT_{i-1}:\quad  A_s\le N^{3/2-(i-3/2)\eta}   \big\},\\
    &\cD=\cD_N := \big\{ \forall t\in [t_{\delta/2},N^3]:\quad  \max( \| \bX_{t}^{\wedge}\|_{\infty},
     \|\bX_{t}^{x}\|_{\infty})\le \sqrt{N}(\log N)^2 \big\},
\end{align*}
Note that on $\cB\cap \cC$, we have $\cR\ge N^3$. We will show that $\cB,\cC,\cD$ are all very likely. This step of the proof requires Theorem \ref{th:wedge} as an input.

\begin{lemma}\label{lem:le1}
We have $\lim_{N\to\infty} \P(\cB_N\cap \cC_N\cap \cD_N) = 1$.
\end{lemma}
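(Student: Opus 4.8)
The plan is to bound separately the probabilities of the complements of $\cB_N$, $\cC_N$ and $\cD_N$, and to show each tends to $0$. The key tool for $\cB_N$ and $\cD_N$ is that, by Theorem \ref{th:wedge}, the law $P^\wedge_{t_{\delta/2}}$ of $\bX^\wedge$ at time $t_{\delta/2}$ is close to $\pi_N$ in total variation, so that equilibrium gradient and height estimates (Corollary \ref{cor:bazics}) can be transported to the trajectory of $\bX^\wedge$ on the time window $[t_{\delta/2},N^3]$. For $\cC_N$ the argument is different: it is a maximal inequality for the supermartingale $A_t$ between the successive times $\cT_{i-1}$, obtained from diffusive control of the martingale bracket.

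\textbf{Step 1: the event $\cB_N$.} First I would use Theorem \ref{th:wedge} to say that $\|P^\wedge_{t_{\delta/2}}-\pi_N\|_{TV}\to0$; since the event
$$\bigl\{\forall t\in[t_{\delta/2},N^3]:\ \|\nabla\bX^\wedge_t\|_\infty<(\log N)^2\bigr\}$$
is measurable with respect to the trajectory after time $t_{\delta/2}$, it suffices to bound its complement when $\bX^\wedge$ is started from $\pi_N$ at time $t_{\delta/2}$, up to an additive error $o(1)$. By stationarity of $\pi_N$, at each fixed time $t$ the gradient configuration is distributed as under $\pi_N$, so $\pi_N(\|\eta\|_\infty\ge(\log N)^2)\le N e^{-c(\log N)^2}$ by \eqref{largegrad}. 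To pass from fixed times to the whole interval I would use a union bound over $O(N^3)$ integer times together with the fact that between consecutive integer times each coordinate is updated $O(1)$ times with overwhelming probability, and on each update the resampled gradient is again exponentially integrable (Lemma \ref{lem:letail}); a crude union bound over the $O(N^4)$ relevant (time, site) pairs against the tail $Ne^{-c(\log N)^2}$ still gives $o(1)$. This yields $\P(\cB_N^\cc)\to0$.

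\textbf{Step 2: the event $\cD_N$.} The same scheme applies, now with the height estimate \eqref{deviates} instead of \eqref{largegrad}: for $\bX^\wedge$ we transport the equilibrium bound via Theorem \ref{th:wedge}, and for $\bX^x$ with $\|x\|_\infty\le N$ we note that by monotonicity $\bX^x(t)\le\bX^\wedge(t)$ and (running a third coupled copy started from $-\nu^\wedge$, or rather from the reflected wedge) $\bX^x(t)\ge \bX^{-\wedge}(t)$, so it is enough to control $\|\bX^\wedge_t\|_\infty$ and $\|\bX^{-\wedge}_t\|_\infty$; alternatively one controls $\|\bX^x_t\|_\infty$ directly through $A_t$ on the event $\cA_N\cap\cC_N$ since $\|\bX^x_t\|_\infty\le\|\bX^\wedge_t\|_\infty+A_t$. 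Applying \eqref{deviates} with $u=(\log N)^2/2$ gives a bound $Ne^{-c(\log N)^2}$ at each fixed time, and a union bound over $O(N^3)$ times plus the update analysis as in Step 1 gives $\P(\cD_N^\cc)\to0$. The mild point to keep in mind is that $N^3$ is comfortably larger than $\cR$, so these events control the relevant window.

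\textbf{Step 3: the event $\cC_N$, the main obstacle.} This is the heart of the lemma and the step I expect to be hardest, since it concerns the non-equilibrium process $A_t$. The idea is: on the event $\cA_N$ we have $A_{t_{\delta/2}}\le N^{3/2-4\eta}$ (Lemma \ref{lem:A}), so for $i\ge4$ we have $\cT_{i-1}\ge t_{\delta/2}$ and $A_{\cT_{i-1}}\le N^{3/2-(i-1)\eta}$. We must show that after $\cT_{i-1}$ the process $A_t$ never climbs above $N^{3/2-(i-3/2)\eta}$, i.e.\ it does not increase by more than a factor $N^{\eta/2}$. Since $A_t$ is a supermartingale (with the drift coming only from the boundary terms $X^\wedge_1-X^x_1$ and $X^\wedge_{N-1}-X^x_{N-1}$, which are nonnegative), one applies a maximal inequality, but because the jumps of $A_t$ are not a priori bounded we first localize: on $\cB_N\cap\cD_N$ the jump of $A_t$ at an update of coordinate $k$ is controlled by the local gradients of $\bX^\wedge$, hence by $(\log N)^2$, so the stopped process $A_{t\wedge\cR}$ (equivalently $M_t$) is a supermartingale with jumps bounded by $(\log N)^2$ and with bracket increasing at rate $O(N)\cdot(\log N)^4$ per unit time — so over a window of length $O(N^3)$ the bracket is $O(N^4(\log N)^4)$, whereas the target gap $N^{3/2-(i-1)\eta}\cdot N^{\eta/2}\approx (A_{\cT_{i-1}})\cdot N^{\eta/2}$ squared is much larger provided $\eta$ is small. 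Concretely I would invoke the diffusive/maximal estimate of the type of \cite[Proposition 21]{CLL} (which is exactly what $\cR$ and $M_t$ are designed for) applied on each stage $[\cT_{i-1},\cT_i]$: starting from $A_{\cT_{i-1}}\le N^{3/2-(i-1)\eta}$, the probability that $M_t$ reaches $N^{3/2-(i-3/2)\eta}$ is bounded, via the optional stopping / Doob maximal inequality for the supermartingale together with a bracket bound, by something like $\exp(-c N^{\eta})$ (or at least $o(1/\log N)$, enough after the union bound over the $I=O(1/\eta)$ values of $i$). Combining, on $\cA_N\cap\cB_N\cap\cD_N$ the event $\cC_N$ holds with probability $1-o(1)$; since $\P(\cA_N)\to1$ by Lemma \ref{lem:A} and $\P(\cB_N^\cc),\P(\cD_N^\cc)\to0$ by Steps 1--2, we conclude $\P(\cB_N\cap\cC_N\cap\cD_N)\to1$. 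The delicate bookkeeping is making the powers of $N$ and $\log N$ line up — choosing $\eta$ small relative to $\delta$ and checking that $N^{\eta/2}$ really dominates the diffusive fluctuation $\sqrt{N^4(\log N)^4}/N^{3/2-(i-1)\eta}\approx N^{1/2+(i-1)\eta}(\log N)^2$, which forces $i\eta$ bounded, consistent with $I=O(1/\eta)$ — and this is where I expect the real work to be, but it is the kind of estimate that \cite[Proposition 21]{CLL} was built to handle.
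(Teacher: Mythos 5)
Your Steps 1 and 2 (for $\cB_N$ and $\cD_N$) match the paper's strategy: invoke Theorem~\ref{th:wedge} to replace $P^\wedge_{t_{\delta/2}}$ by $\pi_N$ at the cost of $o(1)$, then bound the excursion probability for the stationary chain over the window $[t_{\delta/2},N^3]$ by a subdivision/union bound using Corollary~\ref{cor:bazics}, and sandwich $\bX^x$ between $\bX^\vee$ and $\bX^\wedge$ by monotonicity. The paper subdivides into intervals of length $N^{-6}$ rather than counting updates per unit interval, which is cleaner, but the idea is the same.

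Step 3 is where you go astray, and it is worth stressing because you call it ``the heart of the lemma'' when in fact it is the easiest of the three events. The paper's proof of $\cC_N$ is a one-line application of Ville's maximal inequality: since $A_t$ is a nonnegative supermartingale, the stopped process $(A_{\cT_{i-1}+t})_{t\ge 0}$ is also a nonnegative supermartingale, so
\[
\P\Bigl(\sup_{t\ge 0} A_{\cT_{i-1}+t} > N^{3/2-(i-3/2)\eta}\Bigr) \le \E[A_{\cT_{i-1}}]\, N^{-3/2+(i-3/2)\eta} \le N^{-\eta/2},
\]
using only $A_{\cT_{i-1}}\le N^{3/2-(i-1)\eta}$ (from the definition of $\cT_{i-1}$ and right-continuity). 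A union bound over the $I=O(1/\eta)$ values of $i$ finishes. This requires no bracket control, no jump bounds, and in particular no dependence on $\cB_N$ or $\cD_N$ — the three events are bounded independently and intersected at the end. Your proposed route via the diffusive estimate of \cite[Proposition 21]{CLL} has two concrete problems. First, the jumps of $A_t$ are not bounded by $(\log N)^2$ even on $\cB_N\cap\cD_N$: at an unsuccessful sticky update the new value of $\delta X_k$ is $Y^\wedge-Y^x$ which only has exponential tails (Lemma~\ref{lem:letail}); the construction of $M_t$ caps jumps at $N^{3/2-(i-2)\eta}$, not at $(\log N)^2$. Second, and more seriously, your bracket estimate of order $N^4(\log N)^4$ over a window of length $N^3$ does not close against the target fluctuation $N^{3/2-(i-3/2)\eta}$: the $L^2$ maximal inequality would give $\P(\sup\ge\lambda)\lesssim \E\langle M\rangle/\lambda^2 \sim N^4/N^{3}\gg 1$, which is vacuous. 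The bracket/diffusivity machinery (Lemmas~\ref{lem:bracket}, \ref{lem:le3}, \ref{lem:4}) is reserved for the \emph{downward} control of $A_t$ — showing it reaches $0$ in time $O(N^2)$ — and is not the right tool for the upward excursion bound that $\cC_N$ asks for.
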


\noindent Then using the method developped in \cite{CLL} we control the increments of  $\langle M \rangle$, which denotes the angle bracket of the martingale part of  $M_t$, between each consecutive $\cT_i$.
\begin{lemma}\label{lem:bracket}
The probability of the event
\begin{equation}
 \cE=\cE_N:=\left\{ \forall i\le I,\  \langle M \rangle_{\cT_i}-\langle M \rangle_{\cT_{i-1}}\le 4  N^{3-2(i-2)\eta}\right\}.
\end{equation}
satisfies $\lim_{N\to \infty} \bbP(\cE_N)=1.$
\end{lemma}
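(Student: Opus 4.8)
The plan is to treat $M$ for what it is by construction: a non-negative pure-jump supermartingale which is frozen once the stopping time $\cR$ is reached, and to bound its quadratic variation one stage at a time by a second-moment (It\^o) argument. Since $I$ is a constant depending only on $\eta$ (not on $N$), it suffices to prove, for each fixed $i\le I$, that $\bbP\big(\langle M\rangle_{\cT_i}-\langle M\rangle_{\cT_{i-1}}>4N^{3-2(i-2)\eta}\big)\to0$, and then to union-bound over $i$. The stages $i\le4$ are free of charge: on the event $\cA_N$ of Lemma~\ref{lem:A} one has $A_{t_{\delta/2}}\le N^{3/2-4\eta}$, hence $\cT_0=\cdots=\cT_4=t_{\delta/2}$, so the bracket increments over these stages vanish, and $\bbP(\cA_N)\to1$. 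It remains to handle $5\le i\le I$.

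For such $i$ I would work on the good event $\cB_N\cap\cC_N\cap\cD_N$ of Lemma~\ref{lem:le1}. On this event none of the thresholds defining $\cR$ is crossed during stage $i$ --- for $i\ge4$, $\cC_N$ prevents $A$ from reaching $N^{3/2-(i-2)\eta}$, and $\cB_N$ keeps the gradients of $\bX^\wedge$ below $(\log N)^2$ --- so $M=A$ throughout $[\cT_{i-1},\cT_i]$, and $\cC_N$ moreover gives $A_s\le N^{3/2-(i-3/2)\eta}$ for all $s\ge\cT_{i-1}$. To turn this into a usable pathwise bound I would introduce an auxiliary stopping time $\widehat\cT_i\le\cT_i$ obtained by stopping as soon as $A$ exceeds $N^{3/2-(i-3/2)\eta}$ or a gradient of $\bX^\wedge$ exceeds $(\log N)^2$ (the overshoot jump of $A$ then being $O((\log N)^2)$ thanks to the exponential tail bound of Lemma~\ref{lem:letail}); this $\widehat\cT_i$ forces $A$, hence $M$, to stay below $2N^{3/2-(i-3/2)\eta}$ pathwise, and it coincides with $\cT_i$ on the good event.

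The core of the argument is then the standard semimartingale inequality. Write the Doob--Meyer decomposition $M=M_0+N^M+D$, with $N^M$ a martingale and $D$ predictable and non-increasing --- up to $\cR$ one has $D_t=-\int_0^t\big(X^{\wedge}_1(s)-X^{x}_1(s)+X^{\wedge}_{N-1}(s)-X^{x}_{N-1}(s)\big)\dd s\le0$ by \eqref{Eq:Heat} and monotonicity of the coupling, and $M$ is constant afterwards. The pure-jump It\^o formula gives $M_t^2=M_0^2+2\int_0^t M_{s^-}\dd M_s+\sum_{s\le t}(\Delta M_s)^2$, and $\langle M\rangle$ is precisely the compensator of $\sum_{s\le\cdot}(\Delta M_s)^2$ (the jumps of $M$ and of $N^M$ coincide since $D$ is continuous). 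Since $M_{s^-}\ge0$ and $D$ is non-increasing, $\int M_{s^-}\dd D_s\le0$ while $\int M_{s^-}\dd N^M_s$ is a martingale; localising at $\widehat\cT_i$, which makes $M$ bounded, and taking expectations over $[\cT_{i-1},\widehat\cT_i]$ yields
$$
\bbE\big[\langle M\rangle_{\widehat\cT_i}-\langle M\rangle_{\cT_{i-1}}\big]\le\bbE\big[M_{\widehat\cT_i}^2\big]-\bbE\big[M_{\cT_{i-1}}^2\big]\le4N^{3-(2i-3)\eta}.
$$
Because $3-(2i-3)\eta=(3-2(i-2)\eta)-\eta$, Markov's inequality gives $\bbP\big(\langle M\rangle_{\widehat\cT_i}-\langle M\rangle_{\cT_{i-1}}>4N^{3-2(i-2)\eta}\big)\le N^{-\eta}\to0$; combining this with $\bbP(\cB_N\cap\cC_N\cap\cD_N)\to1$, $\bbP(\cA_N)\to1$, and the identity $\widehat\cT_i=\cT_i$ on the good event, and then union-bounding over the $O(1/\eta)$ values $i\le I$, the lemma follows.

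The step I expect to be the real obstacle is not the supermartingale estimate itself, which is soft, but the bookkeeping that makes it rigorous: one must verify carefully that on the events of Lemma~\ref{lem:le1} the process $M$ has genuinely not been frozen or capped during stage $i$ (so that $M=A$ and the $\cC_N$-bound truly applies), that the auxiliary stopping time $\widehat\cT_i$ agrees with $\cT_i$ there, and that the occasional large jump of $A$ --- controlled via Lemma~\ref{lem:letail} together with the gradient control of $\cB_N$ --- does not spoil the pathwise bound on $M_{\widehat\cT_i}$. The more computational alternative, namely bounding the growth rate of $\langle M\rangle$ at site $k$ directly via Lemmas~\ref{lem:lequ} and \ref{lem:letail} to get a total rate $O\big((\log N)^{C}A_s+A_s^2\big)$, still requires an a~priori control on the duration of stage $i$, which one can only extract from the same supermartingale structure, so the route above seems the cleanest.
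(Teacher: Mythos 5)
The paper's actual proof of this lemma is a one-line appeal to \cite[Proposition 21]{CLL}, a diffusivity bound for non-negative supermartingales with bounded jumps, applied to $M^{(i)}_s = M_{s+\cT_{i-1}}$. Your plan — a pure-jump It\^o identity, a stopping time that confines the path, Markov, and a union bound over the $O(1)$ stages $i\le I$ — is exactly the shape of argument that reproves such a bound, and most of the bookkeeping you set up is sound (stages $i\le 4$ are free on $\cA_N$; the target exponent $3-2(i-2)\eta$ sits $N^{\eta}$ above the square of the cap $N^{3/2-(i-3/2)\eta}$, which is what powers Markov).

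However, the crucial displayed inequality has the wrong sign, and this is not the ``soft'' step you describe it as. From $M_t^2=M_0^2+2\int_0^t M_{s^-}\dd M_s+[M]_t$ and $\dd M=\dd N^M+\dd D$ with $D$ continuous non-increasing, taking expectations over $[\cT_{i-1},\widehat\cT_i]$ gives the \emph{identity}
\begin{equation*}
\bbE\big[\langle M\rangle_{\widehat\cT_i}-\langle M\rangle_{\cT_{i-1}}\big]
=\bbE\big[M_{\widehat\cT_i}^2\big]-\bbE\big[M_{\cT_{i-1}}^2\big]
-2\,\bbE\Big[\int_{\cT_{i-1}}^{\widehat\cT_i}M_{s^-}\,\dd D_s\Big].
\end{equation*}
You correctly observe that $\int M_{s^-}\dd D_s\le 0$, but this makes the last term \emph{non-negative}, so what follows is the \emph{lower} bound $\bbE[\langle M\rangle\,\mathrm{incr}]\ge\bbE[M^2\,\mathrm{incr}]$, not the upper bound you wrote; and indeed $\bbE[M_{\widehat\cT_i}^2]-\bbE[M_{\cT_{i-1}}^2]$ is typically \emph{negative} here because $M$ is a decaying supermartingale, so the reversed inequality is false even heuristically. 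To get the required upper bound on the bracket you must estimate, not discard, the drift term: on the stage $0\le M_{s^-}\le a$ (your $a\asymp N^{3/2-(i-3/2)\eta}$), hence $-\int_{\cT_{i-1}}^{\widehat\cT_i}M_{s^-}\dd D_s\le a\,(D_{\cT_{i-1}}-D_{\widehat\cT_i})$, and since $D=M-M_0-N^M$ with $N^M$ a (localized) martingale, $\bbE[D_{\cT_{i-1}}-D_{\widehat\cT_i}]=\bbE[M_{\cT_{i-1}}]-\bbE[M_{\widehat\cT_i}]\le\bbE[M_{\cT_{i-1}}]\le N^{3/2-(i-1)\eta}$. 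This gives $\bbE[\langle M\rangle\,\mathrm{incr}]\le\bbE[M_{\widehat\cT_i}^2]+2a\,\bbE[M_{\cT_{i-1}}]\le a^2(1+o(1))$, which is still $O(N^{3-(2i-3)\eta})$, so the Markov step and the final union bound run exactly as you wrote them. In short the conclusion and the route are right, but the one inequality you call ``soft'' is genuinely wrong as stated and the repair requires the supermartingale drift bound above; this is precisely the content of the cited diffusivity estimate.

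A minor additional point: the process $M$ is already capped by construction through the stopping time $\cR$ (so that $M$ takes values in $[0,N^{3/2-(i-2)\eta}]$ on $[\cT_{i-1},\cT_i]$ and its jumps are bounded by that range), so your auxiliary $\widehat\cT_i$ and the attendant overshoot discussion are not strictly needed — one can work directly with $M^{(i)}$ as the paper does. This is cosmetic, however; the sign error is the substantive gap.
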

Then in order to compare  ${\cT_i}-\cT_{i-1}$ to $\langle M \rangle_{\cT_i}-\langle M \rangle_{\cT_{i-1}}$, we prove the following estimates on the bracket derivative
\begin{lemma}\label{lem:le3}
 When $\cB\cap \cC \cap \cD$ holds,  for all $t\in [t_{\delta/2},N^3\wedge \cT_I]$ we have
  \begin{equation}
 \partial_t \langle M \rangle_t \ge \frac1{8(\log N)^{C}} \min \left( \frac{M_t}{\sqrt{N}}, \frac{M^2_t}{N}\right),
 \end{equation}
 for some constant $C>0$.
 \end{lemma}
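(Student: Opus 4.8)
The plan is to reduce the inequality to a coordinate‑by‑coordinate lower bound on the rate at which the martingale $M$ accumulates quadratic variation, and then to sum. On $\cB\cap\cC$ one has $\cR\ge N^3$, so on this event $M_s=A_s$ for every $s\le N^3$, in particular for $s\le N^3\wedge\cT_I$. Write $D_k(s):=X^{\wedge}_k(s)-X^x_k(s)$; by monotonicity of the sticky coupling $D_k(s)\ge 0$ and $A_s=\sum_{k=1}^{N-1}D_k(s)$. Since an update of coordinate $k$ alters $A$ only through its $k$-th coordinate, the angle bracket of the martingale part of $A$ grows at rate
\[
\partial_t\langle M\rangle_t=\sum_{k=1}^{N-1} g_k,\qquad g_k:=\bbE\big[(D_k^{\mathrm{new}}-D_k(t_-))^2\,\big|\,\cF_{t_-}\big],
\]
where $D_k^{\mathrm{new}}$ is the value of $D_k$ right after a resampling of coordinate $k$. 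In the notation of Subsection~\ref{Subsec:sticky}, with $\bX^x$ and $\bX^{\wedge}$ in the roles of the lower and upper configuration and $\rho_x=\rho_{X^x_{k-1}(t_-),X^x_{k+1}(t_-)}$, $\rho_{\wedge}=\rho_{X^{\wedge}_{k-1}(t_-),X^{\wedge}_{k+1}(t_-)}$, one has $D_k^{\mathrm{new}}=0$ with probability $p_k=\int_{\bbR}\rho_x\wedge\rho_{\wedge}\,\dd u$ and $D_k^{\mathrm{new}}=Y_k-Z_k$ with probability $q_k=1-p_k=\int_{\bbR}(\rho_{\wedge}-\rho_x)_+\,\dd u$, where $Y_k,Z_k$ are independent with densities $(\rho_{\wedge}-\rho_x)_+/q_k$ and $(\rho_x-\rho_{\wedge})_+/q_k$ respectively. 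Hence $g_k=p_k\,D_k(t_-)^2+q_k\,\bbE\big[(Y_k-Z_k-D_k(t_-))^2\,\big|\,\cF_{t_-}\big]$.

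The central step is the pointwise bound: on $\cB$, for $N$ large and a constant $c>0$ depending only on $V$,
\[
g_k\ \ge\ \frac{c}{(\log N)^{4K}}\,\min\!\big(D_k(t_-)^2,1\big)\qquad\text{for every }k .
\]
I would prove it by splitting on the size of $q_k$. If $q_k\le 1/2$ then $p_k\ge 1/2$ and keeping the first term gives $g_k\ge\tfrac12 D_k(t_-)^2$. If $q_k>1/2$, then keeping the second term and using that $D_k(t_-)$ is $\cF_{t_-}$-measurable,
\[
g_k\ \ge\ q_k\,\bbE\big[(Y_k-Z_k-D_k(t_-))^2\,\big|\,\cF_{t_-}\big]\ \ge\ q_k\,\var(Y_k-Z_k)\ =\ q_k\big(\var(Y_k)+\var(Z_k)\big)\ \ge\ \tfrac12\var(Y_k).
\]
Now the density of $Y_k$ is at most $2\rho_{\wedge}$ (since $q_k>1/2$), and on $\cB$ the bound $\|\nabla\bX^{\wedge}_{t_-}\|_{\infty}<(\log N)^2$ forces $|X^{\wedge}_{k+1}(t_-)-X^{\wedge}_{k-1}(t_-)|<2(\log N)^2$, so Lemma~\ref{lem:lerho} gives $\|\rho_{\wedge}\|_{\infty}\le C(\log N)^{2K}$. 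Since any probability density $f$ on $\bbR$ satisfies $\var(f)\ge(12\|f\|_{\infty}^2)^{-1}$ (the extremal case being the uniform law on an interval), this gives $\var(Y_k)\ge c(\log N)^{-4K}$, hence $g_k\ge c'(\log N)^{-4K}$. Combining the two cases — using $\min(\cdot,1)\le D_k(t_-)^2$ in the first and $\min(\cdot,1)\le1$ in the second — yields the pointwise bound for $N$ large.

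It remains to recombine these estimates. Put $A=A_t$, $D_k=D_k(t_-)$ and $L=\{k:\ D_k>1\}$, and write $A_L=\sum_{k\in L}D_k$, $A_{L^c}=A-A_L$. Then $\sum_{k=1}^{N-1}\min(D_k^2,1)\ge|L|+\sum_{k\notin L}D_k^2$; on $\cD$ one has $\max_k D_k\le 2\sqrt N(\log N)^2$, so $|L|\ge A_L/(2\sqrt N(\log N)^2)$, while Cauchy–Schwarz gives $\sum_{k\notin L}D_k^2\ge A_{L^c}^2/(N-1)$. Distinguishing whether $A_L\ge A/2$ or $A_{L^c}\ge A/2$: in the first case the term $|L|$ dominates and $\sum_k\min(D_k^2,1)\ge A/(4\sqrt N(\log N)^2)\ge(4(\log N)^2)^{-1}(A/\sqrt N)$; in the second the term $\sum_{k\notin L}D_k^2$ dominates and $\sum_k\min(D_k^2,1)\ge A^2/(4N)$; in both cases $\sum_k\min(D_k^2,1)\ge(4(\log N)^2)^{-1}\min(A/\sqrt N,A^2/N)$. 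Multiplying by the pointwise constant and recalling $M_t=A_t$ gives $\partial_t\langle M\rangle_t\ge c''(\log N)^{-(4K+2)}\min(M_t/\sqrt N,M_t^2/N)$, which is the assertion for, say, $C=4K+3$ (the extra power of $\log N$ absorbing the constant $c''$ once $N$ is large).

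The step I expect to be the main obstacle is the pointwise bound in the regime $q_k>1/2$. When the two chains fail to couple with non-negligible probability at site $k$ one cannot simply charge the resulting jump to $D_k(t_-)^2$, and one must instead know that the resampled coordinate is genuinely spread out — which is precisely what the a priori control on the gradients of the near-equilibrium trajectory $\bX^{\wedge}$ (the event $\cB$, itself a consequence of Theorem~\ref{th:wedge}) together with the spread-out estimate of Lemma~\ref{lem:lerho} provide. The event $\cD$ enters only in the recombination, to control the number of coordinates at which $D_k$ exceeds $1$.
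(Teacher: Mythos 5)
Your overall scheme — a coordinate-wise lower bound on the bracket derivative via the bounded-density estimate of Lemma~\ref{lem:lerho}, followed by a recombination over coordinates that splits according to the size of $D_k$, invokes Cauchy--Schwarz for small $D_k$, and counts the large $D_k$ using the event $\cD$ — is exactly the strategy the paper uses (the paper isolates the pointwise bound as Lemma~\ref{lem:le2}, with threshold $(\log N)^{-C_K/2}$ rather than $1$, a cosmetic difference). But there is a genuine gap in the pointwise step: you silently replace $\partial_t\langle M\rangle_t$ by $\partial_t\langle A\rangle_t$. The super-martingale $M$ in the paper is not merely $A$ stopped at $\cR$: at a jump that would take $A$ above the running threshold $R=N^{3/2-(i-2)\eta}$, the jump of $M$ is capped at $R-M_{t-}$. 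The bracket derivative is $\cF_{t-}$-measurable, so it is the conditional second moment of the \emph{truncated} jump even on the event where truncation does not actually occur. Concretely, in your $q_k$-branch the squared jump of $M$ is $\bigl[(Y_k-Z_k-D_k(t_-))\wedge(R-M_{t-})\bigr]^2$, which is dominated by your $(Y_k-Z_k-D_k(t_-))^2$; you have therefore lower bounded a quantity that is only an \emph{upper} bound for what the lemma requires. The further step $q_k\,\bbE[(Y_k-Z_k-D_k)^2]\ge q_k\,\var(Y_k-Z_k)$ compounds the problem: a large variance of the untruncated increment gives no control on the truncated second moment if the increment tends to overshoot the cap.

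The repair is precisely what the paper supplies and your sketch omits. One first uses the event $\cC$ to ensure $M_{t-}\le N^{-\eta/2}R$, hence $R-M_{t-}\ge R/2$, and then checks (arguing by contradiction via \eqref{eq:qq1}) that one may assume $R\ge 2$, so the cap is at least $1$. With a one-sided cap $\ge 1$, one has $J_k^2\ge\tilde Y^2\wedge 1$ for $\tilde Y:=Y_k-Z_k-D_k(t_-)$. The bounded-density bound $\|\text{density of }\tilde Y\|_\infty\le L:=C'(\log N)^{2K}$ (which you already derive from $\cB$ and Lemma~\ref{lem:lerho}) then gives $\bbP(|\tilde Y|\le v)\le 2Lv$, whence $\bbE[\tilde Y^2\wedge 1]\ge\int_0^{1/(4L)}2v\,(1-2Lv)\,\dd v\ge(32L^2)^{-1}$, replacing your ``$\var(f)\ge(12\|f\|_\infty^2)^{-1}$'' lemma by a truncated version of the same bound. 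With this substitution your pointwise inequality becomes a genuine lower bound on $\partial_t\langle M\rangle_t$, and your recombination then carries through unchanged.
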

Then we can conclude by simply combining the control we have on the bracket increments, and that on the bracket derivative. The following Lemma, combined with the fact that  $\cA\cap\cB\cap\cC\cap \cD\cap\cE$ holds with large probability, implies Proposition \ref{propstep2}
\begin{lemma}\label{lem:4}
On the event $\cA\cap\cB\cap\cC\cap \cD\cap\cE$
we have $$\forall i\in \lint 5, I\rint, \ \cT_i - \cT_{i-1} \le 2^{-i} N^2.$$
In particular we have $\cT_{I}\le t_{\delta/2}+N^2/2$.
\end{lemma}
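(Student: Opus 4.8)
The idea is to run the familiar "geometric decay of timescales" argument: having brought the area below $N^{3/2-(i-1)\eta}$ at time $\cT_{i-1}$, I want to show it reaches $N^{3/2-i\eta}$ within time $2^{-i}N^2$ with the help of the diffusive estimate. The key input is Lemma \ref{lem:le3}, which gives a lower bound on the bracket derivative $\partial_t\langle M\rangle_t$ in terms of $M_t$, together with Lemma \ref{lem:bracket}, which bounds the total bracket increment $\langle M\rangle_{\cT_i}-\langle M\rangle_{\cT_{i-1}}$ from above. I plan to argue by contradiction on each stage: suppose $\cT_i-\cT_{i-1}>2^{-i}N^2$. On the event $\cB\cap\cC\cap\cD$, for all $t\in[\cT_{i-1},\cT_i)$ we have $M_t=A_t\in(N^{3/2-i\eta},N^{3/2-(i-1)\eta}]$ (using that $\cT_{i-1}$ is the first time $A$ drops below $N^{3/2-(i-1)\eta}$, that $\cC$ forbids $A$ from climbing back up past the relevant threshold, and that $M$ coincides with $A$ before $\cR$, which exceeds $N^3$ on $\cB\cap\cC$). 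On that range, for $i\ge 5$ the relevant minimum in Lemma \ref{lem:le3} is $M_t^2/N$ (since $M_t\le N^{3/2-(i-1)\eta}\le\sqrt N$ once $(i-1)\eta\ge 1$, which holds by the choice of $I$ and $\eta$; for the borderline indices one checks the $M_t/\sqrt N$ branch gives an even better bound), so
\begin{equation*}
\partial_t\langle M\rangle_t\ \ge\ \frac{1}{8(\log N)^{C}}\cdot\frac{M_t^2}{N}\ \ge\ \frac{N^{3-2i\eta}}{8(\log N)^{C}\,N}=\frac{N^{2-2i\eta}}{8(\log N)^{C}}.
\end{equation*}
Integrating over $[\cT_{i-1},\cT_i)$ and using the contradiction hypothesis on its length,
\begin{equation*}
\langle M\rangle_{\cT_i}-\langle M\rangle_{\cT_{i-1}}\ \ge\ 2^{-i}N^2\cdot\frac{N^{2-2i\eta}}{8(\log N)^{C}}\ =\ \frac{N^{4-2i\eta}}{2^{i+3}(\log N)^{C}},
\end{equation*}
which, since $N^{4-2i\eta}=N\cdot N^{3-2(i-2)\eta}\cdot N^{-4\eta}$... more directly: this quantity is $\gg N^{3-2(i-2)\eta}$ for $N$ large (the ratio is $N^{1-4\eta}/(2^{i+3}(\log N)^C)\to\infty$ since $\eta$ is small and $i\le I$ is bounded), contradicting the bound $\langle M\rangle_{\cT_i}-\langle M\rangle_{\cT_{i-1}}\le 4N^{3-2(i-2)\eta}$ supplied by $\cE$. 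Hence $\cT_i-\cT_{i-1}\le 2^{-i}N^2$.

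Summing the geometric series then gives, on $\cA\cap\cB\cap\cC\cap\cD\cap\cE$,
\begin{equation*}
\cT_I-\cT_4\ =\ \sum_{i=5}^{I}(\cT_i-\cT_{i-1})\ \le\ \sum_{i=5}^{\infty}2^{-i}N^2\ \le\ N^2/16\ <\ N^2/2,
\end{equation*}
and since $\cA=\{\cT_4=t_{\delta/2}\}$ forces $\cT_4=t_{\delta/2}$, we conclude $\cT_I\le t_{\delta/2}+N^2/2$, which is the claimed statement.

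The main obstacle I anticipate is \emph{not} this summation, which is routine, but the bookkeeping that guarantees $M_t=A_t$ and that the relevant threshold window $(N^{3/2-i\eta},N^{3/2-(i-1)\eta}]$ is really where $M_t$ lives throughout $[\cT_{i-1},\cT_i)$ — this is exactly what the events $\cB$ (no large gradients, hence $\cR\ge\cQ$ is far away), $\cC$ (the area cannot re-ascend past the slightly higher barriers defining $\cR_i$), and the definition of $\cR$ are set up to provide, so one must carefully invoke Lemmas \ref{lem:le1}, \ref{lem:bracket}, \ref{lem:le3} on the correct event. A secondary point requiring care is the case of the first few indices $i\in\{5,\ldots\}$ where $M_t$ may still exceed $\sqrt N$, so that the active branch in Lemma \ref{lem:le3} is $M_t/\sqrt N$ rather than $M_t^2/N$; there the same contradiction argument runs with $\partial_t\langle M\rangle_t\ge \tfrac{1}{8(\log N)^C}\,N^{3/2-i\eta}/\sqrt N=\tfrac{1}{8(\log N)^C}N^{1-i\eta}$ and one checks the resulting lower bound on the bracket increment again beats $4N^{3-2(i-2)\eta}$ for large $N$, so the dichotomy causes no real difficulty. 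Finally one should note the uniformity in $x$: every estimate used depends on $x$ only through $\|x\|_\infty\le N$, so the conclusion holds uniformly over the admissible initial conditions, as needed to feed into Theorem \ref{th:xwedge}.
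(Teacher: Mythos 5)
Your argument is essentially the paper's proof: contradiction via the lower bound on $\partial_t\langle M\rangle_t$ from Lemma~\ref{lem:le3}, integrated over a stage of assumed length $>2^{-i}N^2$, against the upper bound on the bracket increment from $\cE$; then sum the geometric series. The one bookkeeping point you leave implicit is that Lemma~\ref{lem:le3} only applies for $t\le N^3\wedge\cT_I$, so one must know $\cT_{i-1}+2^{-i}N^2\le N^3$ before integrating; the paper handles this cleanly by taking $j$ to be the \emph{smallest} index with $\cT_j-\cT_{j-1}>2^{-j}N^2$, which guarantees $\cT_{j-1}\le t_{\delta/2}+N^2/2<N^3$ automatically, whereas your per-$i$ phrasing requires an implicit induction to obtain the same control — a trivial fix, but worth stating explicitly.
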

\begin{proof}
We work on the event $\cA\cap\cB\cap\cC\cap \cD\cap\cE$. Let $j$ be the smallest $i\ge 5$ such that $\cT_i - \cT_{i-1} > 2^{-i} N^2$ and assume that $j \le I$. Then, $\cT_{j-1}+2^{-j}N^2 \le N^3$ so that by Lemma \ref{lem:le3}
$$ \langle M\rangle_{\cT_{j-1} + 2^{-j} N^2}- \langle M\rangle_{\cT_{j-1}} \ge (\log N)^{-C'} 2^{-j} N^2 (N^{1-j\eta} \wedge N^{2(1-j\eta)})\;,$$  
where we use the fact tht $A_t\ge N^{3/2-j\eta}$ if $t\le \cT_j$ and $M_t=A_t$ on $\cB\cap\cC$. Moreover since we work on $\cE$ we have
$$ \langle M\rangle_{\cT_{j-1} + 2^{-j} N^2}- \langle M\rangle_{\cT_{j-1}} \le 4 N^{3-2(j-2)\eta}\;.$$
These two inequalities are incompatible for $N$ large enough and the lemma is proved.
\end{proof}

\noindent \textit{Step 3}: \ The last step consists in bringing the area to $0$ within a short time after $t_{\delta/2}+N^2/2$. Introduce the event
$$ \cG := \big\{ A_{t_{\delta/2}+N^2/2} \le N^{-\eta/2}\big\}\;.$$
The following estimates can be proved as a variant of Lemma \ref{Lemma:EstimateBruteGap}.
\begin{lemma}\label{lem:5}
There exists $C>0$ such that for any $t\ge \log N$ we have
$$\P( \bX^\wedge(t_{\delta/2}+N^2/2+t) \ne \bX^x(t_{\delta/2}+N^2/2+t) \,|\, \cG) \le C(Ne^{-t} + t^{2K+1} N^{-\eta/2})\;.$$
\end{lemma}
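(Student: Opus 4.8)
The plan is to follow closely the structure of the proof of Proposition~\ref{Prop:RW}, specializing it to the sticky coupling started at time $t_{\delta/2}+N^2/2$ on the event $\cG$, and tracking how the area $A_t$ (which on $\cG$ is at most $N^{-\eta/2}$) governs the failure probability of merging within an additional time $t$. Write $s_0:=t_{\delta/2}+N^2/2$ for the starting time. Since the two trajectories are coupled by the sticky coupling of Subsection~\ref{Subsec:sticky} and one starts from $\nu^\wedge$, by monotonicity they remain ordered, so $A_{s}\geq 0$ and $A_s$ is nonincreasing in the sense that once $A_s=0$ the two processes coincide forever. Conditioning on $\cG$, we have $A_{s_0}\le N^{-\eta/2}$, and also $A_s\le A_{s_0}$ for all $s\ge s_0$ because each successful update does not increase $A$ and, crucially, the sticky coupling together with \eqref{comparr} guarantees that an \emph{unsuccessful} update also does not increase the area between ordered configurations (the new values stay sandwiched between the old neighbour values). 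Hence $\sum_k|X^\wedge_k(s)-X^x_k(s)|=A_s\le N^{-\eta/2}$ for all $s\ge s_0$ on $\cG$.

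Next I would reproduce the time-partition device from the proof of Lemma~\ref{Lemma:EstimateBruteGap}: over the window $[s_0,s_0+t]$ define $\cT$ as the largest integer $\ell\le t$ with $\tau_k\cap(s_0+\ell,s_0+t)\ne\emptyset$ for every $k$, so that $\bbP(t-\cT\ge \ell)\le CNe^{-\ell}$ by \eqref{Eq:SimpleTell}, and for $s>s_0+\cT$ we are using the sticky coupling. Exactly as in Lemma~\ref{Lemma:EstimateBruteGap}, on the event that all coordinates are updated on $(s_0+\cT,s_0+t)$ and every such update is successful, the two processes have merged by $s_0+t$. Using Lemma~\ref{lem:lequ} to bound the probability of an unsuccessful update at site $k$ by $C\,(1\vee\|\nabla\bX^\wedge\|_\infty)^K\,\Delta_k$ (with $2\Delta_k=|X^\wedge_{k-1}-X^x_{k-1}|+|X^\wedge_{k+1}-X^x_{k+1}|$), and controlling $\|\nabla\bX^\wedge\|_\infty\le (\log N)^2=:t^2$-type terms via Corollary~\ref{cor:bazics} and stationarity of $\pi_N$ for $\bX^\wedge$ (note $\dd P^\wedge_s/\dd\pi_N$ need not be stationary, but the gradient tail estimate still applies through $\cB$-type events or directly through Theorem~\ref{th:wedge}), one gets, conditionally on $\cT=\ell$ and on $\cG$,
\begin{equation}\label{eq:plan5}
\P\big(\bX^\wedge(s_0+t)\ne\bX^x(s_0+t)\mid \cT=\ell,\cG\big)\le C\Big(e^{-c't^2}+(\max_k n_k)\,t^{2K}\,A_{s_0+\ell}\Big),
\end{equation}
and since on $\cG$ we have $A_{s_0+\ell}\le N^{-\eta/2}$, averaging \eqref{eq:plan5} over $\ell$ using $\bbE[\max_k n_k\mid \cT=\ell]\le Ct$ (cf.\ \eqref{Eq:Maxcondit}) and the tail bound for $\cT$ (cf.\ \eqref{Eq:SimpleTell}), the contribution from $\ell=0$ gives the $Ne^{-t}$ term, while the contribution from $\ell\ge 1$ gives $C\,t^{2K+1}N^{-\eta/2}$, completing the claimed bound.

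The main obstacle, as in Lemma~\ref{Lemma:EstimateBruteGap}, is justifying the gradient bound $\|\nabla\bX^\wedge(s)\|_\infty\le t^2$ uniformly on the relevant window when we are \emph{conditioning} on $\cG$ (an event of the trajectory up to $s_0$) rather than working from a clean equilibrium start: one has to argue that $\cG$ is compatible with the high-probability event $\cB$ from Lemma~\ref{lem:le1} (or re-derive a gradient tail bound on $P^\wedge_s$ for $s\ge t_{\delta/2}$ from Theorem~\ref{th:wedge} plus Corollary~\ref{cor:bazics}), so that the factor $\max(1,\|\nabla\bX^\wedge\|_\infty)^K$ can indeed be replaced by $(\log N)^{2K}$ up to an event of probability $o(1)$, absorbed into the $e^{-c't^2}$ term since $t\ge\log N$. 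The remaining ingredients — the independence of $A_{s_0+\ell}$ and $\max_k n_k$ under $\bbP_t(\cdot\mid\cT=\ell)$, and the Poisson-max moment estimate \eqref{eq:poissonmax} — carry over verbatim from the proof of Lemma~\ref{Lemma:EstimateBruteGap}.
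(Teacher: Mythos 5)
Your overall strategy — adapting the argument of Lemma~\ref{Lemma:EstimateBruteGap} to the window $[s_0,s_0+t]$ with $s_0 := t_{\delta/2}+N^2/2$ — is the right one, but the proof contains a genuine gap. You claim that under the sticky coupling the area $A_s$ is pathwise nonincreasing, and in particular that $A_{s_0+\ell}\le A_{s_0}\le N^{-\eta/2}$ for all $\ell\ge 0$ on $\cG$, arguing that an unsuccessful update keeps the resampled values ``sandwiched between the old neighbour values.'' This is false: if the update at site $k$ is unsuccessful, $X^\wedge_k$ is drawn from $\nu_3\propto(\rho_y-\rho_x)_+$ and $X^x_k$ from $\nu_1\propto(\rho_x-\rho_y)_+$, whose supports extend to $+\infty$ and $-\infty$ respectively; the new local gap $X^\wedge_k-X^x_k$ can be arbitrarily large, so $A$ is not monotone in $s$. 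It is only a nonnegative supermartingale, which is precisely why the paper uses Ville's maximal inequality in \eqref{villeo1} to control $\sup_s A_s$ rather than a pathwise argument.

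The gap is traceable to your having imported the random time $\cT$ from the proof of Proposition~\ref{Prop:RW}, which then forces you to control $A$ at the later random time $s_0+\cT$. But in Proposition~\ref{Prop:RW} the time $\cT$ marks the switch from the monotone to the sticky coupling; here the two trajectories are coupled by the sticky coupling from time $0$, so there is no switch and no need for $\cT$. The paper's proof instead conditions on the simpler event $\cH:=\{\forall k:\, n_k\ge1\}$ (all clocks ring at least once in $(s_0,s_0+t)$) and runs the argument of \eqref{eq:timecondit} with the window starting at the deterministic time $s_0$: on the event $\{\tau\ge t_k^{(i)}\}$, every update before $t_k^{(i)}$ was successful, hence each $\Delta_k(t_k^{(i)}-)$ is dominated by $\Delta_k(s_0)$, and the bound naturally involves $B=A_{s_0}$, which is $\le N^{-\eta/2}$ exactly on $\cG$. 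No control of $A$ at later (random) times is needed. The separate concern you raise about the gradient bound on $\bX^\wedge$ (the argument of \eqref{eq:timecondit} uses stationarity of $\bX^\pi$, but $\bX^\wedge$ is not stationary) is legitimate and applies to the paper's proof as well; the fix you sketch — comparing to a stationary trajectory via Theorem~\ref{th:wedge} and Corollary~\ref{cor:bazics}, in the spirit of Lemma~\ref{lem:le1} — is the correct way to close it.
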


\begin{proof}
This is an adaptation of  the argument in Lemma \ref{Lemma:EstimateBruteGap}. Denote by $(t_k^{(i)})_{i=1}^{n_k}$ the ordered set of updates times occurring at site $k$ on the time-interval $(t_{\delta/2}+N^2/2,t_{\delta/2}+N^2/2+t)$. Let $\tilde{\cF}$ be the sigma-field generated by all the $(t_k^{(i)})$ and by $\bX^\wedge(t_{\delta/2}+N^2/2)$, $\bX^x(t_{\delta/2}+N^2/2)$, and let $\tilde{\bbP}$ be the associated conditional probability. Define $\cH := \{\forall k\in\lint 1,N-1\rint: n_k \ge 1\}$. Then, the very same arguments as in the proof of \eqref{eq:timecondit} show that on the $\tilde{\cF}$-measurable event $\cG\cap\cH$ we have
$$ \tilde{\bbP}( \bX^\wedge(t_{\delta/2}+N^2/2+t) \ne \bX^x(t_{\delta/2}+N^2/2+t) ) \le C\left(\max_{k\in \lint 1,N-1\rint} n_k\right) (e^{-ct^2}+t^{2K} B)\;,$$
with
\begin{align*} B &= \sum_{k=1}^{N-1} \big|X^\wedge_k(t_{\delta/2}+N^2/2) - X^x_k(t_{\delta/2}+N^2/2)\big|\\
&= \sum_{k=1}^{N-1} X^\wedge_k(t_{\delta/2}+N^2/2) - X^x_k(t_{\delta/2}+N^2/2)\;.\end{align*}
Furthermore, given $\cH$, the $n_k$'s are i.i.d.~Poisson r.v.~of parameter $t$ conditioned to be positive. Therefore, reasoning as in \eqref{eq:poissonmax}, for all $t\ge \log N$
$$ \E[\max_{k\in \lint 1,N-1\rint} n_k \ |\ \cH] \le Ct\;.$$
Finally, we have
$$ \P(\cH^\cc) \le N e^{-t}\;.$$
Putting everything together we obtain the stated estimate.
\end{proof}
With the help of this final step, we can conclude the proof.
\begin{proof}[Proof of Theorem \ref{th:xwedge}]
By the Martingale Stopping Theorem, since $(A_t)_{t\ge 0}$ is a supermartingale then $(A_{\cT_I+t})_{t\ge 0}$ is also a càd-làg non-negative super-martingale (for the adequate filtration). A maximal inequality (sometimes referred to as Ville's Maximal Inequality see \cite[Exercise 8.4.2]{Dur19} for the discrete time version and also \cite{ville1939}) 
\begin{equation}\label{villeo1}
 \P\left(\sup_{t\ge 0} A_{\cT_I + t} > N^{-\eta/2}\right) \le \E\left[A_{\cT_I}\right]N^{\eta/2}\;.
 \end{equation}
 Therefore,
\begin{equation}\label{ville1}
 \lim_{N\to\infty} \P\left(\sup_{t\ge 0} A_{\cT_I + t} > N^{-\eta/2}\right) = 0\;.
 \end{equation}
Combining this with Lemma \ref{lem:4}, we deduce that the probability of the event $\cG$ goes to $1$.
Applying Lemma \ref{lem:5} we thus deduce that for $t=2\log N$ we have
$$ \lim_{N\to\infty} \P( \bX^\wedge(t_{\delta/2}+N^2/2+t) \ne \bX^x(t_{\delta/2}+N^2/2+t) \,|\, \cG) = 0\;.$$
Since all our estimates hold uniformly over all $x\in \Omega_N$ with $\|x\|_\infty\le N$, this suffices to deduce \eqref{eq:thth}.
\end{proof}

\subsection{Proof of the technical estimates of step 2}

\begin{proof}[Proof of Lemma \ref{lem:le1}]
To prove that $\cB$ and $\cD$ have small probability, we are going to show that similar events have small probability for the stationary version of our Markov chain  $(\bX^{\pi}(t))_{t\ge 0}$  and then use Theorem \ref{th:wedge}.
By a simple coupling argument, for any $\cA\subset \gO_N$ we have
\begin{multline}\label{Eq:wut}
 \P\left(\exists t\in [t_{\delta/2},N^3]:\, \bX_{t}^{\wedge}\in \cA \right)\\ \le \| P_{t_{\delta/2}}^\wedge - \pi\|_{TV} + \P\Big( \exists t\in [0,N^3-t_{\delta/2}]:\,  \bX_{t}^{\pi}\in \cA  \Big)\;,
\end{multline}
where, with slight abuse of notation we denote by $\bbP$ the distribution of $\bX^{\pi}$, the Markov chain starting from the equilibrium distribution.

By symmetry arguments (using the fact that $\hat V(x):=V(-x)$ satisfies $\hat V\in\ccC$), \eqref{Eq:wut} remains true upon replacing $\bX^\wedge$ by $\bX^\vee$ the dynamics with initial distribution $\pi( \cdot \ | \ \forall i\in \lint 1,N-1\rint,\;,  x_i\leq -N)$. 

\medskip

The first term in the r.h.s.\ of \eqref{Eq:wut}  goes to zero by Theorem \ref{th:wedge}. To bound the second term, we use a standard subdivision scheme and estimates on the invariant measure. More precisely, if one subdivides $[0,N^3]$ into intervals of length $N^{-6}$ then with a probability $1-O(N^{-1})$, there are at most one resampling event per interval. Since the process is stationary, we can bound the second term in the r.h.s.\ of \eqref{Eq:wut}  by 
$ N^9 \pi_N(\cA)+C N^{-1}$.
To prove that $\lim_{N\to\infty}\bbP[\cB^{\cc}_N]=0$ use \eqref{Eq:wut} with $\cA= \{ \|\nabla x\|_{\infty} > (\log N)^{2} \}$,
and apply  Corollary \ref{cor:bazics} which entails that $ N^9 \pi_N(\cA)\le N^{-1}$.

\smallskip

We turn now to $\cD$. Using  \eqref{Eq:wut} and the argument above with  $\cA= \{\|x\|_{\infty} > \sqrt{N}(\log N)^2\}$  and  Corollary \ref{cor:bazics} we deduce that
\begin{equation}\label{Eq:Decaywedge} \lim_{N\to\infty}\P\big( \exists t\in [t_{\delta/2},N^3]:\,  \| \bX_{t}^{\wedge}\|_{\infty} > \sqrt{N}(\log N)^2 \big) = 0\;.\end{equation}
and similarly for $\bX^{\vee}$. To get a similar estimate for $\bX^x$
it is sufficient to observe that from Lemma \ref{lem:mongc}
$\bX^x$ is stochastically dominated by $\bX^\vee$
and stochastically dominates $\bX^\vee$, so that we can deduce from \eqref{Eq:Decaywedge}  the desired bound for  $\max_k X_{k}^{x}(t)$ and $\min_k X_{k}^{x}(t)$ respectively, concluding the proof of $\lim_{N\to\infty}\P(\cD_N)=1$.

\smallskip

Finally let us focus on the event $\cC_N$.
For every $i\ge 1$, by the Martingale Stopping Theorem and Ville's Maximal Inequality (as in \eqref{ville1}) we have
$$ \P\left(\sup_{t\ge 0} A_{\cT_{i-1}+t} > N^{3/2-(i-3/2)\eta}\right) \le \E[A_{\cT_{i-1}}] N^{-3/2+(i-3/2)\eta} \le N^{-\eta/2}\;.$$
Since $I$ is a fixed non-random integer, a union bound shows that $\lim\limits_{N\to\infty} \P(\cC_N) = 1$.

\end{proof}

\begin{proof}[Proof of Lemma \ref{lem:bracket}]
The proof follows from a diffusivity bound developped in an earlier work \cite[Proposition 21]{CLL}, applied to the super-martingales 
  $$M^{(i)}_s= M_{s+\cT_{i-1}},$$
 whose jump sizes are bounded above by $N^{3/2-(i-2)\eta}$.
 We refer to \cite{CLL} for more intuition about this inequality.
\end{proof}

To prove Lemma \ref{lem:le3} we will require an intermediate technical result derived from the preliminary work of Section \ref{sec:tecnos} which allows us to estimate the bracket derivative.
Define $$ \delta X_k(t) := X^\wedge_k(t) - X^x_k(t).$$

\begin{lemma}\label{lem:le2}
  When $\cB\cap \cC$ holds, then for all $t\in [t_{\delta/2},N^3 \wedge \cT_I]$ where 
  $\partial_t \langle M \rangle_t$ is differentiable (all $t$ except a random countable set)
 \begin{equation}
 \partial_t \langle M \rangle_t \ge \frac{1}{2}\sum_{k=1}^{N-1}\left[ (\delta X_k(t))^2\wedge (\log N)^{-C_K}\right],
 \end{equation}
for some constant $C_K > 0$. 
\end{lemma}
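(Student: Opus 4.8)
The plan is to express the bracket derivative $\partial_t\langle M\rangle_t$ explicitly as a sum over sites of the contribution of each potential update, and then bound each such contribution from below. On the event $\cB\cap\cC$ we have $t<\cR$, so $M_t=A_t$ on this time window, and thus the martingale part of $M_t$ coincides with that of $A_t$. Since $A_t=\sum_{k=1}^{N-1}\delta X_k(t)$ evolves only at the update times, and at an update of coordinate $k$ at time $t$ the pair $(X^\wedge_k,X^x_k)$ is resampled according to the sticky coupling of Subsection \ref{Subsec:sticky}, the instantaneous rate-one contribution of site $k$ to the bracket is
$$ \mathrm{Var}\bigl( \delta X_k(t)-\delta X_k(t^-)\bigr)= \mathrm{Var}\bigl( X^\wedge_k(t)-X^x_k(t)\bigr)\;,$$
where the variance is taken over the resampling randomness, conditionally on $\cF_{t^-}$. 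Here I would use the description from the Remark after Subsection \ref{Subsec:sticky}: with probability $p=p(t,k)$ the two updated values coincide (so $X^\wedge_k(t)-X^x_k(t)=0$), and with probability $q=1-p$ they are drawn independently from $\nu_1$ (supported on $(-\infty,u_0]$) and $\nu_3$ (supported on $[u_0,\infty)$). Hence $\partial_t\langle M\rangle_t=\sum_{k=1}^{N-1}\mathrm{Var}(X^\wedge_k(t)-X^x_k(t))$, and I would drop all but the "uncoupled" part to get the lower bound
$$ \partial_t\langle M\rangle_t \ge \sum_{k=1}^{N-1} q(t,k)\cdot \mathrm{Var}_{\nu_1\otimes\nu_3}\!\bigl(U_3-U_1\bigr) \ge \sum_{k=1}^{N-1} q(t,k)\cdot \bigl(\mathbb{E}[U_3]-\mathbb{E}[U_1]\bigr)^2\;,$$
using that $U_3\ge u_0\ge U_1$ almost surely so the difference of means is nonnegative and its square is a lower bound for the variance of the (independent) difference (in fact $\mathrm{Var}(U_3-U_1)\ge\mathrm{Var}(U_3)+\mathrm{Var}(U_1)\ge\frac14(\mathbb E[U_3]-\mathbb E[U_1])^2$ by the separation at $u_0$).

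The remaining task is the quantitative one: to show that on $\cB\cap\cC$,
$$ q(t,k)\cdot\bigl(\mathbb{E}[U_3]-\mathbb{E}[U_1]\bigr)^2 \ge \tfrac12\bigl[(\delta X_k(t))^2\wedge(\log N)^{-C_K}\bigr]\;.$$
Write $2\Delta_k:=|\delta X_{k-1}(t)|+|\delta X_{k+1}(t)|$ (the "displacement" of the boundary data felt by the $k$-th resampling) and note $\rho_x=\rho_{X^x_{k-1},X^x_{k+1}}$, $\rho_y=\rho_{X^\wedge_{k-1},X^\wedge_{k+1}}$, so that $\mathbb E[U_3]-\mathbb E[U_1]$ is the difference of the conditional means of $\rho_y$ restricted above $u_0$ and $\rho_x$ restricted below $u_0$ — a "transport distance" between the two resampling laws which, together with $q=q(t,k)=\frac12\int|\rho_x-\rho_y|$, controls the first Wasserstein distance $W_1(\rho_x,\rho_y)$ from below: indeed $q\cdot(\mathbb E[U_3]-\mathbb E[U_1])\ge$ (up to a constant) the difference of means $|\mathbb E_{\rho_y}[u]-\mathbb E_{\rho_x}[u]|=\tfrac12|\delta X_{k-1}(t)+\delta X_{k+1}(t)|$ by the explicit form \eqref{transinv} of the means. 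So on one hand $q\cdot(\mathbb E[U_3]-\mathbb E[U_1])^2\ge c\,\delta X_k(t)\cdot$(stuff) after summation by parts; but it is cleaner to argue site-by-site. I would split into two regimes. If $|\delta X_{k-1}(t)|+|\delta X_{k+1}(t)|$ is small compared to $1$ (say $\le 1$), then Lemma \ref{lem:lequ} gives $q(t,k)\le C\Delta_k(1\vee|X^x_{k+1}-X^x_{k-1}|^K)\le C\Delta_k(\log N)^{C_K'}$ on $\cB$ (where gradients are $\le(\log N)^2$), and simultaneously the two laws $\rho_x,\rho_y$, being shifts of each other by a bounded amount, have conditional means on either side of $u_0$ that are separated by a quantity of order $1/(\log N)^{C_K''}$ times the shift (using the spread-out estimate \eqref{eq:ct3} and tail bound \eqref{eq:tail1} of Lemma \ref{lem:lerho} and Lemma \ref{lem:letail} to bound the density of $\rho_x$ from above on the region near $u_0$, hence a lower bound on $\mathbb E[U_3]-\mathbb E[U_1]$). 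Combining, $q(t,k)(\mathbb E[U_3]-\mathbb E[U_1])^2\ge c(\log N)^{-C_K}(\delta X_{k-1}(t)+\delta X_{k+1}(t))^2$. In the complementary regime where $|\delta X_{k-1}(t)|+|\delta X_{k+1}(t)|\ge 1$, I would argue that $\rho_x$ and $\rho_y$ are then nearly singular: $q(t,k)$ is bounded below by a constant (their total-variation distance is close to $1$), and $\mathbb E[U_3]-\mathbb E[U_1]$ is at least a constant as well, because the two conditional laws live on essentially disjoint intervals — here again Lemma \ref{lem:letail}'s exponential tails outside $[b,c]$ force the bulk of each law to sit near its own interval $[X^\bullet_{k-1},X^\bullet_{k+1}]$ and these are separated. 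Hence the product is $\ge c\ge c(\log N)^{-C_K}$. Finally, I would pass from the bound in terms of $\delta X_{k\pm1}$ to one in terms of $\delta X_k$ by re-indexing: summing $(\delta X_{k-1}(t)+\delta X_{k+1}(t))^2$ (truncated at $(\log N)^{-C_K}$) over $k$ is, up to a constant and relabeling, at least $\tfrac12\sum_k[(\delta X_k(t))^2\wedge(\log N)^{-C_K}]$, using that $\delta X_k\ge 0$ for all $k$ (monotonicity of the coupling), so no cancellation occurs and $(\delta X_{k-1}+\delta X_{k+1})^2\ge \delta X_{k-1}^2+\delta X_{k+1}^2$.

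The main obstacle will be the quantitative lower bound on $q(t,k)\cdot(\mathbb E[U_3]-\mathbb E[U_1])^2$ in the "small displacement" regime: one needs that a $\rho$-law cannot be too concentrated (so that shifting its boundary data by $\Delta_k$ genuinely moves mass across $u_0$, by an amount comparable to $\Delta_k$ up to polylog factors), which is precisely where Lemma \ref{lem:lerho} (lower bound on $Z(a)$, i.e.\ the density is not too peaked) and Lemma \ref{lem:letail} (so the mass is not escaping to infinity) must be combined carefully, keeping track of the polynomial-in-$K$ powers of $\log N$ that arise from the gradient bound on $\cB$. I expect the bookkeeping of these powers — collecting them all into a single exponent $C_K$ depending only on the polynomial-growth exponent $K$ of the potential — to be the most delicate, though entirely routine, part; the structural inequality $\partial_t\langle M\rangle_t=\sum_k\mathrm{Var}(X^\wedge_k(t)-X^x_k(t))$ and its reduction via the sticky coupling is the conceptual core and is immediate from the construction in Subsection \ref{Subsec:sticky}.
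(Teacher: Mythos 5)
The conceptual core of your plan, the claimed identity
\begin{equation*}
\partial_t\langle M\rangle_t=\sum_{k=1}^{N-1}\mathrm{Var}\bigl(X^\wedge_k(t)-X^x_k(t)\bigr),
\end{equation*}
is incorrect, and this is where the argument breaks. For a pure--jump supermartingale $M$ the angle bracket of its martingale part is the predictable compensator of $[M]_t=\sum_{s\le t}(\Delta M_s)^2$; since each coordinate is updated at rate one this gives $\partial_t\langle M\rangle_t=\sum_k\bbE[(\Delta M)^2\,|\,\cF_{t_-}]$, the conditional \emph{second moment} of the jump, not its conditional variance. (Your discrete-time intuition, where the bracket is a sum of conditional variances, does not carry over: in continuous time the subtracted term $\bbE[\Delta M|\cF_{t_-}]^2$ is $O(\mathrm{d}t^2)$ and does not appear at the level of the derivative.) The paper's \eqref{drifft} writes this out explicitly as $(1-q_k)(\delta X_k(t_-))^2+q_k\,\bbE[Y^2|\cF_{t_-}]$, and then handles the two regimes separately: if $q_k<1/2$ the first term alone gives $\ge\tfrac12(\delta X_k)^2$, while if $q_k\ge 1/2$ they show $\bbE[Y^2|\cF_{t_-}]\ge(\log N)^{-C_K}$ using the upper bound on the resampling density from Lemma \ref{lem:lerho} on the event $\cB$. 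Your variance formula discards the term $(1-q_k)(\delta X_k(t_-))^2$ entirely, and it is precisely this term that produces the $(\delta X_k)^2$ in the conclusion; the variance of the resampled $\delta X_k(t)$ depends only on the neighbour displacements $\delta X_{k\pm1}$, so you are forced into the re-indexing detour, which costs you a $(\log N)^{-C_K}$ \emph{prefactor} on the quadratic part instead of the sharper truncation $\delta X_k^2\wedge(\log N)^{-C_K}$ that the lemma actually asserts.

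Two further points. First, the definition of $M$ truncates the jump of $A$ at level $R-M_{t_-}$ once it would push $A$ above $N^{3/2-(i-2)\eta}$; the paper's $Y=(\tilde Y)\wedge(R-M_{t_-})$ and the separate treatment of the case $R<2$ exist precisely to control this. Your plan treats the bracket of $M$ as if it were the bracket of the untruncated $A$, and since truncation can shrink positive jumps, the variance of $\Delta A$ is not in general a lower bound for $\bbE[(\Delta M)^2]$. On the event $\cC$ the truncation rarely bites, but this needs to be said. Second, the quantitative step you flag as delicate — ``shifting the boundary genuinely moves mass across $u_0$'' — is not the route the paper takes: instead of comparing conditional means on either side of $u_0$, the paper bounds the density of $\tilde Y=Y^\wedge-Y^x-\delta X_k(t_-)$ above by $L=C'(\log N)^{2K}$ (via Lemma \ref{lem:lerho} and $\cB$) and deduces $\bbP(|\tilde Y|>v)\ge 1/2$ for $v\le(4L)^{-1}$, hence $\bbE[Y^2|\cF_{t_-}]\ge\frac{1}{32L^2}$; this is shorter and avoids the two-regime split on $\Delta_k$ altogether. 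To salvage your plan you would need to replace the variance by the second moment (so that the $(1-q_k)(\delta X_k)^2$ term reappears), and then you would essentially be reproducing the paper's case analysis in $q_k$.
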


\begin{proof}[Proof of Lemma \ref{lem:le3} assuming Lemma \ref{lem:le2}]
Write $A_t=U_t + V_t$ where, for some $a>0$: 
\begin{equation}
U_t=\sum_{k=1}^{N-1} 
\delta X_k(t)\ind_{\{\delta X_k(t)< a\}}\,,\quad V_t=\sum_{k=1}^{N-1} 
\delta X_k(t)\ind_{\{\delta X_k(t)\ge a\}}.
 \end{equation}
The Cauchy-Schwarz inequality shows that $$U_t^2\leq N \sum_{k=1}^{N-1} 
(\delta X_k(t))^2\ind_{\{\delta X_k(t)\leq a\}}.$$ 
Take $a=(\log N)^{-\frac12 C_K}$. If $U_t\geq A_t/2$, then 
Lemma \ref{lem:le2} implies
  \begin{equation}
\partial_t \langle M \rangle_t \ge \frac{A_t^2}{8N}.
 \end{equation}
If on the other hand $V_t\geq  A_t/2$, then letting  $n_t$ denote the number of indices $k$ such 
that $\delta X_k(t)\ge a$,
Lemma \ref{lem:le2} implies
  \begin{equation}
\partial_t \langle M \rangle_t \ge \frac12(\log N)^{-C_K} n_t.
 \end{equation}
Since $0\leq \delta X_k(t)\leq 2\max( \| \bX_{t}^{\wedge}\|_{\infty},
     \|\bX_{t}^{x}\|_{\infty})$, on the event $\cD$ we get
  \begin{equation}
n_t\geq \frac1{2\sqrt{N}(\log N)^2}\sum_{k=1}^{N-1} 
\delta X_k(t)\ind_{\{\delta X_k(t)\geq a\}} \geq \frac{A_t}{4\sqrt{N}(\log N)^2}.
 \end{equation}
\end{proof}

\begin{proof}[Proof of Lemma \ref{lem:le2}]
We write $\rho^\wedge_k=\rho_{X^\wedge_{k-1},X^\wedge_{k+1}}$, $\rho^x_k=\rho_{X^x_{k-1},X^x_{k+1}}$ for the resampling densities at $k$. Define
$$q_k:=\frac12 \int_\bbR|\rho^\wedge_k(u)-\rho^x_k(u)|du \;.$$
Recall the sticky coupling of Subsection \ref{Subsec:sticky}, in particular the laws $\nu_i$ defined therein. The derivative of the angle bracket
$\partial_t \langle M \rangle_t$ admits an explicit expression which can be derived from the sticky coupling description. For any $t \in [\cT_{i-1}, \cT_{i} \wedge \cR)$
 \begin{equation}\label{drifft}
 \partial_t \langle M \rangle_t = \sum_{k=1}^{N-1}\left(
 (1-q_k) (\delta X_k(t_-))^2 + q_k \bbE[Y^2\,|\,\cF_{t_-}] \right),
 \end{equation}
where 
$$
Y=(Y^\wedge  - Y^x - \delta X_k(t_-))\wedge (R-M_{t-})\,,\quad R:= N^{3/2-(i-2)\eta}
$$
and
$(Y^\wedge,Y^x)$ are, conditionally given $\cF_{t-}$, independent r.v.~with densities $\nu_3$ and $\nu_1$ respectively. The expression \eqref{drifft} simply comes from the fact that for each $k$,
$M_t$ will jump by an amount $\delta X_k(t_-)$ with probability $1-q_k$ and by an amount $Y$ with probability $q_k$. Note that the truncation with $R-M_{t-}$ in the variable $Y$ comes from the definition of $M$ in terms of $A$.

We now work on the event $\cB \cap \cC$. From Lemma \ref{lem:lequ} we have
\begin{equation}\label{eq:qq1}
q_k\leq C (\delta\bar X_k)(\log N)^{2K},
 \end{equation}
 for all $k$, where we use the notation 
 $$
 \delta\bar X_k=\frac12(X^\wedge_{k+1} +X^\wedge_{k-1} -
X^x_{k+1}- X^x_{k-1}).
$$ 
To prove Lemma \ref{lem:le2} it is then sufficient to show that if $q_k\geq 1/2$ then 
\begin{equation}\label{eq:tos}
\bbE\left[Y^2\,|\,\cF_{t_-}\right]\geq (\log N)^{-C_K}\;,
 \end{equation}
for some constant $C_K>0$. Note that under the event $\cC$ we have $ R-M_{t-}\geq R/2$. Moreover, if $q_k \ge 1/2$, because of the event $\cB$ by Lemma \ref{lem:lerho} the density of the random variable $\tilde{Y}:=Y^\wedge  - Y^x - \delta X_k(t_-)$ is bounded above by $L:=C'(\log N)^{2K}$. 
We next observe that we may assume $R\geq 2$. Indeed, if $R\leq 2$ and 
$q_k\geq 1/2$, then by \eqref{eq:qq1} we also have $\delta \bar X_k \geq (2C)^{-1} (\log N)^{-2K}$ and
$$(2C)^{-1} (\log N)^{-2K} \le M_{t-} \le N^{3/2-(i-3/2)\eta} = N^{-\eta/2} R \le 2 N^{-\eta/2}\;,$$
thus raising a contradiction. Hence assuming $ R-M_{t-}\geq R/2$ and $R\geq 2$ we may estimate
\begin{align*}
\bbE\left[Y^2\,|\,\cF_{t_-}\right]&\geq\bbE\left[\tilde{Y}^2\wedge (R/2)^2\,|\,\cF_{t_-}\right]\geq \int_0^{1} 2v\bbP(|\tilde{Y}|>v\,|\,\cF_{t_-})dv.
\end{align*}
The bounded density property implies $\bbP(|\tilde{Y}|>v\,|\,\cF_{t_-})\ge 1-2Lv\geq 1/2$ for all $v\in[0,(4L)^{-1}]$. It follows that
\begin{align*}
\bbE\left[Y^2\,|\,\cF_{t_-}\right]\geq\int_0^{(4L)^{-1}} vdv =\frac1{32 L^{2}}.  
\end{align*}  
This proves \eqref{eq:tos}.
\end{proof}

\section{Proof of Theorem \ref{th:wedge}}\label{Sec:Wedge}
The proof is based on ideas first introduced in \cite{Lac16} for card shuffling by adjacent transpositions. 
An adaptation to the continuous setting was later developed in \cite{CLL}, for the specific case of the adjacent walk on the simplex. Here we are going to follow the proof of \cite[Proposition 14]{CLL}, with some minor modifications due to the different setting. 
We start by recalling the Peres-Winkler censoring inequality.

\subsection{Censoring}\label{sec:censor}
The censoring inequality of Peres and Winkler \cite{PWcensoring} compares the distance to equilibrium at time $t$ for two Markov processes, one of which is obtained as a censored version of the other by omitting some of the updates according to a given censoring scheme. The version of the result that we need here is formulated as Proposition \ref{pro:censor} below. The proof is an 
adaptation to the present 
setting of the original argument for monotone finite spins systems in \cite{PWcensoring}. For completeness we give a brief self-contained account below.

A {\em censoring scheme} $\cC$ is defined as a c\`adl\`ag map
$$
\cC:[0,\infty)\mapsto \cP(\{1,\dots,N-1\}),
$$
where $\cP(A)$ denotes the set of all subsets of a set $A$. The subset $\cC(s)$, at any time $s\geq 0$, represents the set of labels whose update is to be suppressed at that time. More precisely, given a censoring scheme $\cC$, and an initial condition $x\in\gO_N$, we write $P_{t,\cC}^x$ for the law of the random variable obtained by starting at $x$ and applying the standard graphical construction (see Section \ref{Sec:GC}) with the proviso
that if label $j$ rings at time $s$, then the update is performed if and only if $j\notin\cC(s)$.  In particular, the uncensored evolution $P^x_t$ corresponds to $P^x_{t,\cC}$ when $\cC(s)\equiv\eset$. Given a distribution $\mu$ on $\gO_N$, we write $$\mu P_{t,\cC}=\int  P^x_{t,\cC}\,\mu(dx).$$ 
Let $\cS_N$ denote the set of probability measures $\mu$ on $\gO_N$ which are absolutely continuous with respect to \ $\pi_N$ and such that the density $d\mu/d\pi_N$ is an increasing function on $\gO_N$. Recall the notation $\mu\leq \nu$ for stochastic domination.

\begin{proposition}\label{pro:censor}
If $\mu\in\cS_N$, and $\cC$ is a censoring scheme, then for all $t\geq 0$
\begin{equation}\label{Eq:censura}
\|\mu P_t-\pi_N\|_{TV}\leq \|\mu P_{t,\cC}-\pi_N\|_{TV}.
\end{equation}
\end{proposition}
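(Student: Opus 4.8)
The plan is to transcribe the original Peres--Winkler argument \cite{PWcensoring} to our continuous-state setting. Throughout I would work with densities relative to $\pi_N$: each single-site update, viewed as an operator on probability measures, is $\mu\mapsto\mu\cQ_k$, where $\cQ_k$ is the self-adjoint orthogonal projector of \eqref{projectors} with $\cQ_k\mathbf 1=\mathbf 1$, so that $d(\mu\cQ_k)/d\pi_N=\cQ_k(d\mu/d\pi_N)$, while ``no update'' is the identity operator $I$. Since the Poisson clocks ring only finitely often on $[0,t]$, conditionally on their realization $\omega$ both $\mu P_t$ and $\mu P_{t,\cC}$ take the form $\mu\Psi_\omega$, resp.\ $\mu\Psi_{\omega,\cC}$, where $\Psi_\omega$ is the finite composition of the operators $\cQ_{k_i}$ read off at the ring times $s_i$, and $\Psi_{\omega,\cC}$ is the same composition with $\cQ_{k_i}$ replaced by $I$ whenever $k_i\in\cC(s_i)$; moreover $\mu P_t=\E_\omega[\mu\Psi_\omega]$ and $\mu P_{t,\cC}=\E_\omega[\mu\Psi_{\omega,\cC}]$.

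The heart of the proof is three monotonicity facts, all following from the monotonicity of the single-site kernels and Proposition \ref{prop:fkg}. \emph{(i) $\cS_N$ is stable under each $\cQ_k$}: if $f=d\mu/d\pi_N$ is increasing and $x\le y$, then $f(x^{(k,u)})\le f(y^{(k,u)})$ and, by \eqref{firstorder}, $\rho_{y_{k-1},y_{k+1}}$ stochastically dominates $\rho_{x_{k-1},x_{k+1}}$; applying the latter to the increasing map $u\mapsto f(y^{(k,u)})$ gives $\cQ_kf(x)\le\cQ_kf(y)$. \emph{(ii) Each $\cQ_k$ is monotone on measures}: if $\mu\le\nu$, then by Strassen's theorem couple configurations with $X^\mu\le X^\nu$ and update coordinate $k$ with a common uniform variable; by \eqref{firstorder} the order is preserved, whence $\mu\cQ_k\le\nu\cQ_k$. \emph{(iii) For $\mu\in\cS_N$ one has $\pi_N\le\mu\cQ_k\le\mu$}: here $\pi_N\le\mu$ is the FKG inequality; $\mu\cQ_k\le\mu$ reduces, after testing against an increasing $g$ and using self-adjointness, to $\int(g-\cQ_kg)f\,d\pi_N\ge0$, and conditioning on the coordinates $\ne k$ rewrites the left-hand side as the average of the $\rho_{x_{k-1},x_{k+1}}$-covariance of the two increasing functions $u\mapsto g(x^{(k,u)})$ and $u\mapsto f(x^{(k,u)})$, hence nonnegative; finally $\pi_N=\pi_N\cQ_k\le\mu\cQ_k$ by (ii).

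Then, for fixed $\omega$, I would pass from $\mu\Psi_{\omega,\cC}$ to $\mu\Psi_\omega$ by re-activating the censored updates one at a time. At the step re-activating the update in a position $j$, let $\nu$ be the law just before that update (obtained from $\mu$ by a composition of $\cQ$'s and $I$'s, hence in $\cS_N$ by (i)) and let $\Phi$ be the composition of all operations after position $j$, which is the same before and after re-activation; re-activation replaces $\nu\Phi$ by $\nu\cQ_{k_j}\Phi$. By (iii), $\pi_N\le\nu\cQ_{k_j}\le\nu$, so by (ii) $\pi_N\le\nu\cQ_{k_j}\Phi\le\nu\Phi$, and by (i) both measures lie in $\cS_N$. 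Iterating over all censored positions yields $\pi_N\le\mu\Psi_\omega\le\mu\Psi_{\omega,\cC}$ with $\mu\Psi_\omega,\mu\Psi_{\omega,\cC}\in\cS_N$, for every $\omega$. Since stochastic domination and membership in $\cS_N$ are preserved under mixtures, averaging over $\omega$ gives $\pi_N\le\mu P_t\le\mu P_{t,\cC}$ with $\mu P_t,\mu P_{t,\cC}\in\cS_N$.

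Finally, write $g:=d(\mu P_t)/d\pi_N$; it is increasing, so $A:=\{g>1\}$ is an increasing set and $\|\mu P_t-\pi_N\|_{TV}=\int(g-1)_+\,d\pi_N=(\mu P_t)(A)-\pi_N(A)$. Since $\mathbf 1_A$ is increasing and $\mu P_t\le\mu P_{t,\cC}$, this is at most $(\mu P_{t,\cC})(A)-\pi_N(A)\le\|\mu P_{t,\cC}-\pi_N\|_{TV}$, which is \eqref{Eq:censura}. The step I expect to be the main obstacle is fact (iii), in particular establishing $\mu\cQ_k\le\mu$ for $\mu\in\cS_N$, where the one-dimensional positive-association property of the kernels $\rho_{b,c}$ enters, together with the routine but necessary care in handling regular conditional distributions on the continuous space $\gO_N$; once (i)--(iii) are in place, the reduction in the third paragraph is pure bookkeeping.
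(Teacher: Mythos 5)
Your proposal is correct and follows essentially the same route as the paper's proof: the paper's Lemma \ref{lem:kfs} is your final TV-comparison step, your facts (i) and the second half of (iii) together are the paper's Lemma \ref{lem:muqi} (your covariance formulation of $\mu\cQ_k\leq\mu$ is the same 1D-FKG argument the paper phrases as $(\cQ_i\varphi)(\cQ_ig)\leq\cQ_i(\varphi g)$), your fact (ii) is the "each update preserves monotonicity" step the paper invokes, and your one-at-a-time re-activation is the paper's one-at-a-time removal read in the opposite direction. The only deviation is cosmetic: you additionally carry the lower bound $\pi_N\le\mu P_t$, which is not needed for the conclusion.
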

The proof is a consequence of the next two lemmas.

\begin{lemma}\label{lem:kfs}
If $\mu,\nu$ are two probability measures on $\gO_N$ such that $\mu\in\cS_N$ and $\mu\leq \nu$, then 
 \begin{align}\label{Eq:familyS2}
 \|\mu-\pi_N\|_{TV}\leq \|\nu-\pi_N\|_{TV}.
\end{align}
\end{lemma}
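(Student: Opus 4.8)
The plan is to prove the claim $\|\mu - \pi_N\|_{TV} \leq \|\nu - \pi_N\|_{TV}$ under the hypotheses $\mu \in \cS_N$ (so $\dd\mu/\dd\pi_N$ is increasing) and $\mu \leq \nu$ (stochastic domination). The first observation is that since $\dd\mu/\dd\pi_N$ is increasing, the set $A := \{x : \dd\mu/\dd\pi_N(x) \geq 1\}$ is an increasing set, and on this set we have $\mu(A) \geq \pi_N(A)$. In fact, for \emph{any} increasing set $B$, the inequality $\mu(B) \geq \pi_N(B)$ holds: this is because $\ind_B$ is increasing and $\dd\mu/\dd\pi_N$ is increasing, so by the FKG inequality (Proposition \ref{prop:fkg}) $\mu(B) = \pi_N(\ind_B \cdot \dd\mu/\dd\pi_N) \geq \pi_N(\ind_B)\pi_N(\dd\mu/\dd\pi_N) = \pi_N(B)$.

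Next I would use the standard identity for total variation distance in terms of the maximizing set. For a signed measure one has $\|\mu - \pi_N\|_{TV} = \mu(A) - \pi_N(A)$ where $A = \{\dd\mu/\dd\pi_N \geq 1\}$ is precisely the set on which the density exceeds $1$; crucially $A$ is an increasing set. Therefore
\begin{equation}\label{Eq:TVincr}
\|\mu - \pi_N\|_{TV} = \mu(A) - \pi_N(A) \leq \nu(A) - \pi_N(A) \leq \sup_{B \text{ increasing}} \big(\nu(B) - \pi_N(B)\big) \leq \|\nu - \pi_N\|_{TV}\,,
\end{equation}
where the first inequality uses $\mu \leq \nu$ together with the fact that $A$ is increasing (so $\mu(A) \leq \nu(A)$), and the last inequality is trivial since the total variation distance is a supremum over \emph{all} Borel sets. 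This chain gives exactly the claim.

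The main (and really only) subtlety is justifying that the maximizing set $A$ in the Hahn--Jordan decomposition of $\mu - \pi_N$ can be taken to be $\{\dd\mu/\dd\pi_N \geq 1\}$, and that this set is increasing. The former is the classical fact that $\|\mu - \pi_N\|_{TV} = \int (\dd\mu/\dd\pi_N - 1)_+ \dd\pi_N = \mu(A) - \pi_N(A)$; the latter is immediate from monotonicity of $\dd\mu/\dd\pi_N$. One should be slightly careful that $A$ is defined up to $\pi_N$-null sets, but since all of $\mu$, $\nu$, $\pi_N$ are mutually absolutely continuous in the relevant direction ($\mu \ll \pi_N$ and we only evaluate $\nu$ on an event, which is harmless), this causes no issue. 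I do not expect to need the FKG inequality in its set form \eqref{piapib}; the first part of Proposition \ref{prop:fkg} (or even just the elementary fact that stochastic domination is equivalent to $\mu(B) \leq \nu(B)$ for all increasing $B$) suffices. The whole argument is short and the only "obstacle" is bookkeeping the direction of each inequality correctly.
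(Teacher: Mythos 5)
Your proof is correct and follows essentially the same approach as the paper: define $A = \{\dd\mu/\dd\pi_N \geq 1\}$, observe it is an increasing set because $\dd\mu/\dd\pi_N$ is increasing, write $\|\mu-\pi_N\|_{TV} = \mu(A)-\pi_N(A)$, and then bound $\mu(A) \leq \nu(A)$ using stochastic domination. The opening paragraph invoking FKG to show $\mu(B)\geq\pi_N(B)$ is a harmless but unnecessary digression — it plays no role in the final chain of inequalities, which matches the paper's argument exactly.
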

\begin{proof}
Setting $\varphi=d\mu/d\pi_N$, and $A=\{\varphi\geq 1\}$, 
\begin{align}
\|\mu-\pi_N\|_{TV} &= \mu(A)-\pi_N(A).
\end{align}
Since $A$ is increasing, $\mu(A)\leq \nu(A)$, and therefore
\begin{align}
\|\mu-\pi_N\|_{TV} &\leq \nu(A) - \pi_N(A)\leq \|\nu-\pi_N\|_{TV}\,.
\end{align}
\end{proof} 
Let $\cQ_i:L^2(\gO_N,\pi_N)\mapsto L^2(\gO_N,\pi_N)$, $i=1,\dots,N-1$, denote the 
integral operator
\begin{align}
\label{eq:qif}
\cQ_i f (x) = \int f(x^{(i,u)})\rho_{x_{i-1},x_{i+1}}(u)du,
\end{align}
so that $\cQ_i f $ is the expected value of $f$ after the update of label $i$; see \eqref{projectors}. 
If $\mu$ is a probability on $\gO_N$, 
we write $\mu \cQ_i$ for the probability measure defined by
$$
\mu \cQ_i (f) = \int \mu(dx)\cQ_i f(x)\,.
$$
\begin{lemma}\label{lem:muqi}
 
If $\mu  \in\cS_N$ then $\mu \cQ_i\in\cS_N$ and $\mu \cQ_i\leq \mu$, for all $i=1,\dots,N-1$.

\end{lemma}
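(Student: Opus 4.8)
### Plan

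The plan is to prove the two assertions of Lemma~\ref{lem:muqi} — namely that $\mu\cQ_i\in\cS_N$ and $\mu\cQ_i\leq\mu$ — separately, both by translating the statements into properties of the density $\varphi=\dd\mu/\dd\pi_N$ and exploiting the fact that $\cQ_i$ is a $\pi_N$-self-adjoint projection with a monotone-looking kernel (this monotonicity being exactly what the convexity of $V$ bought us in \eqref{comparr}).

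First I would compute the density of $\mu\cQ_i$. Since $\cQ_i$ is self-adjoint on $L^2(\gO_N,\pi_N)$ and $\cQ_i\mathbf 1=\mathbf 1$, for any bounded $g$ one has $\mu\cQ_i(g)=\pi_N(\varphi\,\cQ_ig)=\pi_N((\cQ_i\varphi)\,g)$, so $\dd(\mu\cQ_i)/\dd\pi_N=\cQ_i\varphi$. Thus the first claim reduces to: \emph{if $\varphi$ is increasing on $\gO_N$ then so is $\cQ_i\varphi$.} For this I would use the monotone grand coupling of Lemma~\ref{lem:mongc}, or more directly the pointwise statement behind it: if $x\le y$, then $x^{(i,u)}$ and $y^{(i,u)}$ agree off coordinate $i$ and are ordered there (same $u$), and moreover by \eqref{firstorder}/\eqref{comparr} the resampling law $\rho_{x_{i-1},x_{i+1}}$ is stochastically dominated by $\rho_{y_{i-1},y_{i+1}}$ (since $x_{i\pm1}\le y_{i\pm1}$). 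Hence there is a coupling $(U,U')$ with $U\le U'$ under which $\cQ_i\varphi(x)=\bE[\varphi(x^{(i,U)})]$ and $\cQ_i\varphi(y)=\bE[\varphi(y^{(i,U')})]$; since $x^{(i,U)}\le y^{(i,U')}$ pointwise and $\varphi$ is increasing, $\cQ_i\varphi(x)\le\cQ_i\varphi(y)$. This gives $\mu\cQ_i\in\cS_N$.

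For the second claim, $\mu\cQ_i\leq\mu$, I would show that for every increasing bounded $g$, $\mu\cQ_i(g)\le\mu(g)$, i.e.\ $\pi_N(\varphi\,\cQ_ig)\le\pi_N(\varphi g)$. Using self-adjointness rewrite the left side as $\pi_N((\cQ_i\varphi)g)$; equivalently, since $\cQ_i$ is a projection, $\cQ_i\varphi=\cQ_i^2\varphi$ and $\pi_N(\varphi\cQ_ig)=\pi_N((\cQ_i\varphi)(\cQ_ig))$, so the desired inequality is $\pi_N((\cQ_i\varphi)(\cQ_ig))\le\pi_N(\varphi g)$. The cleanest route: note that conditionally on the coordinates $(x_j)_{j\ne i}$, both $\cQ_i\varphi$ and $\cQ_ig$ are constant (they are the conditional averages), so writing $\bE_i$ for the conditional expectation given $(x_j)_{j\ne i}$ we have $\cQ_i f=\bE_i f$ and $\pi_N((\cQ_i\varphi)(\cQ_ig))=\pi_N(\bE_i[\varphi]\,\bE_i[g])=\pi_N(\bE_i[\varphi\,\bE_i g])$. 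It therefore suffices to show, for $\pi_N$-a.e.\ fixing of $(x_j)_{j\ne i}$, the one-dimensional FKG inequality $\bE_i[\varphi\,g]\ge \bE_i[\varphi]\,\bE_i[g]$, i.e.\ positive correlation under the conditional law $\rho_{x_{i-1},x_{i+1}}(u)\dd u$ of the two functions $u\mapsto\varphi(x^{(i,u)})$ and $u\mapsto g(x^{(i,u)})$ — both of which are nondecreasing in $u$ because $\varphi,g$ are increasing on $\gO_N$ and $x^{(i,u)}$ is increasing in $u$. This is Chebyshev's association inequality for monotone functions of a single real random variable. Combining, $\mu\cQ_i(g)=\pi_N(\bE_i[\varphi]\,\bE_i[g])\le\pi_N(\bE_i[\varphi g])=\pi_N(\varphi g)=\mu(g)$, which is $\mu\cQ_i\le\mu$.

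The only genuinely delicate point is the first one — preservation of monotonicity of the density under $\cQ_i$ — and its crux is the stochastic monotonicity of the family $\{\rho_{b,c}\}$ in $(b,c)$, which is \emph{not} automatic and relies on the convexity of $V$; but this has already been established in \eqref{firstorder}--\eqref{comparr} and I would simply invoke it. Everything else is bookkeeping: the identification $\dd(\mu\cQ_i)/\dd\pi_N=\cQ_i\varphi$ from self-adjointness, and the elementary one-variable FKG/Chebyshev inequality. I would also remark that $\mu\cQ_i\le\mu$ can alternatively be read off Lemma~\ref{lem:kfs} applied with $\nu=\mu$ once one checks directly that $\mu\cQ_i\le\mu$ — but since that is circular, the Chebyshev route above is the one to write down.
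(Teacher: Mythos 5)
Your proof is correct and follows essentially the same route as the paper's: identify $\dd(\mu\cQ_i)/\dd\pi_N=\cQ_i\varphi$ via self-adjointness, deduce monotonicity of $\cQ_i\varphi$ from the stochastic monotonicity of $\rho_{b,c}$ in $(b,c)$ (i.e.\ \eqref{firstorder}), and then establish $\mu\cQ_i\le\mu$ by the pointwise one-variable FKG/Chebyshev inequality $(\cQ_i\varphi)(\cQ_i g)\le\cQ_i(\varphi g)$ together with invariance $\pi_N\cQ_i=\pi_N$. You are merely more explicit about the coupling behind the monotonicity of $\cQ_i\varphi$ and about rewriting $\cQ_i$ as the conditional expectation $\bE_i$, but these are unpackings of exactly the steps the paper invokes.
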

\begin{proof}
Set $\varphi=d\mu/d\pi_N$. Then $\mu \cQ_i$ has density $\cQ_i\varphi$ with respect to \ $\pi_N$. 
Since $\varphi$ is increasing, for any $x,y\in\gO_N$ with $x\leq y$, from \eqref{eq:qif} and Lemma \ref{lem:mongc} (or more precisely \eqref{firstorder})it follows that
\begin{align*}
\cQ_i\varphi(x) &\leq \cQ_i\varphi(y)\,.
\end{align*}
Therefore $\mu\cQ_i\in\cS_N$.
To prove the stochastic domination $\mu \cQ_i\leq \mu$, 
we show that $\mu \cQ_i (g)\leq \mu (g)$ for any bounded measurable  increasing function $g$. 
Notice that
$$
\mu \cQ_i (g) = \pi_N\left[\varphi\cQ_ig\right] = \pi_N\left[(\cQ_i\varphi) (\cQ_ig)\right].
$$ 
Since $\varphi$, $g$ are increasing, the FKG inequality on $\bbR$, which is valid for any probability measure, implies that $(\cQ_i\varphi) (\cQ_ig)\leq \cQ_i(\varphi g)$ pointwise. Therefore,
$$
\mu \cQ_i (g) \leq \pi_N\left[\cQ_i(\varphi g)\right]
 = \pi_N\left[\varphi g\right] = \mu (g).
$$ 
\end{proof}

\begin{proof}[Proof of Proposition \ref{pro:censor}]
By Lemma \ref{lem:kfs} it is sufficient to prove that $\mu P_t\in \cS_N$  and $\mu P_t\leq \mu P_{t,\cC}$
for all $t$. 
By conditioning on the realization $\cT_t$ of the Poisson clocks $\cT^{(j)}$, $j\in\lint 1,N-1\rint $ up to time $t$ in the graphical construction,  
the uncensored 
evolution at time $t$ has a distribution of the form 
\begin{align}\label{Eq:muz}
\mu^z=\mu\, \cQ_{z_1}\cdots \cQ_{z_n}\,,
\end{align}
where $z:=(z_1,\dots,z_n)\in\lint1,N-1\rint^n$ is a fixed sequence, while the censored 
evolution at time $t$ has distribution of the form $\mu^{z'}$, where 
$z'$ denotes a sequence obtained from $z$ by removing some of its entries. 
Taking the expectation over $\cT_t$ then shows that 
it is sufficient to prove that $\mu^z\in\cS_N$ and $\mu^{z}\leq \mu^{z'}$ for any pair of such sequences $z,z'$.  
Lemma \ref{lem:muqi} shows that $\mu^z\in \cS_N$ for any $\mu\in\cS_N$ and any sequence $z$. To prove 
$\mu^{z}\leq \mu^{z'}$ we may restrict to the case where $z$ and $z'$ differ by the removal of a single update, say $z_j$, so that 
$$
z=(z_1,\dots,z_{j-1},z_j,z_{j+1},\dots,z_n)\,,\quad z'=(z_1,\dots,z_{j-1},z_{j+1},\dots,z_n).
$$
Let $\mu_1 = \mu \cQ_{z_1}\cdots \cQ_{z_{j}}$, and $\mu_2 = \mu \cQ_{z_1}\cdots \cQ_{z_{j-1}}$. Then $\mu_1=\mu_2\cQ_{z_{j}}$ and thus, by Lemma \ref{lem:muqi} one has  $\mu_1\leq \mu_2$.
Moreover,  
$$
\mu^z = \mu_1\cQ_{z_{j+1}}\cdots \cQ_{z_{n}} \leq \mu_2\cQ_{z_{j+1}}\cdots \cQ_{z_{n}} = \mu^{z'}\,, 
$$ 
where the inequality follows from the fact that each update preserves the monotonicity, (cf. Equation \eqref{firstorder}). 
\end{proof}

\subsection{Relaxation of skeletons}\label{sec:skeleton}
For any integer $K\geq 2$, consider the $K-1$ labels $u_i:=\lfloor iN/K\rfloor$, $i=1,\dots,K-1$. We consider the evolution of the heights 
\begin{align}\label{Eq:topk}
Y_i(t) = X_{u_i}(t)\,,\qquad i=1,\dots,K-1,
\end{align}
which will be referred to as the $K$-{\em skeleton} of the interface ${\bf X}(t)$. 
\begin{proposition}\label{prop:skeleton}
Fix an integer $K\geq 2$. Let $\mu_t= P^\wedge_t$ and let $\bar \mu_t$ denote the marginal of $\mu_t$ on the $K$-skeleton $\{Y_i(t), i=1,\dots,K-1\}$. If $\bar\pi_N$ denotes the corresponding equilibrium distribution, then for any fixed $\gd>0$, with $t_\gd = (1+\gd)\frac{\log N}{2\gap_N}$ one has
\begin{equation}\label{Eq:top1}
\lim_{N\to\infty}\|\bar \mu_{t_\gd} - \bar\pi_N\|_{TV} = 0. 
\end{equation}
\end{proposition}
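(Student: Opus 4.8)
The plan is to follow the strategy of \cite[Proposition 14]{CLL}, relying on the censoring inequality together with the FKG and spatial Markov properties of $\pi_N$.

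\emph{Reduction by censoring.} Since $\nu^\wedge$ is $\pi_N$ conditioned on the increasing event $\bigcap_{i=1}^{N-1}\{x_i\geq N\}$, its density with respect to $\pi_N$ is increasing, so $\nu^\wedge\in\cS_N$. By Proposition \ref{pro:censor}, for any censoring scheme $\cC$ one has $\|\nu^\wedge P_{t}-\pi_N\|_{TV}\leq\|\nu^\wedge P_{t,\cC}-\pi_N\|_{TV}$, and since the total-variation distance between the $K$-skeleton marginals is dominated by that between the full laws, it suffices to design $\cC$ with $\|\nu^\wedge P_{t_\delta,\cC}-\pi_N\|_{TV}\to 0$. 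I would choose $\cC$ so that its last time window updates only labels outside $\{u_1,\dots,u_{K-1}\}$, and is long enough for the $K$ ``bridges'' (the sub-interfaces strictly between consecutive skeleton labels) to reach their equilibrium conditionally on the frozen heights $Y_i=X_{u_i}$; by the spatial Markov property of $\pi_N$ this reduces the quantity to control to the total-variation distance, just before the last window, between the law of $(Y_1,\dots,Y_{K-1})$ and its $\pi_N$-marginal $\bar\pi_N$. Finally, since for any $m\geq 1$ the $K$-skeleton labels $u_i^{(K)}=\lfloor iN/K\rfloor$ form a subset of the $(mK)$-skeleton labels, it is enough to prove the statement for all sufficiently large $K$.

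\emph{Two timescales.} The scheme I propose partitions $[0,t_\delta]$ into $n=n(K)$ rounds, each made of a ``bridge window'' during which only the non-skeleton labels update, followed by a ``skeleton window'' during which only $u_1,\dots,u_{K-1}$ update. In a bridge window the $K$ bridges evolve as independent Gibbs samplers on segments of length $\sim N/K$ with frozen endpoints; combining Lemma \ref{lem:mongc} with the a priori bound $\|\bX^\wedge(t)\|_\infty\leq CN$ on $[0,t_\delta]$ (which follows from Corollary \ref{cor:bazics} applied to $\nu^\wedge$ --- legitimate because $\nu^\wedge$ is $\pi_N$ conditioned on an increasing event, so its upper deviations are controlled by those of $\pi_N$ --- together with monotonicity of the dynamics), each bridge lies below its own maximal configuration at height $O(N)$, and Corollary \ref{th:corol} with the remark following it shows that a bridge window of length $O((N/K)^2\log N)$ suffices to come within $o(1/n)$ in total variation of the conditional equilibrium. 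Because the skeleton labels are resampled with symmetric kernels (the density $\rho_{b,c}$ has mean $(b+c)/2$ for every potential), the conditional mean of the skeleton coordinates solves, via \eqref{Eq:laplace}, the discrete heat equation on $\{1,\dots,K-1\}$ with Dirichlet boundary, so the first skeleton mode relaxes only at the $N$-independent rate $\overline{\gap}_K:=1-\cos(\pi/K)$, much slower than the bridge relaxation: this separation of timescales is what the scheme exploits. One checks that $n\cdot O((N/K)^2\log N)$ fits below $t_\delta\sim N^2\log N/\pi^2$ for $K$ large, which forces $n=O(K^2)$ but leaves room to take $n\to\infty$ with $K$.

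\emph{Effective chain and upgrade to total variation.} After each bridge window the joint law is within $o(1/n)$ of $\pi_N$ conditioned on the current skeleton heights, so during the next skeleton window the skeleton evolves, up to a controlled error, as the Gibbs sampler of $\bar\pi_N$ --- a log-concave measure whose increment potentials are convex, non-affine and polynomially bounded, these properties being inherited from $V$ through the convolution/local-limit structure of Lemma \ref{compare}. Accumulating over the $n$ rounds, the skeleton marginal at the end of the last skeleton window is within $o(1)$ of the output of this effective $(K-1)$-dimensional chain. Started from the wedge, the functional $\bar g_N(Y):=\sum_{i=1}^{K-1}\sin(i\pi/K)Y_i=\Theta(N)$ decays in mean at rate $\overline{\gap}_K$ during skeleton windows, hence drops to $o(\sqrt N)$ once the skeleton windows total at least of order $\log N/\overline{\gap}_K$; since the $\bar\pi_N$-fluctuations of $\bar g_N$ are of order $\sqrt N$ by Corollary \ref{cor:bazics}, and since the machinery of Section \ref{Sec:Upper} --- in particular Proposition \ref{Prop:RW} and Corollary \ref{th:corol} --- applies verbatim to $\bar\pi_N$, this first-mode relaxation upgrades to genuine total-variation convergence of the $(K-1)$-dimensional skeleton marginal. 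Feeding this into the reduction above and using the final bridge-equilibration window together with the spatial Markov property then yields $\|\bar\mu_{t_\delta}-\bar\pi_N\|_{TV}\to 0$.

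\emph{Main obstacle.} The crux is to control the discrepancy between the true update kernel of a skeleton coordinate $X_{u_i}$ --- which uses the current, only approximately equilibrated values of the bridge neighbors $X_{u_i\pm1}$ --- and the idealized Gibbs kernel of $\bar\pi_N$, and to show that the errors committed at all the skeleton updates add up to $o(1)$. This forces a delicate tuning of the window lengths: the bridge windows must be long enough, of order $(N/K)^2\log N$, to re-equilibrate after each skeleton excursion, yet short enough that the $n$ rounds do not overshoot $t_\delta$; the argument rests on the separation of timescales and on the quantitative estimates of Section \ref{sec:tecnos}, in particular the total-variation bound of Lemma \ref{lem:lequ}. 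A more routine point is to verify that the equilibrium and mixing-time estimates transfer uniformly to the $N$-dependent family $\bar\pi_N$.
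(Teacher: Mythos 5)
Your proposal takes a route genuinely different from the paper's, and it contains gaps that you flag but do not close. The paper proves Proposition \ref{prop:skeleton} in two short steps, neither of which appears in your outline. First, Proposition \ref{prop:Wt} bounds $v_k(t)=\bbE[X^\wedge_k(t)]$ directly on the \emph{full} lattice $\lint 1,N-1\rint$ via the heat equation $\partial_t v=\tfrac12\Delta v$ from \eqref{Eq:laplace}: since $|v(0)|_\infty\le 3N$ and $v_k(t)\ge 0$ (monotone domination of $\pi_N$ by $\nu^\wedge$), one gets $v_k(t)\le 12Ne^{-\gap_N t}$, hence $\mu_{t_\delta}(W)/\sqrt N\to 0$. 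Second, Proposition \ref{prop:muW} --- an FKG argument exploiting \eqref{piapib} together with $P^\wedge_t\in\cS_N$ --- converts the bound $\mu(W)\le\eta\sqrt N$ directly into $\|\bar\mu-\bar\pi_N\|_{TV}\le\epsilon$. No effective skeleton chain, no multi-round censoring, and no mixing-time estimate for a coarse-grained measure is needed; the only inputs are the full-lattice heat equation and FKG.

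The gap in your scheme is exactly the ``main obstacle'' you acknowledge, and I do not see how your sketch closes it. During a skeleton window the bridge coordinates $X_{u_i\pm 1}$ are frozen, so after the first resampling of $X_{u_i}$ the bridge is no longer conditionally equilibrated for the new skeleton height; the next update of $X_{u_i}$ uses $\rho_{X_{u_i-1},X_{u_i+1}}$, whose TV distance from the effective kernel of $\bar\pi_N$ is of the order of the skeleton displacement, typically $\Theta(\sqrt{N/K})$ per update. Over $\Theta(\log N)$ updates per window and $\Theta(K^2)$ windows these errors do not visibly sum to $o(1)$, and no mechanism is offered to make them. Related to this: with frozen bridges there is no heat equation on $\{1,\ldots,K-1\}$, because $\rho_{X_{u_i-1},X_{u_i+1}}$ has mean $(X_{u_i-1}+X_{u_i+1})/2$, not $(Y_{i-1}+Y_{i+1})/2$ --- the skeleton heat equation you invoke is an ``effective'' statement, and it is precisely what you would need to justify. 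Two further points are asserted but not established: that the effective potential $\hat V_N:=-\log\hat Z$ of $\bar\pi_N$ lies in $\ccC$ with $N$-uniform constants (log-concavity is standard, but polynomial growth and non-affinity with uniform constants are not automatic), and that Proposition \ref{Prop:RW} and Corollary \ref{th:corol} apply uniformly to this $N$-dependent family. The paper's combination of Proposition \ref{prop:Wt} and Proposition \ref{prop:muW} bypasses all of these difficulties.
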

Following \cite{Lac16}, the proof of Proposition \ref{prop:skeleton} is based on a subtle use of the FKG inequality together with an explicit estimate on the expected value of the variables $Y_i(t)$.  
Given a probability $\mu$ on $\gO_N$, we write $\bar \mu$ for the marginal of $\mu$ on the $K$-skeleton $y:=(y_1,\dots,y_{K-1})$, where $y_i=x_{u_i}$ for each $i=1,\dots,K-1$. 

We  use the following notation for the area associated to $K$-skeleton variables $y_i = x_{u_i}$:
$$
 W= \sum_{i=1}^{K-1}y_i,
$$
and write $\mu(W)= \bar \mu(W)$ for the expected value of $W$ under $\mu$. 

\begin{proposition}\label{prop:muW}
For any $\gep>0$, $K\geq 2$, there exists $\eta=\eta(K,\gep)>0$ such that for all $N\geq 2$, $\mu\in\cS_N$ one has:
\begin{equation}\label{Eq:contr1}
\mu(W)\leq\eta \sqrt N\;\;\;\Rightarrow\;\;\;\|\bar \mu -\bar \pi_N\|_{TV}\leq \gep.
\end{equation}
\end{proposition}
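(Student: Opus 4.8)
The plan is to project everything onto the $K$-skeleton and to exploit that the skeleton marginal of a measure in $\cS_N$ is again of increasing density; one then shows that a density-$1$ deficit of mass on the skeleton must cost a macroscopic amount of ``area'', which is forbidden by the hypothesis $\mu(W)\leq\eta\sqrt N$.

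\emph{Step 1 (reduction).} Write $\varphi=d\mu/d\pi_N$ (increasing) and let $\bar\mu$ be the law of $(x_{u_1},\dots,x_{u_{K-1}})$ under $\mu$, with density $\bar\varphi=d\bar\mu/d\bar\pi_N$. Since $\bar\varphi(y)=\pi_N[\varphi\mid (x_{u_1},\dots,x_{u_{K-1}})=y]$ and the conditional law $\pi_N(\cdot\mid(x_{u_i})=y)$ is stochastically nondecreasing in $y$ — a standard consequence of the lattice condition \eqref{Hmaxmix}, proved as in Proposition \ref{prop:fkg} — the function $\bar\varphi$ is increasing, so $\bar\mu\geq\bar\pi_N$. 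By the layer-cake formula,
$$\|\bar\mu-\bar\pi_N\|_{TV}=\bar\pi_N\big((\bar\varphi-1)_+\big)=\int_1^\infty p_s\,ds,\qquad p_s:=\bar\pi_N(\bar\varphi>s),$$
and $s\,p_s\leq\bar\pi_N(\bar\varphi\,\ind_{\bar\varphi>s})\leq\bar\pi_N(\bar\varphi)=1$, so $p_s\leq 1/s$. It suffices to prove the contrapositive of \eqref{Eq:contr1}: if $\int_1^\infty p_s\,ds>\gep$ then $\mu(W)>\eta\sqrt N$.

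\emph{Step 2 ($\mu(W)$ controls an integral of area over increasing events).} As $W=\sum_i x_{u_i}$ depends only on the skeleton, $\mu(W)=\bar\pi_N(W)+\bar\pi_N\big(W(\bar\varphi-1)\big)$, and $\bar\pi_N(W)=0$ because the increments are exchangeable under the bridge law $\pi_N$. Decompose $\bar\varphi-1=(\bar\varphi-1)_+-(1-\bar\varphi)_+$; the function $(1-\bar\varphi)_+$ is decreasing and $W$ is increasing, so the FKG inequality (Proposition \ref{prop:fkg}) gives $\bar\pi_N\big(W(1-\bar\varphi)_+\big)\leq0$. Hence, with $G_s:=\{\bar\varphi>s\}$ and Fubini (legitimate since the first absolute moment of $W$ under $\bar\mu$ is finite, as $\bar\mu(W_-)\leq\bar\pi_N(W_-)<\infty$ by monotonicity),
$$\mu(W)\ \geq\ \bar\pi_N\big(W(\bar\varphi-1)_+\big)\ =\ \int_1^\infty \bar\pi_N\big(W\,\ind_{G_s}\big)\,ds,$$
and each $G_s$ is an increasing subset of the skeleton space of probability $p_s$.

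\emph{Step 3 (the key estimate).} The heart of the matter is
$$(\star)\qquad \bar\pi_N\big(W\,\ind_{G}\big)\ \geq\ c_K\,\sqrt N\,\min\big(\bar\pi_N(G),\,1-\bar\pi_N(G)\big)$$
for every increasing event $G$ on the skeleton, with $c_K>0$ depending only on $K$ (the case $\bar\pi_N(G)>\tfrac12$ reduces to $\bar\pi_N(G)\leq\tfrac12$ by writing $\bar\pi_N(W\ind_G)=\bar\pi_N((-W)\ind_{G^c})$ and reflecting, since $\hat V(\cdot):=V(-\cdot)\in\ccC$). Intuitively $(\star)$ says a rare upward event for the interface carries a large area. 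To prove it I would compare $\bar\pi_N$ to a tractable reference: by Lemma \ref{compare} (with $a=(K-1)/K$) the law of $(x_{u_1},\dots,x_{u_{K-1}})$ under $\bar\pi_N$ is dominated by $C_K$ times its law under the product measure $\nu_N$, and — by the local limit theorem argument in the proof of Lemma \ref{compare} — bounded below by $c'_K$ times it on the bulk event $\{|x_{u_{K-1}}|\leq R\sqrt N\}$; together with the tail bound \eqref{deviates} this reduces $(\star)$, up to $K$-dependent constants, to the same statement for the mean-zero measure $\nu_N$, under which $\var_{\nu_N}(W)\asymp N$. For $\nu_N$, conditioning on $W$ gives $\nu_N(W\ind_G)=\mathrm{Cov}_{\nu_N}(W,g(W))$ with $g(w):=\nu_N(G\mid W=w)$ nondecreasing (FKG; here one uses that $W$ has positive covariance with every $x_{u_i}$). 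The FKG consequence \eqref{piapib} for the box events $A=\bigcap_i\{x_{u_i}\geq a\}$, $B=\bigcup_i\{x_{u_i}\geq a\}$ is what lets one squeeze a general increasing $G$ between such box events and thence compare its covariance with $W$ to that of the half-space $\{W\geq t_p\}$, for which a one-dimensional computation yields $\mathrm{Cov}(W,\ind_{\{W\geq t_p\}})\geq c\sqrt{\var(W)}\,p$ when $p\leq\tfrac12$. I expect $(\star)$ to be the main obstacle: a single use of FKG only gives $\bar\pi_N(W\ind_G)\geq0$, the contributions of the positive and negative parts of $W$ cancelling, so one genuinely needs the conditioning-on-$W$/rearrangement input (equivalently, the comparison with the Gaussian reference).

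\emph{Step 4 (conclusion).} Suppose $\int_1^\infty p_s\,ds>\gep$; we may assume $\gep\leq\tfrac12$, since a smaller $\gep$ only forces a smaller $\eta$. Then $p_s\leq p_1\leq 1-\int_1^\infty p_s\,ds<1-\gep$ for all $s\geq1$, so Step 2 and $(\star)$ give $\mu(W)\geq c_K\sqrt N\int_1^\infty\min(p_s,\gep)\,ds$. Set $s_1:=\inf\{s\geq1:\ p_s\leq\gep\}$ (finite, since $p_s\leq1/s$); one of $\int_1^{s_1}p_s\,ds$, $\int_{s_1}^\infty p_s\,ds$ exceeds $\gep/2$. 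In the first case $s_1-1\geq\int_1^{s_1}p_s\,ds>\gep/2$ (as $p_s\leq1$), so $\int_1^\infty\min(p_s,\gep)\,ds\geq\gep(s_1-1)>\gep^2/2$. In the second case $p_s\leq\gep$ on $[s_1,\infty)$, so $\int_1^\infty\min(p_s,\gep)\,ds\geq\int_{s_1}^\infty p_s\,ds>\gep/2\geq\gep^2/2$. Either way $\mu(W)>\tfrac12 c_K\gep^2\sqrt N$, so \eqref{Eq:contr1} holds with $\eta:=\tfrac12 c_K\gep^2$ once $N$ is large enough; the finitely many remaining values of $N$ are dealt with by a compactness argument (for fixed $N$, $\mu\in\cS_N$ with $\mu(W)\to0$ forces $\bar\mu\to\bar\pi_N$ in total variation, since an increasing density $\bar\varphi$ with $\mathrm{Cov}_{\bar\pi_N}(W,\bar\varphi)=0$ must be constant), shrinking $\eta$ if necessary.
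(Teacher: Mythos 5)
Your Steps 1, 2 and 4 are correct and form a clean reduction: the skeleton density $\bar\varphi$ is increasing, $\bar\pi_N(W)=0$ by exchangeability, FKG kills $\bar\pi_N(W(1-\bar\varphi)_+)$, and the layer-cake plus the $p_s\leq 1/s$ bound correctly shows that if the key estimate $(\star)$ held you would obtain $\mu(W)\gtrsim c_K\gep^2\sqrt N$ whenever $\|\bar\mu-\bar\pi_N\|_{TV}>\gep$. The problem is that $(\star)$ — the claim that $\bar\pi_N(W\ind_G)\geq c_K\sqrt N\min(\bar\pi_N(G),1-\bar\pi_N(G))$ for \emph{every} increasing $G$ on the skeleton — is exactly where the whole difficulty of the proposition is concentrated, and you never actually prove it. You explicitly flag it as ``the main obstacle'' and offer only a sketch; that sketch contains several genuine holes. (a) You invoke a two-sided density comparison between $\bar\pi_N$ and the product reference $\nu_N$, but Lemma~\ref{compare} as stated in the paper gives only the upper bound; the claimed lower bound on a bulk event $\{|x_{u_{K-1}}|\leq R\sqrt N\}$ would need its own LLT argument and is not supplied. (b) Even granting the two-sided comparison, $W\ind_G$ changes sign, so the lower bound does not transfer cleanly: splitting $W=W_+-W_-$ and applying the lower constant to one part and the upper constant to the other does \emph{not} reproduce $(\star)$ for $\nu_N$ up to a constant, because the two constants differ. (c) The assertion that $g(w)=\nu_N(G\,|\,W=w)$ is nondecreasing is plausible but is itself a nontrivial lattice/FKG fact that you do not establish. (d) Most seriously, the ``squeeze a general increasing $G$ between box events $A=\bigcap_i\{x_{u_i}\geq a\}$ and $B=\bigcup_i\{x_{u_i}\geq a\}$'' step is not a real argument: a generic increasing event is not sandwiched between box events at comparable probability, and the inequality \eqref{piapib} compares \emph{conditional measures} $\pi_N(\cdot|A)\geq\pi_N(\cdot|B)$, not the events themselves. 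You gesture at \eqref{piapib} but do not use it in any concrete way, whereas the paper states that this inequality is the crucial input to the (omitted, CLL-sourced) proof. So the proposal correctly identifies the reduction but leaves the heart of the proposition unproven, and the one mechanism the paper highlights as essential is not genuinely engaged.
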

The proof of Proposition \ref{prop:muW} is omitted since it is identical to the proof of Proposition 36 in \cite{CLL}. Let us however point out that this proof uses in a crucial way the improved FKG inequality \eqref{piapib} in Proposition \ref{prop:fkg}.

Next, we control the expected value of $W$ at time $t$. Let ${\bf X}^\wedge(t)=\{X^\wedge_k(t)\}$ 
denote the random variables with joint law $ P^\wedge_t$.
 \begin{proposition}\label{prop:Wt}
For any $k=1,\dots,N-1$, any $t\geq 0$:
$$
\bbE\left[X^\wedge_k(t)\right]\leq 12Ne^{-\gap_Nt}.
$$
In particular, if $\mu_t= P^\wedge_t$, then for all $t\geq 0$: 
\begin{equation}\label{Eq:contrW1}
\mu_t(W)\leq 12KN e^{-\gap_Nt}.
\end{equation}
\end{proposition}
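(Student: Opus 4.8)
The plan is to reduce everything to the single field $a(t,k):=\bbE\left[X^\wedge_k(t)\right]$. By \eqref{Eq:laplace} (and the convention $x_0=x_N=0$), $\tfrac{d}{dt}\bbE[X^\wedge_k(t)]=\bbE[\cL g_k(\bX^\wedge(t))]=\tfrac12\Delta\,\bbE[X^\wedge_k(t)]$, so $k\mapsto a(t,k)$ solves the discrete heat equation on $\lint 1,N-1\rint$ with Dirichlet boundary conditions $a(t,0)=a(t,N)=0$, i.e.\ $\partial_t a(t,k)=\tfrac12\Delta a(t,k)$ (compare \eqref{Eq:Heat}). Hence
$$
a(t,k)=\sum_{l=1}^{N-1}p_t(k,l)\,a(0,l),
$$
where $p_t(k,l)=\bbP_k(R_t=l,\ \tau>t)$ is the transition kernel of the continuous-time nearest-neighbour random walk $R$ that jumps at rate $1$, started at $k$ and killed at the exit time $\tau$ of $\lint 1,N-1\rint$. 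Since $p_t\ge 0$, this gives the pointwise domination $0\le a(t,k)\le \big(\max_{1\le l\le N-1}a(0,l)\big)\,\bbP_k(\tau>t)$. It thus suffices to prove (i) $a(0,l)=\bbE_{\nu^\wedge}[x_l]\le 3N$ for all $l$, and (ii) $\bbP_k(\tau>t)\le 4\,e^{-\gap_N t}$ for all $k,t,N$; together they give $a(t,k)\le 12N\,e^{-\gap_N t}$, and summing over the skeleton labels $\mu_t(W)=\sum_{i=1}^{K-1}a(t,u_i)\le 12KN\,e^{-\gap_N t}$.

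\textbf{The main obstacle: bounding the initial data (i).} We must control the mean height of the equilibrium interface conditioned to stay above the wall at level $N$ (recall $\nu^\wedge=\pi_N(\cdot\mid x_i\ge N\ \forall i)$). Since $N\le x_l\le\|x\|_\infty$ on the conditioning event, it is enough to show $\bbE_{\nu^\wedge}[\|x\|_\infty]\le 3N$, equivalently $\bbE_{\nu^\wedge}\big[(\|x\|_\infty-N)_+\big]\le 2N$, which is the quantitative form of entropic repulsion: above a wall at height $N$ the interface sits at height $N+O(\sqrt{N\log N})$. This is the real difficulty. The plan is to write $\bbE_{\nu^\wedge}\big[(\|x\|_\infty-N)_+\big]=\big(\int_0^\infty\pi_N(\{\|x\|_\infty>N+s\}\cap\cA^\wedge)\,ds\big)/\pi_N(\cA^\wedge)$, bound the numerator via the deviation estimate \eqref{deviates}, and bound $\pi_N(\cA^\wedge)$ from below by exhibiting an explicit piecewise-linear ``tent'' configuration at height $\approx N$ whose energy — together with the cost of the $O(\sqrt N)$ fluctuations around it, controlled through Corollary \ref{cor:bazics} — is small enough to absorb the tail exponent appearing in \eqref{deviates}. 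This step also yields $\bbE_{\nu^\wedge}[|x_l|]<\infty$, so that $a(0,\cdot)$ is well defined and the argument of the first paragraph is legitimate. The value $3$ of the constant is immaterial: any $V$-dependent constant would do, being absorbed in the ``$\sim$'' of Theorem \ref{th:main1} at the cost of enlarging $12$ accordingly.

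\textbf{The exit-time estimate (ii).} Expanding $\bbP_k(\tau>t)=\sum_l p_t(k,l)$ along the Dirichlet eigenbasis \eqref{Eq:eigenfcts} gives $\bbP_k(\tau>t)=\sum_{j=1}^{N-1}e^{-\gl_N^{(j)}t}c_j\varphi^{(j)}_k$ with $c_j=\sum_{l=1}^{N-1}\varphi^{(j)}_l$ and $\gl_N^{(1)}=\gap_N$. A trigonometric computation gives $c_j=0$ for $j$ even and $c_j=\sqrt{2/N}\,\cot(j\pi/2N)$ for $j$ odd, so $|c_j|\le\sqrt{2/N}\cdot 2N/(j\pi)$; with $|\sin(jk\pi/N)|\le j\sin(k\pi/N)$ this yields
$$
\bbP_k(\tau>t)\ \le\ \frac4\pi\,\sin\!\Big(\frac{k\pi}N\Big)\sum_{j\ \mathrm{odd}}e^{-\gl_N^{(j)}t}\ \le\ \frac4\pi\sum_{j\ \mathrm{odd}}e^{-\gl_N^{(j)}t}.
$$
If $\gap_N t\le\log 4$ we use the trivial bound $\bbP_k(\tau>t)\le 1\le 4e^{-\gap_N t}$. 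If $\gap_N t>\log 4$, the elementary inequalities $2\theta^2/\pi^2\le 1-\cos\theta\le\theta^2/2$ give $\gl_N^{(j)}\ge\big(1+\tfrac{2j^2}{\pi^2}\big)\gap_N$ for $j\ge 3$, whence $\sum_{j\ \mathrm{odd}}e^{-\gl_N^{(j)}t}\le e^{-\gap_N t}\big(1+\sum_{j\ge 3}4^{-2j^2/\pi^2}\big)\le\pi\,e^{-\gap_N t}$, so $\bbP_k(\tau>t)\le 4e^{-\gap_N t}$ (for $N\in\{2,3\}$ the sum over $j\ge 3$ is empty and the bound holds a fortiori). Combining (i) and (ii) proves $\bbE[X^\wedge_k(t)]\le 12N\,e^{-\gap_N t}$, and the displayed consequence for $\mu_t(W)$ follows by summation over $i=1,\dots,K-1$.
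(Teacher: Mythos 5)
Your reduction to a discrete heat equation is correct, and your representation of $a(t,k)=\sum_{l}p_t(k,l)\,a(0,l)$ via the killed nearest-neighbour walk is a nice, explicit reformulation of the same spectral expansion the paper uses. Step (ii) — the bound $\bbP_k(\tau>t)\le 4e^{-\gap_N t}$ — is essentially correct: the identity $c_j=\sqrt{2/N}\cot(j\pi/2N)$ for $j$ odd, the inequalities $\cot\theta\le 1/\theta$ and $|\sin(j\theta)|\le j|\sin\theta|$, and the comparison $\gl_N^{(j)}\ge(1+\tfrac{2j^2}{\pi^2})\gap_N$ for $j\ge 3$ all check out. The paper instead bounds $\sum_j e^{-\gl_N^{(j)}t}$ directly using $\gl_N^{(j)}\ge j\gap_N$ and the same dichotomy on whether $e^{-\gap_N t}\le 1/4$; the two computations are of comparable difficulty and give the same constant.

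The genuine gap is in step (i), which you rightly identify as the main obstacle but whose proposed resolution does not work. You want to bound $\bbE_{\nu^\wedge}\big[(\|x\|_\infty-N)_+\big]$ by a layer-cake integral, upper-bound the numerator $\pi_N\big(\{\|x\|_\infty>N+s\}\cap\cA^\wedge\big)$ by the \emph{unconditional} tail via \eqref{deviates}, and then lower-bound $\pi_N(\cA^\wedge)$. This cannot succeed when $V$ grows super-linearly. The event $\cA^\wedge=\{x_i\ge N\ \forall i\}$ forces the increments $\eta_1=x_1\ge N$ and $\eta_N=-x_{N-1}\le -N$, so already $\pi_N(\cA^\wedge)\lesssim e^{-cV(N)}$, which for $V(u)\asymp u^{K}$ with $K>1$ decays like $e^{-cN^{K}}$. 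Meanwhile \eqref{deviates} only gives $\pi_N(\|x\|_\infty>N+s)\lesssim Ne^{-c(N+s)}$. The ratio therefore grows like $e^{c(V(N)-N-s)}$, which blows up on the whole range $s\lesssim V(N)$; integrating it produces a bound of order $V(N)$, not $O(N)$. No choice of ``tent'' configuration can change this, because the cost of climbing from $0$ to $N$ in one step is unavoidable and is what dominates $\pi_N(\cA^\wedge)$.

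What is needed — and what the paper does — is to avoid the unconditional deviation estimate entirely and compare the conditional measures directly. The argument: raise the boundary condition from $(0,0)$ to $(2N,2N)$, observe that the Radon--Nikodym derivative between the two Hamiltonians restricted to the wall event equals $e^{[V(x_1)-V(x_1-2N)]+[V(-x_{N-1})-V(2N-x_{N-1})]}$ up to normalization, which is an increasing function of $x$ by convexity of $V$, and hence by the FKG inequality the conditioned measure with raised boundary stochastically dominates $\nu^\wedge$. By the translation $x\mapsto x-2N$ this says $X_k\sim\nu^\wedge$ is dominated by $X_k'+2N$ where $X'\sim\pi_N(\cdot\mid\min_i x_i\ge -N)$; then $\bbE_{\nu^\wedge}[x_k]\le 2N+\pi_N(x_k^2)^{1/2}/\pi_N(\min_ix_i\ge -N)$, and the last quantity is $O(\sqrt N)$ by Cauchy--Schwarz, Corollary~\ref{cor:bazics} and Lemma~\ref{compare}. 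This gives $|v(0)|_\infty\le 3N$ uniformly in $V\in\ccC$. The stochastic-domination step is the indispensable ingredient missing from your sketch.
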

\begin{proof}
Set $v(t)=(v_1(t),\dots,v_{N-1}(t))$, where $v_k(t)=\bbE\left[X^\wedge_k(t)\right]$.
Expanding $v_k(t)$ in the orthonormal basis \eqref{Eq:eigenfcts},
one finds 
$v_k(t) = \sum_{j=1}^{N-1} a_j(t)  \varphi_{k}^{(j)}$, where 
$a_j(t)
=\sum_{k=1}^{N-1}\varphi_{k}^{(j)}v_k(t)$. Since $\frac{d}{dt}v_k(t)=\frac12(\Delta v(t))_k$, it follows that 
$$
a_j(t) = a_j(0)e^{-\gl_j t}\,,\qquad a_j(0) =\sum_{k=1}^{N-1}
\varphi_{k}^{(j)}v_k(0).
$$
In particular, $|a_j(0)|\leq  \sqrt{ 2N}|v(0)|_\infty$, where $|v(0)|_\infty=\max_k v_k(0)$. Therefore,
\begin{equation}\label{Eq:diagD1}
v_k(t)\leq 2|v(0)|_\infty\sum_{j=1}^{N-1}e^{-\gl_j t}\,.
\end{equation}
Let us show that $|v(0)|_\infty\leq 3N$ for all $N$ large enough. Raising the boundary condition from $(0,0)$ to $(2N,2N)$ and using   monotonicity, we see that for all $k$ 
the random variable $X_k$ with distribution $\nu^\wedge$ is stochastically dominated by the random variable $X_k+2N$ where $X_k$ has distribution $\pi(\cdot\,|\,\min_i x_i\geq -N)$. The claimed monotonicity with respect to the boundary conditions can be checked using the FKG inequality for $\pi(\cdot\,|\,\min_i x_i\geq n)$.
Indeed the density of the measure with raised boundary with respect to the original one is equal (up to a renormalizing constant) to 
$$e^{V(2N+x_1)-V(x_1)+V(2N+x_{N-1})-V(x_{N-1})}$$
which by convexity of $V$ is increasing for the order ``$\le$'' on $\gO_N$.
It follows that  
\begin{equation}\label{Eq:monobc}
v_k(0) \leq 2N + \pi(x_k|\,{\min}_ix_i\geq -N)\,.
\end{equation}
From Corollary \ref{cor:bazics} and the union bound, 
\begin{equation}\label{Eq:expot1}
 \pi({\min}_i x_i\geq -N)\geq 1-Ne^{-cN},
\end{equation}
for some constant $c>0$ and all $N$ large enough. Moreover, Lemma \ref{compare} also shows that, uniformly in $k$,
\begin{equation}\label{Eq:expot11}
 \pi(x_k; {\min}_i x_i\geq -N)\leq \pi(x_k^2)^\frac12\leq C\sqrt N\,,
\end{equation}
for some constant $C>0$ and all $N$ large enough. The estimates \eqref{Eq:monobc}-\eqref{Eq:expot11} imply $|v(0)|_\infty\leq 3N$ for $N$ large.
From \eqref{Eq:diagD1},
using $\gl_j \geq j\gl_1$ it follows that
$$
v_k(t)\leq \frac{6Ne^{-\gl_1 t}}{1-e^{-\gl_1 t}}.
$$ 
If $t$ is such that $e^{-\gl_1 t}\leq 1/4$ then this implies $v_k(t)\leq 8Ne^{-\gl_1 t}$. On the other hand if 
$e^{-\gl_1 t}\geq 1/4$ then, using the monotonicity $P^\wedge_t\leq \nu^\wedge$ one has $$v_k(t)\leq v_k(0)\leq 3N \leq 12Ne^{-\gl_1 t}.$$ Since $\gl_1=\gap_N$, this proves the desired upper bound. 
\end{proof}

\begin{proof}[Proof of Proposition \ref{prop:skeleton}]
Proposition \ref{prop:Wt} shows that
\begin{equation}\label{Eq:top2}
\lim_{N\to\infty}\frac{\bar \mu_{t_\gd}(W)}{\sqrt N} = 0,
\end{equation}
and Proposition \ref{prop:muW} shows that \eqref{Eq:top2} is sufficient to achieve the desired convergence of $K$-skeletons.  
\end{proof}
 \subsection{Relaxation of the censored dynamics}\label{sec:censoreddyn}
Consider the censored process obtained by suppressing all updates of the skeleton variables. That is, we use the censoring scheme $\cC$ such that $\cC(s)=\{u_1,\dots,u_{K-1}\}$, $s\geq 0$.
\begin{proposition}\label{prop:special2}
Let $P_{t,\cC}^x=\gd_x P_{t,\cC}$ and let $\pi_N(\cdot|y)$ denote the equilibrium distribution given the skeleton heights  $y_i=x_{u_i}, i=1,\dots,K-1$. For any $\gd\in(0,1)$,  define $K=\lfloor \gd^{-1}\rfloor$ and $s_\gd = \gd\frac{\log N}{2\gap_N}$, and let $B_{N,\delta}$ denote the event
  \begin{equation}\label{Eq:top0bn}
B_{N,\delta}=\Big\{x\in \Omega_N:\, \|x\|_\infty \le 2N,\; \max_{i=1,\dots,K} |x_{u_i}| \leq N/2K\Big\}.
\end{equation}
Then there exists $\delta_0\in(0,1)$ such that for all fixed $\delta\in(0, \delta_0)$ and for all $N$ sufficiently large:
\begin{equation}\label{Eq:top10}
\sup_{x\in B_{N,\delta}}\| P_{s_\gd,\cC}^x- \pi_N(\cdot|y)\|_{TV} \leq  \delta\,.
\end{equation}
\end{proposition}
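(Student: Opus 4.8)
The plan is to use the fact that the chosen censoring scheme decouples the evolution into $K$ independent pieces, one per skeleton segment, and then to control the relaxation of each piece by the (non-sharp) upper bound of Corollary \ref{th:corol}. First, the decoupling: since $\cC(s)\equiv\{u_1,\dots,u_{K-1}\}$, under $P^x_{t,\cC}$ the skeleton heights stay frozen, $X_{u_i}(t)=x_{u_i}$ for all $t$, and the remaining coordinates split into the $K$ blocks $\{u_{i-1}+1,\dots,u_i-1\}$, $i=1,\dots,K$ (with $u_0=0$, $u_K=N$), whose evolutions are independent: the $i$-th one is the Gibbs sampler of the $\nabla\varphi$ interface on the segment $\{u_{i-1},\dots,u_i\}$, of length $m_i:=u_i-u_{i-1}=N/K+O(1)$, with frozen boundary heights $x_{u_{i-1}},x_{u_i}$. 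Because $H$ is a sum over edges, conditioning $\pi_N$ on the skeleton heights factorizes in the same way, and the $i$-th factor is exactly the invariant law of the $i$-th piece. Writing $\mu^{(i)}_t$ for the law of the $i$-th piece at time $t$ and $\pi^{(i)}$ for its invariant measure, the standard bound for products of measures gives $\|P^x_{s_\gd,\cC}-\pi_N(\cdot\,|\,y)\|_{TV}\le\sum_{i=1}^{K}\|\mu^{(i)}_{s_\gd}-\pi^{(i)}\|_{TV}$ (the frozen skeleton coordinates contribute nothing since $y_i=x_{u_i}$), so it suffices to make each summand at most $\gd/K$ at time $s_\gd$, uniformly over $x\in B_{N,\gd}$ and for $N$ large.

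Next, I would bound each summand with Corollary \ref{th:corol}. By the transformation of Remark \ref{rem:tilt}, after shifting by $x_{u_{i-1}}$ and subtracting the affine profile of slope $h_i:=(x_{u_i}-x_{u_{i-1}})/m_i$, the $i$-th piece becomes the Gibbs sampler on a length-$m_i$ interface with zero boundary and potential $V_{h_i}(\cdot)=V(h_i+\cdot)\in\ccC$. On $B_{N,\gd}$ we have $|x_{u_j}|\le N/(2K)$, whence $|h_i|\le 2$ for $N$ large, so all tilts lie in the fixed compact interval $[-2,2]$; and the shifted initial condition has sup-norm at most $\|x\|_\infty+|x_{u_{i-1}}|+m_i|h_i|\le 4N\le m_i^{\alpha}$ for any fixed $\alpha>1$ once $N$ is large (recall $K$ is fixed and $m_i\sim N/K$). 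Using Corollary \ref{th:corol} together with the refinements recorded in Remark \ref{rem:uniformity} (uniformity of the bound over the tilt in a compact interval, and the version allowing the initial height to be $\le m^{\alpha}$ with $\alpha>1$), one gets: for any $\rho>0$ and all $N$ large, the $(\gd/K)$-mixing time of the $i$-th piece is at most $\tfrac{2+\rho}{\gap_{m_i}}\log m_i$, uniformly in $i$, in $x\in B_{N,\gd}$ and in $h_i\in[-2,2]$, where $\gap_{m_i}=1-\cos(\pi/m_i)$ by Theorem \ref{Th:gap}.

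It then remains to check $s_\gd=\gd\tfrac{\log N}{2\gap_N}\ge\tfrac{2+\rho}{\gap_{m_i}}\log m_i$ for all $i$ and all $N$ large. Since $\log m_i\le\log N$, it suffices that $\gap_{m_i}\ge\tfrac{2(2+\rho)}{\gd}\gap_N$; and since $m_i=N/K\,(1+o(1))$ and $1-\cos(\pi/m)\sim\pi^2/(2m^2)$, one has $\gap_{m_i}/\gap_N\to K^2$, so for $N$ large it is enough that $\gd K^2\ge 4(2+\rho)$. With $K=\lfloor\gd^{-1}\rfloor$ one has $\gd K^2\ge(1-\gd)^2/\gd\to\infty$ as $\gd\to0$, so (fixing, say, $\rho=1$) there is $\gd_0\in(0,1)$ such that the inequality holds for all $\gd<\gd_0$. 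Plugging back in, $\|P^x_{s_\gd,\cC}-\pi_N(\cdot\,|\,y)\|_{TV}\le K\cdot\gd/K=\gd$ uniformly over $x\in B_{N,\gd}$, which is \eqref{Eq:top10}.

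The only part that is not routine is the use of Corollary \ref{th:corol} for the pieces in the needed generality — uniformly over the tilt $h\in[-2,2]$, and with initial heights of order $N\sim K m_i\gg m_i$ rather than $\le m_i$. Both points are asserted in Remark \ref{rem:uniformity}; verifying them amounts to inspecting the estimates of Section \ref{sec:tecnos} and the proof of Proposition \ref{Prop:RW} to confirm that the constants there can be taken uniform in $h$ on compact sets and that the $B(\delta_x)$-type bound carries over with $N^{3/2}$ replaced by $m_i^{\alpha+1/2}$. Everything else — the product decoupling and the elementary comparison of Step~3, which is exactly where the choice $K=\lfloor\gd^{-1}\rfloor$ is exploited — is straightforward.
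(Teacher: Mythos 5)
Your argument mirrors the paper's proof exactly: decouple the censored dynamics into $K$ independent sub-interfaces of length $\sim N/K$, bound each marginal via the crude mixing estimate of Corollary \ref{th:corol} together with the uniformity in the tilt and in the initial-height scale recorded in Remark \ref{rem:uniformity}, and win because $K=\lfloor\gd^{-1}\rfloor$ makes the effective gap $\gap_{m_i}\approx K^2\gap_N$ grow faster than $1/\gd$ as $\gd\to 0$. One small imprecision: with initial heights of size $\sim N\gg m_i$ the constant from Remark \ref{rem:uniformity} is $1+\alpha+\rho$ rather than $2+\rho$, but this is immaterial since any fixed constant is dominated once $\gd K^2$ is large, which is exactly the observation your final step makes.
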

\begin{proof}

The censored process is a collection of $K$ independent processes each describing the evolution of an interface on a segment of length $n:=\lfloor N/K\rfloor$, with fixed boundary heights $(y_{i-1},y_i)$, where $y_i=x_{u_i}$. If  $ x\in B_{N,\delta}$ then the left and right boundary conditions of each interface satisfy $$|y_{i-1}-y_i|\leq N/K\leq 2n.$$
Moreover, if  $x\in B_{N,\delta}$ then  the initial condition satisfies $\|x\|_\infty\leq 2N\leq n^2$, if $N$ is large enough.   
From the mixing time bound given in Corollary \ref{th:corol} (see Remark \ref{rem:uniformity}) it follows that  for any given $\gep\in (0,1)$, when $N$ is sufficiently large, each individual process has $\gep$-mixing time bounded above by  
\begin{equation}\label{Eq:crudemix}
C\, n^2\log n\le \frac{C }{K^2}\,N^2\log (N) \le  s_{\gd}\,,
\end{equation}
if $\delta > 0$ is small enough. Thus the entire censored process  satisfies  
 $$
\| P_{s_\gd,\cC}^x- \pi_N(\cdot|y)\|_{TV} \leq K \gep\,. 
$$
The claimed inequality follows by taking $\gep=K^{-1}\gd$.
\end{proof}

\subsection{Proof of Theorem \ref{th:wedge}}

We want to prove that for any $\gd>0$, 
$$ \lim_{N\to\infty}\| P_{t_\gd}^\wedge - \pi_N \|_{TV} =0\;,$$
where $ t_\delta=(1+\gd)\frac{\log N}{2\gap_N}$. 
Set $K=\lfloor \gd^{-1}\rfloor$ and let $\cC'$ denote the censoring scheme defined by
$\cC'(s)=\eset$ for $s\in[0, t_{\gd/2})$ and $\cC'(s)=\{u_1,\dots,u_{K-1}\}$ for $s\geq t_{\gd/2}$.
Let also $P^\wedge_{t,*}=P^\wedge_{t,\cC'}$ denote the corresponding censored process. From Proposition \ref{pro:censor} we have
$$
\| P_{t_\gd}^\wedge - \pi_N \|_{TV} \leq \| P_{t_\gd,*}^\wedge - \pi_N \|_{TV}.
$$
We are going to construct a coupling of $P_{t_\gd,*}^\wedge$ and $\pi_N$. 
We first couple the skeleton heights at time $t_{\gd/2}$. 
Set $\mu=P^\wedge_{t_{\gd/2}}$,  
and let $\bbP$ denote a coupling of $\mu$ and $\pi_N$. Let $(X,Z)$ denote the corresponding height variables, so that $X$ has distribution $\mu$ and $Z$ has distribution $\pi_N$. 
The coupling $\bbP$ can be chosen in such a way that the skeleton variables are optimally coupled, that is
$$
\bbP(X_{u_i}=Z_{u_i}, \;i=1,\dots,K-1) =1- \|\bar \mu_{t_{\gd/2}} - \bar\pi_N\|_{TV}.
$$ 
Consider the event $$E=\{x\in\Omega_N: \,|x|_\infty\leq  2N\}.$$
Monotonicity implies that $\pi_N\leq \mu\leq \nu^\wedge$ and therefore 
\begin{align}\label{Eq:xnotine}
\mu(E^{\cc})&\leq N \max_i \mu(|x_i|>2N)\\&
\leq N \max_i\pi_N(x_i<-2N) + N \max_i\nu^\wedge(x_i>2N).
\end{align}
Corollary \ref{cor:bazics} implies
\begin{equation}\label{Eq:expott}
 \max_{i\in \lint 1,N-1\rint}\pi_N(x_i<-N)\leq Ce^{-N/C},
\end{equation}
for some constant $C>0$. Raising the boundary condition from $(0,0)$ to $(\frac32N,\frac32N)$ and using   monotonicity, we see that for all $i$ 
the random variable $X_k$ with distribution $\nu^\wedge$ is stochastically dominated by the random variable $X_k+3N/2$ where $X_k$ has distribution $\pi(\cdot\,|\,\min_i x_i\geq -N/2)$. Thus, reasoning as in \eqref{Eq:monobc} one finds
\begin{equation}\label{Eq:monobc1}
\max_{i\in \lint 1,N-1\rint}\nu^\wedge(x_i>2N)\leq  Ce^{-N/C},
\end{equation}
for some constant $C>0$.
Define the event 
$$
\cA=\{X_{u_i}=Z_{u_i}, \,i=1,\dots,K-1\}\cap\{X\in B_{N,\delta}\},$$
where $B_{N,\delta}$ is given in Proposition \ref{prop:skeleton}. Let $F=\{x\in\Omega_N:\, |x_{u_i}|\leq N/2K\}$ so that 
$\{X\in B_{N,\delta}\}=\{X\in E\cap F\}$. 
Then, 
\begin{equation}\label{Eq:evea}
\cA=\{X\in E\}\cap\{Z\in F\}\cap\{X_{u_i}=Z_{u_i}, \,i=1,\dots,K-1\}.
\end{equation}
Therefore,
$$
\bbP(\cA^c)\leq  \|\bar \mu_{t_{\gd/2}} - \bar\pi_N\|_{TV} + \mu(X\notin E) + \pi_N(Z\notin F).
$$
From \eqref{Eq:xnotine}-\eqref{Eq:monobc1} we have $\mu(X\notin E)\leq 2CNe^{-N/C}$.
From Corollary \ref{cor:bazics} and the union bound 
one has that $$\pi_N(Z\notin F)\leq C_1e^{-N/C_1},$$ for some $C_1=C_1(K)>0$ independent of $N$.  

If the event $\cA$ occurs, then 
we couple the interfaces at time $t_\gd =t_{\gd/2}+s_{\gd/2}$ with the optimal coupling attaining the total variation distance $\| P_{s_{\gd/2},\cC}^x- \pi_N(\cdot|y)\|_{TV}$, where $\cC$ is as in Proposition \ref{prop:special2}. This shows that
$$
\| P_{t_\gd,*}^\wedge - \pi_N \|_{TV} \leq \bbP(\cA^c)
+ \sup_{x\in B_{N,\delta}}
\| P_{s_{\gd/2},\cC}^x- \pi_N(\cdot|y)\|_{TV}.
$$
From \eqref{Eq:evea}, Proposition \ref{prop:skeleton} and Proposition \ref{prop:special2}, 
$$
\limsup_{N\to\infty}\| P_{t_\gd}^\wedge - \pi_N \|_{TV} \leq 2\gd.
$$
The distance $\| P_{t_\gd}^\wedge - \pi_N \|_{TV}$ is decreasing as a function of $\gd$, and therefore we may take $\gd\to 0$ in the right hand side above to conclude.

\appendix

\subsection*{Acknowledgements}
P.C.\ thanks University Paris-Dauphine for a funding of ``Professeur Invit\'e'' and IMPA for the hospitality in the early stage of this work. C.L.\ acknowledges support from the grant SINGULAR ANR-16-CE40-0020-01.
This work was realized in part during H.L.\ extended stay in Aix-Marseille University funded by the European Union’s Horizon 2020 research and innovation programme under the Marie Skłodowska-Curie grant agreement No 837793.
\bibliographystyle{Martin}
\bibliography{library}

\end{document}